\renewcommand{\appendixpagename}{Appendix}
\renewcommand{\@chap@pppage}{%
  \clear@ppage
  \thispagestyle{plain}%
  \if@twocolumn\onecolumn\@tempswatrue\else\@tempswafalse\fi
  \null\vfil
  \markboth{}{}%
  {\centering
    \interlinepenalty \@M
    \normalfont
    \Large\bfseries \appendixpagename\par}
  \if@dotoc@pp
    \addappheadtotoc
  \fi
  \vfil\newpage
  \if@twoside
    \if@openright
      \null
      \thispagestyle{empty}%
      \newpage
    \fi
  \fi
  \if@tempswa
    \twocolumn
  \fi
}
\renewcommand{\@sec@pppage}{%
  \par
  \addvspace{4ex}%
  \@afterindentfalse
  {\parindent \z@
    \interlinepenalty \@M
    \normalfont
    \centering
    \LARGE\bfseries \appendixpagename
    \markboth{}{}\par}%
  \if@dotoc@pp
    \addappheadtotoc
  \fi
  \nobreak
  \vskip 3ex
  \@afterheading
}
\renewcommand{\appendixpagename}{Appendix}
\newenvironment{myappendices}{%
  \begin{appendices}%
    \addtocontents{toc}{\protect\setcounter{tocdepth}{0}}%
}{%
    \addtocontents{toc}{\protect\setcounter{tocdepth}{2}}%
  \end{appendices}%
}
\def\e{\varepsilon}
\def \p{\partial}
\def \sh{\textnormal{SH}}
\def \0{\mathbf{0}}
\newcommand{\cP}{{\mathcal P}}
\newcommand{\cS}{{\mathcal{S}}}
\newcommand{\cN}{{\mathcal N}}
\newcommand{\R}{{\mathbb R}}
\newcommand{\be}{\begin{equation}}
\newcommand{\ee}{\end{equation}}
\newcommand{\ba}{\begin{array}}
\newcommand{\ea}{\end{array}}
\newtheorem{theorem}{\textbf{Theorem}}[section]
\newtheorem{remark}[theorem]{\textbf{Remark}}
\newtheorem{lemma}[theorem]{\textbf{Lemma}}
\newtheorem{corollary}[theorem]{\textbf{Corollary}}
\newtheorem{proposition}[theorem]{\textbf{Proposition}}
\newtheorem{definition}[theorem]{\textbf{Definition}}
\providecommand{\customgenericname}{}
\newcommand{\newcustomtheorem}[2]{%
  \newenvironment{#1}[1]
  {%
   \renewcommand\customgenericname{#2}%
   \renewcommand\theinnercustomgeneric{##1}%
   \innercustomgeneric
  }
  {\endinnercustomgeneric}
}
\numberwithin{equation}{section}
\begin{document}

\title[Fractal freezing and fine structure of maximal solutions]{The supercooled Stefan problem: fractal freezing and the fine structure of maximal solutions}

\author{Raymond Chu, Inwon Kim, Sebastian Munoz}

\begin{abstract}  We study the supercooled Stefan problem in arbitrary dimensions. First, we study general solutions and their irregularities, showing generic fractal freezing and nucleation, based on a novel Markovian gluing principle. In contrast, we then establish regularity properties of \emph{maximal} solutions, which are obtained by maximizing a suitable notion of ``average" freezing time. Unexpectedly, we show that maximal solutions have a transition zone that is open modulo a low-dimensional set: this allows us to apply obstacle problem theory for a finer regularity analysis. We further show that  maximal solutions are in general non-universal, and we obtain sharp stability results under perturbation of each maximal solution. Lastly, we study maximal solutions in both the radial and the one-dimensional setting. We show that in these cases the maximal solution is universal and minimizes nucleation, in agreement with phenomena observed in the physics literature.
\end{abstract}
\keywords{supercooled Stefan problem; free boundary; nucleation; fractal freezing; Markovian gluing; transition zone.}
\subjclass[2010]{80A22, 35R35, 60H30}

\maketitle

\tableofcontents

\section{Introduction}
We study the supercooled Stefan problem
\begin{equation} \label{eq:stefan intro}
    \begin{cases} \eta_t-\frac12\Delta \eta=(\chi_{\{\eta>0\}})_t & \text{in }\;(0,\infty)\times \R^d,\\
    \eta \geq 0,\,\, \eta(0,\cdot)=\mu\quad &\text{on} \;\;\R^d,\\    
     
    \end{cases}
\end{equation}
where $0\leq \mu \in L^{\infty}(\R^d)$ has compact support.  The system \eqref{eq:stefan intro} models the freezing of supercooled water into ice, where $\eta$ denotes the negative temperature of the water \cite{rubins1971}. Initially the water is at temperature $-\mu$, the ice is kept at temperature $0$ throughout the evolution, and the transition from water to ice occurs at zero temperature. Thus $\{\eta>0\}$ is the region occupied by the water, and the {\it free boundary} $\partial\{\eta>0\}$ denotes the interface between ice and water. Letting $U:=\{\mu>0\}$ be the space occupied by water at the initial time, we require $\{\eta(t,\cdot)>0\} \uparrow U$ as $t\downarrow 0$, so that $(\mu,U)$ serves as the initial data for the free boundary problem; this will be made precise in Definitions \ref{def:weak solution}.  

Unlike the classical Stefan (melting) problem, the supercooled Stefan problem is ill-posed and unstable \cite{ivantsov1988}, as is typical for solidification from a supercooled state. Even in one dimension, classical solutions can develop discontinuities where the positive set jumps in finite time \cite{sherman1970,herrero96,dembo2019}. When the free boundary is constrained to be a single point, the solution can be continued past the jump \cite{fasano1989singularities}, and there is a unique continuation that minimizes the jump size \cite{DNS22,Mun25}. 

Beyond the single-free-boundary-point case, mostly formal/numerical work and well-prepared initial data have been treated \cite{howison1985,herrero2000,chen1997,dib1984,kim2024stefan}. 

The main analytical challenge is the rich geometry of potential singularities that can form. For general initial data, even global-in-time existence was open until the recent work \cite{choi2024existence},  which also introduced the notion of {\it maximal} solutions.

Our aim is to analyze \eqref{eq:stefan intro} for general 
$\mu$, beginning with weak solutions and then focusing on the class of maximal solutions. Akin to \cite{choi2024existence}, our analysis is motivated by the underlying particle dynamics and combines methods from probability and PDE: we present some of the main ideas in Section \ref{subsection:main ideas}.

To state our results, we first define  the largest class of initial data for which global-in-time weak solutions can exist, formulated in terms of the \emph{subharmonic order} $\leq_{\operatorname{SH}}$ between measures (see Definitions \ref{def:subharmonic ordering}--\ref{def:subharmonic ordering relative}). Roughly speaking, $\mu\leq_{\operatorname{SH}} \nu$ with respect to $U$ if there exists a stopped Brownian motion in $U$ with initial distribution $\mu$ and final distribution $\nu$.

\begin{definition} \label{def:Sdelta} Let $U$ be a bounded open subset of $\R^d$, and let $\delta \in [0,1)$. We say that $\mu\in \cS_{\delta}(U)$ if $\mu\in L^{\infty}(\R^d)$ is non-negative, $\{\mu>0\}=U$, and 
\begin{equation}
    \mu \leq_{\sh}\nu\;\;\textnormal{with respect to} \;U,\textnormal{ for some } \nu \;\textnormal{with }\nu\leq (1-\delta)\chi_{U}.
\end{equation}
\end{definition}
It is known that a necessary condition for \eqref{eq:stefan intro} to have a solution is $\mu \in \cS_{0}(U)$ (see \cite[Thm. 7.2]{choi2024existence}).  
Conversely, \cite{choi2024existence} shows existence whenever $\mu \in \cS_{\delta}(U)$ for some $\delta>0$, $U$ is Lipschitz,  and $|\{\mu=1\}|=0$; we discuss how the Lipschitz assumption can be removed in Proposition \ref{prop: existence} and Remark \ref{rem:existence general}. 

Clearly $\mu \leq 1-\delta$ implies $\mu \in \cS_{\delta}(U)$, but the converse need not hold.
Heuristically, elements of $\cS_{\delta}(U)$ may take large values at interior points far from the boundary of $U$, where there is more room to diffuse the mass by Brownian motion. The importance of the critical supercooling threshold $\mu=1$ has been long known in the physics literature \cite{CaCaRo92,Pel88}, and will be observed here in several contexts.

Given any weak solution $\eta$ to \eqref{eq:stefan intro}, the sets $\{\eta(t, \cdot)>0\}$ are monotone nonincreasing in $t$, and $\eta$ has a unique lower semicontinuous representative that ensures the set $\{\eta>0\} \subset (0,\infty)\times U $ is open. One may then unambiguously decompose 
\begin{equation} \label{eq:U decomp intro}  
U=F_0\dot\cup\bigcup_{t>0}\{\eta(t, \cdot)>0\},
\end{equation}
where $|F_0|=0$, and $F_0$ is the set of points that freeze at the initial time. More generally,  if one defines the \emph{freezing time} $s \in \operatorname{LSC}(\R^d; [0,\infty])$ by
\begin{equation} \label{eq:s defi intro}
    s(x):=\inf\{t>0:\eta(t,x)=0\}, 
\end{equation}
then $F_0=\{x\in U:s(x)=0\}$ and, for $t_0>0$, $\{s=t_0\}$  is the set of points that freeze at time $t_0$, which equals $\partial\{\eta>0\}\cap\{t=t_0\}$ if $s$ is continuous (see, however, Proposition \ref{prop:waiting time}).  We also define the \emph{transition zone} of $\eta$ as the set of points that freeze in positive time,
\begin{equation} \label{def:transition zone intro}
    \Sigma=\{0<s(x)<\infty\} \subset U.
\end{equation}
Our first result shows that, for general initial data, the freezing set may contain any prescribed fractal. 

\begin{theorem}[Programmed fractal freezing] \label{thm: fractal freezing} Let $U$ be a bounded open set, and let $\mu \in \cS_{\delta}(U)$ for some $\delta \in(0,1)$, with $|\{\mu=1\}|=0$.  Then the following hold:
\begin{enumerate}
    \item[(i)] Suppose $\mu\leq 1-\delta$. Then for any closed null set $F\subset U$,  there exists a weak solution $\eta$ to \eqref{eq:stefan intro} such that 
    \begin{equation*}
        F\subset \{x:s(x)=0\}.
    \end{equation*}
    \item[(ii)] Let $\eta_0$ be a weak solution to \eqref{eq:stefan intro}. Then there exists $0\leq T_0=T_0(\|\mu\|_{\infty})<\infty$ such that the following holds: for any $T> T_0$ and  for any closed null set $F\subset \{\eta_0(T,\cdot)>0\}$, there is a weak solution $\eta$ to \eqref{eq:stefan intro} with corresponding freezing time $s$, such that 
    \begin{equation}\label{eq:fractal freeze t0 intro}
        \eta(t,\cdot)=\eta_0(t,\cdot), \quad 0\leq t<T, \quad \text{and }\;\;     F \subset\{x: s(x) =T\}.
    \end{equation} 
Moreover, if $\mu\leq 1$, then $T_0=0$. In particular, for any $T>0$, there exists a weak solution of \eqref{eq:stefan intro} with the property
\begin{equation} \label{eq:freeze dim d t0 intro}
    \operatorname{dim}_{\mathcal{H}}(\{x\in U: s(x)=T\})=d.
\end{equation} 
\end{enumerate}   
\end{theorem}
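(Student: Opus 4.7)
The argument rests on two building blocks. First, an \emph{instantaneous-freezing} construction produces a weak solution with any prescribed closed null set frozen at $t = 0$; this is essentially Part (i). Second, a \emph{Markovian gluing} principle restarts the dynamics from the state $\eta_0(T, \cdot)$ of a given solution at some time $T$, feeding it as fresh initial data into the instantaneous-freezing construction and splicing the two pieces into a single global weak solution; this, together with Part (i), yields Part (ii). The Hausdorff dimension claim follows by inserting a classical Cantor-type fractal into the mechanism of Part (ii).

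\textbf{Part (i).} Given $F \subset U$ closed and null, set $U' := U \setminus F$, which is open, and let $\mu' := \mu\,\chi_{U'}$. Then $\{\mu' > 0\} = U'$ (since $\{\mu > 0\} = U$), and the trivial stopping time $\tau \equiv 0$ exhibits $\mu' \leq_{\sh} \mu'$ with respect to $U'$, with $\mu' \leq (1-\delta)\chi_{U'}$; hence $\mu' \in \cS_\delta(U')$, and $|\{\mu' = 1\}| = 0$. Apply the generalized existence result of Proposition~\ref{prop: existence} and Remark~\ref{rem:existence general} (which dispense with the Lipschitz hypothesis) on $U'$ to obtain a weak solution $\eta^{\sharp}$ of \eqref{eq:stefan intro} there, and extend by zero to $\R^d$. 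Since $F$ is null it is invisible to the distributional formulation, so the extension is a weak solution with initial data $\mu$; by construction $\eta^{\sharp}$ vanishes on $(0,\infty) \times F$, so $F \subset \{s = 0\}$.

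\textbf{Part (ii).} Monotonicity of $\{\eta_0 > 0\}$ in $t$ gives $(\chi_{\{\eta_0 > 0\}})_t \leq 0$, so $\eta_0$ is a subsolution of the heat equation with diffusion $\tfrac{1}{2}$; hence $\eta_0(T, \cdot) \leq G_T * \mu$, with $G_T$ the corresponding heat kernel, and $\|\eta_0(T, \cdot)\|_\infty \leq C(d)\|\mu\|_1 T^{-d/2}$. Choose $T_0 = T_0(\|\mu\|_\infty)$ large enough that $\eta_0(T, \cdot) \leq 1 - \delta'$ for some $\delta' \in (0,1)$ whenever $T > T_0$. If $\mu \leq 1$, then $G_T * \mu$ is continuous, decays at infinity, and is strictly below $1$ pointwise (because $\mu$ has compact support and $|\{\mu = 1\}| = 0$); being continuous and vanishing at infinity, it attains its supremum, which is therefore $1 - \delta'$ for some $\delta' > 0$ at \emph{every} $T > 0$, giving $T_0 = 0$. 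Setting $U_T := \{\eta_0(T, \cdot) > 0\}$ (open, by the openness of $\{\eta_0 > 0\}$ in spacetime), the same trivial witness shows $\eta_0(T, \cdot) \in \cS_{\delta'}(U_T)$, with $|\{\eta_0(T, \cdot) = 1\}| = 0$. Apply Part (i) to $(\eta_0(T, \cdot), U_T)$ with freezing set $F \subset U_T$, obtaining after time-shift a weak solution $\tilde\eta$ on $(T, \infty) \times \R^d$ with $\tilde\eta(T, \cdot) = \eta_0(T, \cdot)$ and $F \subset \{\tilde s = T\}$; concatenate $\eta := \eta_0$ on $[0, T]$ and $\eta := \tilde\eta$ on $[T, \infty)$. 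The Markovian gluing principle certifies $\eta$ as a weak solution of \eqref{eq:stefan intro} with initial data $\mu$. Since $F \subset U_T$ and $\{\eta_0(t, \cdot) > 0\}$ is monotone decreasing in $t$, $\eta(t, x) = \eta_0(t, x) > 0$ for $t < T$ and $x \in F$, so $s \equiv T$ on $F$, proving \eqref{eq:fractal freeze t0 intro}. The dimension statement \eqref{eq:freeze dim d t0 intro} follows by choosing any $\eta_0$ whose liquid region at time $T$ contains a ball (slow-freezing solutions, obtainable by iterating Part (ii) or by invoking maximal solutions) and taking $F$ inside that ball to be a classical compact Cantor-type set of Hausdorff dimension $d$ and zero Lebesgue measure.

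\textbf{Main obstacle.} The technical heart is the Markovian gluing principle itself: one must verify that concatenating two weak solutions across $t = T$ produces a \emph{global} weak solution, even though the restart domain $U_T$ may be highly irregular (non-Lipschitz) and $\eta_0(T, \cdot)$ is merely bounded. Compatibility across $t = T$ of the distributional time derivative $(\chi_{\{\eta > 0\}})_t$, and ensuring that no point of $F$ freezes strictly before $T$ (so that freezing occurs \emph{exactly} at $T$), are the delicate points, and both rely on the generalized existence result without Lipschitz assumptions and on the probabilistic interpretation of weak solutions developed in the paper.
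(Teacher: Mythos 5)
Your proposal is correct and follows essentially the same route as the paper: part (i) by deleting the closed null set $F$ from $U$ and invoking the rough-domain existence result, and part (ii) by using subcaloricity to wait until the density drops below the critical threshold, restarting via part (i), and splicing with the Markovian gluing principle (the paper's Proposition \ref{cor:programmed_gluing_Stefan}), whose hypothesis $\{s_1>0\}\subset\{s_0>T\}$ you correctly verify. Your explicit heat-kernel argument for $T_0=0$ when $\mu\le 1$ is a detail the paper's proof leaves implicit, and is a welcome addition.
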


In physical terms, Theorem \ref{thm: fractal freezing} can be viewed as a timed seeding principle: a brief intervention that releases or activates nucleants along a prescribed thin set $F$, forcing a flash-freeze at $F$. The set $F$ may be space-filling in the fractal sense, which corresponds to the formation of crystals of arbitrary complexity. This also highlights the extreme non-uniqueness of the model in the absence of regularizing physics such as surface tension.  In a similar vein, we also show (see Proposition \ref{prop:t=0 nucleation}) that, even with smooth initial data, solutions may exhibit \emph{nucleation} (spontaneous appearance of ice) at the initial time, as well as an interior waiting time phenomenon.

 Given this ill-posedness, it is natural to ask whether one may select a canonical, physically meaningful notion of solution that admits a well-posedness theory and improved regularity properties. While this has been explored extensively in the one-dimensional case of a single free boundary point \cite{DNS22,Mun25}, very little is known in a more general setting.

To explore these questions, we focus for the remainder of the paper on \emph{maximal} solutions of \eqref{eq:stefan intro}, first introduced in \cite{choi2024existence} (see Definition~\ref{def:maximal}). Informally, these solutions  maximize their time to freeze, in a sense analogous to the one-dimensional physical solutions of \cite{DNS22}. Maximal solutions arise from the stochastic optimization problem 
\begin{equation} \label{eq:primal intro}
\mathcal{P}(\mu,u):= \inf_{\nu} \left\{ \int_{\mathbb{R}^d} u(x)\,d\nu(x) :\; W_0 \sim \mu,\; W_\tau \sim \nu \leq \chi_U,\; \tau \leq \tau^U \right\},  
\end{equation}
where $\mu\in \cS_{\delta}(U)$ (and thus $U=\{\mu>0\}$), $W_t$ is Brownian motion,  $\tau^U$ denotes its first exit time from $U$, and $u>0$ is a smooth, strictly superharmonic function. Denoting by $\nu$ the unique optimizer, $\nu=\chi_{\Sigma}$ a.e. for some measurable $\Sigma \subset U$, and there exists a unique solution $\eta$ to \eqref{eq:stefan intro} with transition zone $\Sigma$ \cite{choi2024existence}. In fact, if $\tau$ is the Skorokhod--optimal stopping time (see Section \ref{sec:prelim}) such that $W_{\tau}\sim \nu$, then $\eta$ is precisely the active particle distribution of $W_{t\wedge \tau}$ (see Definition \ref{def:eulerian_variables}), and $\eta$ is the unique solution to \eqref{eq:stefan intro} that postpones solidification as much as possible, in the sense that it maximizes
\begin{equation} \label{eq:E maximized intro}
\mathbb{E}\left(\int_{0}^{\tau}-\Delta u(W_t)dt\right)=\int_{0}^{\infty}\int_{\R^d}-\Delta u(x)\eta(t,x)dtdx .
\end{equation} As we will see in Proposition \ref{prop: maximal_sol_characterization} this maximization property uniquely characterizes maximal solutions.
In particular, in the simplest case of $\Delta u \equiv -1$, the maximal solution associated to $u$ maximizes the quantity $\mathbb{E}\left[\tau\right]$, or equivalently $\|\eta\|_{L^1( (0,\infty) \times \R^d)}$. Moreover, these solutions have a maximal transition zone, in the sense of the subharmonic order (see \cite[Thm. 1.2]{choi2024existence}). 

As we will see next, maximal solutions also enjoy substantial regularity and stability properties. We first discuss our regularity results for the transition zone \eqref{def:transition zone intro}. It was conjectured in \cite[Rem. 6.2]{choi2024existence} that the transition zone of maximal solutions is, in general, not an a.e. open set. Our next result proves this conjecture to be false, showing that the transition zone may only contain a low-dimensional subset of its boundary. In what follows, given $S \subset \R^d$, $S^{\circ}$ will denote the topological interior of $S$.

\begin{theorem}[Fine structure of the transition zone] \label{thm: regularity intro} Let $U$ and $\mu$ be as given in Theorem \ref{thm: fractal freezing}, and assume that $U$ has a Lipschitz boundary.  Let $u$ be a smooth, positive, and strictly superharmonic function on $\overline{U}$.  Let $\eta$ be the maximal solution to \eqref{eq:stefan intro} associated to $\mathcal{P}(\mu,u)$, and let the freezing time $s:\R^d \to [0,\infty]$ be defined by \eqref{eq:s defi intro}. Then the transition zone $\Sigma=\{x:0<s(x)<\infty\} $ satisfies
\begin{equation} 
    \Sigma=\Sigma^{\circ}\cup A,
\end{equation}
where $\dim_{\mathcal{H}}(A)\leq d-1$. If $u$ is real analytic, then 
\begin{equation} \label{eq:dim d-2 intro}
    \dim_{\mathcal{H}}(A)\leq d-2.
\end{equation}
One also has, for general $u$,
\begin{equation} \label{eq:dim bd sigma intro}
    \dim_{\mathcal{H}}(\partial\Sigma \cap \{s(x)>0\})\leq d-1, \;\;\quad\quad |\partial \Sigma|=0,
\end{equation}
and the set $\Sigma$ is measure-saturated. 

\end{theorem}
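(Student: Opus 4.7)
My plan is to realize $\Sigma$ as the coincidence set of a stationary obstacle problem dual to $\mathcal{P}(\mu,u)$, and then to deduce the structural claims from Caffarelli's regularity theory together with its refinements for real-analytic data. For the obstacle problem setup, I would apply linear programming duality to $\mathcal{P}(\mu,u)$: the constraints $\nu\leq \chi_U$ and $\mu\leq_{\sh}\nu$ with respect to $U$ naturally produce a dual potential $V:\overline{U}\to\mathbb{R}$. Concretely, with $w(x):=\int_0^\infty \eta(t,x)\,dt$, one has $-\tfrac{1}{2}\Delta w=\mu-\chi_\Sigma$ in $U$ with $w=0$ on $\partial U$ and $w\geq 0$, and by \eqref{eq:E maximized intro} the set $\Sigma$ maximizes $\int_U(-\Delta u/2)\,w\,dx$ among all admissible choices. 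A Legendre-type transformation converts this concave maximization into an elliptic obstacle problem for $V$ with obstacle determined by $u$ and an additive correction depending on $\mu$; complementary slackness then identifies $\Sigma$ with the coincidence set of this obstacle problem up to a Lebesgue null set.

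Caffarelli's theorem then yields $V\in C^{1,1}_{\mathrm{loc}}(U)$, and the free boundary $\partial\Sigma\cap U$ decomposes into a regular part --- a locally $C^{1,\alpha}$ hypersurface, hence $(d-1)$-rectifiable --- and a singular part, whose Hausdorff dimension is bounded by $d-1$ in general and by $d-2$ in the real-analytic case, via Monneau-type monotonicity formulas and the structure theory of singular free-boundary points. Since $A=\Sigma\setminus\Sigma^\circ\subset\partial\Sigma\cap\{s>0\}$, which is contained in the free boundary of the obstacle problem, this immediately gives $\dim_{\mathcal{H}}(A)\leq d-1$ for general smooth $u$. For real-analytic $u$, I would further argue via a barrier built from the (then) real-analytic regularity of the regular free boundary, combined with the lower semicontinuity of $s$, that at any regular free-boundary point $x_0$ one has $s(x_0)=+\infty$, so $x_0\notin\Sigma$; consequently $A$ is contained in the singular set of the free boundary, and inherits the improved bound $\dim_{\mathcal{H}}(A)\leq d-2$.

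Finally, $|\partial\Sigma|=0$ follows because the free boundary of an obstacle problem with $C^{1,1}$ solution has Lebesgue measure zero (via non-degeneracy and the semiconvexity of $V-u$), and the bound $\dim_{\mathcal{H}}(\partial\Sigma\cap\{s>0\})\leq d-1$ is the same free-boundary bound used above. Measure-saturation of $\Sigma$ then follows from $|A|=0$. The principal difficulty I anticipate is the obstacle-problem setup: the density constraint $\nu\leq\chi_U$ distinguishes $\mathcal{P}(\mu,u)$ from the classical Skorokhod/Rost framework, so the dual variable $V$ has to be constructed with care in order for its coincidence set to be literally $\Sigma$ rather than only some a.e. representative. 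A secondary hurdle is the barrier argument in the analytic case, which requires bootstrapping the $C^{1,\alpha}$ regularity of the regular free boundary to real-analytic via hodograph-type unique continuation, and then ruling out finite $s$ at those regular points through Green-function estimates compatible with the Lipschitz boundary $\partial U$.
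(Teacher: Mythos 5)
Your reduction of the theorem to Caffarelli's regularity theory for a stationary obstacle problem does not go through, and the difficulty you flag as "principal" is in fact fatal rather than technical. The dual variable produced by $\mathcal{P}(\mu,u)$ is not the solution of a hard-constraint obstacle problem: after simplification the dual is an $L^1$-penalized best-subharmonic-approximation problem, $\inf\{\int(\psi-u)^+(1-\mu)+\int(\psi-u)^-\mu:\ \psi\in L^1(U),\ \Delta\psi\geq0\}$, whose optimizers are merely $L^1$ subharmonic functions and can be \emph{discontinuous} even for smooth superharmonic $u$ on a ball (see \cite[Cor. 7.2]{shapiro00}, cited in the paper precisely to make this point). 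So there is no $C^{1,1}$ dual potential $V$, no regular/singular dichotomy in Caffarelli's sense, and no Monneau-type monotonicity to invoke; moreover, even for genuine obstacle problems with analytic obstacle, a general bound $\dim_{\mathcal{H}}(\text{singular set})\leq d-2$ is not a theorem you can cite. The paper's route is different: it first proves (via a low-regularity Green identity, the nonnegativity of $v=\Delta^{-1}(\chi_\Sigma-\mu)$, and the duality identity $\int v\,\Delta\psi=0$) that the dual optimizer $\psi$ is \emph{harmonic} on the open set $\{s>0\}$. Then $f:=\psi-u$ is smooth there with $\Delta f=-\Delta u>0$, and the bounds follow elementarily: $\{f=0,\nabla f=0\}$ lies in a finite union of smooth hypersurfaces by the implicit function theorem (giving $d-1$), and for analytic $u$ a Weierstrass preparation argument on $f$, using that every point of $\Sigma\setminus\Sigma^\circ$ lies in $\overline{\{f<0\}}$, gives $d-2$ via the vanishing of an analytic discriminant. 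Your alternative mechanism for the analytic case (showing $s=\infty$ at regular points) also misidentifies where the points of $A$ live: they belong to $\Sigma$, hence have \emph{finite} freezing time.

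A second, independent gap is the role of measure saturation. You derive it at the end from $|A|=0$, but it cannot be deduced from a dimension bound on $A$ (an open set $V$ with $|V\setminus\Sigma|=0$ could still meet $\partial\Sigma^\circ\setminus\Sigma$). In the paper it is an \emph{input}, proved separately from the parabolic potential $w(t,x)=\tfrac12\int_t^\infty\eta$ using the nondegeneracy of the parabolic obstacle problem and the strong maximum principle, and it is needed twice: to upgrade the a.e. identity $|\Sigma\,\Delta\,\{\psi>u\}|=0$ to the genuine inclusions $\{f>0\}\subset\Sigma\subset\{f>0\}\cup\{f=0,\nabla f=0\}$, and to place the candidate singular points inside $\overline{\{f<0\}}$ for the Weierstrass argument. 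Without an independent proof of saturation, even the identification of $A$ with a subset of $\{f=0\}$ is unavailable.
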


 We refer to Lemma \ref{lem:sigma saturation} for the definition of measure-saturation. Heuristically, this means that: (i) there cannot exist cracks that freeze in finite time surrounded by water that never freezes, and (ii) there cannot exist cracks that never freeze surrounded by water that freezes in finite time. 

Regarding \eqref{eq:dim d-2 intro}, the gain in regularity when $u$ is analytic rather than merely smooth reflects a broader pattern that highlights the inherent instability of the problem (see also the discussion after Theorem \ref{thm:universality} regarding the converse phenomenon).

In combination with the results of the companion paper \cite{EKM25}, Theorem \ref{thm: regularity intro} yields strong regularity results for the freezing time, and precise dimension bounds for the set of singular points that freeze at positive times, outside of which the free boundary is smooth (see Corollary \ref{cor:regu} and Remark \ref{rem:dim par sharp}). In particular, in the subcritical case when $\mu \leq 1$, Corollary \ref{cor:regu} implies that fractal freezing never occurs for maximal solutions at any positive time, in contrast with Theorem \ref{thm: fractal freezing}. Moreover, for general $\mu \in \cS_{\delta}(U),$ maximal solutions do not exhibit fractal freezing for most times, namely
$$
\dim_{\mathcal{H}}(\{t: \dim_{\mathcal{H}}(\{x:s(x)=t\})>d-1\})=0.$$ While Theorem~\ref{thm: fractal freezing} suggests that this estimate is close to sharp, the optimality of the estimate and whether maximal solutions can develop fractal freezing when $\sup\mu>1$ remain open. 

For general maximal solutions, the Hausdorff dimension estimate \eqref{eq:dim bd sigma intro} for $\partial \Sigma$ cannot be improved to include the set of initial freezing  $\{s=0\}$. In fact, for any closed null set $F\subset U$ (in particular fractals with $\dim_{\mathcal{H}}(F)=d$), there exists $\mu \in \cS_{0}(U)\cap C(\overline{U})$ such that every weak solution (including maximal solutions) of \eqref{eq:stefan intro} satisfies $F \subset \{s=0\} \subset \partial \Sigma $ (see Proposition \ref{fractal_freezing_maximal}, compare with Theorem \ref{thm: fractal freezing} (i)). It remains an interesting open question whether for maximal solutions the regularity of $\{s=0\}$ can be improved with the slack condition $\mu \in \cS_{\delta}(U)$, $\delta>0$.

We also note that, even in the more regular setting of maximal solutions with smooth initial data, it may be difficult to understand the geometry of $\Sigma$ beyond  Hausdorff dimension bounds. For instance, while it is easy to show that $\partial U \subset \overline{\Sigma}$, {\it waiting time} phenomena may develop. That is, there may be points near the initial free boundary $\partial U$ that never freeze, and where the freezing time $s$ is discontinuous (see Figure \ref{fig:sigma} for a numerical example). The following construction illustrates this fact. 

\begin{proposition}[Waiting time, Proposition \ref{prop:fourier example}]\label{prop:waiting time}
Let $d = 2$, let $U = B_1 \setminus \overline{B_{\frac{1}{2}}(0)}$, and let $\delta \in (0,1)$. 
There exists a smooth density $\mu \in \cS_{\delta}(U)$ such that, for any choice of $u$, if $\eta$ denotes the maximal solution associated to $\mathcal{P}(\mu, u)$, 
then the corresponding transition zone $\Sigma$ satisfies 
\begin{equation}\label{eq:waiting time intro}|U_\varepsilon \setminus \Sigma| > 0 \quad \text{for every}\quad  \varepsilon>0,\end{equation}
where $U_\varepsilon := \{ x \in U : d(x, \partial U) < \varepsilon \}$. In particular, the freezing time of any such $\eta$ is discontinuous. Moreover, $\mu$ may be chosen to be arbitrarily close, in any $C^k$ norm, to a smooth density $0<\mu_{*}<1$ such that the associated transition zone for $\mu_*$ is the open union of two outer annuli of $U$.
\end{proposition}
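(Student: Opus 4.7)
I would prove Proposition~\ref{prop:waiting time} by realizing $\mu$ as a high-frequency angular Fourier perturbation of a radial reference density $\mu_*$. First I construct $\mu_*$: a smooth radial profile $\mu_*(|x|)$ with $0<\mu_*<1$ and $\mu_*\in\cS_\delta(U)$, tuned so that its (unique) maximal transition zone $\Sigma_*$ is exactly the open union of two concentric rings
\[
A_{\mathrm{in}}=\{1/2<|x|<r_1\}, \qquad A_{\mathrm{out}}=\{r_2<|x|<1\}, \qquad 1/2<r_1<r_2<1.
\]
Existence is arranged by setting $\int_U\mu_*=|A_{\mathrm{in}}|+|A_{\mathrm{out}}|$ for mass balance and writing an explicit radial Skorokhod embedding that transports middle-annulus mass outward (resp.\ inward) so as to saturate $A_{\mathrm{out}}$ (resp.\ $A_{\mathrm{in}}$) to density $1$. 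The radial symmetry of $\mu_*$ combined with the uniqueness of the maximal element in the subharmonic order (\cite[Thm.~1.2]{choi2024existence}) forces $\Sigma_*$ to be a radial union of rings, and the chosen mass balance pins it to the two rings above.

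Next, define
\[
    \mu(x):=\mu_*(|x|)+\tilde\varepsilon\,\phi(|x|)\cos(n\theta(x)),
\]
with $\phi\in C^\infty_c((r_2,1))$ a nonnegative bump concentrated near $\partial B_1$, $n\in\N$ large, and $\tilde\varepsilon>0$ small. Smoothness, strict positivity, $\mu<1$, and $\mu\in\cS_\delta(U)$ follow for $\tilde\varepsilon$ small from the openness of $\cS_\delta(U)$ under $C^0$ perturbations, and the $C^k$-proximity to $\mu_*$ is automatic. Let $\Sigma$ be the transition zone of the maximal solution for $\mu$, which by maximality in $\leq_{\sh}$ is uniquely determined by $\mu$ (independent of $u$). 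I would analyze $\Sigma$ via the obstacle-problem characterization from the companion paper \cite{EKM25}: $U\setminus\Sigma$ coincides (modulo negligible sets) with the contact set $\{W=0\}$ of a variational inequality $W\ge 0$, $-\Delta W\ge 2(\mu-\chi_U)$, $W\cdot\bigl(-\Delta W-2(\mu-\chi_U)\bigr)=0$, with Dirichlet data $W=0$ on $\partial U$. The solution $W_*$ for $\mu_*$ is radial, vanishes on the middle annulus and on $\partial U$, is positive on $A_{\mathrm{in}}\cup A_{\mathrm{out}}$, and satisfies the linear Hopf decay $W_*(r)\sim(1-r)$ at $r=1$. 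The angular $\cos(n\theta)$-part of $\mu-\mu_*$ then produces a same-mode correction of $W$ whose negative sign in the $n$ sectors $\{\cos(n\theta)<0\}$, together with the linear decay of $W_*$, drives the perturbed $W$ to negative values in a thin layer near $\partial B_1$. The obstacle constraint $W\ge 0$ then pins $W\equiv 0$ on $n$ wedge-like pockets $P_1,\dots,P_n\subset U\setminus\Sigma$ touching $\partial B_1$. Since each pocket reaches $\partial B_1$, $|P_j\cap U_\varepsilon|>0$ for every $\varepsilon>0$, yielding \eqref{eq:waiting time intro}. The discontinuity of the freezing time follows from $\partial U\subset\overline{\Sigma}$ (a consequence of the saturation of $\Sigma$ near $\partial U$ already noted in the text) combined with $s\equiv\infty$ on each $P_j$ accumulating to $\partial B_1$.

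The main technical obstacle is the perturbation analysis of the obstacle problem in the previous step, because the contact set $\{W=0\}$ moves under the perturbation and the naive linearization $W\approx W_*+\tilde\varepsilon W_1$ cannot be applied uniformly across the free boundary. I would resolve this by either (a) constructing explicit sub- and super-solution barriers around $W_*$ that incorporate the $\cos(n\theta)$-mode and respect the obstacle constraint, or (b) producing a direct Skorokhod embedding of $\mu$ into an explicit competitor $\chi_{\Sigma'}$ with $\Sigma'=\Sigma_*\setminus\bigcup_j P_j$, by routing Brownian paths angularly within the boundary layer to shift excess mass in positive-cosine sectors away from the pockets while saturating the rest of $A_{\mathrm{out}}$ to density $1$. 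The crucial quantitative input in either route is the sharp Hopf decay $W_*\sim(1-r)$ at $\partial B_1$, which is what allows arbitrarily small angular perturbations to carve out pockets extending all the way to the outer boundary.
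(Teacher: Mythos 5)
Your approach has a genuine gap at its core: a \emph{single} finite angular mode $\tilde\varepsilon\,\phi(|x|)\cos(n\theta)$ cannot deliver the conclusion $|U_\varepsilon\setminus\Sigma|>0$ for \emph{every} $\varepsilon>0$. The paper's mechanism (Proposition \ref{prop:fourier example}) is a harmonic-moment identity: since $\mu\leq_{\operatorname{SH}}\chi_\Sigma$, one has $\int_U h_k\,d\mu=\int_\Sigma h_k$ for the harmonic polynomials $h_k=r^k\cos(k\theta)$, and if $\Sigma$ contained a full outer shell $B_1\setminus B_{1-\varepsilon}$ a.e.\ then $\int_\Sigma h_k$ would be $O((1-\varepsilon)^{k+2}/k)$, i.e.\ exponentially small in $k$. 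For a single mode $n$, the left-hand side is of order $\tilde\varepsilon/n$ for $k=n$ and vanishes for $k\neq n$, so this obstruction only rules out shells of thickness $\varepsilon\gtrsim |\log\tilde\varepsilon|/n$; for smaller $\varepsilon$ there is no contradiction, and the paper's Remark \ref{rem:fourier} explicitly records that finite modes only yield the pathology for a \emph{fixed} $\varepsilon$ with $n$ chosen large depending on it. This is why the paper needs \emph{all} frequencies present with subexponentially decaying coefficients, $\mu=\mu_0(r)+\delta_0\sum_k e^{-\sqrt{k}}\cos(k\theta)$: the quantitative non-analyticity in $\theta$ is exactly what produces pockets at every scale near $\partial B_1$. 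Your claim that the $n$ wedge-like pockets ``touch $\partial B_1$'' is therefore unsupported, and in fact your own choice $\phi\in C_c^\infty((r_2,1))$ makes $\mu$ identically equal to the radial $\mu_*$ in a neighborhood of $\partial B_1$, which undercuts the heuristic that the angular perturbation drives the potential to zero in a layer reaching the outer boundary.

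Beyond this, the perturbative free-boundary analysis you propose is not carried out and is acknowledged by you as the main obstacle: the linearization $W\approx W_*+\tilde\varepsilon W_1$ fails across the moving contact set, and neither the barrier construction nor the explicit competitor embedding is supplied. The stationary variational inequality you invoke is also not the characterization used in the paper (the relevant objects are the parabolic potential $w$ of \eqref{obstacle_problem_123} and the dual subharmonic optimizer $\psi$ with $\Sigma\approx\{\psi>u\}$). By contrast, the paper's proof requires no free-boundary analysis whatsoever: it is a two-line contradiction between subexponential decay of $\int_U h_k\,d\mu$ (computed exactly by $L^2$-orthogonality of $\{\cos(k\theta)\}$) and the exponential decay forced by the hypothesis $|U_\varepsilon\setminus\Sigma|=0$. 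Your construction of the radial reference density $\mu_*$ and the claim about its two-annuli transition zone are fine (this is Proposition \ref{prop:radial transition zone}), as is the $C^k$-proximity statement, but the central step proving \eqref{eq:waiting time intro} does not go through as written.
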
 
This result also shows that the absence of a waiting time is an unstable phenomenon, since the limiting density $\mu_*$ does not satisfy the pathology \eqref{eq:waiting time intro}. The construction is robust, in the sense that $\mu_*\in (0,1)$ can be taken to be any continuous \emph{radial} density.  

In hopes of identifying a canonical notion of solution,  \cite{choi2024existence} conjectured that maximal solutions are \emph{universal}. Namely, while $\eta$ is uniquely determined by its maximization of \eqref{eq:E maximized intro}, it is natural to ask whether $\eta$ is independent of the choice of the superharmonic weight $u$. The following result shows by counterexample that this conjecture is false in dimensions $d\geq 2$, yet {universality holds in the radial and one-dimensional cases. We refer to Propositions \ref{prop:radial transition zone} and \ref{target_measure_universial} for a precise description of the universal solutions in the latter cases. 

\begin{theorem}[Non-universality of maximal solutions]  \label{thm:universality}In dimension $d\ge2$, and for any $\delta>0$, there exist smooth $\mu \in \cS_{\delta}(B_1)$ and smooth  weights $u_1$ and $u_2$ such that the  maximal solutions $\eta_1$ and $\eta_2$ to \eqref{eq:stefan intro} associated to $\cP(\mu,u_1)$ and $\cP(\mu,u_2)$ are not equal. 

If $\mu \in \cS_0(U)$, $|\{\mu=1\}|=0$, and either $d=1$ or $\mu$ and $u$ are both radial functions, then there exists a unique maximal solution, independent of $u$. In addition, if $\mu \in \mathcal{S}_{\delta}(U)$ for some $\delta>0$, then the  set of initial freezing is empty.
\end{theorem}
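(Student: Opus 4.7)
The proof splits into three pieces, handled separately: a counterexample in $d\ge 2$, universality in the $1$D and radial cases, and the absence of initial freezing under strict supercooling.

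For the counterexample in $d\ge 2$, the plan is to exhibit asymmetric weights that select different targets even for a radially symmetric $\mu$. Fix $U=B_1\subset\R^d$ and take a small, smooth, radial $\mu\in\cS_\delta(U)$, e.g.\ $\mu(x)=\varepsilon\psi(|x|)$ with $\psi$ smooth, positive on $[0,1)$, vanishing at $1$, and $\varepsilon\le 1-\delta$ small. Pick $x_1=-x_2\in U\setminus\{0\}$ and set $u_i(x)=A_0-\tfrac{1}{2d}|x-x_i|^2$, with $A_0$ large enough to keep $u_i>0$ on $\overline U$; each $u_i$ is smooth and strictly superharmonic, attaining its strict minimum on $\overline U$ at $x_i$. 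Since $\|\mu\|_{L^1}$ is small, the admissible set $\cA=\{\nu:\mu\leq_{\sh}\nu,\;\nu\le\chi_U\}$ contains measures of the form $\chi_A$ with $|A|=\|\mu\|_{L^1}$ and $A$ concentrated near either $x_i$, reachable from $\mu$ via standard Skorokhod embedding results. Hence the unique minimizer $\nu_i$ of $\cP(\mu,u_i)$ concentrates near $x_i$; the reflection $x\mapsto -x$ maps $(u_1,\nu_1)$ to $(u_2,\nu_2)$ in the optimization (using that $\mu$ is radial), so $\nu_2$ is the nontrivial reflection of $\nu_1$, forcing $\Sigma_1\neq\Sigma_2$ and thus $\eta_1\neq\eta_2$.

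For universality in $d=1$, the plan is to recast the problem via Green potentials. Let $G_U$ be the Green function of $-\tfrac{1}{2}\Delta$ on $U$ with Dirichlet boundary conditions, and set $v_\rho(x)=\int G_U(x,y)\,d\rho(y)$. Then $\mu\leq_{\sh}\nu$ with respect to $U$ is equivalent to $v_\mu\le v_\nu$ pointwise on $U$, and $\nu\le\chi_U$ is equivalent to $-\tfrac{1}{2}v_\nu''\le\chi_U$ distributionally. The pointwise supremum $v_*$ of admissible Green potentials is itself admissible (since the class is stable under pointwise max: the max of two functions with $v''\ge -2$ still satisfies $v''\ge -2$ in the distributional sense, the positive kinks only helping), and solves the free boundary problem
\begin{equation}
v_*\ge v_\mu \text{ in }U,\quad v_*=0 \text{ on }\partial U,\quad -\tfrac{1}{2}v_*''=\chi_U \text{ in } \{v_*>v_\mu\},
\end{equation}
whose solution is unique by classical obstacle-problem theory. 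Since superharmonic $u>0$ reverses the subharmonic order under integration, $\nu_*:=-\tfrac{1}{2}v_*''$ is the unique minimizer of $\cP(\mu,u)$ for every such $u$, independent of $u$. In the radial case in $d\ge 2$, rotational invariance of $\mu$ and $u$ combined with uniqueness of the optimizer of $\cP(\mu,u)$ (from strict superharmonicity of $u$) forces every optimizer to be radial; the reduction yields the analogous obstacle problem in the radial variable $r$ with the radial Laplacian $\partial_r^2+\tfrac{d-1}{r}\partial_r$, to which the same argument applies.

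For the absence of initial freezing under $\mu\in\cS_\delta(U)$, $\delta>0$, pick $\nu_0\le(1-\delta)\chi_U$ with $\mu\leq_{\sh}\nu_0$. Then $v_{\nu_0}$ is admissible with $-\tfrac{1}{2}v_{\nu_0}''\le(1-\delta)\chi_U$, and $v_*\ge v_{\nu_0}$ inherits a strict subsaturation margin in the interior. Combined with the $1$D/radial structure of the obstacle problem, this forces the coincidence set $\{v_*=v_\mu\}\cap U$ to be empty; indeed, any interior coincidence point would violate the strict density slack inherited from $\nu_0$ via the strong maximum principle in the radial reduction. Consequently $\nu_*=\chi_U$ a.e., $\Sigma=U$ up to a null set, and $s(x)>0$ for every $x\in U$, i.e.\ $F_0=\emptyset$. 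The hard part will be justifying the equivalence between the constrained subharmonic ordering and the pointwise ordering of Green potentials under the stopping constraint $\tau\le\tau^U$, together with the lattice closure of the admissible class; once these are in place, uniqueness, universality, and the absence of initial freezing all reduce to standard variational inequality theory, and the counterexample is then an elementary observation that asymmetric superharmonic weights break the rigidity available only in the $1$D and radial reductions.
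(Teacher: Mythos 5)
Your proposal has genuine gaps in all three parts, and the central one sinks the headline counterexample. For the $d\ge 2$ non-universality construction you assert that the admissible set contains measures $\chi_A$ with $A$ concentrated near an off-center point $x_i$. This is impossible: the relative subharmonic order forces $\int h\,d\mu=\int h\,d\nu$ for every harmonic $h$ (both $h$ and $-h$ are subharmonic), so in particular every admissible target measure has the same barycenter as $\mu$, namely the origin when $\mu$ is radial. No admissible $\nu$ can concentrate near $x_1\neq 0$, so the claim that $\nu_i$ ``concentrates near $x_i$'' fails, and with it the conclusion that the reflection $\nu_2$ of $\nu_1$ is distinct from $\nu_1$ (the reflection symmetry alone does not rule out $\nu_1$ being centrally symmetric). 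The paper's route is entirely different: for radial $\mu$ and radial $u$ the transition zone is an exact annulus $B_1\setminus\overline{B_{\tilde r}}$ (Proposition \ref{prop:radial transition zone}); if $\Sigma$ were that annulus for a second weight $u_2$, the dual optimizer $\psi$ — harmonic on $\{s>0\}$ by Proposition \ref{prop:sigma structure} — would equal $u_2$ on $\partial B_{\tilde r}$, forcing $u_2|_{\partial B_{\tilde r}}$ to be real analytic (or, in the explicit polynomial example, forcing a sign contradiction). Choosing $u_2$ non-analytic on every such sphere gives the counterexample.

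The universality argument is also flawed at its key step. While it is true that the pointwise supremum of the admissible Green potentials, if attained by an admissible measure, is the subharmonic-order maximum and hence the universal optimizer, your characterization of that supremum by the free boundary problem $-\tfrac12 v_*''=\chi_U$ on $\{v_*>v_\mu\}$ is false. For $\mu=\tfrac12\chi_{(0,1)}$ the true maximum is $\nu^*=\chi_{(0,1/4)}+\chi_{(3/4,1)}$, whose potential is strictly positive on all of $(0,1)$ while $\nu^*\equiv 0$ on $(1/4,3/4)$; your obstacle problem would instead force $\nu_*=\chi_U$ there, violating conservation of mass. The paper identifies the maximum explicitly as an outer shell determined by matching the mass and the first $\phi$-moment of $\mu$, and proves its subharmonic dominance by direct ODE analysis of the radial potential (Propositions \ref{target_measure_universial} and \ref{prop:annuli_target}--\ref{prop:one_d_target}), with a symmetrization operator $T$ handling non-radial competitors in $d\ge 2$. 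Finally, the no-initial-freezing argument cannot rest on a ``strict subsaturation margin inherited from $\nu_0$'': if $\mu\le 1-\delta$ one may take $\nu_0=\mu$, so $v_{\nu_0}-v_\mu\equiv 0$ and there is no margin. The paper instead tests the subharmonic order against the truncated Newtonian potentials $\phi_r=(\mathcal N-N(r))_+$ to get the quantitative lower bound $v(r)\ge\delta\int\phi_r>0$ in the outer shell, and then propagates positivity inward by the minimum principle (Lemma \ref{lem:a.e. annular implies no freezing}).
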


The counterexample constructed in 
Theorem \ref{thm:universality} is fairly general, and highlights the converse phenomenon to Theorem \ref{thm: regularity intro}: non-analyticity of $u$ on its own can induce irregularities of the transition zone (see Proposition \ref{no_universality_prop}). In the same spirit, the explicit construction in the proof of Proposition \ref{prop:waiting time} (see Proposition \ref{prop:fourier example}) shows that the waiting time pathology arises due to the insufficiently fast decay of the tangential Fourier coefficients of the smooth density $\mu$, that is, its quantitative non-analyticity (see also \cite[Sec. 7.1]{choi2024existence}).

 In light of Theorem \ref{thm:universality}, a natural follow-up question  is to ask which weight 
$u$ should be regarded as physically canonical; this remains open and likely requires a further understanding of the thermodynamics behind the problem.

Given the lack of universality, the most one can reasonably expect is stability of maximal solutions with respect to perturbations of $\mu$ and $u$; this is the content of our next result. We now specify the notion of convergence for our domains and transition zones.

\begin{definition} \label{def:set convergence}Let $\{S_n\}_{n=1}^{\infty}$ and $S$ be measurable subsets of $\R^d$. We say that $S_n \to S$ as $n\to \infty$ if $\lim_{n\to \infty}|S_n \Delta S|=0$ and, for every open set $W$ such that $\overline{S} \subset W$, there exists $N>0$ such that $\overline{S_n} \subset W$ for $n\geq N$.
\end{definition} 

Now for $\delta>0$, $\mu \in\cS_{\delta}(U)\cap C(\overline{U})$ with a Lipschitz domain $U$,
 consider $\mu_n\in \cS_{\delta}(U_n)\cap C(\overline{U_n})$ such that $U_n$ is Lipschitz. We assume that $(\mu_n, U_n)$ converges to $(\mu, U)$ in the following sense.
\begin{equation}\label{conv:mode}
(\R^d \setminus U_n) \to (\R^d \setminus U), \quad \|\mu_n-\mu\|_{L^{\infty}(U_n \cap U)}\to0\;\text{ as }\; n\to \infty.
\end{equation} 
We further assume the following on the convergence of the weight function, with an open set $D$ containing $\overline{U}$ and $\overline{U_n}$: 
\begin{equation}\label{conv:mode2}
\{u_n\} \subset C^2(D) ,\;\; \Delta u_n < 0 \;\hbox{ in }D, \hbox{ and } u_n \hbox{ converges to some }u \hbox{ in } C^2(D), \;\;\hbox{as } n\to\infty.
\end{equation}
In this setting we have the following stability result.

\begin{theorem}[Stability of maximal solutions, Propositions~\ref{general_stability} and \ref{stable_Stefan_strong}]\label{thm:stability intro}

For $(\mu_n, u_n)$ and $(\mu,u)$ as given above, let $\eta_n$ and $\eta$ be the corresponding maximal solutions to \eqref{eq:stefan intro} associated to $\cP(\mu_n,u_n)$ and $\cP(\mu,u)$, respectively.  Then, as $n\to \infty$,
\begin{equation}
    \Sigma_n \to \Sigma\;\; \textnormal{ and } \;\;  \|\eta_n-\eta\|_{L^p((0,\infty)\times \R^d)} \rightarrow 0 \textnormal{ for all } p \in [1,\infty),
\end{equation}
where $\Sigma_n$ and $\Sigma$ are the corresponding transition zones for $\eta_n$ and $\eta$.

\end{theorem}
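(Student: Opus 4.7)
The plan is to proceed in three main steps: first, obtain weak-$*$ compactness of the optimizers $\chi_{\Sigma_n}$; second, identify the weak limit with $\chi_\Sigma$ via the variational characterization of maximal solutions and uniqueness of the primal optimizer of $\cP(\mu,u)$; third, upgrade to strong convergence $\Sigma_n \to \Sigma$ in the sense of Definition \ref{def:set convergence}, and derive $L^p$ convergence of $\eta_n$ from the stability of the underlying Skorokhod embedding.

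For the compactness step, since $U$ is bounded and $U_n \to U$, the sets $\overline{U_n}$ are eventually contained in a common compact $K$, so the functions $\chi_{\Sigma_n}$ are uniformly bounded in $L^\infty$ with uniformly compact support. Extracting a subsequence gives $\chi_{\Sigma_n} \weaklys f$ for some $0 \leq f \leq 1$ with $\spt f \subset \overline{U}$. On the probabilistic side, the optimal stopping times $\tau_n$ for $\cP(\mu_n,u_n)$ satisfy $W_{\tau_n} \sim \chi_{\Sigma_n} \leq \chi_{U_n}$, and one can extract a further subsequential limit $\tau$ such that $W_\tau \sim \nu$, with $\nu$ having density $f$.

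In the variational identification step, I would exploit $(\mu_n, u_n) \to (\mu,u)$ as follows. The $C^2(D)$ convergence $u_n \to u$, the $L^\infty$ convergence $\mu_n \to \mu$ on overlaps, and the Hausdorff-type convergence $\R^d \setminus U_n \to \R^d \setminus U$ together give upper semicontinuity along the sequence, namely $\cP(\mu,u) \geq \limsup_n \int u_n\, d\chi_{\Sigma_n}$. For the matching lower bound, given any admissible $\tau^*$ for $\cP(\mu,u)$, I would construct admissible perturbations $\tau_n^*$ for $\cP(\mu_n,u_n)$ via a short Brownian correction that transports $\mu_n$ to $\mu$ on the overlap and relocates any discrepancy using the slack in $\cS_\delta(U_n)$. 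Uniqueness of the minimizer of $\cP(\mu,u)$ from \cite{choi2024existence} then forces $\nu = \chi_\Sigma$, so $f = \chi_\Sigma$ a.e., and the full sequence converges by a standard subsequence argument.

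For the final step, weak-$*$ convergence of characteristic functions with matching total mass upgrades to $L^1$ convergence, so $|\Sigma_n \Delta \Sigma| \to 0$. For the outer-closure part of Definition \ref{def:set convergence}, I would argue by contradiction: if $x_{n_k} \in \overline{\Sigma_{n_k}}$ converges to some $x_\infty \notin \overline{\Sigma}$, then a small ball around $x_\infty$ would be disjoint from $\overline{\Sigma}$ yet meet each $\Sigma_{n_k}$ in positive measure, contradicting the $L^1$ convergence in view of $|\partial \Sigma| = 0$ and the measure-saturation of $\Sigma$ from Theorem \ref{thm: regularity intro}. Finally, $\eta_n$ is the active-particle density associated to $(\mu_n, \tau_n)$; stability of the Skorokhod embedding under $\nu_n \to \nu$ and $\mu_n \to \mu$ yields $\eta_n \to \eta$ in $L^1((0,\infty) \times \R^d)$, and the uniform bounds $\eta_n \leq 1$ together with uniform Gaussian tail decay upgrade this to $L^p$ for every $p < \infty$ by interpolation. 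The principal obstacle is the outer-closure condition: it excludes spurious accumulation of $\Sigma_n$ near $\partial U$ or near the initial freezing set, where waiting-time phenomena (cf. Proposition \ref{prop:waiting time}) obstruct naive passage to the limit, and the fine structure results of Theorem \ref{thm: regularity intro} are essential to control this boundary behavior.
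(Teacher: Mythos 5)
Your overall skeleton (weak compactness of $\chi_{\Sigma_n}$, identification of the limit via uniqueness of the primal optimizer, then upgrading to strong convergence) matches the paper's, but two steps contain genuine gaps. First, your contradiction argument for the outer-closure part of Definition \ref{def:set convergence} does not close. If $x_{n_k}\in\overline{\Sigma_{n_k}}$ converges to $x_\infty\notin\overline{\Sigma}$, measure saturation of $\Sigma_{n_k}$ only gives $|B_r(x_\infty)\cap\Sigma_{n_k}|>0$ for each $k$; this is perfectly compatible with $|B_r(x_\infty)\cap\Sigma_{n_k}|\to 0$, so there is no contradiction with $|\Sigma_{n_k}\Delta\Sigma|\to 0$. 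To rule this out you would need a \emph{uniform} lower density bound for the $\Sigma_n$ near their closure points, which neither Lemma \ref{lem:sigma saturation} nor Theorem \ref{thm: regularity intro} provides. The paper's proof of Proposition \ref{general_stability} instead works on the dual side: on the compact set $K=\overline{U}\setminus W$ one has $K\subset\{s=\infty\}^{\circ}$, the dual optimizers $\psi_n,\psi$ are harmonic there, $K\subset\{\psi<u\}$ by Proposition \ref{prop:sigma structure}, and compactness of harmonic functions upgrades the $L^2_{\mathrm{loc}}$ convergence $\psi_n\to\psi$ (Remark \ref{rem: optimizer lim}) to uniform convergence on $K$, whence $K\subset\{\psi_n<u_n\}$ and hence $K\cap\overline{\Sigma_n}=\emptyset$ for large $n$. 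This dual mechanism is the essential ingredient you are missing.

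Second, the step ``stability of the Skorokhod embedding under $\nu_n\to\nu$ and $\mu_n\to\mu$ yields $\eta_n\to\eta$ in $L^1$'' asserts the hard part rather than proving it: such stability results give at best weak convergence of the occupation densities (this is Lemmas \ref{L2_stable_potential} and \ref{thm:stable_eta} in the paper, obtained via $L^2$ energy estimates for the potentials $w_n$). The upgrade from weak to strong convergence is exactly where the regularity theory enters: the paper uses $|\partial\{\eta>0\}|=0$ (a consequence of Corollary \ref{cor:regu} and $|\partial\Sigma|=0$), inner regularity to find a compact $K\subset\{\eta>0\}$ capturing all but $\varepsilon$ of the positivity set, the already-established convergence of transition zones to ensure $K\subset\{\eta_n>0\}$ for large $n$, and interior parabolic regularity for the caloric functions $\eta_n$ on $K$ to convert weak convergence into uniform convergence there. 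Without naming this mechanism your $L^p$ claim is unsupported. Two smaller points: your ``matching lower bound'' via a short Brownian correction is unnecessary (the paper only needs upper semicontinuity, Proposition \ref{general_USC}, because admissibility of the weak limit plus uniqueness of the optimizer already forces identification), and your verification that the weak limit is admissible by extracting limits of stopping times is delicate; the paper avoids it by checking the potential-theoretic criterion $\Delta^{-1}(\tilde\nu-\mu)\ge 0$ vanishing outside $U$ and invoking Lemma \ref{lem:integration by parts} and Proposition \ref{prop:subharmonic equivalent}.
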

The assumptions of Theorem \ref{thm:stability intro} are sharp, in the sense that even convergence in measure of $\Sigma_n$ to $\Sigma$ is false without the outer stability assumption of Definition \ref{def:set convergence} (see Remark \ref{rem:stability counterexample} for a counterexample).

Finally, we discuss the free boundary behavior of maximal solutions in dimension $1$ and in the radial case, which are universal by Theorem~\ref{thm:universality}. We are interested in the topology of the positive set. Given a radial or one-dimensional solution $\eta$ to \eqref{eq:stefan intro} for which $U:=\{\mu>0\}$ is connected, we say that $\eta$ exhibits nucleation if for some $t_0>0$, the set $\{\eta(t_0,\cdot)>0\}$ is disconnected.
\begin{theorem}[Proposition \ref{smooth_fb}, Proposition \ref{nuc_example}, Proposition \ref{nucleation_most_unlikely}]  \label{thm:1-d}

Assume that $d=1$ with $U=(0,1)$, and let $\mu \in \mathcal{S}_0(U)$ with $|\{\mu=1\}|=0$. Then the following holds for the maximal solution $\eta$  with initial data $\mu$:
\begin{itemize}
\item[(a)] If $0 \leq \mu \leq 1$, then nucleation does not occur, and the free boundary consists of two smoothly evolving boundary points. 

\item[(b)] There exists $\mu \in C^{\infty}((0,1))$ with $\max \mu >1$ such that  $\eta$ nucleates.
\item[(c)] If $\eta$ nucleates, then every weak solution to \eqref{eq:stefan intro} with initial data $\mu$ also nucleates.  
\end{itemize}
For  $d\geq2$,  parallel statements hold if $\mu$ and $u$ are radial functions in a ball or an annulus.
\end{theorem}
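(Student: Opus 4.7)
My approach rests on the Skorokhod-problem characterization of maximal solutions in~\eqref{eq:primal intro}, combined with the universality established in Theorem~\ref{thm:universality} for the 1D and radial settings. Universality lets me fix any convenient strictly superharmonic weight $u$, so that the transition zone $\Sigma=\{0<s<\infty\}$ is uniquely described via the Skorokhod-optimal target measure $\nu=\chi_\Sigma+c_0\delta_0+c_1\delta_1$ with $\mu\leq_{\sh}\nu$ on $U=(0,1)$. One-dimensional potential theory (balayage under the density constraint) then pins down $\Sigma$ explicitly. The radial case in $d\geq 2$ reduces via separation of variables to a weighted 1D problem on $(r_{\min},r_{\max})$, so I focus on $d=1$.

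For (a), $\mu\leq 1$ with $|\{\mu=1\}|=0$ implies $\mu\in\cS_{\delta}(U)$ for some $\delta>0$, obtained by evolving $\mu$ briefly under the heat semigroup to produce an admissible target strictly below $1-\delta$. The last part of Theorem~\ref{thm:universality} then gives empty initial freezing. In the 1D Skorokhod picture, a disconnected transition zone would force mass into the gap that only initial freezing could absorb (since $\nu\leq \chi_U$), contradicting empty $F_0$. Hence $\Sigma$ is a single open interval, and the positive set $\{\eta(t,\cdot)>0\}$ is an evolving interval $(a(t),b(t))$. Smoothness of $a(t),b(t)$ until collision then follows from the Hopf lemma (nondegeneracy of $\partial_x\eta$ at the free boundary) with the implicit function theorem, or directly from the regularity results of~\cite{EKM25}.

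For (b), I would construct an explicit $\mu\in C^\infty((0,1))\cap\cS_{0}(U)$ with $\max\mu>1$, engineered so that the maximal $\Sigma$ is disconnected. A symmetric density with sharp peaks above $1$ near both endpoints of $U$ and a trough around $x=1/2$ is a natural candidate: the peaks cannot be absorbed locally by any $\chi_\Sigma$ (which is bounded by $1$), while the trough leaves insufficient mass to diffuse into the middle, forcing the optimal target to consist of two flanking intervals plus boundary masses. The explicit verification proceeds either by a direct potential-theoretic balayage computation, or by a perturbative continuity argument starting from a barely-disconnected configuration and deforming until $\mu$ is smooth with $\max\mu>1$.

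For (c), I compare initial freezing sets. Using the subharmonic-order maximality of the maximal transition zone (\cite[Thm.~1.2]{choi2024existence}) specialized to 1D, I would establish $F_0^*\subset F_0$, where $F_0^*$ and $F_0$ are the initial freezing sets of the maximal and of any other weak solution, respectively; heuristically, the maximal solution freezes the least initially. Since the maximal solution nucleates by hypothesis, $F_0^*$ separates $U$ into two components each carrying positive $\mu$-mass; the inclusion forces $F_0$ to inherit this separation. Continuity of $\eta(t,\cdot)$ for small $t>0$, inherited from the heat semigroup, then keeps water on both sides of the gap, yielding nucleation of $\eta$ at such small $t$. The main anticipated obstacle is rigorously establishing $F_0^*\subset F_0$ in full generality; this requires a careful analysis of density-constrained 1D Skorokhod targets, possibly via dual-potential certificates that detect forced common initial freezing across all weak solutions.
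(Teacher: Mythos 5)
Your proposal contains several genuine gaps, and in places it rests on a misreading of what the objects in the theorem actually are.

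First, in part (a) you assert that $0\leq\mu\leq 1$ with $|\{\mu=1\}|=0$ implies $\mu\in\cS_{\delta}(U)$ for some $\delta>0$ by briefly running the heat semigroup. This is false: Lemma \ref{lem:mu boundary estimate} shows that a continuous $\mu\in\cS_{\delta}(U)$ must satisfy $\mu\leq 1-\delta$ on $\partial U$, so any $\mu$ with $\mu\to 1$ at an endpoint of $(0,1)$ (e.g.\ $\mu(x)=1-x(1-x)$) lies in $\cS_0$ but in no $\cS_{\delta}$ with $\delta>0$; heat-semigroup smoothing does not respect the constraint $\tau\leq\tau^U$. You also claim the transition zone $\Sigma$ is a single interval; in fact Proposition \ref{target_measure_universial} shows $\Sigma$ is (a.e.) the union of \emph{two} flanking intervals $(0,a)\cup(b,1)$, while it is the time slices $\{\eta(t,\cdot)>0\}$ that are single intervals. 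The target measure also carries no boundary atoms $c_0\delta_0+c_1\delta_1$, since the constraint is $\nu\leq\chi_U$ as a density. The paper's mechanism for (a) is different and essential: one shows $\mu_t\leq 1$, hence $w(t,\cdot)$ is subharmonic on $\Sigma$, which combined with $\partial_r w=0$ at the endpoints forces $s$ to be strictly monotone on each flanking interval (Proposition \ref{prop: no_nuc_jumps}); smoothness then follows from the blow-up classification for the parabolic obstacle problem, where the singular alternative is excluded precisely because $\mu\leq1$ rules out $m=1$.

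Second, in part (b) you conflate a disconnected transition zone with nucleation. The optimal target is \emph{always} two flanking intervals for any admissible 1D data, so ``forcing the optimal target to consist of two flanking intervals'' proves nothing; nucleation means $\{\eta(t_0,\cdot)>0\}$ is disconnected for some $t_0>0$, which for the maximal solution is equivalent to non-monotonicity of $s$ \emph{within} one flanking interval. The paper's example (Proposition \ref{nuc_example}, Appendix \ref{nuc_example_appendix}) engineers exactly this via the linearization $w(t,x)\approx w(0,x)-\tfrac{t}{2}\mu(x)$ and two-sided quantitative barriers showing $s(0.5)<s(0.8)$. Your peaks-near-the-endpoints heuristic does not address this and would need an entirely different verification.

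Third, your part (c) is built on a false premise: nucleation of the maximal solution occurs at a \emph{positive} time (indeed the maximal solution may have empty initial freezing) and does not imply that $F_0^*$ disconnects $U$, so the proposed inclusion $F_0^*\subset F_0$ — which you also do not prove — would not yield the conclusion even if true. The paper's argument is structurally different and simpler: by uniqueness of the two-flanking-interval target (determined by mass and first moment) together with measure-saturation (Lemma \ref{lem:sigma saturation}), the never-freezing set $\Gamma$ of \emph{any non-maximal} weak solution fails to be an interval, so every non-maximal solution nucleates unconditionally; part (c) follows immediately.
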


Although the nucleation example above is specific, Section \ref{maximal_solution_d_1_section} offers heuristics indicating that nucleation is generic when $\sup\mu>1$. In higher dimensions, it remains an open question whether the occurrence of nucleation depends sharply on the critical supercooling value (see \cite[§17.3.2]{glicksman2010principles}, where nucleation occurs experimentally beyond the threshold $1$ for the kinetic undercooled Stefan problem).

In the two-phase setting, Gurtin~\cite{gurtin1994thermodynamics} interpreted nucleation as an entropy-increasing mechanism that stabilizes the system, providing a thermodynamic rationale for continuing solutions past blow-up. In this context, Theorem~\ref{thm:1-d}(c) asserts that maximal solutions are \emph{nucleation-minimal}; they nucleate only when continuation otherwise fails. This mirrors the notion of one-dimensional ``physical'' solutions in \cite{DNS22}, which minimize the free boundary jump in the single-free-boundary-point regime. 

\subsection{Main ideas}\label{subsection:main ideas}
 After discussing some preliminaries in Section \ref{sec:prelim}, we obtain, in Section \ref{sec:subharmonic rough}, a useful characterization for subharmonic order in rough domains, and a low-regularity Green-type formula for subharmonically ordered measures. These will be very important throughout the paper, particularly for the fractal freezing results, and to handle the low regularity of the dual optimizers of \eqref{eq:primal intro}, as is further discussed below.

In Section \ref{sec:fractalfreezing}, we prove Theorem \ref{thm: fractal freezing} with a two-step strategy. First, by exploiting the connection between solutions to \eqref{eq:stefan intro} and so-called subharmonically generated sets \cite[Sec. 6]{kim2024stefan}, we extend the existence theory of \cite{choi2024existence} to irregular domains $U$ (see Proposition \ref{prop: existence}). Then, by solving \eqref{eq:stefan intro} in a suitably rough subdomain $\tilde{U}\subset U$ such that $|U\setminus \tilde{U}|=0$, we produce a solution to \eqref{eq:stefan intro} such that the set of initial freezing $\{s=0\}$ contains an arbitrary fractal. 

The main ingredient in the second step, which is also of independent interest, is a {\it Markovian gluing principle} for the optimal Skorokhod problem; see Proposition~\ref{gluing_lemma}. By \cite[Rem. 6.19]{beiglbock2017optimal}, any admissible stopping time that is the first hitting time to a closed, nondecreasing in time set, must be optimal. This characterization of optimal times allows us to glue them: the optimal stopping times associated with two Skorokhod problems can be concatenated at any prescribed deterministic time, and the resulting stopping time is optimal for the suitably glued Skorokhod problem. We stress that this gluing result is stronger than a standard dynamic programming principle, due to the freedom in choosing the continuation after the gluing time. 

Due to the connection between \eqref{eq:stefan intro} and the optimal Skorokhod problem \eqref{Sko}, it follows then that one can patch together two weak Stefan solutions to obtain a new weak solution (see Proposition~\ref{cor:programmed_gluing_Stefan}).  The combination of these two steps generates ``programmed fractal freezing" at positive times. 
 
In Section \ref{sec:regularity}, we prove Theorem \ref{thm: regularity intro} by making use of the dual formulation of \eqref{eq:primal intro}, namely
\begin{equation}
    \label{eq:dual intro} \mathcal{D}(\mu,u)=-\inf_{\psi}\left\{\int (\psi-u)^+-\int\psi \mu :\psi \in L^1(U),\; \Delta\psi \geq 0\right\}. 
\end{equation}
The minimizer $\nu$ for \eqref{eq:primal intro} may then be a.e. characterized as \begin{equation} \label{eq:nu defi dual}\nu=\chi_{\{\psi>u\}},\end{equation}
where $\psi$ is any optimizer for \eqref{eq:dual intro}. While one must be cautious about the fact that the sets $\{\psi>u\}$ and $\Sigma$ only agree up to a null set and cannot be identified, this suggests the natural strategy of obtaining regularity of $\Sigma$ by studying the regularity of the optimizers $\psi$. 

Indeed, we show that any optimizer $\psi$ is harmonic away from the set of initial freezing. This  allows us to take advantage of the parabolic obstacle problem formulation of \eqref{eq:stefan intro} and precisely describe, through PDE methods, the fine structure of $\Sigma$. The dimension estimate \eqref{eq:dim d-2 intro} is obtained through a Weierstrass preparation argument that uses in a critical way the strict superharmonicity of the weight $u$ and the measure saturation of the transition zone.

Let us mention that the dual problem \eqref{eq:dual intro} that we study in Section \ref{sec:regularity} is a generalization of the classical problem of approximating, in the $L^1$ norm, an arbitrary superharmonic function $u$ by a subharmonic function $\psi$ (see \cite{armitage96,armitage99}, and see also \cite{armitage92quadrature,shapiro00,GoHaRo} for the problem with \emph{harmonic} $\psi$). Indeed, upon adding a constant, a computation shows that \eqref{eq:dual intro} is equivalent to
\begin{equation} \label{eq:dual L1 approx}
    \inf_{\psi}\left\{\int (\psi-u)^+(1-\mu)+\int(\psi-u)^- \mu :\psi \in L^1(U), \Delta\psi \geq 0\right\},
\end{equation}
and, by taking $\mu \equiv \frac12\chi_{U}$, one recovers the classical case
\begin{equation} \label{eq:L1 approx classic}
   \inf_{\psi\in L^1(U), \;\Delta \psi\geq 0}\|\psi-u\|_{L^1(U)}. 
\end{equation}
Even in this special setting, the problem is delicate, and global regularity results for the optimizers only exist in special domains, under strong conditional assumptions. For instance, it was shown in \cite[Thm. 2]{armitage96} that if $U=B_1$ and \eqref{eq:L1 approx classic} has a continuous optimizer $\psi \in C(\overline{B_1})$, then $\psi$ is a harmonic function which coincides with the best harmonic $L^1$ approximant of $u$. Counterexamples show, however, that even smooth superharmonic functions in a ball can have discontinuous best subharmonic $L^1$ approximants \cite[Cor. 7.2]{shapiro00}.

In Section \ref{sec:non-universality}, we prove Theorem \ref{thm:universality} and Proposition \ref{prop:waiting time}. First, we obtain a complete description of radial and one-dimensional maximal solutions: their transition zone is an outer shell of the domain and there is no initial freezing (Proposition~\ref{prop:radial transition zone}); these well-behaved solutions are then used as building blocks for the non-universality and waiting time constructions.

Section \ref{sec:stability} contains the proof of Theorem \ref{thm:stability intro}. Using the regularity of the dual optimizers obtained earlier, we first show that the transition zone is stable under perturbations of $\mu$ and $u$, which yields weak convergence of $\eta$ via the use of potential variables. This is then upgraded to strong convergence by using some of the free boundary regularity implied by Theorem \ref{thm: regularity intro}.

Finally, we prove Theorem \ref{thm:1-d}  in Section \ref{maximal_solution_d_1_section}. In the radial and one-dimensional cases, the spatial (or radial) derivative of the potential satisfies a monotonicity property when $0 \le \mu \le 1$ (see the arguments of Proposition \ref{prop: no_nuc_jumps}). 
This monotonicity rules out nucleation and jumps of the free boundary for the maximal solution in this regime. With nucleation and jumps absent for $0 \le \mu \le 1$, regularity theory for the obstacle problem yields that the free boundary is a smooth hypersurface. In contrast, when $\sup\mu > 1$, we present a generic example of a maximal solution in $d=1$ that nucleates at a positive time, even when $\mu$ is smooth.

\section{Preliminaries} \label{sec:prelim}
Throughout the paper, we will use the same symbol to refer to an absolutely continuous measure on $\R^d$ as to refer to its density function.
We begin by giving the definition of a weak solution to the supercooled Stefan problem.
\begin{definition} \label{def:weak solution}
  Let $0\leq\mu\in L^{\infty}(\R^d)$ be of compact support and denote $U = \{\mu>0\}$. We say that $\eta \in L^2((0,\infty) ;H^1(\R^d))$ 
is a weak solution of \eqref{eq:stefan intro}   if 
\begin{enumerate}[label=(\alph*)]
  \item the set $\{\eta(t,\cdot)>0\}$ is nonincreasing in $t$,
  \item $\eta$ vanishes outside $U\times(0,\infty)$, and $\limsup_{t \downarrow 0}\{\eta(t,\cdot)>0\}=U$ up to a null set,
  \item for any test function $\varphi \in C_c^\infty([0,\infty) \times \mathbb{R}^d)$,
  \begin{equation}
  \label{eq:weakform}
  \int_0^\infty\!\!\int_{\mathbb{R}^d}
  \Big[(\eta-\chi_{\{\eta>0\}})\varphi_t - \tfrac{1}{2}\nabla \eta\cdot \nabla\varphi\Big]\,dt\,dx
  = 
  \int_{\mathbb{R}^d}\!\big[(-\mu+\chi_U)\varphi\big](0,x)\,dx.
  \end{equation}
\end{enumerate}
\end{definition}

An important notion of ordering for establishing existence of solutions to the Stefan problem is the subharmonic order.
\begin{definition}[Subharmonic ordering]\label{def:subharmonic ordering}For $0\leq \mu,\nu \in L^{\infty}(\R^d)$ with compact support and $\int_{\R^d} \mu(x)dx =\int_{\R^d} \nu$, we say that $\mu$ and $\nu$ are {\it subharmonically ordered} if for some open set $\mathcal{O}$ containing the support of $\mu+\nu$, one has
\begin{equation}\label{eq:subharmonic defi}
    \int \varphi(x)d\mu(x)\leq \int \varphi(x)d\nu(x) \;\;\text{ for every smooth subharmonic function}\;\; \varphi \text{ in } \;\mathcal{O},
\end{equation}  
and we write $\mu \leq_{\operatorname{SH}}\nu$.
\end{definition}
Subharmonic ordering admits a natural probabilistic interpretation in terms of Brownian motion \cite{rost1971stopping,Roo69}. For our filtered probability space, we adopt the same standard assumptions as in \cite[Sec. 2.2.1]{kim2024stefan}. By \cite[Thm. 1.5]{ghoussoub2020optimal}, it is known that $\mu\leq_{\operatorname{SH}}\nu$ if and only if there exists a stopping time $\tau$ for Brownian motion such that 
\begin{equation}\label{eq:subharmonic defi stop}
       W_0 \sim \mu, \quad W_{\tau}\sim \nu, \quad \mathbb{E}[\tau]<\infty. 
\end{equation}
We use this probabilistic interpretation to define subharmonic order with respect to a rough domain. 
\begin{definition}\label{def:subharmonic ordering relative} Let $\mu$, $\nu$ be as in Definition \ref{def:subharmonic ordering}, let $U$ be a bounded open set such that $\mu(\R^d \setminus U)=\nu(\R^d \setminus U)=0$, and let $\tau^U :=\inf\{t \geq 0: W_t \notin U\}$ be the exit time from $U$. We say that $\mu\leq_{\operatorname{SH}}\nu$ with respect to $U$ if there exists a stopping time $\tau\leq \tau^U$ for Brownian motion such that \eqref{eq:subharmonic defi stop} holds.    
\end{definition}
We now recall the {\it optimal Skorokhod problem} $\mathcal{Q}(\mu, \nu)$, which given two measures $\mu \leq_{\textnormal{SH}} \nu$, is given by
\begin{equation}\label{Sko} \mathcal{Q}(\mu,\nu) := \inf_{\tau} \left\{ \mathbb{E}[\tau^2]: W_0 \sim \mu \text{ and } W_{\tau} \sim \nu \right \}, \end{equation} where $\tau$ are stopping times (for further background on the optimal Skorokhod problem, see the survey \cite{obloj2004skorokhod} and the references therein). By the results of \cite{beiglbock2017optimal}, if $\mu \leq_{\textnormal{SH}} \nu$, then there exists a unique optimal stopping time for $\mathcal{Q}(\mu,\nu)$, which we denote as $\tau^*$. \smallskip

By \cite[Thm. 4.15]{kim2024stefan} (or \cite{gassiat2021free}), one can characterize $\tau^*$ in terms of an obstacle problem. Indeed, if one considers the unique solution to 
\begin{equation} \label{obstacle_problem_123}
\begin{cases}
    \p_t w - \frac{1}{2} \Delta w = -\frac{1}{2} \nu \chi_{ \{w>0\} } \text{ on } (0,\infty) \times \R^d, \\ 
    w(0,x) = \Delta^{-1}(\nu-\mu)(x) \text{ on } \R^d,
\end{cases}
\end{equation} then
\begin{equation} \tau^* = \inf\{t \geq 0:(t,W_t) \in \{w=0\} \} \text{ almost surely, where } w \text{ solves } \eqref{obstacle_problem_123}.  \label{optimal_stopping_time} \end{equation}

Here  $\Delta^{-1}\mu(x) := \mathcal{N} * \mu$, where $\mathcal{N}$ is the standard Newtonian potential. \smallskip

Next, we review the connection between subharmonic ordering, \eqref{Sko}, and \eqref{eq:stefan intro} established in \cite{kim2024stefan}. To make this link precise, we employ the Eulerian formulation introduced in \cite{ghoussoub2019pde}.

\begin{definition}[Eulerian variables] \cite[Prop. 2.2]{ghoussoub2019pde} \label{def:eulerian_variables} Let $\mu \leq_{\operatorname{SH}} \nu$ and let $\tau$ be a stopping time such that $W_0 \sim \mu$, $W_{\tau} \sim \nu$, and $\mathbb{E}[\tau]<\infty$. We say that $(\eta,\rho)$ are, respectively the {\it active and stopped} particle distributions associated to $(\mu,\nu,\tau)$ if  
\[  \mathbb{E} \left[ \int_0^{\tau} \varphi(s,W_s) ds \right] = \iint_{(0,\infty) \times \R^d} \varphi(t,x) \eta(t,x)dtdx,  \quad \mathbb{E}[\varphi(\tau,W_{\tau})] = \iint_{[0,\infty) \times \R^d} \varphi(t,x) d\rho(t,x),  \]
for all test functions $\varphi \in C^{\infty}_c( [0,\infty) \times \R^d)$. When $\tau^*=\tau^*_{\mu,\nu}$ optimizes $\mathcal{Q}(\mu,\nu)$, we suppress the dependence on the third argument of $(\mu,\nu,\tau^*)$ and simply write $(\eta,\rho)$ for the active and stopped distributions associated with $(\mu,\nu)$.
\end{definition} 
\begin{proposition}[\cite{kim2024stefan},\cite{choi2024existence}] \label{prop:eulerian} 
Let $\mu, \nu$ and $\tau^*$ be as in Definition~\ref{def:eulerian_variables}. Then the following holds:
\begin{itemize}

    \item[(a)] \cite[Thm. 4.15, Sec. 4.3]{kim2024stefan}   For $\eta$ associated to $(\mu, \nu)$, the potential variable
$w(t,x) :=\frac{1}{2}\int_t^{\infty} \eta(s,x) ds$ is the unique solution to the obstacle problem \eqref{obstacle_problem_123}. In addition,  $\{w>0\}=\{\eta>0\}$ a.e., and if $\mu_t$ denotes the law of $W_{t \wedge \tau^*},$ then
\[ w(t,x) = \Delta^{-1}(\nu-\mu_t). \]

\item[(b)] \cite[Thm. 6.5, Cor. 6.6]{choi2024existence}  Suppose further that $U$ has a Lipschitz boundary and that $\mu \in \mathcal{S}_{\delta}(U)$ for some $0<\delta<1$. Then for any weight function $u \in C^2(\overline{U})$ with $\Delta u < 0$ in $U$, there exists a unique optimizer $\nu^*$ of \eqref{eq:primal intro}, and it satisfies $\nu^*\in\{0,1\}$ and $\mu \leq_{\textnormal{SH}} \nu^*$ over $U$. 

\item[(c)] \cite[Thm. 7.4]{choi2024existence} If $\nu\in \{0,1\}$, $|\{\mu=1\}|=0$, and $\mu \leq_{\textnormal{SH}} \nu$ with respect to the Lipschitz domain $\{\mu>0\}$, then $\eta$ is a weak solution of \eqref{eq:stefan intro}.

\item[(d)] \cite[Thm. 7.2, Rem. 7.3]{choi2024existence} For any weak solution $\eta$ to \eqref{eq:stefan intro}, one has $\mu \leq_{\textnormal{SH}} \chi_{\Sigma}, $ where $\Sigma$ is the transition zone of $\eta$.
 
\end{itemize}
\smallskip
\end{proposition}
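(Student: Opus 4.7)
The plan is to organize the four parts around the potential $w(t,x) = \tfrac{1}{2}\int_t^\infty \eta(s,x)\,ds$ and the Root barrier $\{w=0\}$ that realizes the optimal stopping time $\tau^*$. Parts (a), (c), and (d) will all flow from PDE manipulations of $w$, whereas part (b) requires a separate convex duality argument.

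For part (a), I would first apply It\^o's formula to $\mathcal{N}*\varphi(W_{t\wedge\tau^*})$ for a smooth compactly supported $\varphi$, which yields the distributional identity $\partial_t \mu_t = \tfrac{1}{2}\Delta \eta_t$, where $\mu_t$ is the law of $W_{t\wedge\tau^*}$. Integrating from $t$ to $\infty$ and using $\mu_t\to \nu$ gives $w=\Delta^{-1}(\nu-\mu_t)$. Combining the time derivative $\partial_t w = -\tfrac{1}{2}\eta_t$ with this identity then produces
\begin{equation}
\partial_t w - \tfrac{1}{2}\Delta w \;=\; \tfrac{1}{2}(\mu_t-\eta_t-\nu),
\end{equation}
and recognizing $\mu_t-\eta_t$ as the stopped-particle mass, which by the Root barrier characterization of $\tau^*$ equals $\nu\chi_{\{w=0\}}$, delivers the obstacle equation \eqref{obstacle_problem_123}. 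The identity $\{w>0\}=\{\eta>0\}$ a.e.\ is then immediate from the PDE together with the monotonicity of the positivity set, and uniqueness comes from standard comparison for the obstacle problem.

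For part (b), I would pass to the dual formulation \eqref{eq:dual intro}. Existence of a minimizer $\psi$ should follow from the direct method: subharmonicity is closed under $L^1$ convergence, and the slack condition $\mu\in\cS_\delta(U)$ with $\delta>0$ provides the necessary coercivity. Complementary slackness then yields the primal optimizer $\nu^* = \chi_{\{\psi>u\}}\in\{0,1\}$, and the subharmonic ordering $\mu\leq_{\sh}\nu^*$ is obtained by testing \eqref{eq:subharmonic defi} against $\varphi=\psi$. Uniqueness of $\nu^*$ (but not of $\psi$) comes from strict convexity on the primal side. The main obstacle I expect here is the low a priori regularity of $\psi$, which lies only in $L^1(U)$; strong duality and the boundary arguments become delicate, and the Lipschitz hypothesis on $\partial U$ is precisely what one needs to close these gaps.

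For the converse pair (c) and (d), my plan is to shuttle through the obstacle problem in both directions. For (c), given $\mu\leq_{\sh}\chi_\Sigma$ with $\nu=\chi_\Sigma\in\{0,1\}$, I would solve \eqref{obstacle_problem_123} and define $\tau^*$ via \eqref{optimal_stopping_time}; part (a) then guarantees $W_{\tau^*}\sim \chi_\Sigma$, and differentiating the obstacle PDE in $t$ together with $\eta=-2\partial_t w$ yields the weak form \eqref{eq:weakform}, with the initial condition checked by letting $t\downarrow0$ in the potential representation and using $|\{\mu=1\}|=0$ to rule out degenerate matching with the obstacle at $t=0$. For (d), given a weak solution $\eta$, one defines $w$ by the potential formula, verifies from \eqref{eq:weakform} that it satisfies the obstacle problem with $\nu=\chi_\Sigma$, and then reads off $\tau^*$ from the Root barrier $\{w=0\}$; the finiteness $\mathbb{E}[\tau^*]<\infty$ follows from $\eta\in L^1$. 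The hardest aspect throughout (c)-(d) is handling the low regularity of $w$, particularly in interpreting $\{w=0\}$ as a hitting target and in controlling behavior near $\partial U$.
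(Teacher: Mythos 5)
This proposition is not proved in the paper at all: it is a catalogue of results imported verbatim from \cite{kim2024stefan} and \cite{choi2024existence}, with each item carrying its precise citation, and the paper treats them as black boxes. So there is no internal proof to compare against; what you have written is a reconstruction of the arguments in the cited references. As such a reconstruction it is broadly on the right track — the derivation of the obstacle equation in (a) from the continuity equation $\partial_t\eta+\rho=\tfrac12\Delta\eta$ and the identification of the stopped mass, the duality route for (b), and the shuttling through \eqref{obstacle_problem_123} for (c)--(d) all match the structure of those papers.

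Two soft spots worth flagging. First, in (b) you attribute uniqueness of $\nu^*$ to ``strict convexity on the primal side,'' but the primal objective $\nu\mapsto\int u\,d\nu$ is \emph{linear} in $\nu$, so strict convexity is not available. The actual mechanism (used repeatedly later in this paper, e.g.\ in the uniqueness step of Proposition \ref{target_measure_universial}) is the strict superharmonicity of the weight: if $\nu_1,\nu_2$ are both optimal with $v:=\Delta^{-1}(\nu_2-\nu_1)\ge0$, then $0=\int u\,\Delta v=\int \Delta u\cdot v$ together with $\Delta u<0$ forces $v\equiv0$; the bang-bang property $\nu^*\in\{0,1\}$ then pins down $\nu^*$ itself. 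Second, in (a) there is a mild circularity: you invoke the Root-barrier characterization \eqref{optimal_stopping_time} to identify $\mu_t-\eta_t$ with $\nu\chi_{\{w=0\}}$, but \eqref{optimal_stopping_time} is itself part of the content of the theorem you are reproving; in \cite{kim2024stefan} the obstacle-problem identity and the barrier characterization are established together rather than one being deduced from the other. Neither issue affects the present paper, which only uses the proposition as cited.
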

\begin{remark}\label{rem:weak sol non lipschitz}While Proposition \ref{prop:eulerian} (c) is stated for Lipschitz domains, the proof in \cite[Thm. 7.4, Rem. 7.3]{choi2024existence} remains valid as long as $\Delta^{-1}(\nu-\mu)\equiv 0$ outside $U=\{\mu>0\}$. While this is automatically true if $U$ is Lipschitz, it may fail for topologically irregular sets such as punctured balls (see Proposition \ref{prop:subharmonic equivalent} and Remark \ref{rem:counterexample subharmonic L1}).   
\end{remark}
We will often refer to $w$ as the \emph{potential variable} associated with $\eta$, or simply as the potential variable when $\eta$ is clear from context. It is also important to note that, by standard parabolic regularity, the function $w$ enjoys $C^{1,\alpha}_x\cap C^{0,1}_t$ regularity for any $0<\alpha<1$, and $\nabla w$ is space-time H\"older continuous (see \cite[Prop. 7.18]{lieberman96}, \cite[p. 80, Lem. 3.3]{ladyzhenskaya}). 
\medskip

We now show that $\eta$ has a well-defined LSC representative, which in particular makes the freezing set at each time slice well-defined.
\begin{lemma} \label{LSC_eta} Under the notation and assumptions of Proposition \ref{prop:eulerian} (a), there exists $\tilde{\eta} \in \operatorname{LSC}( (0,\infty) \times \R^d)$, which satisfies $\tilde{\eta}=\eta$ a.e., and $\{\tilde{\eta}>0\} \cap \{t>0\}=\{w>0\} \cap \{t>0\}$.
\end{lemma}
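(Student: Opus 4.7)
The plan is to take as the canonical LSC representative
\[
\tilde\eta(t, x) := \begin{cases} -2\,\partial_t w(t, x), & (t, x) \in \{w > 0\}, \\ 0, & \text{otherwise}, \end{cases}
\]
after first establishing that $\partial_t w$ has a pointwise-smooth meaning on the open set $\{w > 0\}$. The key observation is that, although $\nu \in L^\infty$ is merely bounded, so the regularity recorded after Proposition \ref{prop:eulerian} gives only $w \in C^{1,\alpha}_x \cap C^{0,1}_t$, the source term in the equation $\partial_t w - \tfrac12 \Delta w = -\tfrac12 \nu$ on $\{w>0\}$ is independent of time. Differentiating in $t$ shows that $v := -\partial_t w$ solves the homogeneous heat equation $\partial_t v - \tfrac12 \Delta v = 0$ distributionally on $\{w > 0\}$, and hypoellipticity of the heat operator upgrades $v$ to a $C^\infty$ function on this open set.

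Given this regularity, the properties $\tilde\eta = \eta$ a.e.\ and $\tilde\eta \in \operatorname{LSC}((0,\infty) \times \R^d)$ both follow quickly. For the first, I would appeal to the identity $w(t,x) = \tfrac12 \int_t^\infty \eta(s, x)\,ds$ from Proposition \ref{prop:eulerian}(a): by Fubini and absolute continuity of $t \mapsto w(t, x)$ for a.e.\ $x$, this gives $\partial_t w = -\tfrac12 \eta$ a.e., and combined with the a.e.\ equality $\{\eta > 0\} = \{w > 0\}$ (also Proposition \ref{prop:eulerian}(a)) one obtains $\tilde\eta = \eta$ a.e. For the second, I would observe that $\tilde\eta \geq 0$ throughout (since $w$ is non-increasing in $t$), that $\tilde\eta$ is continuous on the open set $\{w > 0\}$, and that $\tilde\eta$ vanishes on the closed set $\{w = 0\}$; for any boundary point $(t_0, x_0)$ one then has $\liminf_{(s,y) \to (t_0, x_0)} \tilde\eta(s, y) \geq 0 = \tilde\eta(t_0, x_0)$, so $\tilde\eta$ is LSC.

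The crux is the set equality $\{\tilde\eta > 0\} \cap \{t > 0\} = \{w > 0\} \cap \{t > 0\}$, and this is where the only real obstacle lies. The inclusion $\subseteq$ is immediate from the definition. For $\supseteq$, I would argue by contradiction: suppose $v(t_0, x_0) = 0$ at some $(t_0, x_0) \in \{w > 0\}$ with $t_0 > 0$, and pick a parabolic cylinder $Q := B_r(x_0) \times (t_0 - \delta, t_0 + \delta) \subset \{w > 0\}$. The strong minimum principle for the heat equation applied to the nonnegative caloric function $v$ on $Q$ forces $v \equiv 0$ on $B_r(x_0) \times (t_0 - \delta, t_0]$. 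Hence $\eta = -2 \partial_t w$ vanishes on an open set of positive Lebesgue measure contained in $\{w > 0\}$, contradicting the a.e.\ equality $\{\eta > 0\} = \{w > 0\}$. The hypoellipticity upgrade of $v$ to $C^\infty$ is precisely what enables this use of the strong minimum principle; without it, $\partial_t w$ is only essentially bounded and the argument would not go through.
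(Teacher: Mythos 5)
Your proof is correct, and its overall architecture coincides with the paper's: establish that the a.e.\ representative of $\eta$ is caloric on the open set $\{w>0\}$, upgrade to smoothness, apply the strong minimum principle to get $\{w>0\}\cap\{t>0\}\subset\{\tilde\eta>0\}$, and deduce lower semicontinuity from nonnegativity plus smoothness on $\{w>0\}$ and vanishing on its complement. The one genuine difference is how calorcity on $\{w>0\}$ is obtained. The paper works with the Eulerian pair $(\eta,\rho)$ satisfying $\partial_t\eta+\rho=\tfrac12\Delta\eta$ and uses the probabilistic barrier characterization \eqref{optimal_stopping_time} to show $(\tau^*,W_{\tau^*})\in\{w=0\}$ a.s., hence $\rho(\{w>0\})=0$, so $\eta$ is caloric there. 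You instead differentiate the obstacle problem \eqref{obstacle_problem_123} in time on $\{w>0\}$, exploiting that the right-hand side $-\tfrac12\nu$ is time-independent, so $v=-\partial_t w$ is distributionally caloric and hypoellipticity applies; combined with $\partial_t w=-\tfrac12\eta$ a.e.\ (from $w(t,x)=\tfrac12\int_t^\infty\eta\,ds$) this recovers the same representative. Your route is purely PDE-based and avoids the probabilistic input about the support of $\rho$, at the cost of the (routine but necessary) verification that the distributional time derivative may be taken and identified with the a.e.\ pointwise derivative of $t\mapsto w(t,x)$; the paper's route gets smoothness of $\eta$ directly from the equation it already satisfies. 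Both are complete.
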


\begin{proof} By \cite{ghoussoub2019pde} (or see \cite[Eq. (2.3)]{kim2024stefan}), $(\eta,\rho)$ is a weak solution to
\[ \p_t \eta + \rho = \frac{1}{2} \Delta \eta.  \] Because $\rho \sim (\tau^*,W_{\tau^*})$, \eqref{optimal_stopping_time}, and the closedness of $\{w=0\}$ imply that $(\tau^*,W_{\tau^*}) \in \{w=0\}$ almost surely. Hence $\rho(\{w>0\})=0$, so that $\eta$ is caloric in the open set $\{w>0\}$. Therefore, one can modify $\eta$ on a set of space-time measure zero, so that $\eta$ is smooth on $\{w>0\}$. As $\eta \geq 0$, the strong minimum principle implies that this modification satisfies $\{w>0\} \cap \{t>0\} \subset \{\eta>0\} \cap \{t>0\}$. By Proposition \ref{prop:eulerian} (a), 
\[ \tilde{\eta}(t,x) := \begin{cases} \eta(t,x) &(t,x) \in \{w>0\} \\ 0 & \text{otherwise} \end{cases} \] is a valid modification of $\eta$, which is LSC (because $\tilde{\eta} \geq 0$ and $\tilde{\eta}$ is smooth on $\{w>0\} \cap \{t>0\}$), and satisfies $\{\tilde{\eta}>0\} \cap \{t>0\}=\{w>0\} \cap \{t>0\}$.
\end{proof}

From now on, we always identify $\eta$ with this precise LSC representative that satisfies $\{\eta>0\} \cap \{t>0\} = \{w>0\} \cap \{t>0\}.$

 We may now give the definition of maximal solutions.
\begin{definition}[Maximal solutions associated to $u$]\label{def:maximal}
Let  $\mu$ be as given in Proposition~\ref{prop:eulerian} (b)--(c), let $0<u\in C^2(\overline{U})$, $U:=\{\mu>0\}$  with $\Delta u<0$ in $U$, and let $\nu^*$ be the unique optimal target measure of $\mathcal{P}(\mu,u)$. We call $\eta$, the active particle distribution associated to $(\mu, \nu^*)$, the maximal solution of \eqref{eq:stefan intro} corresponding to the weight $u$. 
\end{definition}

Because of Proposition~\ref{prop:eulerian}, the maximal solution $\eta$ associated to the weight $u$ is a well-defined weak solution to \eqref{eq:stefan intro}.  The conditions of Propositions~\ref{prop:eulerian} (c) are meant to ensure that $\eta$ is a weak solution. Indeed, without the assumption $|\{\mu=1\}|=0$, $\eta$ may exhibit instantaneous freezing on a set of positive measure, thereby altering the initial data. For example, if $\mu=\chi_{U}$, then for any weight $u$ the optimal target measure of $\mathcal{P}(\mu,u)$ is $\mu$, so the active particle distribution is $\eta\equiv 0$, which does not solve~\eqref{eq:stefan intro} with initial data $\mu$. We now show that among all weak solutions of \eqref{eq:stefan intro}, $\eta$ uniquely maximizes \eqref{eq:E maximized intro}.
    \begin{proposition}[Delayed solidification characterization for maximal solutions] \label{prop: maximal_sol_characterization} Suppose $\eta^{*}$ is the maximal  solution of \eqref{eq:stefan intro} associated with the weight $u$, with initial data $\mu$  satisfying the assumptions of Definition~\ref{def:maximal}.  Then $\eta^{*}$ is the 
\emph{unique weak solution} of \eqref{eq:stefan intro} such that, for every weak solution $\eta$ of \eqref{eq:stefan intro},
\begin{equation} \label{L1_maximal_property}
    \int_{\R^d} \int_{0}^{\infty} -\Delta u(x)\, \eta^{*}(t,x)\,dt\,dx 
    \;\ge\;
    \int_{\R^d} \int_{0}^{\infty} -\Delta u(x)\, \eta(t,x)\,dt\,dx.
\end{equation}
\end{proposition}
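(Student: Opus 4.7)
The plan is to reduce the maximization in \eqref{L1_maximal_property} to the primal problem $\mathcal{P}(\mu,u)$ via a Green-type identity relating $\int_0^\infty\int_{\R^d}(-\Delta u)\eta$ to $\int u\,d\chi_\Sigma$, and then to invoke the uniqueness of the primal optimizer $\nu^*$ together with the obstacle-problem characterization of Proposition~\ref{prop:eulerian}(a).

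First I would establish the identity
\begin{equation*}
\int u\,d\chi_\Sigma \;-\; \int u\,d\mu \;=\; \tfrac12 \int_0^\infty\!\int_{\R^d}\Delta u(x)\,\eta(t,x)\,dt\,dx
\end{equation*}
for every weak solution $\eta$ with transition zone $\Sigma$. To do so, I would test \eqref{eq:weakform} against $\varphi(t,x)=u(x)\phi_T(t)$, where $\phi_T$ is a smooth cutoff with $\phi_T(0)=1$ and $\phi_T\equiv 0$ for $t\geq T+1$, integrate by parts in space (using that $\eta$ and $\chi_{\{\eta>0\}}$ vanish outside $\overline U$), and send $T\to\infty$. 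The only nontrivial limit is $\int_0^\infty\int \chi_{\{\eta>0\}}u\,\phi_T'$, which by the monotone decay $\{\eta(t,\cdot)>0\}\downarrow W_\infty:=\{s=\infty\}$ converges to $-\int_{W_\infty} u$; combined with $\chi_U=\chi_\Sigma+\chi_{F_0}+\chi_{W_\infty}$ and $|F_0|=0$, this yields the identity. The remaining term $\int_0^\infty\int \eta\, u\,\phi_T'$ tends to $0$ because $\int_0^\infty\int\eta<\infty$, which follows from the finiteness of the potential $w:=\tfrac12\int_t^\infty\eta\,ds$.

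Rewriting the identity as $\int_0^\infty\int(-\Delta u)\eta = 2\bigl(\int u\,d\mu-\int u\,d\chi_\Sigma\bigr)$, the inequality \eqref{L1_maximal_property} is equivalent to $\int u\,d\chi_\Sigma\geq\int u\,d\chi_{\Sigma^*}$, where $\Sigma^*$ is the transition zone of $\eta^*$. By Proposition~\ref{prop:eulerian}(d) the measure $\chi_\Sigma$ is admissible for $\mathcal{P}(\mu,u)$, and by Proposition~\ref{prop:eulerian}(b) the unique optimizer of $\mathcal{P}(\mu,u)$ is $\nu^*=\chi_{\Sigma^*}$. Hence \eqref{L1_maximal_property} follows upon applying the identity to both $\eta$ and $\eta^*$, and equality forces $\chi_\Sigma=\nu^*$ a.e., so $\Sigma=\Sigma^*$ up to a null set.

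To upgrade $\Sigma=\Sigma^*$ to $\eta=\eta^*$, I would show that the potential $w=\tfrac12\int_t^\infty\eta\,ds$ of any weak solution solves the obstacle problem~\eqref{obstacle_problem_123} with data $\nu=\chi_\Sigma$ and initial datum $w(0,\cdot)=\Delta^{-1}(\chi_\Sigma-\mu)$. Integrating the Stefan equation in time from $t$ to $\infty$ and using $\{\eta>0\}=\{w>0\}$, together with the identity $\chi_{\{w>0\}}-\chi_{W_\infty}=\chi_\Sigma\,\chi_{\{w>0\}}$ (reflecting $\{w(t,\cdot)>0\}\setminus W_\infty=\Sigma\cap\{s>t\}$), recovers the PDE; the identification of $w(0,\cdot)$ is obtained by repeating the derivation of Step~1 with $u$ replaced by $\Delta^{-1}\rho$ for $\rho\in C_c^\infty$. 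If $\Sigma=\Sigma^*$, then the uniqueness clause in Proposition~\ref{prop:eulerian}(a) applied with $\nu=\nu^*$ yields $w=w^*$, and hence $\eta=-2w_t=\eta^*$. The main technical obstacle is the justification of the limit $T\to\infty$ in the Green identity, together with the verification that the potential of a general weak solution solves \eqref{obstacle_problem_123}; both rely on the monotonicity of the positive set, the LSC representative from Lemma~\ref{LSC_eta}, and the space-time integrability of $\eta$.
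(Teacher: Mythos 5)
Your proposal is correct and reaches the same structural reduction as the paper — admissibility of $\chi_\Sigma$ via Proposition \ref{prop:eulerian}(d), optimality and uniqueness of $\nu^*$ via Proposition \ref{prop:eulerian}(b) — but you obtain the key identity
$\int u\,d\chi_\Sigma-\int u\,d\mu=\tfrac12\int_0^\infty\!\int\Delta u\,\eta$
by a different mechanism. The paper applies It\^o's formula to $u(W_{t\wedge\tau})$ for the optimal stopping times of $\mathcal{Q}(\mu,\chi_\Sigma)$ and $\mathcal{Q}(\mu,\nu^*)$ and then translates back to $\eta$ via Definition~\ref{def:eulerian_variables}; this implicitly uses that every weak solution is the active particle distribution of $(\mu,\chi_\Sigma)$. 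You instead derive the same identity deterministically by testing \eqref{eq:weakform} against $u(x)\phi_T(t)$ and sending $T\to\infty$, using the monotone decay $\{\eta(t,\cdot)>0\}\downarrow\{s=\infty\}$ and the decomposition $\chi_U=\chi_\Sigma+\chi_{F_0}+\chi_{\{s=\infty\}}$ with $|F_0|=0$. This buys you an argument for the inequality that works directly from Definition~\ref{def:weak solution} without invoking the Lagrangian representation, at the cost of some routine care: you must extend $u\in C^2(\overline U)$ to a $C^2_c(\R^d)$ function (harmless, since all integrands are supported in $\overline U$), and you need $\int_0^\infty\!\int\eta<\infty$ to kill the $\int\eta\,u\,\phi_T'$ term, which indeed follows from the exponential decay of the subcaloric $\eta$ or from $\mathbb{E}[\tau]<\infty$ in \eqref{eq:subharmonic defi stop}. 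For uniqueness, the paper simply cites the uniqueness of the primal optimizer, leaving implicit the fact that a weak solution is determined by $(\mu,\Sigma)$ up to null sets (elsewhere attributed to Cor.~9.10 of the cited work of Kim et al.); your reconstruction of that fact via the obstacle problem \eqref{obstacle_problem_123} for the potential $w=\tfrac12\int_t^\infty\eta\,ds$ is the right idea and fills in a step the paper outsources, though you should be explicit that the a.e.\ identification $\chi_{\{\eta(t,\cdot)>0\}}-\chi_{\{s=\infty\}}=\chi_\Sigma\chi_{\{w>0\}}$ is taken for the representatives fixed in Lemma~\ref{LSC_eta}.
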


\begin{proof} 
Let  $\eta$ be a weak solution. Then $\{s(x)>0\}\subset U$ due to Definition \ref{def:weak solution}. Moreover, Proposition \ref{prop:eulerian} (d)
 implies that $\mu \le_{\mathrm{SH}} \nu:=\chi_{\{0<s<\infty\}}$. Hence,  $\mu \le_{\mathrm{SH}} \chi_{\{0<s<\infty\}}$ with respect to $U$ because of \eqref{optimal_stopping_time}. In particular $\nu:=\chi_{\{0<s<\infty\}}$ is admissible for $\mathcal{P}(\mu,u)$, and thus  
$\int u\, d\nu^{*} \le \int u\, d\nu$.  
Let $\tau$ and $\tau^{*}$ be the optimal stopping times for 
$\mathcal{Q}(\mu,\nu)$ and $\mathcal{Q}(\mu,\nu^{*})$. Itô's formula then yields
$$\mathbb{E}\left[\int_{0}^{\tau^{*}} \Delta u(W_s)\,ds\right]
 \leq 
 \mathbb{E}\left[\int_{0}^{\tau} \Delta u(W_s)\,ds\right],$$
 so we infer from Definition \ref{def:eulerian_variables} that \eqref{L1_maximal_property} holds. Uniqueness now follows from Proposition~\ref{prop:eulerian} (b), which guarantees the uniqueness of an optimal target measure of $\mathcal{P}(\mu,u)$.
\end{proof}

\begin{remark}[Maximal solutions for $\mu \in \mathcal{S}_0(U)$] \label{rem:maximal S0}
Definition~\ref{def:maximal} may be naturally extended in the obvious way to $\mu \in \mathcal{S}_0(U)$ whenever $\mathcal{P}(\mu,u)$ admits a unique optimal target measure, and the associated particle distribution $\eta$ is a weak solution to \eqref{eq:stefan intro}.  However, we emphasize that neither of these assumptions is known to hold in general without the slack condition $\delta>0$ (except when $d=1$ or in the radial case, as we show in Proposition \ref{target_measure_universial}).  
\end{remark}

\section{Properties of relative subharmonic order in rough domains}
\label{sec:subharmonic rough}
In this section we obtain, under minimal regularity assumptions on $U$, a characterization of the subharmonic order with respect to $U$. This will be useful in later sections, especially for the fractal freezing results of Section \ref{sec:fractalfreezing}. Along the way we also derive a low-regularity Green-type formula for subharmonically ordered measures that will be important in  Section \ref{sec:regularity}. 

We begin with a basic elliptic estimate.
\begin{lemma} \label{lem: w quadratic growth} Let $h \in L^\infty(\R^d)\cap L^1(\R^d)$, and assume that $v:= \Delta^{-1}h\geq0$. Then there exists $C=C(d)>0$ such that for any $x\in\{v=0\}$  and $r>0$,
\begin{equation*}
    \|v\|_{L^{\infty}(B_r(x))}+r\|\nabla v\|_{L^{\infty}(B_r(x))}\leq C\|h\|_{L^{\infty}(\R^d)}r^2.
\end{equation*}    
\end{lemma}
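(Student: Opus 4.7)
My plan is to combine a harmonic replacement argument with Harnack's inequality for the $L^\infty$ bound, and then close with standard interior gradient estimates. Let $M:=\|h\|_{L^\infty(\R^d)}$ and fix $x\in\{v=0\}$ and $r>0$. I will work on the enlarged ball $B_{4r}(x)$ to leave enough room at the end; the global regularity $v\in C^{1,\alpha}_{\mathrm{loc}}$ (standard for Poisson equations with bounded right-hand side) makes all pointwise statements meaningful.

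First I would let $u$ be the harmonic extension of $v|_{\partial B_{4r}(x)}$ into $B_{4r}(x)$, so that $\Delta(v-u)=h$ in $B_{4r}(x)$ with $v-u=0$ on $\partial B_{4r}(x)$. Comparing $v-u$ with the radial barrier $\psi(y):=\frac{M}{2d}(16r^2-|y-x|^2)$, for which $-\Delta\psi = M$, the maximum/minimum principle applied to $v-u\pm\psi$ yields $|v-u|\le C(d)\,r^2 M$ throughout $B_{4r}(x)$.

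Next, because $v\ge 0$ on $\partial B_{4r}(x)$ and $u=v$ there, the maximum principle gives $u\ge 0$ on $B_{4r}(x)$; moreover $u(x) = v(x)-(v-u)(x)\le C(d)\,r^2 M$ since $v(x)=0$. Hence $u$ is a nonnegative harmonic function on $B_{4r}(x)$ with a small value at the center, and Harnack's inequality on $B_{2r}(x)$ gives
\begin{equation*}
\sup_{B_{2r}(x)} u\;\le\; C(d)\,u(x)\;\le\; C(d)\,r^2 M.
\end{equation*}
Combining this with the bound on $v-u$, one concludes $\|v\|_{L^\infty(B_{2r}(x))}\le C(d)\,r^2 M$, which already beats what is needed on $B_r(x)$.

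Finally, the bound on $\nabla v$ follows from the standard interior gradient estimate for the Poisson equation applied on the pair $B_r(x)\subset B_{2r}(x)$:
\begin{equation*}
\|\nabla v\|_{L^\infty(B_r(x))}\;\le\; \frac{C(d)}{r}\|v\|_{L^\infty(B_{2r}(x))}+C(d)\,r\,\|h\|_{L^\infty(B_{2r}(x))},
\end{equation*}
which combined with the previous $L^\infty$ estimate yields $\|\nabla v\|_{L^\infty(B_r(x))}\le C(d)\,rM$. I do not expect any serious obstacle: the ingredients are all classical, and the only point requiring care is working on the slightly enlarged ball $B_{4r}(x)$ so that Harnack's inequality and the interior gradient estimate both apply with universal dimensional constants; the hypothesis $h\in L^1$ is used only to ensure that $v:=\Delta^{-1}h$ is well-defined as a function, not in the local estimates themselves.
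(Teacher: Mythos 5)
Your proof is correct and follows essentially the same route as the paper: both arguments hinge on a Harnack-type inequality applied at the point where $v$ vanishes (giving $\sup_{B_r(x)} v \le C(\inf v + r^2\|h\|_\infty) = Cr^2\|h\|_\infty$) followed by a standard interior gradient estimate for the Poisson equation. The only difference is that the paper simply cites the Harnack inequality for $\Delta v = h$ (Gilbarg–Trudinger, Thms.\ 8.17--8.18) and the Calder\'on--Zygmund estimate, whereas you re-derive the Harnack step from scratch via harmonic replacement and a quadratic barrier, which is a perfectly valid, more self-contained version of the same argument.
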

\begin{proof}
The Harnack inequality \cite[Thms. 8.17, 8.18]{GilTru} yields
\begin{equation*}    
\|v\|_{L^{\infty}(B_r(x))}\leq C(\inf_{B_r(x)} v+r^2\|\Delta v\|_{L^{\infty}(B_{2r}(x))})= C\|h\|_{L^{\infty}(\R^d)}r^2.\end{equation*}
On the other hand, by the Calder\'on-Zygmund estimates \cite[Thm. 9.11]{GilTru}, we have
\begin{equation*}
    r\|\nabla v\|_{L^{\infty}(B_r(x))}\leq C(\|v\|_{L^{\infty}(B_{2r}(x))}+r^2\|\Delta v\|_{L^{\infty}(B_{2r}(x))})\leq C\|h\|_{L^{\infty}(\R^d)}r^2.
\end{equation*} \end{proof}
For the next lemma, we recall that
\begin{equation}
    U_{\varepsilon}=\{x\in U: \operatorname{dist}(x,\partial U)<\e\}.
\end{equation}
Besides its important role in this section, the other main purpose of the following Green-type formula will be to eventually test it against the optimizer $\psi$ of the dual problem \eqref{eq:dual intro}, which is merely in $L^1(U)$. We stress that both the $L^1$ regularity of $\psi$ and the quadratic decay of the Green potential $v$ near $\partial U$ are critical in the proof below.

\begin{lemma}[Green identity for subharmonic order]\label{lem:integration by parts} Let $U$ be a bounded open set, let $0\leq \mu, \nu \in L^{\infty}(\R^d)$ vanish outside $U$, and let $v=\Delta^{-1}(\nu-\mu)$. Then the following holds:
\begin{itemize}
    \item[(i)] If $v\geq 0$ and $v$ vanishes outside $U$, then, for every subharmonic function $\psi\in L^1(U)$,
\begin{equation} \label{eq:integration by parts}
    \int \psi(x)\nu(x)dx =\int\psi (x)\mu(x)dx +\int v(x)\Delta \psi(dx),
\end{equation}
where $\Delta \psi\geq 0$ is the distributional Laplacian of $\psi$.
\item[(ii)] If $\mu \leq_{\operatorname{SH}}\nu$ with respect to $U$ and $U=(\overline{U})^{\circ}$, then $v\geq 0$, $v$ vanishes outside $U$, and the conclusion of (i) holds.
\end{itemize}
\end{lemma}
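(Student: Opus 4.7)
I would split the argument into proving (i) directly and then reducing (ii) to (i) via a probabilistic identification. For (i), the plan is a double approximation that isolates $v$ from $\partial U$ before pairing with $\Delta\psi$. Fix smooth cutoffs $\phi_\varepsilon\in C_c^\infty(U)$ with $\phi_\varepsilon\equiv 1$ on $U^{2\varepsilon}$, $\phi_\varepsilon\equiv 0$ on $U\setminus U^{\varepsilon}$, monotone as $\varepsilon\downarrow 0$, and with $|\nabla\phi_\varepsilon|\lesssim \varepsilon^{-1}$, $|D^2\phi_\varepsilon|\lesssim \varepsilon^{-2}$. Calderón--Zygmund theory gives $v\in C^{1,\alpha}(\R^d)$ with $\Delta v=\nu-\mu\in L^\infty$. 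Mollifying $\phi_\varepsilon v$ at scale $\delta\ll\varepsilon$ produces $\tilde v_{\varepsilon,\delta}\in C_c^\infty(U)$, which can be tested against the distribution $\Delta\psi$. Sending $\delta\downarrow 0$ and using $v\in C^{1,\alpha}$ with $\Delta v\in L^\infty$ yields
\begin{equation*}
\int(\phi_\varepsilon v)\,d(\Delta\psi)=\int \phi_\varepsilon(\nu-\mu)\psi\,dx+\int\bigl(2\nabla\phi_\varepsilon\cdot\nabla v+v\,\Delta\phi_\varepsilon\bigr)\psi\,dx.
\end{equation*}

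Next I let $\varepsilon\downarrow 0$. The left side converges to $\int_U v\,d(\Delta\psi)$ by monotone convergence (using $v,\Delta\psi\ge 0$), and the first term on the right tends to $\int_U(\nu-\mu)\psi\,dx$ by dominated convergence. \textbf{The main obstacle} is showing the boundary-correction term vanishes: its integrand is concentrated on the thin shell $U^\varepsilon\setminus U^{2\varepsilon}$ where $\nabla\phi_\varepsilon$ and $\Delta\phi_\varepsilon$ blow up as $\varepsilon^{-1}$ and $\varepsilon^{-2}$. Here Lemma~\ref{lem: w quadratic growth} is essential: since $v\ge 0$ vanishes on $\partial U$, its quadratic bound gives $|v|\lesssim \varepsilon^2$ and $|\nabla v|\lesssim \varepsilon$ on the shell, exactly offsetting the cutoff growth and producing a uniformly bounded integrand. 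As $|U\setminus U^{2\varepsilon}|\to 0$ and $\psi\in L^1(U)$, absolute continuity of the integral then forces this term to $0$, completing (i).

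For (ii), I first establish $v\ge 0$ and $v\equiv 0$ on $\R^d\setminus U$, after which (i) applies. Let $\tau\le\tau^U$ be a stopping time realizing $\mu\le_{\operatorname{SH}}\nu$, and let $\mu_{\operatorname{occ}}(A):=\E\bigl[\int_0^\tau\chi_A(W_s)\,ds\bigr]$ be its occupation measure: a finite non-negative Radon measure supported in $\overline U$, since $W_{t\wedge\tau}\in\overline U$ almost surely. It\^o's formula applied to any $f\in C_c^\infty(\R^d)$ gives
\begin{equation*}
\int f\,d\nu-\int f\,d\mu=\tfrac12\int \Delta f\,d\mu_{\operatorname{occ}},
\end{equation*}
so $\Delta\mu_{\operatorname{occ}}=2(\nu-\mu)$ distributionally. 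Elliptic regularity upgrades $\mu_{\operatorname{occ}}$ to a $C^{1,\alpha}$ density $g$; since $g-2v$ is harmonic on $\R^d$ and vanishes at infinity, Liouville forces $g\equiv 2v$. Thus $v\ge 0$ pointwise, and $v\equiv 0$ a.e.\ on $\R^d\setminus\overline U$, hence everywhere by continuity. The assumption $U=(\overline U)^\circ$ gives $\partial U\subset\overline{\R^d\setminus\overline U}$, so continuity extends $v\equiv 0$ to $\partial U$ and thus to all of $\R^d\setminus U$. Then (i) yields the identity.
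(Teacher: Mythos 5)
Your proposal is correct. Part (i) follows essentially the same route as the paper: cut off at scale $\varepsilon$ from $\partial U$, expand $\Delta(\phi_\varepsilon v)$ by the product rule, and use Lemma \ref{lem: w quadratic growth} (valid because $v\ge 0$ vanishes on $\partial U$) to see that the commutator terms are bounded by $C|\psi|$ on an $\varepsilon$-shell, which then vanish by absolute continuity of the integral of $\psi\in L^1(U)$; your extra mollification at scale $\delta\ll\varepsilon$ to justify pairing the merely $C^{1,\alpha}$ function $\phi_\varepsilon v$ with the Radon measure $\Delta\psi$ is a welcome point of rigor that the paper leaves implicit. Part (ii) is where you genuinely diverge: the paper obtains $v\ge 0$ by testing the subharmonic inequality against the Newtonian kernels $\mathcal{N}(\cdot-x)$, and obtains $v\equiv 0$ a.e.\ outside $\overline U$ via an It\^o computation with $\xi\in C_c^\infty(\R^d\setminus\overline U)$, whereas you identify $2v$ with the density of the occupation measure of $W_{t\wedge\tau}$, which delivers both nonnegativity and the support condition in one stroke (and is consistent with the paper's potential variable $w(0,\cdot)=\tfrac12\int_0^\infty\eta$). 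Two small points to tighten: (a) the order of quantifiers in ``elliptic regularity upgrades $\mu_{\operatorname{occ}}$ to a density $g$'' should be reversed — first apply Weyl's lemma to the distribution $\mu_{\operatorname{occ}}-2v$, whose Laplacian vanishes, to conclude it is a harmonic function $h$, and only then argue $h\equiv 0$; (b) the decay of $v$ at infinity, needed for the Liouville step, is immediate for $d\ge 3$ but for $d\le 2$ requires that $\mu$ and $\nu$ have equal mass (and, for $d=1$, equal first moment), both of which do follow from $\mu\leq_{\operatorname{SH}}\nu$ by testing against $\pm 1$ and $\pm x_j$. The final step, that $U=(\overline U)^{\circ}$ forces $\partial U\subset\overline{\R^d\setminus\overline U}$ so that continuity extends the vanishing of $v$ to $\partial U$, matches the paper exactly.
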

\begin{proof} Assume first that $v\geq 0$ and $v \equiv 0$ outside $U$. Let $\zeta_{\e}\in C^{\infty}_c(U)$ be such that $0\leq \zeta_{\e} \leq 1$, $\zeta_{\e} \equiv 1$ in $U\setminus U_{\varepsilon}$, $|D\zeta_{\e} |\leq \frac{C}{\varepsilon}$, and $|D^2\zeta_{\e} |\leq \frac{C}{\varepsilon^2}$.
We then have
\begin{equation} \label{ipa13deq1}\left|\int_{U}\Delta(v\zeta_{\e})\psi-\int_{U}(\Delta v\zeta_{\e}\psi)\right|=\left| \int_{U}( 2\nabla v\cdot \nabla \zeta_{\e}\psi+\Delta \zeta_{\e} v\psi)\right|\\\leq C\int_{U_{\varepsilon}} (|\nabla v\|\nabla \zeta_{\e}\|\psi|+|\Delta \zeta_{\e}\|v\|\psi|).\end{equation}
By Lemma \ref{lem: w quadratic growth}, since $v$ vanishes on $\partial U$, we have $|v|+\varepsilon|\nabla v|\leq C\varepsilon ^2$ on $U_{\varepsilon}$, and therefore
\begin{equation}\label{ipa13deq2}
\int_{U_{\varepsilon}} (|\nabla v\|\nabla \zeta_{\e}\|\psi|+|\Delta \zeta_{\e}\|v\|\psi|)\leq C\int_{U_{\varepsilon}}|\psi|.
\end{equation}
Since $\psi\in L^1(U)$ is a subharmonic function, $\sigma :=\Delta\psi $ is a non-negative Radon measure on $U$. From \eqref{ipa13deq1} and \eqref{ipa13deq2}, we then have the estimate
\begin{equation*} \left|\int_{U}v\zeta_{\e} d\sigma -\int_{U}(\Delta v\zeta_{\e}\psi)\right|\leq C\int_{U_{\varepsilon}}|\psi|.    
\end{equation*}
Letting $\varepsilon \to 0$ and using the fact that $\psi \in L^{1}(U)$, we conclude from the monotone convergence theorem and the dominated convergence theorem that
\begin{equation*}
 \int_{U}vd\sigma=\int_{U}\Delta v \psi,
\end{equation*}
which is precisely \eqref{eq:integration by parts}. This proves (i).

Assume now that $\mu \leq_{\operatorname{SH}} \nu $ with respect to $U$ and $U=(\overline{U})^{\circ}$. 
For each $x\in \R^d$, by approximation, one can test \eqref{eq:subharmonic defi} against $\cN(\cdot-x)$, which readily yields $v(x)\geq 0$. By It\^o's formula, for any $\xi\in C^{\infty}_c(\R^d \setminus \overline{U})$, and any $\tau\leq \tau^U$ such that \eqref{eq:subharmonic defi stop} holds, 
\begin{equation*}
    \int_{\R^d}v\xi =\int_{\R^d} (\nu-\mu)\Delta^{-1} \xi =\mathbb{E}[\Delta^{-1} \xi(W_{\tau})-\Delta^{-1} \xi (W_{0})]=\mathbb{E}\left[\int_{0}^{\tau}\frac12\xi (W_s)ds\right]=0. 
\end{equation*}
This implies that $v\equiv 0$ a.e. outside $\overline{U}$. Since $U=(\overline{U})^{\circ}$, for any $x\notin U$ and any ball $B_r(x)$, we must have $B_r(x)\setminus\overline{U}\neq \emptyset$. Since $\R^d \setminus \overline{U}$ is open, we have $|B_r(x)\setminus \overline{U}|>0 $, and since $v=0$ a.e. outside $\overline{U}$, we infer that $x\in \overline{\{v=0\}}$. But $v$ is continuous, so it follows that $v(x)=0$, which shows that $v \equiv 0$ outside $U$. We may then apply part (i) to deduce \eqref{eq:integration by parts}.
\end{proof}
We finish this section by characterizing the  subharmonic order in $U$.

See Remark \ref{rem:counterexample subharmonic L1} for a counterexample showing that the equivalence can be false if the mild condition $U=(\overline{U})^{\circ}$ is removed.
\begin{proposition}[Relative subharmonic order characterization]\label{prop:subharmonic equivalent} Let $\mu, \nu$ be as in Definition \ref{def:subharmonic ordering}, and assume in addition that $\mu,\nu\equiv 0$ outside of a bounded open set $U$ such that $U=\overline{U}^{\circ}$.  Then the following are equivalent:
    \begin{equation} \label{eq:subharmonic defi equiv L1}
    \int \varphi(x)d\mu(x)\leq \int \varphi(x)d\nu(x) \;\;\text{ for every  subharmonic function} \;\;\varphi\in L^1(U),
\end{equation}
\begin{equation} \label{eq:pot vanish outside U equiv}
    \mu \leq_{\operatorname{SH}}\nu \quad \text{ and }\quad  v:=\Delta^{-1}(\nu -\mu)\equiv 0\quad \text{outside}\quad  U,
\end{equation}
and
\begin{equation}\label{eq:rel subharmonic equiv}
    \mu \leq_{\operatorname{SH}} \nu\;\;\; \text{with respect to } U.
\end{equation}
 Moreover, if one removes the assumption that $U=(\overline{U})^{\circ}$, then \eqref{eq:subharmonic defi equiv L1} and \eqref{eq:pot vanish outside U equiv} are still equivalent, and they both imply \eqref{eq:rel subharmonic equiv}.
\end{proposition}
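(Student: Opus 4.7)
The plan is to establish (A) $\Leftrightarrow$ (B) without topological hypotheses, (C) $\Rightarrow$ (B) under $U = (\overline U)^\circ$, and (B) $\Rightarrow$ (C) in full generality; here I write (A), (B), (C) for the three conditions \eqref{eq:subharmonic defi equiv L1}, \eqref{eq:pot vanish outside U equiv}, \eqref{eq:rel subharmonic equiv}, respectively. The equivalence (A) $\Leftrightarrow$ (B) and the implication (C) $\Rightarrow$ (B) will follow directly from Lemma \ref{lem:integration by parts}, while (B) $\Rightarrow$ (C) is the main analytic step.

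For (A) $\Rightarrow$ (B), I would observe that any smooth subharmonic function on an open neighborhood of $\overline U$ restricts to an element of $L^1(U)$ (as $U$ is bounded), so (A) immediately yields $\mu \leq_{\sh} \nu$ in the sense of Definition \ref{def:subharmonic ordering}. To obtain $v \equiv 0$ outside $U$, I would test (A) against $\pm \mathcal{N}(\cdot - x_0)$ for each $x_0 \in \R^d \setminus U$: both functions are harmonic on $U$ (hence subharmonic) and belong to $L^1(U)$ by local integrability of the Newtonian potential, and the two resulting inequalities combine to force $v(x_0) = \int \mathcal{N}(y - x_0)(d\nu - d\mu)(y) = 0$. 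For (B) $\Rightarrow$ (A), the ordering $\mu \leq_{\sh} \nu$ together with approximation of $\mathcal{N}$ by smooth subharmonic functions (as in the proof of Lemma \ref{lem:integration by parts} (ii)) gives $v \geq 0$, so (B) places us in the setting of Lemma \ref{lem:integration by parts} (i); the Green identity \eqref{eq:integration by parts} then produces
\[
\int \psi \, d\nu - \int \psi \, d\mu \;=\; \int v(x)\,\Delta\psi(dx) \;\geq\; 0
\]
for every subharmonic $\psi \in L^1(U)$, which is (A). The implication (C) $\Rightarrow$ (B) under $U = (\overline U)^\circ$ is then immediate from Lemma \ref{lem:integration by parts} (ii), since any stopping time $\tau \leq \tau^U$ witnessing (C) trivially witnesses the global ordering $\mu \leq_{\sh} \nu$ as well.

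The main obstacle is (B) $\Rightarrow$ (C), which must be established without any topological hypothesis on $U$. My approach is to exploit the continuity of $v$ (from standard Newtonian potential estimates, or Lemma \ref{lem: w quadratic growth}): combined with $v \geq 0$ and $v \equiv 0$ on the open set $\R^d \setminus U$, continuity forces $v = 0$ on $\partial U$, so $v$ restricted to $U$ uniquely solves the Dirichlet problem $-\Delta v = \mu - \nu$ with zero boundary data, and hence agrees with the Green potential $G_U(\mu - \nu)$ of $U$. The non-negativity of $v$ thus translates into the Rost ordering $G_U \mu \geq G_U \nu$ for Brownian motion killed at $\tau^U$; Rost's filling theorem (cf. \cite{rost1971stopping}) applied to this killed process then produces the desired stopping time $\tau \leq \tau^U$ with $W_0 \sim \mu$ and $W_\tau \sim \nu$, which is exactly (C). The technical delicacy is ensuring applicability of the filling theorem for arbitrary bounded open $U$, which may alternatively be handled via a direct Chacon--Walsh balayage construction.
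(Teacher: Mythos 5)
Your treatment of the equivalence \eqref{eq:subharmonic defi equiv L1} $\Leftrightarrow$ \eqref{eq:pot vanish outside U equiv} and of the implication \eqref{eq:rel subharmonic equiv} $\Rightarrow$ \eqref{eq:subharmonic defi equiv L1} under $U=(\overline U)^\circ$ coincides with the paper's: testing against $\pm\mathcal{N}(\cdot-x_0)$ in one direction and invoking Lemma \ref{lem:integration by parts} in the other. Where you genuinely diverge is the main step \eqref{eq:pot vanish outside U equiv} $\Rightarrow$ \eqref{eq:rel subharmonic equiv}. You propose to identify $v|_U$ with the Green potential $G_U(\mu-\nu)$ and then apply Rost's filling theorem to Brownian motion killed at $\tau^U$. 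This is workable in principle, but it imports nontrivial classical potential theory for arbitrary bounded open $U$: the identification of $v$ with the Green potential requires a generalized maximum principle modulo the polar set of irregular boundary points, and one must check that Rost's theorem for the killed (transient) process produces a genuine (possibly randomized) stopping time with $\mathbb{E}[\tau]<\infty$. You flag exactly this as the ``technical delicacy,'' and as written it is the one incomplete step of your argument. The paper avoids all of it with a one-line observation already available from the preliminaries: the optimizer $\tau^*$ of $\mathcal{Q}(\mu,\nu)$ is, by \eqref{optimal_stopping_time}, the hitting time of $\{w=0\}$ for the obstacle-problem solution $w$ with $w(0,\cdot)=v$; since $w(t,\cdot)\le v$ and $v\equiv 0$ outside $U$, the barrier contains $[0,\infty)\times(\R^d\setminus U)$, whence $\tau^*\le\tau^U$ directly. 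So: same skeleton for three of the four implications, a different and heavier (and not fully closed) route for the fourth; if you complete your step, you get a self-contained potential-theoretic proof independent of the obstacle-problem machinery, but the paper's argument is both shorter and already rigorous given \eqref{optimal_stopping_time}.
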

\begin{proof} Assume first that \eqref{eq:subharmonic defi equiv L1} holds. Since $U$ is bounded, it follows that $\mathcal{N}(\cdot - x_0) \in L^1(U)$ for every $x_0 \in \mathbb{R}^d$. 
If moreover $x_0 \in U^c$, then $\mathcal{N}(x - x_0)$ is harmonic in $U$. Thus, testing \eqref{eq:subharmonic defi equiv L1} against $\pm \mathcal{N}(x - x_0)$, we get
\[
    \int_{\mathbb{R}^d} \mathcal{N}(x - x_0)\, d\mu(x)
    = \int_{\mathbb{R}^d} \mathcal{N}(x - x_0)\, d\nu(x),
\]
that is, $v(x_0) = 0$. This proves that $v$ vanishes outside $U$, so \eqref{eq:pot vanish outside U equiv} holds. 

Assume now \eqref{eq:pot vanish outside U equiv}. As in the proof of Lemma \ref{lem:integration by parts}, we see from \eqref{eq:subharmonic defi} that $v\geq 0$. Then, by Lemma \ref{lem:integration by parts} (i), \eqref{eq:subharmonic defi equiv L1}
holds. Letting $\tau^*$ be the minimizer of \eqref{Sko} we see by \eqref{optimal_stopping_time} that $\tau^*\leq \tau^U$. This proves that $\mu \leq_{\operatorname{SH}}\nu$ with respect to $U$, and thus \eqref{eq:rel subharmonic equiv} holds. We have thus far shown that \eqref{eq:subharmonic defi equiv L1} and \eqref{eq:pot vanish outside U equiv} are equivalent, and they both imply \eqref{eq:rel subharmonic equiv}.

Finally, assume that $U=(\overline{U})^{\circ}$ and \eqref{eq:rel subharmonic equiv} holds. Then, \eqref{eq:subharmonic defi equiv L1} follows from Lemma \ref{lem:integration by parts} (ii).

\end{proof}

\section{Programmed fractal freezing}
\label{sec:fractalfreezing}

Whenever $\eta$ is a weak solution to \eqref{eq:stefan intro}, we recall that the freezing time $s:\R^d\to [0,\infty]$ is defined by \eqref{eq:s defi intro}. The freezing set at each time can then be defined as
\begin{equation}\label{initial_freezing}
    F_t=\{x\in U: s(x)=t\} \quad t\in [0,\infty).
\end{equation}
While $F_0$ can only be a measure zero set (see Definition \ref{def:weak solution}), it can act as a barrier for Brownian particles that affects the pointwise dynamics of ice in significant ways, and is thus worth our attention. We will first see in Section \ref{subsection:existence} that, for \emph{arbitrary initial data}, $F_0$ can contain any prescribed closed null set. In Section \ref{subsection:gluing}, we take advantage of this observation, together with a programmed gluing result (Proposition~\ref{cor:programmed_gluing_Stefan}) to obtain solutions for which $F_t$ includes any prescribed null set at any positive time $t>0$, and prove Theorem \ref{thm: fractal freezing}.

\subsection{Existence result and the geometry of initial freezing}

\label{subsection:existence}

In this section, we study the generic nature of the set $F_0$ given by \eqref{initial_freezing}. We begin by obtaining an existence result for rough domains with initial data below the critical supercooling threshold, which removes the Lipschitz assumption on $U$ from \cite[Thm. 1.2]{choi2024existence}.  Roughly speaking, the main observation is that, by Proposition \ref{prop:eulerian}, there is a correspondence between solutions to \eqref{eq:stefan intro} and measurable sets $\Sigma \subset U$ such that $\mu\leq_{\operatorname{SH}}\chi_{\Sigma}$ with respect to $U$.
\begin{proposition}[Existence for the supercooled Stefan problem in bounded domains]\label{prop: existence} Let $U\subset \R^d$ be a bounded open set. Assume that  $0\leq \mu \leq 1-\delta$ for some $\delta\in(0,1)$ and $U=\{\mu>0\}$.

Then there exists a weak solution $\eta$ to \eqref{eq:stefan intro}. 
\end{proposition}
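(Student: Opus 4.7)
The plan is to reduce to the Lipschitz case of Proposition \ref{prop:eulerian}(b)--(c) via an interior approximation of $U$. The guiding principle is the correspondence established earlier: a weak solution to \eqref{eq:stefan intro} with initial datum $\mu$ and transition zone $\Sigma \subset U$ is produced, via the associated active particle distribution, whenever $\mu \leq_{\operatorname{SH}} \chi_{\Sigma}$ with respect to $U$ and $\Delta^{-1}(\chi_{\Sigma} - \mu) \equiv 0$ outside $U$. Assuming without loss of generality that $U = (\overline{U})^{\circ}$ (obtained by replacing $U$ with its topological saturation, which changes neither $\mu$ nor the problem since $\mu$ is supported in $U$), the second condition is automatic from the first by Proposition \ref{prop:subharmonic equivalent}. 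Thus the entire task is to exhibit a measurable $\Sigma \subset U$ with $\mu \leq_{\operatorname{SH}} \chi_{\Sigma}$ relative to $U$; the hypothesis $|\{\mu = 1\}| = 0$ required by Proposition \ref{prop:eulerian}(c) is automatic since $\mu \leq 1-\delta$.

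To construct such a $\Sigma$, I would exhaust $U$ from inside by a sequence of bounded smooth (in particular Lipschitz) open sets $U_n$ with $\overline{U_n} \subset U_{n+1}$ and $|U \setminus U_n| \to 0$, and set $\mu_n := \max(\mu \chi_{U_n}, \varepsilon_n \chi_{U_n})$ with $\varepsilon_n \searrow 0$ chosen so small that $\mu_n \leq 1-\delta/2$ for large $n$. Then $\{\mu_n > 0\} = U_n$, $|\{\mu_n = 1\}| = 0$, and $\mu_n \to \mu$ pointwise a.e.\ and in every $L^p(\R^d)$; moreover $\mu_n \in \mathcal{S}_{\delta/2}(U_n)$ by taking $\nu = \mu_n$ in Definition \ref{def:Sdelta}. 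Applying Proposition \ref{prop:eulerian}(b) on each $U_n$ with a fixed smooth strictly superharmonic weight yields indicator targets $\chi_{\Sigma_n}$ with $\Sigma_n \subset U_n$ and $\mu_n \leq_{\operatorname{SH}} \chi_{\Sigma_n}$ with respect to $U_n$, as well as Skorokhod-optimal stopping times $\tau_n \leq \tau^{U_n} \leq \tau^U$ with $W_0 \sim \mu_n$ and $W_{\tau_n} \sim \chi_{\Sigma_n}$.

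The main obstacle is to pass to the limit in $n$ while preserving the indicator structure of the targets. By Lemma \ref{lem: w quadratic growth} together with standard parabolic obstacle regularity, the associated potentials $w_n$ of Proposition \ref{prop:eulerian}(a) are uniformly bounded in $L^\infty$ with $C^{1,\alpha}_x \cap C^{0,1}_t$ estimates, so along a subsequence $w_n \to w$ locally uniformly in $C^{1,\beta}_{\operatorname{loc}}$, and $\chi_{\Sigma_n} \rightharpoonup^{*} f$ in $L^\infty$ for some $f \in L^\infty(U; [0,1])$ with $\int f = \int \mu$. Passing to the limit in the probabilistic definition \eqref{eq:subharmonic defi stop} yields $\mu \leq_{\operatorname{SH}} f$ with respect to $U$, but $f$ need not be an indicator. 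To upgrade $f$ to an actual set, I would invoke the subharmonically generated set theory of \cite[Sec. 6]{kim2024stefan}: under the subcriticality $\mu \leq 1-\delta$, a Rost-type filling argument produces from $f$ (or equivalently, from any limit stopping time $\tau \leq \tau^U$ embedding $\mu$ into $f$) a new stopping time whose endpoint distribution is an indicator $\chi_\Sigma$ with $\Sigma \subset U$ and $\mu \leq_{\operatorname{SH}} \chi_\Sigma$ with respect to $U$. Once $\Sigma$ is in hand, the first paragraph completes the proof. I expect this bang-bang rearrangement in the non-Lipschitz setting to be the only point requiring care beyond the content of \cite{choi2024existence}, and the reason the approximation from inside is useful is precisely that the subcriticality slack $\delta>0$ survives the approximation, providing the room needed for such a rearrangement.
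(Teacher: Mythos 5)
There is a genuine gap, and it sits exactly where you flag it yourself: the final ``Rost-type filling argument'' that upgrades the weak-$*$ limit $f\in L^\infty(U;[0,1])$ to an indicator $\chi_\Sigma$ with $\mu\leq_{\operatorname{SH}}\chi_\Sigma$ relative to the rough domain $U$. That bang-bang rearrangement in a non-Lipschitz domain is precisely the content of the proposition being proved; it is not available in \cite{kim2024stefan} or \cite{choi2024existence} for such $U$, and you do not supply it. As written, the argument delivers only $\mu\leq_{\operatorname{SH}} f$ for a fractional target, which is strictly weaker than what is needed (the transition zone of a weak solution must be a set, by Proposition \ref{prop:eulerian}(d)). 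A secondary but real flaw is the opening ``WLOG $U=(\overline{U})^{\circ}$'': if $U=B_1\setminus K$ with $K$ a fat Cantor set, then $(\overline{U})^{\circ}=B_1$ and $|B_1\setminus U|>0$, so solving on the saturation produces an $\eta$ that is positive on $K$ at positive times, violating Definition \ref{def:weak solution}(b) and changing the initial-data condition; relatedly, Remark \ref{rem:counterexample subharmonic L1} shows that the relative order alone does not imply that $\Delta^{-1}(\chi_\Sigma-\mu)$ vanishes outside $U$ for non-saturated domains, so this implication cannot be obtained for free.

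The paper's proof avoids both difficulties with a different, purely local decomposition that requires no limit at all: write $U=\bigcup_i\overline{Q_i}$ as a disjoint union of closed dyadic cubes, note that $\mu\chi_{Q_i}\in\cS_{\delta}(Q_i)$ on each Lipschitz cube $Q_i$ (here $\mu\leq 1-\delta$ is used), and apply Proposition \ref{prop:eulerian}(b) on each cube to get indicator targets $\chi_{\Sigma_i}$ with $\mu|_{Q_i}\leq_{\operatorname{SH}}\chi_{\Sigma_i}$ relative to $Q_i$. Setting $\Sigma=\bigcup_i\Sigma_i$, the condition \eqref{eq:subharmonic defi equiv L1} for the pair $(\mu,\chi_\Sigma)$ on $U$ follows by summing over the cubes, since any subharmonic $\phi\in L^1(U)$ restricts to a subharmonic $L^1$ function on each $Q_i$; Proposition \ref{prop:subharmonic equivalent} (in the form that does not require $U=(\overline{U})^{\circ}$) and Remark \ref{rem:weak sol non lipschitz} then give both the relative order and the vanishing of the potential outside $U$. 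Because each cube already yields a genuine set and the cubes are disjoint, the union is automatically an indicator, so no compactness or rearrangement step is ever needed. If you want to salvage your exhaustion strategy, you would have to either prove the bang-bang upgrade in rough domains directly or restructure so that each approximating piece contributes a set on a region that is never revisited --- which is in effect what the cube decomposition accomplishes.
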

\begin{proof}By Remark \ref{rem:weak sol non lipschitz} and Proposition \ref{prop:subharmonic equivalent}, it is enough to show that there exists a measurable set $\Sigma \subset U$ such that the pair $(\mu,\chi_{\Sigma})$ satisfies \eqref{eq:subharmonic defi equiv L1}. 
Given that $U$ is open, we may write
\begin{equation*}U=\bigcup_{i=1}^\infty \overline{Q_i}, \end{equation*}
where the $Q_i$ are open, disjoint dyadic cubes. Since, for each $i$, $Q_i$ is a bounded Lipschitz domain and $\mu \chi_{Q_i}\in \cS_{\delta}(Q_i)$, Proposition \ref{prop:eulerian} (b) implies that there exist measurable sets $\Sigma_i \subset Q_i$ such that
\begin{equation} \label{eq: Qi SH ord}
    \mu |_{Q_i}\leq_{\text{SH}}\chi_{\Sigma_i}
\end{equation}
in the subharmonic order with respect to $Q_i$. Let $\Sigma =\bigcup_{i=1}^{\infty}\Sigma_i$, and let $\nu =\chi_{\Sigma}$. Then, for any subharmonic function $\phi\in L^1(U)$, we have, by \eqref{eq: Qi SH ord} and Proposition \ref{prop:subharmonic equivalent}, that
\begin{equation*} 
 \int_{U} \phi d\mu =\sum_{i=1}^{\infty}\int_{Q_i}\phi d\mu \leq \sum_{i=1}^{\infty}\int_{\Sigma_i}\phi=\int_{U} \phi d\nu,  
\end{equation*}
which proves that $(\mu,\chi_{\Sigma})$ satisfy \eqref{eq:subharmonic defi equiv L1}. \end{proof}
\begin{remark}\label{rem:existence general} While this is not needed for our purposes, the conclusion of Proposition \ref{prop: existence} holds for general $\mu \in \cS_{\delta}(U)$ such that $|\{\mu=1\}|=0$ (without assuming that $\mu \leq 1-\delta$), with a refined version of the same proof, if one either: (i) assumes that $U=\overline{U}^{\circ}$ or (ii) replaces the assumption $\mu \in \cS_{\delta}(U)$ by the slightly stronger condition that the pair $(\mu,\nu)$ in Definition \ref{def:Sdelta} satisfy \eqref{eq:subharmonic defi equiv L1}. This technicality arises due to the non-equivalence of \eqref{eq:subharmonic defi equiv L1}--\eqref{eq:rel subharmonic equiv} for ``punctured'' domains.    
\end{remark}

Next, we show that the closed null set $F_0$ of points that freeze at $t=0$ can be arbitrarily irregular, in the sense that, for any $\mu$, it can contain any given closed null set (and in particular, any closed fractal, including those of dimension $s\in (d-1,d]$). 

\begin{proposition}[Initial freezing can contain any closed null set] \label{prop: initial freezing} Let $U, \mu$ be as given in Proposition~\ref{prop: existence}, and let $F\subset U$ be any closed set of measure zero. Then there exists a weak solution $\eta$ to \eqref{eq:stefan intro} such that $F \subset F_0$, where $F_0$ is the set of initial freezing of $\eta$. \end{proposition}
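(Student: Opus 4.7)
The strategy is to solve the Stefan problem on the open subdomain $\tilde U := U \setminus F$ and to verify that the resulting weak solution, once viewed on $U$, has $F$ contained in its initial freezing set. The whole plan hinges on the fact that Proposition~\ref{prop: existence} is already formulated for arbitrarily rough bounded open sets, which is exactly what makes the construction possible: $F$ may be closed but very badly shaped (even of Hausdorff dimension $d$), yet $\tilde U$ remains a perfectly admissible open set for our existence theory.

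First I would set $\tilde\mu := \mu \chi_{\tilde U}$. Since $F$ is closed, $\tilde U$ is open; since $|F|=0$ and $\mu>0$ on $U$, we have $\{\tilde\mu>0\}=\tilde U$ and $\tilde\mu \leq 1-\delta$. Applying Proposition~\ref{prop: existence} to the pair $(\tilde U,\tilde\mu)$ yields a weak solution $\tilde\eta$ of \eqref{eq:stefan intro} with initial data $\tilde\mu$, whose support lies in $\tilde U \times (0,\infty)$. I will always identify $\tilde\eta$ with its LSC representative from Lemma~\ref{LSC_eta}, so that $\{\tilde\eta(t,\cdot)>0\} = \{w(t,\cdot)>0\}$ for each $t>0$, where $w$ is the associated potential variable.

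Next I would verify that $\tilde\eta$ is in fact a weak solution on the (larger) domain $U$ with initial data $\mu$, in the sense of Definition~\ref{def:weak solution}. Condition (a) is unchanged. For condition (b), $\tilde\eta$ vanishes outside $\tilde U \times (0,\infty) \subset U \times (0,\infty)$, and $\limsup_{t\downarrow 0}\{\tilde\eta(t,\cdot)>0\} = \tilde U$ up to a null set, which equals $U$ up to a null set since $|F|=0$. For condition (c), the weak form \eqref{eq:weakform} only sees $\mu$ and $\chi_U$ through their a.e. values; replacing $(\tilde\mu,\chi_{\tilde U})$ by $(\mu,\chi_U)$ on the right-hand side changes nothing, since the two pairs agree off the null set $F$. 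Hence $\tilde\eta$ is a weak solution of \eqref{eq:stefan intro} with initial data $\mu$ on $U$.

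Finally, for the pointwise inclusion $F \subset F_0$, the LSC representative is essential: since the support of $\tilde\eta$ lies in $\tilde U \times (0,\infty)$, we have $\tilde\eta(t,x)=0$ for every $x \in F$ and every $t>0$. From the definition \eqref{eq:s defi intro} of the freezing time, this forces $s(x)=0$ for every $x \in F$, giving $F \subset F_0$. I do not see any serious obstacle in this argument; the only delicate point is the interplay in step three between the two slightly different notions of ``initial data'' (on $U$ versus on $\tilde U$), which is resolved by the observation that both the initial trace condition and the weak formulation are stated modulo null sets, so the measure-zero discrepancy caused by $F$ is harmless.
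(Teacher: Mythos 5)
Your proposal is correct and follows essentially the same route as the paper: solve on $\tilde U = U\setminus F$ via Proposition~\ref{prop: existence}, note that the resulting $\tilde\eta$ is still a weak solution on $U$ with initial data $\mu$ since $|F|=0$, and conclude $F\subset F_0$. The only point worth making explicit is that the pointwise (not merely a.e.) vanishing of the LSC representative on $F\times(0,\infty)$ comes from the fact that the potential $w(0,\cdot)=\Delta^{-1}(\chi_\Sigma-\tilde\mu)$ vanishes identically outside $\tilde U$ by Proposition~\ref{prop:subharmonic equivalent}, which is exactly what the paper's citation of Proposition~\ref{prop:eulerian} supplies.
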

\begin{proof}
 Let $\tilde{U}=U\backslash F$. Then, by Proposition \ref{prop: existence}, there exists a weak solution $\eta$ to \eqref{eq:stefan intro} with initial data $\mu \chi_{\tilde{U}}$. By Proposition \ref{prop:eulerian} and Definition \ref{def:weak solution}, we have, for the associated freezing time $s$,
 \begin{equation}\label{eq: spt eta fasf1}
     \{s>0\}=\limsup_{t \downarrow0}\{\eta(t,\cdot)>0\}\subset \tilde{U},\;\; |\tilde{U} \cap \{s=0\}|=0.
 \end{equation}
 
 Since $|U\backslash \tilde{U}|=0$, $\eta$ is also a weak solution to \eqref{eq:stefan intro} with initial data $\mu$, and, by \eqref{eq: spt eta fasf1}, $F\subset F_0$.
\end{proof}

The next example shows that the freezing time $s(x)$ may be discontinuous in the interior of $U$, and nucleation (i.e. spontaneous freezing away from the ice boundary) can occur at the initial time. Indeed, in the following construction, there exists $x_0\in U$ that freezes at $t=0$, and $x_0$ may be approximated by entire open annuli of points that \emph{never} freeze. The proof will be postponed to Section \ref{subsec:waiting}, since the construction involves radial maximal solutions, which have not yet been discussed. This proposition will not be used in the proof of any result.  

\begin{proposition}[Initial nucleation and waiting time]\label{prop:t=0 nucleation} Let $U=B_1$ and assume that $\mu\in \cS_{\delta}(U)$ is radial, with $\mu\leq1-\delta$ for some $\delta>0$. Then there exists a solution to \eqref{eq:stefan intro} with freezing time $s$ such that
\begin{equation*}
s(0)=0,  \quad B_{r^{(2)}_n} \backslash \overline{B}_{r_n^{(1)}} \subset \{s=\infty\},
\end{equation*}
where
\begin{equation*}
    0<r^{(1)}_n<r_n^{(2)}<1, \quad \lim_{n\to\infty}r^{(1)}_n=\lim_{n\to\infty}r^{(2)}_n=0.
\end{equation*}

\end{proposition}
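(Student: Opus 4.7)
The plan is to construct the desired weak solution by gluing radial maximal solutions on a countable family of concentric annuli that shrink to the origin, in the spirit of Proposition \ref{prop: existence}. Frozen barriers between annuli accumulate at the origin and force $s(0)=0$, while the strict subcriticality $\mu\leq 1-\delta$ guarantees that the transition zone of the radial maximal solution on each annulus leaves behind a nondegenerate water shell on which $s\equiv\infty$.

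Fix a strictly decreasing sequence $r_n\downarrow 0$ in $(0,1)$, set $r_{-1}:=1$, and for $n\ge 0$ let $A_n:=B_{r_{n-1}}\setminus\overline{B}_{r_n}$. The set $F:=\{0\}\cup\bigcup_{n\ge 0}\partial B_{r_n}$ is closed and Lebesgue-null. On each $A_n$, $\mu_n:=\mu\chi_{A_n}$ is radial, bounded by $1-\delta$, and has $\{\mu_n>0\}=A_n$, so $\mu_n\in\cS_\delta(A_n)$. The forthcoming radial structure theorem (Proposition \ref{prop:radial transition zone}) then furnishes a radial maximal solution $\eta_n$ on $A_n$ with radial transition zone $\Sigma_n\subset A_n$. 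Since $|\Sigma_n|=\int\mu_n\le(1-\delta)|A_n|<|A_n|$, the complement $A_n\setminus\Sigma_n$, being radial, contains a nondegenerate open shell $S_n=B_{\rho_n^+}\setminus\overline{B}_{\rho_n^-}$ with $r_n\le\rho_n^-<\rho_n^+\le r_{n-1}$.

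For the gluing, set $\Sigma:=\bigsqcup_n\Sigma_n$. For any subharmonic $\varphi\in L^1(B_1)$, Proposition \ref{prop:subharmonic equivalent} applied on each $A_n$ gives $\int_{A_n}\varphi\,d\mu_n\le\int_{\Sigma_n}\varphi$, and summing over $n$ (using $|F|=0$) yields $\int\varphi\,d\mu\le\int_\Sigma\varphi$. Hence $(\mu,\chi_\Sigma)$ satisfies \eqref{eq:subharmonic defi equiv L1}, so Proposition \ref{prop:subharmonic equivalent} upgrades this to $\mu\le_{\sh}\chi_\Sigma$ with respect to the Lipschitz domain $B_1$, and Proposition \ref{prop:eulerian}(c) (using $|\{\mu=1\}|=0$) identifies the associated active particle distribution $\eta$ as a weak solution of \eqref{eq:stefan intro}. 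To identify $\eta$ locally with the $\eta_n$, let $w_n$ denote the potential variable of $\eta_n$, which vanishes outside $A_n$ at all times by Proposition \ref{prop:subharmonic equivalent} (using the Markov property of the Skorokhod stopping time on $A_n$ to obtain $(\mu_n)_t\le_{\sh}\chi_{\Sigma_n}$ with respect to $A_n$). Lemma \ref{lem: w quadratic growth} at $\partial A_n$ then shows $w_n$ and $\nabla w_n$ vanish on $\partial A_n$, so $w_{\mathrm{glue}}:=\sum_n w_n$ is globally $C^{1,\alpha}$, has initial data $\Delta^{-1}(\chi_\Sigma-\mu)$, and satisfies the obstacle problem \eqref{obstacle_problem_123}; by uniqueness, $w_{\mathrm{glue}}=w$, so $\eta=\eta_n$ on $(0,\infty)\times A_n$.

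Finally, since $0\notin\overline{A_n}$ for any $n$, $w(0,0)=\sum_n w_n(0,0)=0$; the monotonicity $\partial_t w=-\frac{1}{2}\eta\le 0$ (Proposition \ref{prop:eulerian}(a)) then forces $w(t,0)=0$ for all $t\ge 0$, and Lemma \ref{LSC_eta} yields $\eta(t,0)=0$ and $s(0)=0$. For each $n\ge 1$, Theorem \ref{thm:universality} (empty initial freezing in the radial case with $\delta>0$) gives $\{s_n=0\}\cap A_n=\emptyset$, so $S_n\subset A_n\setminus\Sigma_n$ satisfies $s_n\equiv\infty$; since $\eta=\eta_n$ on $A_n$, also $s\equiv\infty$ on $S_n$. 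Setting $r_n^{(1)}:=\rho_n^-$ and $r_n^{(2)}:=\rho_n^+$ yields the required sequences with $r_n^{(1)}, r_n^{(2)}\to 0$. The main obstacle is the identification $w_{\mathrm{glue}}=w$ in the third paragraph, which crucially relies on the $C^{1,\alpha}$ pasting via the quadratic decay of Lemma \ref{lem: w quadratic growth} to ensure the glued potential is a valid distributional solution of the obstacle problem across the frozen barriers $\partial B_{r_n}$.
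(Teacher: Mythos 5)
Your proof is correct and follows essentially the same strategy as the paper: decompose $B_1$ into annuli shrinking to the origin, take the radial maximal solution on each (whose transition zone is two outer annuli with no initial freezing, by Proposition \ref{prop:radial transition zone}), and glue via the subharmonic-order summation of Proposition \ref{prop: existence}. Your third paragraph merely spells out, via the $C^{1,\alpha}$ pasting of the potentials, the identification of the glued solution with the local ones that the paper leaves implicit in the phrase ``by the arguments of Proposition \ref{prop: existence}.''
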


\subsection{Markovian gluing and programmed freezing at positive times} \label{subsection:gluing}
 
We now show how to construct weak solutions to \eqref{eq:stefan intro} with a prescribed $F_t$ at any positive time $t> 0$ by using the Markov property of Brownian motion. Let us first recall the definition of the hitting time.\smallskip

\begin{definition}[Hitting time]
Let $R \subset [0,\infty) \times \mathbb{R}^d$ be a measurable set and let $(X_t)_{t \ge 0}$ be a stochastic process.  
The \emph{first hitting time} of $(t,X_t)$ to the set $R$ is defined by $\tau := \inf\{ t \ge 0 : (t, X_t) \in R \}$. For brevity, we also say that $\tau$ is the first hitting time of $X_t$ to $R$.
\end{definition}

We require the following result from \cite[Rem. 6.19]{beiglbock2017optimal}, which implies that the optimizer of $\mathcal{Q}(\mu,\nu)$ is characterized as the unique admissible stopping time that is also the first hitting time to a closed, nondecreasing set.

\begin{proposition}
    [\cite{beiglbock2017optimal}] \label{converse_OSP}  Let $0\leq \mu,\nu\in L^{\infty}(\R^d)$ be compactly supported,  with $\mu \leq_{\textnormal{SH}} \nu$. Suppose that there exists $s^*\in \operatorname{LSC}(\R^d;[0,\infty])$ such that the first hitting time
$\tau^*$ of
$(t,W_t)$ to $R^*:= \{(t,x): t\geq s^*(x)\}$ 
satisfies the admissibility condition  $W_0\sim \mu$, $W_{\tau^*}\sim \nu$, $\mathbb{E}[(\tau^*)^2]<\infty$.
Then  $\tau^*$ is the unique  optimizer of $\mathcal{Q}(\mu,\nu)$.
\end{proposition}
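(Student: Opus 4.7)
The plan is to identify $\tau^*$ with the unique Root-type optimizer of $\mathcal{Q}(\mu,\nu)$ already supplied by the obstacle problem characterization \eqref{obstacle_problem_123}--\eqref{optimal_stopping_time}. Let $w$ denote the solution of \eqref{obstacle_problem_123}, and let $\tilde\tau$ be the first hitting time of $(t,W_t)$ to $\{w=0\}$; by \eqref{optimal_stopping_time}, $\tilde\tau$ is the unique optimizer of $\mathcal{Q}(\mu,\nu)$. Moreover, the representation $w(t,x) = \tfrac{1}{2}\int_t^\infty \eta(s,x)\,ds$ with $\eta \ge 0$ from Proposition~\ref{prop:eulerian}~(a) shows that $t \mapsto w(t,x)$ is nonincreasing, so $\{w=0\}$ is itself a closed, time-nondecreasing barrier of the same form as $R^*$. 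It therefore suffices to show $\tau^* = \tilde\tau$ almost surely.

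The next step is a comparison of Newtonian potentials. Let $\mu_t^*$ denote the law of $W_{t\wedge\tau^*}$ and set $v^*(t,x) := \Delta^{-1}(\nu - \mu_t^*)$. An It\^o computation combined with the fact that $\tau^*$ is a \emph{first} hitting time to $R^*$ yields $v^* \ge 0$, $v^* = 0$ on $R^*$, and shows that $v^*$ satisfies the same PDE \eqref{obstacle_problem_123} as $w$ in the weak sense, with the same initial data $\Delta^{-1}(\nu-\mu)$. Uniqueness for the obstacle problem then forces $v^* \equiv w$, and in particular $\mu_t^*$ agrees with the law of $W_{t\wedge\tilde\tau}$ for every $t \ge 0$.

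The main obstacle is promoting this marginal identity to the pathwise statement $\tau^* = \tilde\tau$. I would handle this via a Loynes-type intersection argument: the set $R^\cap := R^* \cap \{w=0\}$ is again a closed, time-nondecreasing barrier, so its first hitting time $\tau^\cap$ satisfies $\tau^\cap \ge \tau^* \vee \tilde\tau$. Using the strong Markov property together with the marginal identity just established, one verifies that $W_{\tau^\cap} \sim \nu$ and $\mathbb{E}[(\tau^\cap)^2] < \infty$, so $\tau^\cap$ is admissible. Any admissible stopping time with finite second moment satisfies $\mathbb{E}[\tau] = \tfrac{1}{d}\int |x|^2\,d(\nu - \mu)$ by the standard martingale identity for $|W_t|^2 - dt$; hence $\mathbb{E}[\tau^\cap] = \mathbb{E}[\tau^*] = \mathbb{E}[\tilde\tau]$, and combined with $\tau^\cap \ge \tau^*$ and $\tau^\cap \ge \tilde\tau$ almost surely, this forces $\tau^\cap = \tau^* = \tilde\tau$ a.s. The delicate point is the admissibility verification for $\tau^\cap$, since the intersection of two barrier hitting times need not in general embed the same terminal marginal; this step relies critically on the coincidence of the marginals of $W_{t\wedge\tau^*}$ and $W_{t\wedge\tilde\tau}$ produced by the potential-theoretic comparison in the previous step.
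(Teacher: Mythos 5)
First, a point of reference: the paper does not prove this proposition at all --- it is quoted verbatim from \cite[Rem.~6.19]{beiglbock2017optimal}, where the underlying fact is Loynes' uniqueness theorem for barriers (classically proved via the \emph{union} of two barriers). Your PDE route, comparing the potential of the given hitting time with the obstacle-problem solution $w$, is a legitimate alternative architecture, but the step carrying all the weight is under-justified in a way that matters. Write $\mu_t^*$ for the law of $W_{t\wedge\tau^*}$, $\eta^*(s,\cdot)$ for the active occupation density, and $\sigma_t^*(x):=\nu(x)\,\mathbb{P}[\tau^*\le t\mid W_{\tau^*}=x]$ for the stopped mass. The It\^o computation gives $v^*(t,x)=\tfrac12\int_t^\infty\eta^*(s,x)\,ds\ge0$ together with
\[
\partial_t v^*-\tfrac12\Delta v^*=-\tfrac12\,(\nu-\sigma^*_t),
\]
which is \emph{not} equation \eqref{obstacle_problem_123}: to obtain the right-hand side $-\tfrac12\nu\chi_{\{v^*>0\}}$ you would need $\sigma^*_t=\nu\chi_{\{v^*(t,\cdot)=0\}}$, i.e.\ that all of the mass $\nu$ destined for a point has already stopped there as soon as $v^*$ vanishes --- and that is essentially the statement being proved. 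What the barrier geometry actually yields is the complementarity structure: on $\{\tau^*>s\}$ the path has not entered $R^*$, so $\eta^*(s,\cdot)$ is supported in $\{s^*>s\}$ and hence $v^*=0$ on $R^*$; while $\sigma^*_t$ is supported in $\{s^*\le t\}\subset\{v^*(t,\cdot)=0\}$, so that $0\le\sigma^*_t\le\nu$ and $v^*\sigma^*_t=0$. Thus $v^*$ solves the parabolic \emph{variational inequality} (complementarity problem) with data $\Delta^{-1}(\nu-\mu)$, and you must invoke uniqueness for that weaker formulation (a standard energy argument testing the difference of two solutions against itself), not uniqueness for the strong form \eqref{obstacle_problem_123}, to conclude $v^*\equiv w$. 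This is fixable, but it is the crux of the whole proposition and cannot be dispatched as ``an It\^o computation.''

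Second, once $v^*\equiv w$ is in hand, your closing Loynes-type intersection argument is both vague at exactly the point you flag as delicate and, in fact, redundant. From $v^*=0$ on $R^*$ and $v^*\equiv w$ you get $R^*\subset\{w=0\}$, hence $\tau^*\ge\tilde\tau$ pathwise (hitting a subset takes longer), so $R^\cap=R^*$ and there is nothing to verify. Wald's identity gives $\mathbb{E}[\tau^*]=\mathbb{E}[\tilde\tau]=\tfrac1d\int|x|^2\,d(\nu-\mu)<\infty$, and a nonnegative random variable ($\tau^*-\tilde\tau$) with zero mean vanishes, so $\tau^*=\tilde\tau$ a.s.\ and optimality plus uniqueness follow from \eqref{optimal_stopping_time}. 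As written, the ``admissibility verification for $\tau^\cap$'' is precisely the step you could not carry out without first noticing the inclusion $R^*\subset\{w=0\}$, which makes the intersection construction unnecessary.
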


We begin with a useful lemma, which is an immediate consequence of \eqref{optimal_stopping_time} combined with the closedness and monotonicity of the barrier set $\{(t,x):s(x) \leq t\}$.

\begin{lemma} \label{hitting_time_sets} Let $\mu,\nu$ be as given in Proposition~\ref{converse_OSP}, and let $\tau^*$ be the optimizer of $\mathcal{Q}(\mu,\nu)$, and let $s(x)$ be the associated freezing time. Define the {\it active region at time $t$} as 
\begin{equation}\label{active_region} U_{\mu,\nu}(t)  := \{x:s(x)>t\}.  
\end{equation}
Then $\{\tau^* > t\} = \{ W_{t \wedge \tau^*} \in U_{\mu,\nu}(t) \}$ a.s.

 \end{lemma}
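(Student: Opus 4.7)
The plan is to identify $\tau^*$ as the first hitting time of $(r,W_r)$ to the \emph{closed, forward-monotone} barrier $R := \{(r,x): r \ge s(x)\}$, and then read off the equality of events from this characterization. By Proposition~\ref{prop:eulerian}(a) and Lemma~\ref{LSC_eta}, the potential $w$ associated with $(\mu,\nu)$ is continuous and satisfies $\{w>0\}\cap\{t>0\}=\{\eta>0\}\cap\{t>0\}$, and the associated freezing time $s(x)=\inf\{t>0:\eta(t,x)=0\}$ coincides with $\inf\{t\ge 0:w(t,x)=0\}$. Since the positive set of $w$ is nonincreasing in $t$, we have $\{w=0\}=\{(r,x):r\ge s(x)\}=R$, which is closed by continuity of $w$. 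Combining this with \eqref{optimal_stopping_time} yields
\begin{equation*}
\tau^* \;=\; \inf\{r\ge 0:(r,W_r)\in R\} \;=\; \inf\{r\ge 0: s(W_r)\le r\}\quad \text{a.s.,}
\end{equation*}
and moreover, by closedness of $R$, the above infimum is attained whenever $\tau^*<\infty$.

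For the forward inclusion, fix a sample $\omega$ with $\tau^*(\omega)>t$. Then $t\wedge\tau^*=t$, so $W_{t\wedge\tau^*}=W_t$, and by the hitting-time characterization $s(W_r)>r$ for every $r\le t$; taking $r=t$ gives $s(W_t)>t$, i.e.\ $W_t\in U_{\mu,\nu}(t)$.

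For the converse, suppose $W_{t\wedge\tau^*}(\omega)\in U_{\mu,\nu}(t)$, i.e.\ $s(W_{t\wedge\tau^*})>t$, and assume toward a contradiction that $\tau^*\le t$. Then $W_{t\wedge\tau^*}=W_{\tau^*}$, and since $\tau^*$ is a hitting time to the closed set $R$, we have $(\tau^*,W_{\tau^*})\in R$, meaning $s(W_{\tau^*})\le \tau^*\le t$. This contradicts $s(W_{t\wedge\tau^*})>t$, so $\tau^*>t$.

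The argument is essentially bookkeeping once the correct barrier is identified; the only point requiring any care is verifying that $R$ equals $\{w=0\}$ (and in particular is closed and monotone in $t$), for which the continuity of $w$ and the monotonicity of its positive set suffice. No novel estimate is needed.
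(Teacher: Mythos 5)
Your proof is correct and follows exactly the route the paper intends: it identifies $\tau^*$ via \eqref{optimal_stopping_time} as the hitting time of the closed, time-monotone barrier $\{w=0\}=\{(r,x):r\ge s(x)\}$ and reads off the event equality path by path. The paper states the lemma as an immediate consequence of precisely these facts, so your argument is simply a careful write-up of the same proof.
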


Note from \eqref{eq:s defi intro} and Lemma \ref{LSC_eta} one has that $U_{\mu,\nu}(t) = \{x:w(t,x)>0\}$ where $w$ is the associated potential variable. \smallskip

We are now ready to show the main technical result of this section, a \emph{Markovian gluing principle} that will later yield the generic non-uniqueness result of Theorem \ref{thm: fractal freezing}. It states that one can concatenate two optimal stopping times associated with two different Skorokhod problems, and the resulting stopping time is optimal for the corresponding glued problem. The key step below will be to show that, if $R_i$ denote the barriers associated with these Skorokhod problems, then as we will see below, the concatenated stopping time is the hitting time of the glued barrier \eqref{glued_barrier}. By Proposition \ref{converse_OSP}, it therefore suffices to verify that the glued barrier is closed and nondecreasing in time to conclude that this hitting time is optimal for the glued problem. In the context of the Stefan problem, $\nu_0$ below will correspond to the characteristic function of the transition zone of the first solution. Meanwhile, $\nu_1$ will be the characteristic function of the transition zone of the second solution combined with the region that has already frozen, at the gluing time, under the first solution
(see Proposition~\ref{cor:programmed_gluing_Stefan}).  
\begin{proposition}[Markovian gluing principle] \label{gluing_lemma} Let $0 \leq \mu_0,\nu_0,\nu_1 \in L^{\infty}(\R^d)$ be compactly supported, with $\mu_0 \leq_{\textnormal{SH}} \nu_0$, let $\tau_0$ be the optimizer of $\mathcal{Q}(\mu_0,\nu_0)$ with respect to the Brownian motion $W_t$.

For $t \geq 0$ let $\mu_t$ be the law of $W_{t \wedge \tau_0}$, and assume that there exists $t_1 > 0$ such that
 \begin{equation} \mu_{t_1} \leq_{\textnormal{SH}} \nu_1 \label{SH_ordering_glue_1_2}, \end{equation}
  with the property
 \begin{equation} U_{\mu_{t_1}, \nu_1}(0) \subset U_{\mu_0, \nu_0}(t_1).  \label{SH_ordering_glue_2_2} \end{equation}
\noindent Let $\tau_1$ be the optimizer of $\mathcal{Q}(\mu_{t_1},\nu_1)$ with respect to the Brownian motion $\tilde{W}_t := W_{t + (t_1 \wedge \tau_0)}$. Then the stopping time
 \begin{equation} \tau :=  \tau_0 \chi_{ \{ \tau_0 \leq t_1 \} } + ( t_1 +   \tau_1) \chi_{ \{\tau_0 > t_1\} }   \label{optimal_glued_stopping_time} \end{equation} optimizes $\mathcal{Q}(\mu_0,\nu_1)$. 
\end{proposition}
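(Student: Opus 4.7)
The plan is to invoke Proposition \ref{converse_OSP} by constructing an LSC function $s^*: \R^d \to [0, \infty]$ such that $\tau$ is the first hitting time of $(t, W_t)$ to the closed barrier $R^* := \{(t, x) : t \geq s^*(x)\}$, and then verifying the admissibility conditions. Let $s_0, s_1 \in \operatorname{LSC}(\R^d; [0, \infty])$ be the freezing times such that $\tau_i$ is the first hitting time of $(t, W_t)$ (resp.\ $(t, \tilde W_t)$) to $\{(t,x): t \geq s_i(x)\}$, as given by \eqref{optimal_stopping_time}. Using Lemma \ref{hitting_time_sets}, the containment \eqref{SH_ordering_glue_2_2} translates into the pointwise implication
\begin{equation*}
\{s_1 > 0\} \subset \{s_0 > t_1\}, \quad \text{equivalently,} \quad s_0(x) \leq t_1 \;\Rightarrow\; s_1(x) = 0.
\end{equation*}
With this in mind, I set
\begin{equation*}
s^*(x) := \begin{cases} s_0(x), & \text{if } s_0(x) \leq t_1, \\ t_1 + s_1(x), & \text{if } s_0(x) > t_1. \end{cases}
\end{equation*}
A case analysis in $c$ gives $\{s^* > c\} = \{s_0 > c\}$ for $c < t_1$, and $\{s^* > c\} = \{s_1 > c - t_1\}$ for $c \geq t_1$ (in the latter one uses $\{s_1 > c - t_1\} \subset \{s_1 > 0\} \subset \{s_0 > t_1\}$). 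Both sets are open, so $s^*$ is LSC, and $R^*$ is a closed nondecreasing barrier.

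Next I identify $\tau$ with the first hitting time to $R^*$. On $\{\tau_0 \leq t_1\}$, for $t < \tau_0$ one has $s_0(W_t) > t$ and $t_1 + s_1(W_t) \geq t_1 > t$, so $s^*(W_t) > t$; and $s^*(W_{\tau_0}) \leq s_0(W_{\tau_0}) \leq \tau_0$, giving hitting time $\tau_0 = \tau$. The delicate case is $\{\tau_0 > t_1\}$, where I use \eqref{SH_ordering_glue_2_2} essentially. For $t \in (t_1, t_1 + \tau_1)$, the shifted process $\tilde W_{t - t_1}$ has not yet hit $R_1$, so $s_1(\tilde W_{t - t_1}) > t - t_1 > 0$; hence $\tilde W_{t - t_1} \in \{s_1 > 0\} \subset \{s_0 > t_1\}$, and therefore $s^*(W_t) = t_1 + s_1(W_t) > t$. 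Without this inclusion the BM could re-enter the already-frozen region $\{s_0 \leq t_1\}$ after $t_1$, forcing $s^*(W_t) \leq t$ prematurely and breaking the hitting-time characterization; this is the main technical obstacle. At $t = t_1 + \tau_1$, one checks $s^*(W_t) \leq t$ in both cases $s_0(W_t) \leq t_1$ and $s_0(W_t) > t_1$, so the hitting time equals $\tau$.

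It remains to verify admissibility and apply Proposition \ref{converse_OSP}. Trivially $W_0 \sim \mu_0$, and $\mathbb{E}[\tau^2] \leq 3 t_1^2 + 2\mathbb{E}[\tau_1^2] < \infty$. For the marginal $W_\tau \sim \nu_1$, I apply the strong Markov property at the bounded stopping time $t_1 \wedge \tau_0$: for bounded continuous $\phi$ and $\psi(x)$ denoting the expectation of $\phi$ at the $R_1$-hitting time of a BM starting at $x$,
\begin{equation*}
\mathbb{E}[\phi(W_\tau)] = \mathbb{E}\big[\phi(W_{\tau_0})\chi_{\{\tau_0 \leq t_1\}}\big] + \mathbb{E}\big[\psi(W_{t_1}) \chi_{\{\tau_0 > t_1\}}\big].
\end{equation*}
On the other hand, $\int \phi \, d\nu_1 = \int \psi \, d\mu_{t_1}$, and $\mu_{t_1}$ splits as the sum of the sub-probability laws of $W_{\tau_0}\chi_{\{\tau_0 \leq t_1\}}$ and $W_{t_1}\chi_{\{\tau_0 > t_1\}}$. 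On $\{\tau_0 \leq t_1\}$, the inclusion $\{s_0 \leq t_1\} \subset \{s_1 = 0\}$ gives $\tau_1 = 0$ starting from $W_{\tau_0}$, hence $\psi(W_{\tau_0}) = \phi(W_{\tau_0})$, matching the two expressions. Proposition \ref{converse_OSP} then yields that $\tau$ is the unique optimizer of $\mathcal{Q}(\mu_0, \nu_1)$.
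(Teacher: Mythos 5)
Your proof is correct and follows essentially the same route as the paper's: both reduce to Proposition \ref{converse_OSP}, verify $W_\tau\sim\nu_1$ via the strong Markov property together with the key fact that $\{\tau_0\le t_1\}\subset\{\tau_1=0\}$ (a consequence of \eqref{SH_ordering_glue_2_2}), and exhibit $\tau$ as the hitting time of a glued closed, nondecreasing barrier. Your LSC function $s^*$ encodes exactly the paper's glued barrier $R$ in \eqref{glued_barrier}, so the difference is purely notational.
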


\begin{proof} Note first that, since $\mu_0$ and $\nu_0$ have bounded densities, $\mu_{t_1}$ does as well. Moreover, we trivially have $\mu_{t_1}\leq_{\operatorname{SH}}\nu_0$, and thus testing \eqref{def:subharmonic ordering} against convex functions implies that $\mu_t$ is supported in the convex hull of $\operatorname{supp}(\nu_0)$. Additionally, the strong Markov property implies that $\tilde{W}_t$ is a Brownian motion with initial distribution $\mu_{t_1}$, so $\tau_1$ is well-defined.

Since $W_0 \sim \mu_0$ and $\mathbb{E}[\tau^2]\leq \mathbb{E}[\tau_0^2]+\mathbb{E}[(t_1+\tau_1)^2]<\infty,$ Proposition \ref{converse_OSP} implies that, to show that $\tau$ is optimal, it suffices to show that $W_{\tau} \sim \nu_1$ and that $\tau$ is the first hitting time of $(t,W_t)$ to a closed and nondecreasing in time set. 

 First we will show that $W_{\tau} \sim \nu_1$. For any continuous and bounded function $f(x)$, we have
\begin{multline} \mathbb{E}[f(W_{\tau})] = \mathbb{E}[f(W_{\tau_0}) \chi_{ \{\tau_0 \leq t_1 \} }] + \mathbb{E}[f(W_{t_1 + \tau_1}) \chi_{ \{\tau_0 > t_1\} }] 
  \\= \mathbb{E}[f(W_{\tau_0}) \chi_{\{\tau_0 \leq t_1\}}] + \mathbb{E}[f(\tilde{W}_{\tau_1}) \chi_{\{\tau_0 > t_1\}}]. \label{decompose_expectation_123}   \end{multline}

The second term in the right hand side may be written as
 \begin{equation}
      \mathbb{E}[f(\tilde{W}_{\tau_1}) \chi_{\{\tau_0 > t_1\}}]  = \mathbb{E}[f(\tilde{W}_{\tau_1})] - \mathbb{E}[f(\tilde{W}_{\tau_1})\chi_{ \{ \tau_0 \leq t_1 \}}]\label{expectation_decompose_234}.
 \end{equation}  Now from \eqref{SH_ordering_glue_1_2}, \eqref{SH_ordering_glue_2_2}, and \eqref{optimal_stopping_time},
 one has $\tau_1 \leq \tau^{U_{\mu_0,\nu_0}(t_1)}$ (the exit time from $U_{\mu_0,\nu_0}(t_1)$). Hence, Lemma \ref{hitting_time_sets} implies that $\{\tau_0 \leq t_1\} = \{ \tilde{W}_0 \notin U_{\mu_0,\nu_0}(t_1)\}  \subset \{\tau_1=0\}$ a.s. In particular, in view of \eqref{expectation_decompose_234},
\begin{equation} \mathbb{E}[f(\tilde{W}_{\tau_1}) \chi_{\{\tau_0 > t_1\}}]   = \mathbb{E}[f(\tilde{W}_{\tau_1}) ]- \mathbb{E}[f(\tilde{W}_{0}) \chi_{ \{\tau_0 \leq t_1\} } ]. \label{decompose345_expectation}    \end{equation}
 Therefore, we deduce from \eqref{decompose345_expectation}, \eqref{decompose_expectation_123}, and $\tilde{W}_{\tau_1} \sim \nu_1$ that
\[ \mathbb{E}[f(W_{\tau})] = \mathbb{E}[f(\tilde{W}_{\tau_1})] = \int_{\R^d} f(x) d\nu_1(x), \] that is, $W_{\tau} \sim \nu_1$. Applying Proposition \ref{prop:subharmonic equivalent} to a large ball containing $\operatorname{supp}(\mu_0+\nu_1)$, we infer that $\mu_0 \leq_{\operatorname{SH}}\nu_1$.

It remains to show that $\tau$ is the first hitting time of $(t,W_t)$ to a nondecreasing in time closed barrier. For this purpose, let $R_i$ be optimal barriers for $\tau_i$ given by \eqref{optimal_stopping_time}, so that $\tau_0$ is the first hitting time of $(t,W_t)$ to $R_0$, and $\tau_1$ is the first hitting time of $(t,\tilde{W}_t)$ to $R_1$. We define the shifted barrier
\[ \tilde{R}_1 := \{ (t+t_1,x):(t,x) \in R_1 \}, \] and the glued barrier
\begin{equation} R := (R_0 \cap ([0,t_1] \times \R^d)) \cup (\tilde{R}_1 \cap ([t_1,\infty) \times \R^d)). \label{glued_barrier} \end{equation} 

\begin{figure}[h]
    \centering
    \includegraphics[width=0.60\textwidth]{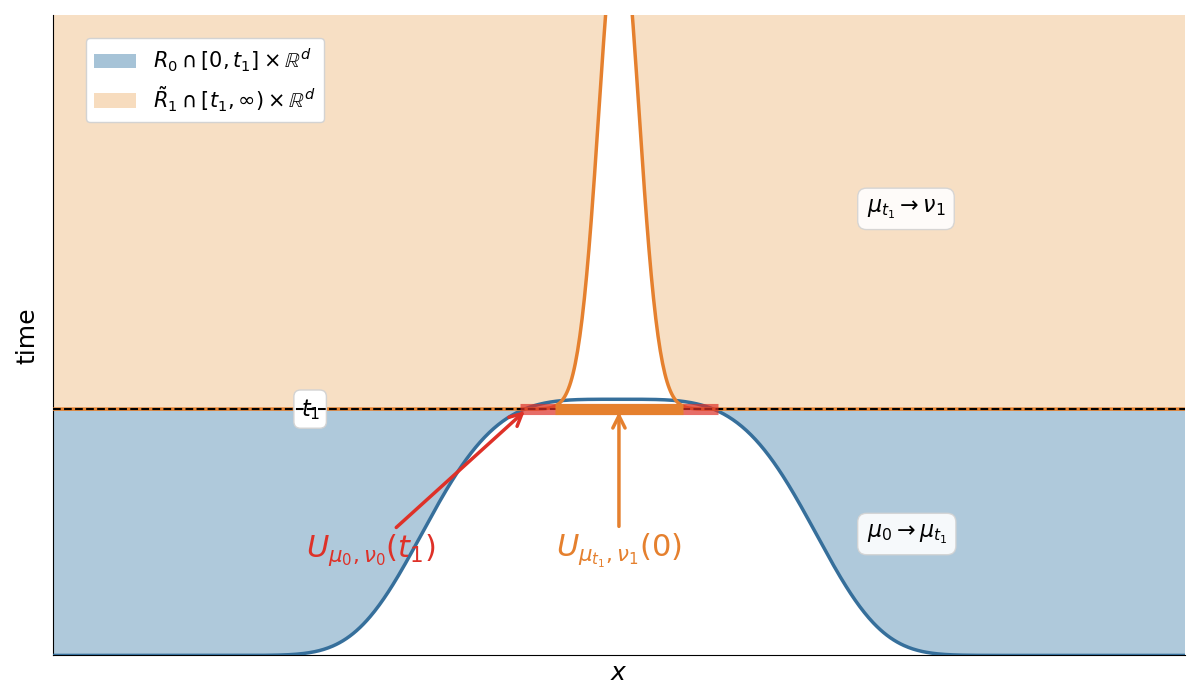}
    \caption{Illustration of the gluing procedure for stopping times at $t_1$. 
    The ordering of the barriers at time $t_1$ implies that the glued barrier is closed and is nondecreasing in time.}
\end{figure}
We first claim that $\tau$ is the first hitting time of $W_t$ to $R$. For a sample $\omega$ such that $\tau_0(\omega)\leq t_1$, $\tau(\omega)=\tau_0(\omega)\leq t_1$ is the first hitting time of $W_t$ to $R_0$, and thus to $R$.

If instead, a sample path satisfies $\tau_0(\omega)  >t_1$, then $\tau(\omega) = t_1 + \tau_1(\omega)$. From $\tau_0(\omega)>t_1$, we have $(t,W_t(\omega)) \notin R_0$ for $t \leq t_1$. In addition, along this sample path, $\tilde{W}_t(\omega) = W_{t + t_1}(\omega)$, so, by definition of $\tau_1$, $\tau_1(\omega)$ is the first hitting time of $(t,W_{t+t_1})$ to $R_1$. Hence, $\tau(\omega)=t_1+\tau_1(\omega)$ is the first hitting time of $W_t$ to $\tilde{R}_1\cap ([t_1,\infty) \times \R^d)$, and thus to $R$.

In summary, for any sample path, we have verified that $\tau$ is the first hitting time of $W_t$ to  $R$. 

Now, recalling the characterization \eqref{optimal_stopping_time} for the barriers $R_i$, \eqref{SH_ordering_glue_2_2} implies that
$R_0 \cap \{t=t_1\}   \subset \tilde{R}_1 \cap \{t=t_1\}$. This inclusion, along with the nondecreasing in time property of both $R_0$ and $R_1$ implies that $R$ is nondecreasing in time. We conclude by observing that $R$ is closed, since it is the union of two closed sets.  \end{proof}

\begin{corollary}[Programmed gluing of active particle distributions]\label{lem:gluing} 

Under the assumptions and notation of Proposition \ref{gluing_lemma},
\begin{equation} \label{eq:glued sol}
    \eta(t,x) :=
    \begin{cases}
        \eta_0(t,x) & t \in (0,t_1 ), \\[0.3em]
        \eta_1(t-t_1,x) & t \geq t_1,
    \end{cases}
\end{equation} is the active particle distribution associated to $(\mu_0,\nu_1)$, where $\eta_0$ and $\eta_1$ are the active particle distributions associated to $(\mu_0,\nu_0)$ and $(\mu_{t_1},\nu_1)$, respectively.
\end{corollary}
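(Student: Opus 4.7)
The plan is to directly verify that the piecewise function $\eta$ defined in \eqref{eq:glued sol} satisfies the defining expectation identity of Definition~\ref{def:eulerian_variables} with respect to $(\mu_0,\nu_1)$. Since Proposition~\ref{gluing_lemma} has already established that the glued stopping time $\tau$ in \eqref{optimal_glued_stopping_time} is the optimizer of $\mathcal{Q}(\mu_0,\nu_1)$, it will suffice to show that for every test function $\varphi \in C_c^\infty([0,\infty)\times\R^d)$,
\begin{equation*}
\mathbb{E}\left[\int_0^{\tau}\varphi(s,W_s)\,ds\right] = \iint_{(0,\infty)\times\R^d}\varphi(t,x)\,\eta(t,x)\,dt\,dx.
\end{equation*}

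First, I would split the time integral at $t_1$. Observing that $\tau \wedge t_1 = \tau_0 \wedge t_1$ almost surely (directly from \eqref{optimal_glued_stopping_time}, examining the cases $\{\tau_0 \le t_1\}$ and $\{\tau_0 > t_1\}$ separately), the first piece becomes
\begin{equation*}
\mathbb{E}\left[\int_0^{\tau_0 \wedge t_1}\varphi(s,W_s)\,ds\right] = \mathbb{E}\left[\int_0^{\tau_0}\varphi(s,W_s)\chi_{\{s<t_1\}}\,ds\right],
\end{equation*}
which, by the definition of $\eta_0$ as the active particle distribution associated to $(\mu_0,\nu_0)$ applied to the test function $\varphi(s,x)\chi_{\{s<t_1\}}$ (truncated by a standard approximation), equals $\iint \varphi(t,x)\chi_{\{t<t_1\}}\eta_0(t,x)\,dt\,dx$.

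For the second piece, I would use the change of variables $u = s - t_1$ together with the shifted Brownian motion $\tilde W_u = W_{u+t_1}$. On the event $\{\tau_0 > t_1\}$ one has $\tau = t_1 + \tau_1$, and the integral over $[t_1,\tau]$ becomes $\int_0^{\tau_1}\varphi(t_1 + u,\tilde W_u)\,du$. On the complementary event $\{\tau_0 \le t_1\}$, the integrand on $[t_1,\tau]$ vanishes since $\tau \le t_1$; crucially, the proof of Proposition~\ref{gluing_lemma} already established that $\{\tau_0 \le t_1\}\subset\{\tau_1 = 0\}$ a.s., so one still has $\int_0^{\tau_1}\varphi(t_1+u,\tilde W_u)\,du = 0$ on this event. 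Hence, almost surely,
\begin{equation*}
\int_{\tau\wedge t_1}^{\tau}\varphi(s,W_s)\,ds = \int_0^{\tau_1}\varphi(t_1+u,\tilde W_u)\,du.
\end{equation*}
Taking expectations and invoking the strong Markov property (so $\tilde W$ is a Brownian motion starting from $\mu_{t_1}$, as used in Proposition~\ref{gluing_lemma}) together with the defining property of $\eta_1$ as the active particle distribution for $(\mu_{t_1},\nu_1)$, this contributes $\iint \varphi(t_1+u,x)\eta_1(u,x)\,du\,dx = \iint \varphi(t,x)\chi_{\{t>t_1\}}\eta_1(t-t_1,x)\,dt\,dx$. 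Summing the two pieces yields exactly the representation with $\eta$ as in \eqref{eq:glued sol}.

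The only subtle point I anticipate is the careful handling of the indicator $\chi_{\{\tau_0 > t_1\}}$ on the post-$t_1$ contribution; but this is automatically absorbed by the a.s. inclusion $\{\tau_0 \le t_1\}\subset\{\tau_1 = 0\}$ inherited from Proposition~\ref{gluing_lemma}. Everything else is a routine application of the strong Markov property and the definition of active particle distribution, with no need for any further regularity input.
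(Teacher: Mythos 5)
Your proposal is correct and follows essentially the same route as the paper's proof: both rest on the optimality of the glued stopping time $\tau$ from Proposition \ref{gluing_lemma}, split the expectation at time $t_1$, and use the a.s. inclusion $\{\tau_0 \le t_1\}\subset\{\tau_1=0\}$ (via Lemma \ref{hitting_time_sets}) to remove the indicator $\chi_{\{\tau_0>t_1\}}$ before invoking the definitions of $\eta_0$ and $\eta_1$. The only cosmetic difference is that you split the time integral at $t_1$ directly rather than first conditioning on $\{\tau_0\le t_1\}$ versus $\{\tau_0>t_1\}$, which leads to the same computation.
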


\begin{proof} 

By Proposition \ref{gluing_lemma}, the optimizer of $\mathcal{Q}(\mu_0,\nu_1)$ is given by $\tau$ from \eqref{optimal_glued_stopping_time}.
 Now for any $g \in C_b([0,\infty) \times \R^d)$, we have 
\begin{multline*} \mathbb{E} \left[ \int_0^{\tau} g(s,W_s) ds \right] = \mathbb{E} \left[ \chi_{ \{\tau_0 \leq t_1\} } \int_0^{\tau_0} g(s,W_s) ds \right] + \mathbb{E} \left[ \chi_{ \{\tau_0 > t_1\} } \int_0^{t_1+\tau_1} g(s,W_s) ds \right] \\ = \mathbb{E} \left[ \chi_{ \{\tau_0 \leq t_1\} } \int_0^{\tau_0} g(s,W_s) ds \right] + \mathbb{E} \left[ \chi_{ \{\tau_0 > t_1\} } \int_0^{t_1} g(s,W_s) ds \right] + \mathbb{E} \left[ \chi_{ \{\tau_0 > t_1\} } \int_{t_1}^{\tau_1+t_1} g(s,W_s) ds \right]\\
 = \mathbb{E} \left[ \int_0^{\tau_0 \wedge t_1} g(s,W_s) ds \right] + \mathbb{E} \left[ \chi_{ \{ \tau_0 > t_1\} } \int_{t_1}^{\tau_1+t_1} g(s,W_s) ds \right].
\end{multline*}
 Recalling that $\tilde{W}_t = W_{t + (t_1 \wedge \tau_0)}$, this yields
\begin{multline} \label{eq:dispeqactprt1pf} \mathbb{E} \left[ \int_0^{\tau} g(s,W_s) ds\right]= \mathbb{E} \left[ \int_0^{\tau_0 \wedge t_1} g(s,W_s) ds \right] + \mathbb{E} \left[ \chi_{ \{\tau_0 > t_1 \} } \int_{0}^{\tau_1} g(s+t_1,\tilde{W}_s) ds \right]   \\
= \mathbb{E} \left[ \int_0^{\tau_0 \wedge t_1} g(s,W_s) ds \right] + \mathbb{E} \left[  \int_{0}^{\tau_1} g(s+t_1,\tilde{W}_s) ds \right] - \mathbb{E} \left[ \chi_{\{ \tau_0 \leq t_1 \}} \int_0^{\tau_1} g(s+t_1,\tilde{W}_s)ds \right].   \end{multline} By Lemma \ref{hitting_time_sets}, we have \begin{equation} \label{eq:acpt pf 123}
    \{\tau_0 \leq t_1\} = \{W_{t_1 \wedge \tau_0} \notin U_{\mu_0,\nu_0}(t_1)\} = \{ \tilde{W}_0 \notin U_{\mu_0,\nu_0}(t_1)\} \quad \text{a.s.}
\end{equation}
On the other hand, from \eqref{SH_ordering_glue_1_2}, \eqref{SH_ordering_glue_2_2}, and \eqref{optimal_stopping_time}, 
we get $\tau_1 \leq \tau^{U_{\mu_0,\nu_0}(t_1)}$ (the exit time from $U_{\mu_0,\nu_0}(t_1)$). In view of \eqref{eq:acpt pf 123}, this implies that $\tau_1= 0$ a.s. on  $\{\tau_0 \leq t_1\}$.
Hence, recalling Definition \ref{def:eulerian_variables}, we deduce from \eqref{eq:dispeqactprt1pf} that
\begin{multline*}
     \mathbb{E} \left[ \int_0^{\tau} g(s,W_s)  ds \right] = \mathbb{E} \left[ \int_0^{\tau_0 \wedge t_1} g(s,W_s) ds \right] + \mathbb{E} \left[  \int_{0}^{\tau_1} g(s+t_1,\tilde{W}_s) ds \right] \\
     = \mathbb{E} \left[ \int_0^{\tau_0} g(s,W_s) \chi_{\{s \leq t_1\}} ds \right] + \mathbb{E} \left[  \int_{0}^{\tau_1} g(s+t_1,\tilde{W}_s) ds \right] = \int_0^{t_1} \int_{\R^d} g(s,x) \eta_0(s,x) dsdx\\
     + \int_{t_1}^{\infty} \int_{\R^d} g(s,x) \eta_1(s-t_1,x) dsdx   = \int_{\R^d} \int_0^{\infty}  g(s,x) \eta(s,x) dsdx.
\end{multline*}
\end{proof}
We see next that this programmed gluing of the active particle distributions implies a gluing of weak solutions of \eqref{eq:stefan intro}. Recall that the freezing time $s$ for a solution $\eta$ of \eqref{eq:stefan intro} has been given in \eqref{eq:s defi intro}, with $\{s\leq t\}$ describing the frozen set by time $t$. \smallskip

For the proof, we introduce the following notation. If $(\eta,\rho)$ denotes the Eulerian variables associated to $(\mu,\nu,\tau)$ (see Definition \ref{def:eulerian_variables}), we disintegrate $\rho$ 
with respect to its spatial marginal $\nu$, interpreting $\rho(ds,x)$ as $\nu(x)$ times the 
conditional law of $\tau$ given $W_\tau=x$:
\begin{equation}
\int_{0}^{t}\rho(ds,x):=\nu(x)\,\mathbb{P}[\tau\le t\,|\,W_\tau=x].
 \label{def_integral_rho}
\end{equation}
Also, by decomposing $\mu_t$ from Proposition \ref{gluing_lemma} into active particles plus frozen particles  (see the arguments of \cite[Thm. 4.15]{kim2024stefan} or \cite[Lem. 4.11]{chu2025nonlocal}), we may write
\begin{equation}
    \mu_t(x)  = \eta(t,x)  + \int_0^t \rho(ds,x). \label{mu_t_formula}
\end{equation}

\begin{proposition}[Programmed gluing for the Stefan problem]  \label{cor:programmed_gluing_Stefan} Let $U$ be a bounded open set, and let $\mu \in \cS_{\delta}(U)$ for some $\delta \in(0,1)$ with $|\{\mu=1\}|=0$. Let $\eta_0$ solve \eqref{eq:stefan intro} with initial data $\mu$. For any $t_1 > 0$, let $\eta_1$ solve  \eqref{eq:stefan intro} with initial data $\eta_0(t_1,\cdot)$. If the freezing times $s_i$ of $\eta_i$  satisfy  
\begin{equation}\label{freezing_assumption}
    \{x:s_1(x) >0 \} \subset \{x:s_0(x)>t_1\},
\end{equation} 
then  
\begin{equation} \label{glue_stefan}
    \eta(t,x) :=
    \begin{cases}
        \eta_0(t,x) & t \in (0,t_1) \\[0.3em]
        \eta_1(t-t_1,x) & t \geq t_1
    \end{cases}
\end{equation} is a weak solution of \eqref{eq:stefan intro}.
\end{proposition}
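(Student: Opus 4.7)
The plan is to invoke the Markovian gluing of Corollary~\ref{lem:gluing} with the choices $\mu_0 = \mu$, $\nu_0 = \chi_{\Sigma_0}$, and
\[
\nu_1 := \chi_{\Sigma_0 \cap \{s_0 \le t_1\}} + \chi_{\Sigma_1},
\]
where $\Sigma_i$ is the transition zone of $\eta_i$. Once $\eta$ from \eqref{glue_stefan} is identified as the active particle distribution associated to $(\mu, \nu_1)$, the conclusion follows by Proposition~\ref{prop:eulerian} (c) via Remark~\ref{rem:weak sol non lipschitz}, after verifying that the associated potential vanishes outside $U$.

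By Proposition~\ref{prop:eulerian} (d) applied to $\eta_0$ and $\eta_1$, one has $\mu \leq_{\textnormal{SH}} \chi_{\Sigma_0}$ and $\eta_0(t_1, \cdot) \leq_{\textnormal{SH}} \chi_{\Sigma_1}$, and the uniqueness of solutions to the obstacle problem \eqref{obstacle_problem_123} together with Proposition~\ref{prop:eulerian} (a) identifies $\eta_0$ and $\eta_1$ as the active particle distributions associated to $(\mu, \chi_{\Sigma_0})$ and $(\eta_0(t_1, \cdot), \chi_{\Sigma_1})$, respectively. Moreover, by \eqref{freezing_assumption}, $\Sigma_1 \subset \{s_0 > t_1\}$ is disjoint from $\Sigma_0 \cap \{s_0 \le t_1\}$, so $\nu_1 \in \{0, 1\}$.

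The key step is to compute the law $\mu_{t_1}$ of $W_{t_1 \wedge \tau_0^*}$. From \eqref{obstacle_problem_123}, the relation $-2\partial_t w_0 = \eta_0$, and the equality $\{w_0(t_1, \cdot) > 0\} = \{s_0 > t_1\}$, one finds $\Delta w_0(t_1, \cdot) = -\eta_0(t_1, \cdot) + \chi_{\Sigma_0 \cap \{s_0 > t_1\}}$; combined with $w_0(t_1, \cdot) = \Delta^{-1}(\chi_{\Sigma_0} - \mu_{t_1})$ from Proposition~\ref{prop:eulerian} (a), this yields
\[
\mu_{t_1} = \chi_{\Sigma_0} - \Delta w_0(t_1, \cdot) = \eta_0(t_1, \cdot) + \chi_{\Sigma_0 \cap \{s_0 \le t_1\}}.
\]
The subharmonic ordering \eqref{SH_ordering_glue_1_2} then follows by adding $\chi_{\Sigma_0 \cap \{s_0 \le t_1\}}$ to both sides of $\eta_0(t_1, \cdot) \leq_{\textnormal{SH}} \chi_{\Sigma_1}$, and the inclusion \eqref{SH_ordering_glue_2_2} reduces to \eqref{freezing_assumption} after observing, using Lemma~\ref{hitting_time_sets}, that $U_{\mu_{t_1}, \nu_1}(0) = \{\Delta^{-1}(\chi_{\Sigma_1} - \eta_0(t_1, \cdot)) > 0\} = \{s_1 > 0\}$ while $U_{\mu, \chi_{\Sigma_0}}(t_1) = \{s_0 > t_1\}$. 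The same identities show that the active particle distribution associated to $(\mu_{t_1}, \nu_1)$ is exactly our $\eta_1$: the matched mass $\chi_{\Sigma_0 \cap \{s_0 \le t_1\}}$ is stopped at time $0$ under the optimal rule and contributes nothing to the active part, while the remaining mass $\eta_0(t_1, \cdot)$ follows $\mathcal{Q}(\eta_0(t_1, \cdot), \chi_{\Sigma_1})$. Corollary~\ref{lem:gluing} then identifies $\eta$ from \eqref{glue_stefan} as the active particle distribution associated to $(\mu, \nu_1)$.

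Finally, to invoke Proposition~\ref{prop:eulerian} (c) via Remark~\ref{rem:weak sol non lipschitz}, one must verify that $\Delta^{-1}(\nu_1 - \mu) \equiv 0$ outside $U$. Since $\eta_0$ and $\eta_1$ are weak solutions, their potentials $w_0$ and $w_1$ vanish outside $U$ for all times, and summing the Newtonian potential formulas of Proposition~\ref{prop:eulerian} (a) yields
\[
w_0(0, \cdot) - w_0(t_1, \cdot) + w_1(0, \cdot) = \Delta^{-1}(\nu_1 - \mu),
\]
which therefore vanishes outside $U$. The main obstacle is the bookkeeping underlying the Eulerian decomposition $\mu_{t_1} = \eta_0(t_1, \cdot) + \chi_{\Sigma_0 \cap \{s_0 \le t_1\}}$ and the identification of the active distribution for the second Skorokhod problem with $\eta_1$, which is the bridge that turns the probabilistic Markovian gluing into a PDE-level concatenation of weak Stefan solutions.
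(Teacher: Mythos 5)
Your proposal is correct and follows essentially the same route as the paper: the same glued target $\nu_1=\chi_{\Sigma_0\cap\{s_0\le t_1\}}+\chi_{\Sigma_1}$ (the paper writes it as $\tilde\nu_1+\int_0^{t_1}\rho_0(ds,x)$ and then evaluates the integral to the same indicator), the same appeal to Corollary~\ref{lem:gluing} after checking \eqref{SH_ordering_glue_1_2}--\eqref{SH_ordering_glue_2_2}, and the same telescoping of potentials to verify $\Delta^{-1}(\nu_1-\mu)\equiv 0$ outside $U$. The only cosmetic difference is that you derive $\mu_{t_1}=\eta_0(t_1,\cdot)+\chi_{\Sigma_0\cap\{s_0\le t_1\}}$ from the obstacle problem rather than from the probabilistic disintegration \eqref{mu_t_formula}, and you make explicit (correctly, via $\nu_1-\mu_{t_1}=\chi_{\Sigma_1}-\eta_0(t_1,\cdot)$, which forces the two potentials and hence the two active distributions to coincide) the identification of the active distribution of $(\mu_{t_1},\nu_1)$ with $\eta_1$, a point the paper leaves implicit.
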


\begin{proof} For simplicity, we denote $\mu_0 := \mu$ and $\tilde{\mu}_1(\cdot) := \eta_0(t_1,\cdot)$. Let $\nu_0 := \chi_{\Sigma_0}$ and $\tilde{\nu}_1 := \chi_{\Sigma_1}$, where $\Sigma_i:= \{ 0<s_i(x)<\infty\}$. By Proposition \ref{prop:eulerian} (d),
\begin{equation} \mu_0 \leq_{\textnormal{SH}} \nu_0 \text{ and } \tilde{\mu}_1 \leq_{\textnormal{SH}} \tilde{\nu}_1. \label{the_SH_order123} \end{equation} Then $\eta_0$ and $\eta_1$ are, respectively, the active particle distributions associated to $(\mu_0,\nu_0)$ and $(\tilde{\mu}_1, \tilde{\nu}_1)$ \cite[Thm. 5.6]{kim2024stefan}. Let $\rho_0$ denote the stopped particle distribution corresponding to $(\mu_0,\chi_{\Sigma_0})$.
Let $\tau_0$ and $\mu_{t_1}$ be as in Proposition \ref{gluing_lemma}, and define 
\begin{equation}\label{eq:nu1 defi pf4.2sc}\nu_1 := \tilde{\nu}_1 + \int_0^{t_1} \rho_0(ds,x).\end{equation} 
By \eqref{mu_t_formula}, we have $\mu_{t_1} = \tilde{\mu}_1 + \int_0^{t_1} \rho_0(ds,x).$ Thus, in view of  \eqref{the_SH_order123},  $\mu_{t_1}$ satisfies $\mu_{t_1}\leq_{\textnormal{SH}}\nu_1$. Additionally, \eqref{SH_ordering_glue_2_2} is satisfied due to \eqref{freezing_assumption}. Thus, by Corollary \ref{lem:gluing}, $\mu_0 \leq_{\textnormal{SH}} \nu_1$, and  $\eta$ is the active particle distribution associated to $(\mu_0, \nu_1).$ 

First, we check that $\nu_1 \in \{0,1\}$. Indeed, as $\rho_0 \sim (\tau_0, W_{\tau_0})$ and $W_{\tau_0} \sim \chi_{\Sigma_0}$, we have from \eqref{def_integral_rho} that $\int_0^{t_1} \rho_0(ds,x) = \chi_{\Sigma_0}(x) \cdot \mathbb{P}[\tau_0 \leq t_1 | W_{\tau_0}=x  ].$ Now by \eqref{optimal_stopping_time} and using that $\{s_0\leq t\}$ is closed and nondecreasing in time, we have that $\mathbb{P}[\tau_0 \leq t_1 | W_{\tau_0}=x  ] = \chi_{\{s_0(x)\leq t_1\}},$ and thus
\begin{equation} \label{eq:intrho0t1sec42pf}    \int_{0}^{t_1}\rho_0(ds,x)=\chi_{\Sigma_0 \cap \{s_0(x)\leq t_1\}}.  
\end{equation}
From \eqref{freezing_assumption}, we have $\Sigma_1 \subset \{s_0(x)>t_1\}$, so \eqref{eq:nu1 defi pf4.2sc} and \eqref{eq:intrho0t1sec42pf} yield $\nu_1 \in  \{0,1\}.$

Next, we check that $\Delta^{-1}(\nu_1-\mu_0)(x)=0$ for $x \notin U$. By Definition \ref{def:weak solution}, $\eta_0$ vanishes outside $(0,\infty)\times U$, so from \eqref{freezing_assumption}, $\eta_1$ also vanishes outside $(0,\infty)\times U$. Therefore, from \eqref{glue_stefan}, $\eta$ vanishes outside $(0,\infty)\times U$. We then infer from Lemma \ref{LSC_eta} that if $w_i$ is the associated potential variable for $\eta_i$, then $w_i(t,x) = 0$ for $x \notin U$. Hence, for any $x \notin U$
\[ \Delta^{-1}(\nu_1-\mu_0)(x) = \underbrace{\Delta^{-1}(\nu_1-\mu_{t_1})(x)}_{w_1(0,x)} + \underbrace{\Delta^{-1}(\nu_0-\mu_0)(x)}_{w_0(0,x)} + \underbrace{\Delta^{-1}(\mu_{t_1}-\nu_0)(x)}_{-w_0(t_1,x)} = 0.   \] We thus conclude from Proposition \ref{prop:eulerian} and  Remark \ref{rem:weak sol non lipschitz} that $\eta$ is a weak solution of \eqref{eq:stefan intro}. \end{proof}
Finally, we use the above gluing property to extend Proposition \ref{prop: initial freezing} for positive times, thereby proving the main result of this section.

\begin{proof}[Proof of Theorem \ref{thm: fractal freezing}]    
 Part (i) of the statement is  Proposition \ref{prop: initial freezing}.  For (ii), note that since  $(\chi_{\{\eta_0>0\}})_t\leq 0$, $\eta_0$ is subcaloric. Thus, if $t_1>0$ is sufficiently large, then $\eta_0(t_1) \leq 1-\delta$ for some $\delta>0$. Set $U_1=\{x:\eta_0(t_1,\cdot)>0\}=\{x:s_0(x)>t_1\}$, where $s_0$ is the freezing time of $\eta_0$, and let $F \subset U_1$.
  
Proposition \ref{prop: initial freezing} then yields a solution $\eta_1$ to \eqref{eq:stefan intro} on $(0,\infty)\times \R^d$ with initial data $\eta_0(t_1,\cdot)$ and transition zone $\Sigma_1$, such that its freezing time $s_1$ satisfies
\begin{equation} \label{eq: lF0+y pf}
    F \subset \{x:s_1(x)=0\}
\end{equation}

Since $ \{x:s_1(x)>0\}\subset \{x:\eta_0(t_1,x)>0\}=U_1=  \{x: s_0(x)> t_1\}$, Proposition \ref{cor:programmed_gluing_Stefan} implies that the glued function $\eta$ defined by \eqref{glue_stefan} solves \eqref{eq:stefan intro}, and by \eqref{eq: lF0+y pf}, its  freezing time $s$ satisfies
\begin{equation*}
   F \subset \{x:s(x)=t_1\}. 
\end{equation*}
To see that \eqref{eq:freeze dim d t0 intro} may occur, it suffices to choose $F$ with $\dim_{\mathcal{H}} (F)=d$. 
\end{proof}

\section{Transition zone and regularity of maximal solutions} \label{sec:regularity}

In this section, we switch gears to show that  maximal solutions  enjoy substantial regularity properties. Our main goal will be to analyze the fine structure of the transition zone to prove Theorem \ref{thm: regularity intro}. We then explain (Corollary \ref{cor:regu}), in light of the local results of \cite{EKM25}, how our study of the transition zone yields a thorough regularity theory for maximal solutions, and precise dimension estimates on the singular set. In particular, we show that maximal solutions do not generate fractal freezing, except at a Hausdorff zero-dimensional set of times, and only if $|\{\mu>1\}|>0$.   Roughly speaking, this says that, while certain maximal solutions could potentially exhibit the irregularities featured in Theorem \ref{thm: fractal freezing}, this cannot happen in the subcritical case $\mu\leq 1$, and would be a worst case scenario in general. We then show by example that, even for maximal solutions, the set $F_0$ of initial freezing can be arbitrarily irregular.

 We recall that the transition zone is the set
\begin{equation}
    \Sigma=\{x\in U: 0<s(x)<\infty\}.
\end{equation}
Since this section is dedicated to analyzing the fine structure of $\Sigma$, we will be careful  to avoid identifying $\Sigma$ with sets that agree on a set of measure zero. Throughout, $U$ will be a Lipschitz domain, and we will work with a fixed choice for the superharmonic weight $u$. \smallskip

The dual formulation of $\mathcal{P}(\mu, u)$ plays a central role in the present section. Let $\nu^*$ be the optimal target measure for $\mathcal{P}(\mu,u)$. By \cite[Thm. 6.5]{choi2024existence},  $\nu^*=\chi_{\Sigma}$, and
\begin{equation} \label{eq:sigma a.e.}
   |\Sigma \Delta \{\phi<0\}|=0, 
\end{equation}
with $(\phi,\varphi)$ being any maximizer of the dual problem
\begin{equation} \label{dualp}
    \mathcal{D}(\mu,u)= \sup_{(\phi,\varphi)\in \mathcal{F}}\left[\int \phi(y)dy-\int \varphi (x)\mu(dx)\right],
\end{equation}
\[\mathcal{F}:=\{(\phi,\varphi)\in L^1(U): \phi \leq 0, \varphi \geq \phi -u, \;\Delta \varphi \leq0 \}\]
where the inequality $\Delta \varphi \leq 0$ is interpreted in the distributional sense. Part of our analysis will consist of refining the rough statement \eqref{eq:sigma a.e.}, to give a more precise characterization of $\Sigma$ in terms of the optimizer of the dual problem. We begin by reducing the dual variables.

\begin{lemma}\label{lem:dual_simple} Under the assumptions of Theorem \ref{thm: regularity intro}, we have

\begin{multline} \label{eq:dualp simple}
       \mathcal{D}(\mu,u)= -\inf_{\psi\in L^1(U), \Delta \psi \geq 0}\left\{\int (\psi-u)^+ -\int\psi \mu\right\} \\=-\inf_{\psi\in  L^1(U),\; \psi\geq0, \Delta \psi \geq 0}\left\{\int (\psi-u)^+ -\int\psi \mu \right\},
\end{multline}    
and $\psi \geq 0$ is an optimizer of \eqref{eq:dualp simple} if and only if the pair $(\min(-\psi+u,0),-\psi)$ is an optimizer of \eqref{dualp}. Moreover, if $\Sigma$ is the transition zone, then
\begin{equation} \label{eq:sigma=psi u}
    |\Sigma \Delta \{\psi>u\}|=0.
\end{equation}
\end{lemma}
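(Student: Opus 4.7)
The plan is to reduce the two-variable dual \eqref{dualp} to the one-variable form \eqref{eq:dualp simple} by eliminating $\phi$ explicitly, and then transfer the set-level characterization \eqref{eq:sigma a.e.} to the new variable. I would proceed in three steps.

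First, in \eqref{dualp} the variable $\phi$ appears only through $\int \phi$ and the pointwise constraints $\phi \leq 0$ and $\phi \geq \varphi + u$ (rewriting $\varphi \geq \phi - u$), so I would perform the change of variables $\psi := -\varphi$ (turning the superharmonicity $\Delta \varphi \leq 0$ into the subharmonicity $\Delta \psi \geq 0$). The constraints on $\phi$ then become $\phi \leq \min(u-\psi, 0) = -(\psi-u)^+$, and since we are maximizing $\int \phi$, the pointwise a.e. maximizer is $\phi^* := \min(u-\psi, 0)$, which lies in $L^1(U)$ whenever $\psi \in L^1(U)$ and $u$ is bounded. Substituting this $\phi^*$ into the objective of \eqref{dualp} yields the first equality in \eqref{eq:dualp simple}, and immediately establishes the bijection between maximizers: a subharmonic $\psi \in L^1(U)$ optimizes \eqref{eq:dualp simple} if and only if $(\min(u-\psi,0), -\psi)$ optimizes \eqref{dualp}.

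Second, I would justify the restriction $\psi \geq 0$ by showing that truncation at zero can only decrease the objective. Given any subharmonic $\psi \in L^1(U)$, the function $\psi^+ = \max(\psi, 0)$ is again subharmonic (as a maximum of two subharmonic functions, in the distributional sense) and in $L^1(U)$. Using the positivity $u > 0$, one checks by case analysis that $(\psi^+ - u)^+ = (\psi - u)^+$ pointwise, so the first integral is unchanged, while the second integral satisfies $\int \psi^+ \mu \geq \int \psi \mu$ since $\mu \geq 0$. Hence replacing $\psi$ by $\psi^+$ does not increase the quantity being infimized, proving the second equality in \eqref{eq:dualp simple}.

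Third, to establish \eqref{eq:sigma=psi u}, I would combine the characterization \eqref{eq:sigma a.e.} with the pointwise identity from the first step. If $\psi \geq 0$ is a minimizer of \eqref{eq:dualp simple}, then $\phi = \min(u-\psi, 0)$ is the first coordinate of a corresponding maximizer of \eqref{dualp}, and $\{\phi < 0\} = \{u - \psi < 0\} = \{\psi > u\}$ pointwise. Hence \eqref{eq:sigma a.e.} gives $|\Sigma \Delta \{\psi > u\}| = 0$ directly. No substantial obstacle is anticipated here; the only mild subtlety is confirming that the distributional subharmonicity passes through the positive-part truncation, but this is standard (e.g.\ via mollification or the fact that compositions with convex increasing functions preserve subharmonicity of $L^1_{\mathrm{loc}}$ subharmonic functions).
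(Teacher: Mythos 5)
Your proposal is correct and follows essentially the same route as the paper: eliminate $\phi$ by taking the pointwise maximal admissible choice $\phi=\min(\varphi+u,0)$, restrict to the sign-constrained variable by truncation (using $u>0$ to see that the first integral is unchanged while $-\int\varphi\mu$ only improves), and read off \eqref{eq:sigma=psi u} from \eqref{eq:sigma a.e.} via $\{\phi<0\}=\{\psi>u\}$. The only cosmetic difference is that the paper performs the truncation in the $(\phi,\varphi)$ variables before substituting $\psi=-\varphi$, whereas you substitute first; the content is identical.
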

\begin{proof}Let $(\phi,\varphi)$ be any optimizer of \eqref{dualp}. Replacing $\phi$ with $\min(\varphi+u,0)$ preserves admissibility, and only increases the value of the objective. Moreover, replacing, in two steps, first $\varphi$ with $\min(\varphi,0)$, and then $\phi$ with $\min(0,\varphi+u)$, also preserves admissibility at each step and only increases the value of the objective. This proves that
   \begin{multline}
       \mathcal{D}(\mu,u)= \sup_{\varphi\in L^1(U),-\Delta\varphi\geq0}\left\{\int \min(\varphi+u,0) -\int\varphi\mu \right\} \\=\sup_{\varphi\in L^1(U),-\Delta\varphi\geq0, \; \varphi\leq 0}\left\{\int \min(\varphi+u,0) -\int\varphi\mu \right\}.  \label{eq:new_dual_formula}
\end{multline}     
The claim now follows by setting $\psi = -\varphi$, noting that \eqref{eq:sigma=psi u} is a restatement of \eqref{eq:sigma a.e.}.
\end{proof}
In order to characterize the precise relationship between $\Sigma$ and the optimizer, we will need the key property that $\Sigma$ has no hidden ``cracks'' of measure zero inside $\{s > 0\}$, in the precise measure-saturation sense below. We also observe that some concentration of $\Sigma$ occurs at every boundary point of $U$.
\begin{lemma}[Measure-saturation of the transition zone]\label{lem:sigma saturation} Let $\eta$ be a weak solution to \eqref{eq:stefan intro}. The transition zone $\Sigma$ is measure-saturated  in the following sense. For any open set $V \subset U$ the following holds:
\begin{itemize}
    \item [(i)] If $|V \backslash \Sigma|=0$ and $V \subset \{s>0\}$, then $V \subset \Sigma$. 
    \item [(ii)] If $|V \cap \Sigma|=0$, then $V \cap \Sigma = \emptyset$ and $V \cap \{s=0\}=\emptyset$.
\end{itemize}
Moreover, we have $|B_r(x)\cap \Sigma|>0$ for any $x\in \partial U$ and any $r>0$.
\end{lemma}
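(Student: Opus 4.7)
The plan is to treat the three assertions separately, using tools adapted to the structure of the obstacle equation for $w$ in each regime. For (ii), the hypothesis $|V \cap \Sigma|=0$ reduces the obstacle equation for $w$ from Proposition~\ref{prop:eulerian}(a) to the homogeneous heat equation a.e.\ on $V \times (0,\infty)$, making $w$ a nonnegative caloric function there. I would invoke the strong parabolic minimum principle: an interior vanishing $w(t_0, x_0)=0$ would propagate to $w \equiv 0$ on $(0, t_0] \times V'$, with $V'$ the connected component of $V$ containing $x_0$. Using Lemma~\ref{LSC_eta}, this forces $s \equiv 0$ on the open set $V'$, contradicting $|F_0|=0$. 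Hence $w > 0$ throughout $V \times (0, \infty)$, giving $V \subset \{s = \infty\}$, from which both $V \cap \Sigma = \emptyset$ and $V \cap \{s=0\}=\emptyset$ follow immediately.

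For (i), the hypotheses force the opposite reduction: $V \subset \{s > 0\}$ and $|V \setminus \Sigma|=0$ imply $\chi_\Sigma = 1$ a.e.\ on $V$, so $w$ satisfies the one-phase parabolic obstacle problem $\partial_t w - \tfrac12 \Delta w = -\tfrac12 \chi_{\{w>0\}}$ on $V$. I expect this to be the main obstacle, since unlike in (ii) the hypothetical bad point $x_0 \in V \cap \{s=\infty\}$ satisfies $w(t, x_0) > 0$ for every $t$, so a strong minimum principle does not apply directly. Instead the argument combines two ingredients. First, the monotonicity of $w(t, \cdot)$ in $t$ (from $w = \tfrac12\int_t^\infty \eta\,ds$ with $\eta \ge 0$) together with pointwise decay $w(t,x) \to 0$ (from $w(0,\cdot) \in L^\infty$) yield, via Dini's theorem applied to the decreasing continuous family with continuous limit $0$, the uniform decay $M(t) := \sup_{\overline{B_r(x_0)}} w(t,\cdot) \to 0$ on any compact ball $\overline{B_r(x_0)} \subset V$. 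Second, the classical parabolic non-degeneracy for the one-phase obstacle problem gives $\sup_{(t-\rho^2, t) \times B_\rho(y_t)} w \ge c \rho^2$ at every spacetime free boundary point $(t, y_t)$, for small fixed $\rho > 0$. Since $|\{s > t\} \cap B_{r/2}(x_0)| \to 0$ while $x_0 \in \{s > t\}$ for all $t$, for $t$ large the set $\{w(t,\cdot)>0\} \cap B_{r/2}(x_0)$ is a proper open subset and hence admits a spatial free boundary point $y_t \in B_{r/2}(x_0)$. Applying non-degeneracy at $(t, y_t)$ and bounding the supremum by $M(t - \rho^2) \to 0$ via the time-monotonicity of $w$ produces $c\rho^2 \le 0$, the desired contradiction.

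For the boundary-concentration claim, I would argue probabilistically: by Proposition~\ref{prop:eulerian}(d) there exists a stopping time $\tau \le \tau^U$ with $W_0 \sim \mu$ and $W_\tau \sim \chi_\Sigma$. If $|B_r(x) \cap \Sigma|=0$ at some $x \in \partial U$, then $W_\tau \notin B_r(x)$ almost surely. However, $\{\mu>0\}=U$ makes $B_{r/2}(x) \cap U$ carry positive $\mu$-mass, and by the Lipschitz regularity of $\partial U$, every $y_0$ in this set has positive harmonic measure on $\partial U \cap B_r(x)$ within the domain $B_r(x) \cap U$. On the resulting positive-probability event that Brownian motion starting at $y_0$ exits $U$ through $\partial U \cap B_r(x)$ before leaving $B_r(x)$, the constraint $\tau \le \tau^U$ traps $W_\tau$ in $\overline{B_r(x)} \cap \Sigma = \emptyset$, yielding the desired contradiction.
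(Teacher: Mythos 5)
Parts (i) and (ii) of your argument use the same core ingredients as the paper. For (ii) the paper likewise observes that $w$ is a nonnegative caloric function on $V\times(0,\infty)$ and applies the strong maximum principle; it reaches the contradiction slightly more directly, from $w(0,\cdot)>0$ a.e.\ on $V$ (a consequence of $\limsup_{t\downarrow0}\{\eta(t,\cdot)>0\}=U$ a.e.), rather than by propagating a zero backward to force $s\equiv0$ on an open set, but both routes are sound. For (i) the paper also combines decay of $w$ with the nondegeneracy of the parabolic obstacle problem, but more economically: it first proves the \emph{global} uniform decay $\|w(t,\cdot)\|_{L^\infty}\to0$ via Poincar\'e and Gronwall, and then applies the nondegeneracy estimate of \cite[Lem.~5.1]{caffarelli2004regularity} \emph{directly at the point} $x_0$ (that lemma only requires $w(t,x_0)>0$, not that $(t,x_0)$ be a free boundary point), so there is no need for your Dini argument or for locating spatial free boundary points $y_t$. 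Your longer route does work, but be aware that the version of nondegeneracy you would need at free boundary points is exactly the one the paper already invokes at interior points of the positivity set.

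The genuine discrepancy is in the boundary-concentration claim. Your probabilistic argument hinges on $\partial U\cap B_r(x)$ having positive harmonic measure from nearby interior points, for which you explicitly invoke Lipschitz regularity of $\partial U$ --- but the lemma is stated for an arbitrary weak solution with $U=\{\mu>0\}$ merely open and bounded, and for irregular $U$ (e.g.\ a punctured ball with $x$ the puncture) the relevant boundary portion can be polar, so the positive-probability exit event need not exist. The paper instead argues purely elliptically at $t=0$: if $|B_r(x)\cap\Sigma|=0$ then $\Delta w(0,\cdot)=\chi_\Sigma-\mu=-\mu\le0$ a.e.\ in $B_r(x)$, while $w(0,\cdot)\ge0$ attains the interior minimum value $0$ at $x\in\partial U$ (since $w$ vanishes off $U$); the strong minimum principle forces $w(0,\cdot)\equiv0$, hence $\mu\equiv0$ a.e.\ in $B_r(x)$, contradicting $x\in\partial\{\mu>0\}$. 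That argument needs no regularity of $\partial U$ whatsoever, so you should either restrict your claim to Lipschitz domains (sufficient for the paper's applications, but weaker than the stated lemma) or replace the harmonic-measure step by this minimum-principle argument.
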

\begin{proof}
    Let $w(t,x)=\frac12\int_{t}^{\infty}\eta(s,x)ds$ be the potential variable. We note that, since $\eta$ is subcaloric in $(0,\infty)\times \mathbb{R}^d$ and compactly supported with bounded initial data, the Poincar\'e inequality and a standard Gronwall argument show that 
    $\eta$ decays exponentially, and, in particular,
 \begin{equation} \label{eq:w limit t infty}
\lim_{t\to \infty}\|w(t,\cdot)\|_{L^{\infty}(\mathbb{R}^d)}= \lim_{t\to \infty}\|\eta(t,\cdot)\|_{L^{\infty}(\mathbb{R}^d)}=0.
 \end{equation} 
 Assume first that $| V \setminus \Sigma |=0$ and $V \subset \{s>0\}$, and let $x_0 \in V$. Since $|V\setminus\Sigma|=0$, for sufficiently small $r>0$ such that $B_{2r}(x_0)\subset V$, $w$ satisfies
 \begin{equation} \label{eq:obst local pf}
     w_t-\frac12\Delta w =-\frac12\chi_{\{w>0\}} \quad \text{in } \; (0,\infty)\times B_{2r}(x_0)
 \end{equation}
 in the distributional sense. By the nondegeneracy property of the parabolic obstacle problem \cite[Lem. 5.1]{caffarelli2004regularity}, if $w(t,x_0)>0$, then, for some dimensional constant $c_d>0$,
 \begin{equation*}
     \sup_{B_r(x_0)\times [t-r^2,t]}w\geq c_dr^2. 
 \end{equation*}
 By \eqref{eq:w limit t infty}, we infer that there exists $t_0>0$ such that $w(t_0,x_0)=0$, which proves that $s(x_0)<\infty$, and thus $x_0 \in \Sigma$ and $V \subset \Sigma$.

Suppose now that $|V \cap \Sigma |=0$ and let $x_0 \in V$. Then, since $V$ is open, we deduce as before that there exists $r>0$ such that $w$ solves the heat equation distributionally (and thus classically) in $(0,\infty)\times B_{2r}(x_0)$. Since $w\geq 0$ and $w(0,\cdot)>0$ a.e. in $B_{2r}(x_0)$ (recall that $\limsup_{t\to 0^+} \{\eta(x,t)>0\} = U$ a.e.), the strong maximum principle implies that $w(t,x_0)>0$ for all $t$, namely $x_0\in \{s=\infty\}$. This shows that $V \cap \Sigma =\emptyset$ and $V\cap \{s=0\}=\emptyset$.

Finally, if $x\in \partial U$, then assuming by contradiction that $|B_r(x)\cap \Sigma|=0$ for some $r>0$, we infer that $\Delta w(0,\cdot)=-\frac12\mu \leq 0$ a.e. in $B_r(x)$, which  contradicts the strong minimum principle  \cite[Thm. 8.19]{GilTru}.
\end{proof}

We can now return to the dual problem and analyze more precisely how the dual minimizer encodes the geometry of the transition zone. In particular, the following result already shows that $\Sigma$ is a.e. equal to an open set, providing a negative answer to the conjecture in \cite[Rem. 6.2]{choi2024existence}.

\begin{proposition}[Regularity of dual minimizers and structure of $\Sigma$] \label{prop:sigma structure} Under the assumptions of Theorem \ref{thm: regularity intro}, for a minimizer $\psi$  of \eqref{eq:dualp simple}, its unique USC representative  is harmonic in the open set $E=\{x:s(x)>0\}$. Letting $f:=(\psi-u)|_{E}$, the transition zone $\Sigma$ satisfies
\begin{equation} \label{eq:sigma structure}
    \{f>0\}\subset \Sigma \subset \{f>0\} \cup \{f=0, \nabla f=0\}.
\end{equation}
    \end{proposition}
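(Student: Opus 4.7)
The plan is to obtain both assertions from the primal--dual optimality pair $(\mathcal P(\mu,u),\mathcal D(\mu,u))$, combining the integration-by-parts formula of Lemma \ref{lem:integration by parts} with the measure-saturation statement of Lemma \ref{lem:sigma saturation}. First I would establish that the canonical USC representative of $\psi$ is harmonic on $E$ via complementary slackness. Since $U$ is Lipschitz one has $U=(\overline U)^\circ$ and $\mu\le_{\operatorname{SH}}\nu^*$ with respect to $U$, so Lemma \ref{lem:integration by parts}(ii) applies to the subharmonic $L^1(U)$ dual optimizer $\psi\ge0$ and gives
\begin{equation*}
    \int \psi\,d\nu^*\;=\;\int \psi\,d\mu\;+\;\int v\,d(\Delta\psi), \qquad v:=\Delta^{-1}(\nu^*-\mu).
\end{equation*}
Using $\{\psi>u\}=\Sigma$ a.e., the dual objective at $\psi$ reduces to $\int\psi(\chi_\Sigma-\mu)-\int u\,d\nu^*$, so the strong duality $\mathcal D(\mu,u)=\mathcal P(\mu,u)$ forces $\int v\,d(\Delta\psi)=0$. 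Since $v\ge0$, $\Delta\psi\ge0$ as a Radon measure, and $v>0$ on $E$ (as $v=2w(0,\cdot)$ by Proposition \ref{prop:eulerian}(a) and $\{v>0\}=\{s>0\}=E$ by Lemma \ref{LSC_eta}), the measure $\Delta\psi$ must vanish on the open set $E$, so $\psi$ is harmonic---in particular smooth---there.

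The inclusion $\{f>0\}\subset\Sigma$ is then immediate: on $E$, $f=\psi-u$ is smooth, so $\{f>0\}$ is open, contained in $E=\{s>0\}$, and agrees with $\{\psi>u\}$ and hence with $\Sigma$ up to null sets by Lemma \ref{lem:dual_simple} and \eqref{eq:sigma=psi u}; the conclusion follows from Lemma \ref{lem:sigma saturation}(i). For the reverse inclusion I would argue by contradiction. Any $x\in\Sigma$ lies in $E$, so $f(x)$ is defined; if $f(x)<0$, continuity of $f$ on $E$ yields a neighborhood $V\ni x$ on which $f<0$, so $|V\cap\Sigma|=|V\cap\{\psi>u\}|=0$, and Lemma \ref{lem:sigma saturation}(ii) forces $V\cap\Sigma=\emptyset$, a contradiction. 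Hence $f(x)=0$, and it remains only to rule out $\nabla f(x)\ne0$.

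The genuine difficulty, and the main obstacle of the argument, is ruling out the configuration $f(x)=0$ with $\nabla f(x)\ne0$. In that case, the implicit function theorem makes $\{f=0\}$ a smooth hypersurface near $x$ and $V^-:=V\cap\{f<0\}$ a nonempty open set with $x\in\partial V^-$; applying the easy case of Step~3 to $V^-$ gives $V^-\subset\{s=\infty\}$, i.e.\ $(0,\infty)\times V^-\subset\{w>0\}$. The trouble is that lower-semicontinuity of $s$ alone cannot propagate $s\equiv\infty$ on $V^-$ to $s(x)=\infty$, so a finer analysis of the parabolic obstacle problem for $w$ is needed. The strategy I would pursue is a parabolic blow-up at the free-boundary space--time point $(s(x),x)$: the rescaled family $w_\lambda(\tau,y):=\lambda^{-2}w(s(x)+\lambda^2\tau,x+\lambda y)$ subconverges, using the uniform $C^{1,\alpha}_x\cap C^{0,1}_t$ regularity of $w$, to a global limit $w_\infty\ge0$ solving $\partial_\tau w_\infty-\tfrac12\Delta w_\infty=-\tfrac12\chi_{\{w_\infty>0\}}$, monotone nonincreasing in $\tau$ (inherited from $w_t\le0$), with the rigid constraint that $w_\infty>0$ on the entire half-space $\{y\cdot\nabla f(x)<0\}\times\R$ and $w_\infty(0,0)=0$. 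Combining this half-space constraint with the quadratic nondegeneracy bound of Lemma \ref{lem: w quadratic growth} (which forces quantitative positivity of $w$ inside $\{w>0\}$) and the strict subharmonicity $\Delta f=-\Delta u>0$ on $E$ (which gives a definite lower bound on $w$ on the $V^+$-side at times slightly before $s(x)$), I expect to derive the required contradiction from the classification of such global solutions. Carrying out this blow-up/comparison analysis cleanly across the interface of the two half-spaces is the main technical step of the proof.
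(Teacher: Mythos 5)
Your treatment of the harmonicity of $\psi$ on $E$ and of the inclusions $\{f>0\}\subset\Sigma\subset\{f\ge 0\}$ matches the paper's argument: strong duality plus the Green identity of Lemma \ref{lem:integration by parts}(ii) forces $\int v\,d(\Delta\psi)=0$ with $v\ge 0$ and $\{v>0\}=E$, and the measure-saturation Lemma \ref{lem:sigma saturation} converts the a.e.\ identity $|\Sigma\,\Delta\,\{\psi>u\}|=0$ into the two set inclusions. (Two harmless slips: $v=w(0,\cdot)$, not $2w(0,\cdot)$, and the sign in your reduction of the dual objective; neither affects the conclusion $\int v\,d(\Delta\psi)=0$.)

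The genuine gap is the final step, which you explicitly leave open. To rule out $x_0\in\Sigma$ with $f(x_0)=0$ and $\nabla f(x_0)\neq 0$, you propose a parabolic blow-up at $(s(x_0),x_0)$ and a classification of global monotone solutions of the obstacle problem constrained to be positive on a half-space; you do not carry this out, and as stated it is far from routine (controlling the blow-up limit across the interface, and the interaction with the inhomogeneous right-hand side $\chi_{\{w_\infty>0\}}$, is exactly the hard part you flag). The paper avoids all of this with a short elliptic argument on fixed time slices. Since $\{f=0\}$ is a smooth hypersurface near $x_0$, the open set $\{f<0\}$ contains a ball $B$ with $x_0\in\partial B$; by the inclusion $\Sigma\subset\{f\ge0\}$ and Lemma \ref{lem:sigma saturation}(ii) one gets $B\subset\{s=\infty\}$ (this much you also observed), so $w$ is positive and caloric on $(0,\infty)\times B$. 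For each fixed $t_0>0$ the slice $w(t_0,\cdot)$ is positive in $B$ and superharmonic there, since $-\Delta w(t_0,\cdot)=-2w_t(t_0,\cdot)\ge 0$. If $w(t_0,x_0)=0$, then because $w\ge 0$ everywhere and $w(t_0,\cdot)\in C^{1,\alpha}$, the point $x_0$ is an interior minimum of $w(t_0,\cdot)$ on $\R^d$ and hence $\nabla w(t_0,x_0)=0$, contradicting Hopf's lemma applied to the positive superharmonic function $w(t_0,\cdot)$ in $B$ at the boundary point $x_0$. Therefore $w(t_0,x_0)>0$ for every $t_0$, i.e.\ $s(x_0)=\infty$, contradicting $x_0\in\Sigma$. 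You should replace your blow-up program with this exterior-ball/Hopf argument (or complete the blow-up analysis in full, which would be substantially longer and is not needed).
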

Moreover, if $x_0\in \partial \Sigma$ satisfies an exterior ball condition, then $x_0\notin \Sigma$.
 \begin{proof}   By duality \cite[Thm. 4.5]{choi2024existence},  we have $\mathcal{D}(\mu,u)=\mathcal{P}(\mu,u)$, that is,
\[-\int (\psi-u)^+dy + \int \psi d\mu = \int u(y)d\nu(y),\]
where $\nu=\chi_{\Sigma}$.  In view of \eqref{eq:sigma a.e.}, this may be rewritten as 
\[\int_{\Sigma} (-\psi(y)+u(y))dy + \int \psi(x)d\mu(x)=\int u(y)d\nu(y),\]
which means 
\[\int \psi d\nu=\int \psi d\mu.\]
Letting $v=\Delta^{-1}(\nu-\mu)=w(0,\cdot)$, since $U$ is a Lipschitz domain, Lemma \ref{lem:integration by parts} (ii) yields
\begin{equation*}
 \int_{U}(v\Delta \psi) dx=0.
\end{equation*}
Since $v\geq 0$ and $\Delta \psi\geq 0$, this proves that $\psi$ is harmonic in the open set $E=\{v>0\}=\{s>0\}$. 

 Recall that  $|\{f>0\} \Delta \Sigma|=0$ due to Lemma~\ref{lem:dual_simple}. Note that, by definition, $\{f>0\} \subset \{s>0\}$, and the sets $\{f>0\}$ and $\{f<0\}$ are  open since $f$ is continuous in $E$. Hence we infer from Lemma \ref{lem:sigma saturation} that 
 \begin{equation}
    \label{eq:f>0 subs sig pf} \{f>0\} \subset \Sigma \subset \{f \geq 0\}.
 \end{equation}

Suppose now that there exists $x_0\in\Sigma$ such that  $f(x_0)=0$ and $\nabla f(x_0)\neq 0$. Then the set $\{f=0\}$ is a smooth hypersurface in a neighborhood of $x_0$, and, in particular, the set $\{f<0\}$ has an interior ball $B$ at $x_0$. Since $x_0\in \Sigma\subset\{f\geq 0\}$, this ball $B$ is exterior to $\Sigma$ at $x_0$, namely $B\subset U\backslash \Sigma$ and $x_0\in \partial B \cap \partial\Sigma$. We will show now that the exterior ball condition at $x_0\in\partial\Sigma$ implies $x_0\notin \Sigma$, from which we can conclude.  Note that the potential variable $w$ is positive and caloric in $B \times (0,\infty)$. In particular, for each $t_0>0$, 
 \begin{equation} \label{eq:w(t0) elliptic pf}
     w(t_0,\cdot)>0, \quad -\Delta w(t_0,\cdot)=-2w_t(t_0,\cdot)\geq0\;\; \;\;\text{ in } B.
 \end{equation}
On the other hand, if we had $w(t_0,x_0)=0$, then $\nabla w(t_0,x_0)=0$ due to its $C^{1,\alpha}_x$ regularity, which by \eqref{eq:w(t0) elliptic pf} would contradict Hopf's lemma. Thus $w(t,x_0)>0$ for all $t>0$, that is, $x_0\not \in \Sigma$.
\end{proof}
We may now prove the main result of this section.
\begin{proof}[Proof of Theorem \ref{thm: regularity intro}]
The measure-saturation property is simply Lemma \ref{lem:sigma saturation}. Letting $f$ be defined on $E=\{s>0\}$ as in Proposition \ref{prop:sigma structure}, we have $\Delta f=-\Delta u>0$ in $E$, and thus
\begin{equation} \label{eq: f=0gf=0 cont pf}
\{f=0,\nabla f=0\}\subset \bigcup_{i=1}^d\{f_{x_i}=0, f_{x_ix_i}>0 \},
\end{equation}
with the right hand side being the union of $d$ smooth hypersurfaces. Since $\{f=0, \nabla f\neq 0\}$ is a smooth hypersurface, we infer that
\begin{equation*}
    \dim_{\mathcal{H}}(\{f=0\})\leq d-1.
\end{equation*}
By \eqref{eq:sigma structure}, we have $\Sigma \backslash \Sigma ^{\circ}\subset \partial \Sigma \cap\{s>0\} \subset \{f=0\}$, and $\dim_{\mathcal{H}}(A)\leq \dim_{\mathcal{H}}(\partial\Sigma \cap \{s>0\})\leq d-1$ is proven. A posteriori, we infer that $|\partial \Sigma|=0$ from the fact that, by Definition \ref{def:weak solution}, $|U \backslash \{s>0\}|=0$.

Assume now that $u$ is real analytic, and let $x_0 \in \Sigma \backslash \Sigma^\circ$. We note first that, by the saturation property of Lemma \ref{lem:sigma saturation}, we must have $|B_r(x_0) \cap \{f<0\}| >0$ for every $r>0$, which implies that $x_0 \in  \overline{ \{f<0\} }$. Therefore, by \eqref{eq:sigma structure} and \eqref{eq: f=0gf=0 cont pf}, it is enough to show that, $\operatorname{dim}_{\mathcal{H}}(S)\leq d-2$, where
\begin{equation*}
    S=\{f=0, \nabla f=0,\; f_{x_d x_d}>0\}\cap \overline{\{f <0 \}}.
\end{equation*}
We fix $x_0 \in S$, and write $x':=(x_1,\ldots,x_{d-1})$. With no loss of generality, we may assume that $x_0=0$. Since $u$ is analytic and $\psi$ is harmonic in $\{s>0\}$, $f=\psi-u$ is analytic in $\{s>0\}$. 

By the Weierstrass preparation theorem \cite[Thm. 6.1.3]{krantz2002}, we may write, for some open rectangle $R\times (-\varepsilon,\varepsilon) \subset \R^{d-1}\times \R$ centered at $0$,
\begin{equation*}
    f(x',x_d)=h(x',x_d)(x_d^2+b(x')x_d+c(x')),\;\; (x',x_d)\in R\times (-\varepsilon,\varepsilon),
\end{equation*}
where $h$, $b$, and $c$ are real analytic functions, with $h > 0$ (because $f_{x_d x_d}(x_0)>0$). Let $D(x'):=(b^2-4c)(x')$ on $R$, and suppose that $(y',y_d)\in S \cap (R\times (-\varepsilon,\varepsilon))$. Since $h>0$ and $f=|\nabla f| =0$ at $(y',y_d)$, $y_d$ is a double root  of the one-variable polynomial $x \mapsto x^2+b(y')x+c(y')$. That is,
\begin{equation*}
    D(y')=0, \quad y_d=-b(y')/2.
\end{equation*}
We therefore have
\begin{equation*}
    S \cap R \subset \{(x',x_d)\in R\times(-\varepsilon,\varepsilon) : D(x')=0, \;x_d=-b(x')/2\}.
\end{equation*}
It is then enough to show that
\begin{equation*}
    \text{dim}_{\mathcal{H}}(\{D=0\})\leq d-2.
\end{equation*}
Since $D$ is a real analytic function on $R \subset \R^{d-1}$, it is enough to prove that $D \not \equiv 0$. But if we had $D \equiv 0$, then
\begin{equation*}
    f(x',x_d)=h(x',x_d) (x_d+b(x')/2)^2 \geq 0 \;\text{ for every }\; (x',x_d)\in R\times (-\varepsilon,\varepsilon),
\end{equation*}
contradicting the fact that $x_0\in S \subset \overline{\{f<0\}}.$
\end{proof}

Next, we apply the regularity results of \cite{EKM25} for decreasing solutions to the parabolic obstacle problem, to derive some important consequences of Theorem \ref{thm: regularity intro} for the free boundary regularity of maximal solutions, the size of their singular set, and the time frequency of high-dimensional fractal freezing. 

\begin{corollary}[Free boundary regularity and size of the singular set] \label{cor:regu}Under the assumptions of Theorem \ref{thm: regularity intro}, the freezing time $s(x)$ is $C^1$ in $\Sigma^{\circ}$, and is $C^{\infty}$ in the set $\Sigma^{\circ}\cap\{\nabla s\neq0\}$. Moreover, we may write
\begin{equation*}
  \partial\{(t,x):\eta(t,x)>0, \;s(x)>0\}=R \dot \cup S, 
\end{equation*}
where:
\begin{itemize}
    \item[(i)] $R \subset (0,\infty)\times \R^d$, the set of regular points, is relatively open in $\partial\{\eta>0\}$. The free boundary is a $C^{\infty}$ hypersurface advancing with finite speed near every point of $R$.
    \item[(ii)]  $S \subset (0,\infty)\times \R^d$ is the set of singular points where $\partial\{\eta>0\}$ either attains infinite speed or has a discontinuous freezing time. One has $\dim_{\operatorname{par}}S\leq d+1$ and, if the generating weight $u$ is analytic, $S$ satisfies the sharp estimate
\begin{equation} \label{eq:dim par sing}
    \dim_{\operatorname{par}}(S)\leq d.
\end{equation}
\end{itemize}
Moreover, one has
\begin{equation} \label{eq:fractals are rare in time}
    \dim_{\mathcal{H}}(\{t: \dim_{\mathcal{H}}(\{x:s(x)=t\})>d-1\})=0,
\end{equation}
and, if $\mu \leq 1$, then the above set of times is empty.
\end{corollary}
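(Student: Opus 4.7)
The plan is to transfer the structural information about the transition zone from Theorem~\ref{thm: regularity intro} to free boundary regularity by working locally with the potential variable $w(t,x)=\tfrac12\int_t^\infty \eta(s,x)\,ds$, and then invoking the local theory of \cite{EKM25} for decreasing solutions to the parabolic obstacle problem.

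First I would observe that in $(0,\infty)\times\Sigma^\circ$ the potential $w$ satisfies the parabolic obstacle problem \eqref{obstacle_problem_123} with monotone coincidence set $\{w=0\}=\{(t,x):t\geq s(x)\}$, placing us squarely in the framework of \cite{EKM25}. That theory yields $s\in C^1(\Sigma^\circ)$, $s\in C^\infty(\Sigma^\circ\cap\{\nabla s\neq 0\})$, and smoothness with finite speed of the free boundary at every such regular point, together with a parabolic-dimension bound on the obstacle-problem singular set $S_0\subset\Sigma^\circ$. Moreover, by Proposition~\ref{prop:sigma structure}, the dual minimizer $\psi$ is harmonic in $\{s>0\}$, so when $u$ is real analytic the function $f=\psi-u$ is analytic in $\Sigma^\circ$ and the local theory yields the sharper bound $\dim_{\operatorname{par}}(S_0)\leq d$ in this regime.

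Next I would assemble the global decomposition: take $R$ to be the set of points $(s(x),x)$ with $x\in\Sigma^\circ$ and $\nabla s(x)\neq 0$, and write $S=S_0\cup S_1$ where $S_1$ consists of space-time points above $A:=\Sigma\setminus\Sigma^\circ$, which is the set where $s$ may be discontinuous. Theorem~\ref{thm: regularity intro} gives $\dim_{\mathcal{H}}(A)\leq d-1$, improved to $d-2$ when $u$ is analytic. Lifting $A$ into space-time via $x\mapsto(s(x),x)$ increases the parabolic Hausdorff dimension by at most one (the worst case being $s$ merely H\"older-$\tfrac12$), so $\dim_{\operatorname{par}}(S_1)\leq d$ in general and $\leq d-1$ when $u$ is analytic. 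Combined with the bound on $S_0$, this gives $\dim_{\operatorname{par}}(S)\leq d+1$, and the sharp estimate \eqref{eq:dim par sing} in the analytic case.

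For \eqref{eq:fractals are rare in time} I would invoke a Marstrand--Mattila type slicing theorem adapted to the parabolic metric (in which the time coordinate carries exponent $\tfrac12$). On $R$ every time slice $R\cap(\{t\}\times\R^d)$ is locally a smooth hypersurface, hence of Hausdorff dimension at most $d-1$, so the only obstruction to fractal freezing at a given time can come from slicing $S$. Slicing a set of parabolic dimension $\leq d+1$ by the hyperplanes $\{t=\mathrm{const}\}$ should yield slices of Euclidean Hausdorff dimension $\leq d-1$ outside a set of times of Hausdorff dimension zero, which is \eqref{eq:fractals are rare in time}; the main obstacle here is the precise statement and verification of this parabolic slicing result with the correct exceptional-time dimension count. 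Finally, in the subcritical regime $\mu\leq 1$, the potential $w$ enjoys an additional time-monotonicity/semiconcavity (the obstacle-problem forcing stays dominated by the initial data), which by \cite{EKM25} precludes both jumps in $s$ and the appearance of positive-time singular points, so that the exceptional set of times in \eqref{eq:fractals are rare in time} is empty.
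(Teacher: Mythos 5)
Your overall decomposition --- handle the interior $\Sigma^\circ$ with the local obstacle-problem theory of \cite{EKM25} and the residual set $A=\Sigma\setminus\Sigma^\circ$ with Theorem \ref{thm: regularity intro} --- is the same as the paper's. But two steps do not hold as written. First, your claim that lifting $A$ via $x\mapsto(s(x),x)$ increases parabolic Hausdorff dimension by at most one is unjustified: $A$ is precisely where $s$ may be discontinuous, so no modulus of continuity is available there, and even the H\"older-$\tfrac12$ regularity you invoke would only give an increase of $\tfrac32$ (a graph over a $k$-dimensional set whose time-oscillation on spatial $r$-balls is $r^{1/2}$ requires about $r^{1/2}/r^2$ parabolic cubes per ball). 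The correct elementary bound is $\dim_{\operatorname{par}}(S_2)\leq \dim_{\mathcal H}(A)+2$, which is what the paper uses and which already yields $d+1$ in general and $d$ when $u$ is analytic. Relatedly, the general-versus-analytic improvement in \eqref{eq:dim par sing} comes entirely from the $d-1$ versus $d-2$ bound on $A$, not from a sharper interior bound: the interior singular set satisfies $\dim_{\operatorname{par}}\leq d$ unconditionally by \cite[Thm.~1.1]{EKM25}, with no analyticity of $u$ needed.

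Second, and more seriously, your route to \eqref{eq:fractals are rare in time} through an abstract parabolic Marstrand--Mattila slicing theorem fails. A set of parabolic dimension $d+1$ can have time-slices of full spatial dimension $d$ for a set of times of positive Hausdorff dimension: for instance $E\times F$ with $\dim_{\mathcal H}(E)=\tfrac12$ and $\dim_{\mathcal H}(F)=d$ has parabolic dimension $d+1$, yet every slice over $E$ has dimension $d$. Abstract slicing can at best control Lebesgue-a.e.\ time, not the complement of a Hausdorff-dimension-zero set of times. The paper instead invokes \cite[Thm.~1.3]{EKM25}, a statement specific to monotone solutions of the parabolic obstacle problem (which also gives emptiness when $\mu\leq1$), for the slices through $R\cup S_1$, and disposes of the slices through $S_2$ trivially, since those slices lie in $A$ and $\dim_{\mathcal H}(A)\leq d-1$. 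You correctly flagged the slicing step as the main obstacle; it is indeed the missing ingredient and cannot be supplied by metric measure theory alone.
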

\begin{proof}
If $x_0\in \Sigma^{\circ}$, then, recalling \eqref{obstacle_problem_123}, there exists a neighborhood $V= (t_0-r^2,t_0+r^2) \times B_r(x_0)$ such that the associated potential $w$ solves the parabolic obstacle problem
\begin{equation}\label{eq:obst pf th}\begin{cases}
     w_t -\frac12 \Delta w=-\frac12\chi_{\{w>0\}}\\
     w\geq 0, \;w_t \leq 0,\; \{w>0\}=\{w_t<0\}
\end{cases}   
\end{equation}
in $V$. The regularity result for $s$ then follows from \cite[Thm. 1.2]{EKM25}, and, by \cite[Thm. 1.1]{EKM25}, one may write
\begin{equation*}
    \partial\{(t,x): \eta(t,x)>0, x\in \Sigma^{\circ}\}=R \dot \cup S_1, 
\end{equation*}
where $R$ satisfies (i), and $S_1=\{(s(x),x)\in \partial\{\eta>0\}: x\in \Sigma^\circ, \nabla s(x)=0 \}$, the set of points where the boundary attains infinite speed, satisfies
\begin{equation*}
    \dim_{\operatorname{par}}(S_1)\leq d.
\end{equation*}
On the other hand, let $S_2:=\{(t,x)\in \partial\{\eta>0\}: x\in \Sigma \setminus \Sigma^{\circ}\},$ be the set of free boundary points where the freezing time is discontinuous. By the definition of parabolic Hausdorff dimension, we have \begin{equation*}
    \dim_{\operatorname{par}}(S_2)\leq  \dim_{\mathcal{H}}(\Sigma \backslash \Sigma^{\circ})+2.
\end{equation*} Letting $S:=S_1 \dot \cup S_2$, \eqref{eq:dim par sing} follows from Theorem \ref{thm: regularity intro}.

Similarly, \cite[Thm. 1.1, Thm. 1.3]{EKM25} implies that
\begin{equation} \label{eq:fractalsrare1pf}
    \dim_{\mathcal{H}}(\{t:\dim_{\mathcal{H}}(\{x:t=s(x), (t,x)\in R\cup S_1\})>d-1\})=0,
\end{equation}
with the above set being empty if $\mu\leq 1$. Since $\dim_{\mathcal{H}}(\Sigma \setminus \Sigma^{\circ})\leq d-1$, we trivially have
\begin{equation} \label{eq:fractalsrare2pf}
\{t:\dim_{\mathcal{H}}(\{x:t=s(x), (t,x)\in S_2\})>d-1\}=\emptyset.
\end{equation}
Hence, \eqref{eq:fractals are rare in time} follows from \eqref{eq:fractalsrare1pf} and \eqref{eq:fractalsrare2pf}, as does the upgraded statement when $\mu\leq 1$.
\end{proof}
\begin{remark}\label{rem:dim par sharp} We note that, by Theorem \ref{thm: fractal freezing}, \eqref{eq:dim par sing} is sharp for weak solutions to \eqref{eq:stefan intro}. In fact \eqref{eq:dim par sing} is also sharp for local solutions to \eqref{eq:obst pf th}, since it is attained, for any $t_0>0$, by the simple example
\begin{equation*}
    w(t,x)=(t_0-t)^+.
\end{equation*}
See, however, \cite[Thm. 1.3]{EKM25}.    

Let us also mention that, from the point of view of optimal Skorokhod embeddings (see Section \ref{sec:prelim}), the freezing time $s$ is precisely the Root‑type barrier associated with the optimal stopping time realizing $\mu \rightsquigarrow \chi_{\Sigma}$. The $C^1$ regularity result of Corollary \ref{cor:regu} may therefore be understood as a partial high-dimensional analogue of the one-dimensional continuity theorem for Root's barrier obtained in \cite{bayraktar2022continuity}.

\end{remark}

Note that the above theorem excludes the discussion of the initial freezing set $\{x\in U:s(x)=0\}$. In fact, the initial freezing of a maximal solution can be as pathological as wanted, as long as it is a measure zero set, as we show next (see Remark \ref{rem:maximal S0}).
\begin{proposition}[Fractal freezing at $t=0$ in the class $\mathcal{S}_0(U)$] \label{fractal_freezing_maximal} Let $U$ be an open bounded domain with Lipschitz boundary and let $F \subset U$ be a closed set with $|F|=0$.  Then there exists an initial datum $\mu \in C(\overline{U}) \cap \mathcal{S}_0(U)$  with $|\{\mu=1\}|=0$ such that there exists a unique weak solution $\eta$ of \eqref{eq:stefan intro}. The unique solution $\eta$ is also a maximal solution for any weight $u$, and if $s$ denotes its  freezing time, then 
\[ F \subset \{x \in U:s(x)=0\}. \] 

\end{proposition}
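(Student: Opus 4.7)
The strategy is to exhibit an initial datum $\mu \in C(\overline{U})$ satisfying (i) $\{\mu>0\}=U$, (ii) $\mu \leq_{\sh} \chi_{U}$ with respect to $U$ and $\int_U \mu\,dx = |U|$, (iii) $|\{\mu=1\}|=0$, and (iv) the Newtonian potential $v:=\Delta^{-1}(\chi_U-\mu)$ vanishes identically on $F$. The mass saturation $\int_U\mu = |U|$, together with the constraint $\chi_\Sigma \leq \chi_U$ for any transition zone $\Sigma\subset U$, will force uniqueness of the transition zone (and hence of the weak solution); the vanishing $v\equiv 0$ on $F$ will encode the prescribed initial freezing through the identity $w(0,\cdot)=v$ of Proposition~\ref{prop:eulerian}~(a).

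\textbf{Construction of the potential.} Fix a base function $v_0 \in C^1(\R^d) \cap C^2(U)$ supported in $\overline U$ with $v_0 > 0$ in $U$, $v_0 = |\nabla v_0| = 0$ on $\partial U$, $\Delta v_0 \leq 1$ in $U$ with $\Delta v_0 \to 1$ at $\partial U$ from inside, and $|\{\Delta v_0 = 0\}|=0$. When $U = B_1$ one may take $v_0(x) = \tfrac{1}{8}(1-|x|^2)^2$, which yields explicitly $1 - \Delta v_0 = \tfrac{d+2}{2}(1-|x|^2)$; an analogous construction handles general bounded Lipschitz $U$ by standard elliptic arguments. By Whitney's extension theorem, pick $f \in C^\infty(\R^d;[0,1])$ with $\{f=0\}=F$, all derivatives of $f$ vanishing on $F$, and $f \equiv 1$ outside a neighborhood $F^\varepsilon \Subset U$ of $F$. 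Set $v := v_0\, f^k$ for a large integer $k$. Then $v \geq 0$ is supported in $\overline U$, $\{v=0\}\cap U=F$, and $v = |\nabla v| = 0$ on $\partial U$. Expanding
\[
\Delta v \;=\; v_0\,\Delta(f^k) + 2\,\nabla v_0\cdot\nabla(f^k) + f^k\,\Delta v_0,
\]
one sees that $f\equiv 1$ forces $\Delta v = \Delta v_0$ outside $F^\varepsilon$, while on $F^\varepsilon\setminus F$ the quantities $kf^{k-1}$ and $k(k-1)f^{k-2}$ converge to zero uniformly as $k\to\infty$; choosing $k$ large enough therefore gives $\Delta v \leq 1$ throughout $U$ and $|\{\Delta v = 0\}|=0$. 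Then $\mu := \chi_U - \Delta v$ belongs to $C(\overline U)$, is strictly positive on $U$, vanishes on $\partial U$, and satisfies $|\{\mu=1\}|=0$; the divergence theorem together with $\nabla v\equiv 0$ on $\partial U$ gives $\int_U \Delta v = 0$, so $\int_U \mu = |U|$. Since $v\in C^1(\R^d)$ is supported in $\overline U$ with distributional Laplacian $\chi_U-\mu$, it equals the Newtonian potential $\Delta^{-1}(\chi_U-\mu)$ and vanishes outside $U$; Proposition~\ref{prop:subharmonic equivalent} then yields $\mu \leq_{\sh} \chi_U$ with respect to $U$, so $\mu\in\mathcal{S}_0(U)$.

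\textbf{Uniqueness, maximality, and freezing on $F$.} Let $\eta$ denote the active particle distribution associated with $(\mu,\chi_U)$; by Proposition~\ref{prop:eulerian}~(c) it is a weak solution of \eqref{eq:stefan intro}. Conversely, any weak solution $\widetilde\eta$ with transition zone $\widetilde\Sigma\subset U$ satisfies $\mu\leq_{\sh}\chi_{\widetilde\Sigma}$ by Proposition~\ref{prop:eulerian}~(d); testing this inequality against the constant $1$ gives $|\widetilde\Sigma|=\int\mu=|U|$, hence $\chi_{\widetilde\Sigma}=\chi_U$ a.e. Therefore $\widetilde\eta$ is also the active particle distribution of $(\mu,\chi_U)$, and the obstacle-problem characterization in Proposition~\ref{prop:eulerian}~(a) forces $\widetilde\eta=\eta$. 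For every admissible weight $u$, the same mass argument forces the unique optimizer of $\mathcal{P}(\mu,u)$ to be $\chi_U$, so by Remark~\ref{rem:maximal S0} the solution $\eta$ is the maximal solution associated to $u$. Finally, $w(0,\cdot)=v$ by Proposition~\ref{prop:eulerian}~(a); since $v\equiv 0$ on $F$ and $w\geq 0$ is nonincreasing in $t$, we get $w(\cdot,x)\equiv 0$, hence $s(x)=0$, for every $x\in F$.

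\textbf{Main obstacle.} The technical heart of the argument is the construction of $v$: producing a single function that vanishes on the arbitrary closed null set $F$ (which may be a fractal of Hausdorff dimension $d$), vanishes to second order on $\partial U$ with the precise leading Laplacian, and still satisfies $\Delta v\leq 1$ throughout $U$. Whitney extension provides the needed smooth $f$ vanishing on $F$, but ensuring $\Delta v\leq 1$ requires the quantitative rescaling $f\mapsto f^k$ with a careful asymptotic analysis of $\nabla(f^k)$ and $\Delta(f^k)$ in the transition region $F^\varepsilon\setminus F$; in the general Lipschitz case one also has to verify existence of a base function $v_0$ with the correct boundary behavior.
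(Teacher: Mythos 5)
Your overall architecture is sound and parallels the paper's logic in its second half: mass saturation $\int_U\mu=|U|$ forces every admissible target measure (and every transition zone) to equal $\chi_U$, which gives uniqueness and maximality for every weight, and the prescribed freezing follows from $w(0,\cdot)=v$ vanishing on $F$ together with $\partial_t w\le 0$. The paper reaches the same endpoint by a different construction of $\mu$: it decomposes $U\setminus F$ into dyadic cubes $Q_i$ and sets $\mu=1-\e_i\Delta\psi_i$ on each cube, with $\psi_i$ an explicit polynomial vanishing to high order on $\partial Q_i$ and $\e_i$ a small \emph{multiplicative} constant chosen per cube; the potential is then the sum $\sum_i\e_i\psi_i$, which automatically vanishes on $F$ and handles arbitrary Lipschitz (indeed arbitrary bounded open) $U$ without any Whitney extension.

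However, your construction of $v=v_0 f^k$ has a genuine gap at exactly the step you identify as the technical heart. The claim that $kf^{k-1}$ and $k(k-1)f^{k-2}$ converge to zero uniformly on $F^\varepsilon\setminus F$ as $k\to\infty$ is false: these quantities blow up wherever $f$ is close to $1$. Concretely, $\sup_{s\in(0,1)}ks^{k-1}=k$ and, even using the bound $|\nabla f|^2\le 2\|D^2f\|_\infty(1-f)$ valid for smooth $f\le1$, one gets
\begin{equation}
\sup_{F^\varepsilon}\,k(k-1)f^{k-2}|\nabla f|^2\;\ge\;c\,k
\end{equation}
at points where $1-f\sim 1/k$, so $\Delta(f^k)=kf^{k-1}\Delta f+k(k-1)f^{k-2}|\nabla f|^2$ grows at least linearly in $k$ in the transition region. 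Taking $k$ large therefore makes the bound $\Delta v\le 1$ \emph{harder}, not easier, to achieve, and the positivity of $\mu=\chi_U-\Delta v$ on $U$ is not established (note also that you need the strict inequality $\Delta v<1$, not $\Delta v\le1$, for $\{\mu>0\}=U$). The fix is to abandon the large power and instead scale by a small constant, e.g.\ $v=\e\,v_0 f$ with $\e$ small enough that $\e\,\Delta(v_0f)<1$ on $U$ (giving up the cosmetic requirement $\mu\to0$ at $\partial U$, which is not needed — the paper's $\mu$ equals $1$ on $\partial U$). A secondary gap: for a general bounded Lipschitz $U$, the existence of a base function $v_0\in C^1(\R^d)\cap C^2(U)$ with $v_0=|\nabla v_0|=0$ on $\partial U$ and $\Delta v_0$ bounded is asserted but not standard (zero Dirichlet data does not give vanishing gradient on $\partial U$, and $\operatorname{dist}(\cdot,\partial U)^2$ is not $C^2$); the cube-sum construction is the natural way to produce such a $v_0$, at which point one has essentially reproduced the paper's proof.
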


\begin{proof} We first construct $\mu$. For $\tilde{U} := U \setminus F$, there exist open disjoint cubes $Q_i=\prod_{j=1}^{d} (a^{(i)}_j,b^{(i)}_j)$ such that
$\tilde{U} = \bigcup_{i=1}^{\infty} \overline{Q_i}$. Letting
\[ \psi_i(x) := \prod_{j=1}^d (x_j-a^{(i)}_j)^4(b^{(i)}_j-x_j)^4 ,   \] we have $\psi_i \geq 0$ with $\psi_i(x)=|\nabla \psi_i(x)|=\Delta \psi_i =0$ on $\partial Q_i$. Choose $\e_i > 0$ sufficiently small such that $\e_i \Delta \psi_i < 1$ on $Q_i$, and define
\[ \mu_i(x) := \begin{cases} 1 - \e_i \Delta \psi_i(x) &\text{ if } x \in Q_i, \\ 0 &\text{ otherwise. } \end{cases} \] Observe that $|\{\mu_i=1\}|=0$ by analyticity of $\psi_i$. If $\varphi\in L^1(Q_i)$ is a subharmonic function over $Q_i$, then
\[ \int_{\R^d} \varphi\mu_i  = \int_{\R^d} \varphi - \int_{Q_i} \e_i \Delta\psi_i \cdot \varphi \leq \int_{\R^d} \varphi \chi_{Q_i}.  \] The last inequality is justified as in the proof of Lemma \ref{lem:integration by parts},  by approximating $\psi_i$ with smooth, compactly supported functions in $Q_i$, and using the definition of the distributional Laplacian (together with $\psi_i=|\nabla \psi_i|=0$ on $\partial Q_i$). By Proposition \ref{prop:subharmonic equivalent}, we infer that $\mu_i \leq_{\textnormal{SH}} \chi_{Q_i}$ over $Q_i$.  We then define $\mu \in C(\overline{U})$ by
\[ \mu(x) := \begin{cases} \mu_i(x) &\text{ if } x \in Q_i \\ 1 &\text{ if } x \in \overline{U} \setminus \tilde{U}.  \end{cases} \]
Since $\mu_i \leq_{\textnormal{SH}} \chi_{Q_i}$ over $Q_i$, Proposition \ref{prop:subharmonic equivalent} implies that $\Delta^{-1}(\chi_{Q_i}-\mu_i) \equiv 0$ outside $Q_i$. Therefore,
\begin{equation} \Delta^{-1}(\chi_U-\mu) = \sum_{j=1}^{\infty} \Delta^{-1}(\chi_{Q_i} - \mu_i)  \equiv 0 \text{ outside } \tilde{U}. \label{subharmonic_sum_measures} \end{equation} Proposition \ref{prop:eulerian} and Remark \ref{rem:weak sol non lipschitz} thus imply the existence of a weak solution $\eta$ of \eqref{eq:stefan intro} with initial data $\mu$ and transition zone a.e. equal to $U$. By \eqref{subharmonic_sum_measures}, the associated freezing time $s$ satisfies
\[ F \subset \{x\in  U:s(x)=0\}. \]

We now show uniqueness. Suppose that $\tilde{\eta}$ is a weak solution of \eqref{eq:stefan intro} with initial data $\mu$. If $\tilde{\Sigma}$ denotes the transition zone of $\tilde{\eta}$, then Proposition \ref{prop:eulerian} (d) implies that $\mu \leq_{\textnormal{SH}} \chi_{\tilde{\Sigma}}$.  In particular, as $\mu(\R^d)=|U|$, we see from $\mu \leq_{\textnormal{SH}} \chi_{\tilde{\Sigma}}$ that $|U|= |\tilde{\Sigma}|$. But as $\tilde{\Sigma} \subset U$ due to Definition \ref{def:weak solution}, we conclude that $\tilde{\Sigma}= U$ a.e. That is $\tilde{\eta}$ has the same initial data and transition zone a.e. as $\eta$, so \cite[Cor. 9.10]{kim2024stefan} implies that $\tilde{\eta}=\eta$. 

Finally, we show that $\eta$ is a maximal solution for any weight $u$. Since $\mu(\R^d)=|U|$ the only admissible target measure for $\mathcal{P}(\mu,u)$ is $\nu(x) = \chi_U(x)$. Thus, $\nu$ is the optimizer for $\mathcal{P}(\mu,u)$, so $\eta$ is the maximal solution associated to $u$.

\end{proof}

\section{Non-universality and waiting time}
\label{sec:non-universality}
Our main goal in this section will be to prove Theorem \ref{thm:universality}. We first show that, in the radial and one-dimensional cases, maximal solutions enjoy a universal transition zone that we characterize explicitly. We also show that with the slack condition $\mu \in\cS_{\delta}(U)$ for $\delta>0$, the solutions exhibit no initial freezing. 

Next, we prove that in dimensions $d\geq 2$, maximal solutions with non-radial data are not universal, as they may depend on the weight $u$. We also exhibit an additional instability phenomenon for maximal solutions, by constructing, in Proposition \ref{prop:fourier example}, solutions that display waiting time near the initial free boundary.
\subsection{Universality of radial and one-dimensional solutions} \label{subsec:radial}
We begin by proving universality of the primal problem $\mathcal{P}(\mu,u)$ when $d=1$, or when both $\mu$ and $u$ are radial, by identifying the optimal target measure $\nu$. We note that the argument below covers the case $\delta=0$, which falls outside the scope of the results of \cite{choi2024existence}, where well-posedness of the primal problem was studied. However, the result is new even in the case $\delta>0$, since uniqueness was only known for a \emph{fixed choice} of $u$.

\begin{proposition} [Universality for one-dimensional or radial solutions] \label{target_measure_universial} Assume that 
\begin{equation*}U=B_1 \quad \text{or} \quad U=A_{\rho}:=\{\rho<|x|<1\}, \; \;\;\rho\in (0,1) \quad \text{or} \quad U=(0,1), \;\;\;d=1.    
\end{equation*}

Then for any $\mu \in \mathcal{S}_0(U)$ and any weight $u\in C^2(\overline{U})$, both assumed radial if $d\geq 2$,
the unique optimal target measure for $\mathcal{P}(\mu,u)$ is

\begin{equation} \label{eq:nu* radial}
  \nu^*(x) =
  \begin{cases}
    \chi_{(0,a)}(x) + \chi_{(b,1)}(x), 
      & \text{if } U = (0,1), \\[6pt]
    \chi_{\{\tilde r < |x| <1\}}(x), 
      & \text{if } U = B_1, \\[6pt]
    \chi_{\{\rho < |x| < r_1\}}(x) 
      + \chi_{\{r_2 < |x| < 1\}}(x), 
      & \text{if } U = A_\rho,
  \end{cases}
\end{equation} where $(a,b)$ are from \eqref{one_dimension_mass_moments}, 
$\tilde r$ is from \eqref{same_mass_ball}, 
and $(r_1,r_2)$ are from \eqref{eq:annuli_constraints}.
\end{proposition}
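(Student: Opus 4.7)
The plan is to reduce each case to a one-dimensional optimization in the radial variable, identify the bookend candidate $\nu^*$ through the matching of harmonic moments, and then prove a rearrangement lemma showing that $\nu^*$ subharmonically dominates every other feasible target. Combined with the strict superharmonicity $\Delta u < 0$, this yields the optimality, uniqueness, and universality (independence of $u$) of $\nu^*$.

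I would first reduce the radial cases in $d \geq 2$ to a one-dimensional problem: for radial $\mu$ and $u$, any feasible $\nu$ may be replaced by its spherical symmetrization $\nu_{\mathrm{sym}}$, which still satisfies $\nu_{\mathrm{sym}} \leq \chi_U$ and $\mu \leq_{\operatorname{SH}} \nu_{\mathrm{sym}}$ in $U$ (only radial subharmonic test functions are needed once $\mu$ is radial) and has the same cost $\int u\,d\nu$. Next, I parametrize the candidate $\nu^*$ by its free endpoints and pin them down using the fact that for any $\varphi$ harmonic in $U$, both $\pm \varphi$ are subharmonic, so $\int \varphi\,d\mu = \int \varphi\,d\nu^*$. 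In 1D one tests with $\varphi \equiv 1$ and $\varphi(x) = x$, yielding two equations for $(a,b)$ as in \eqref{one_dimension_mass_moments}. For $U = B_1$, only constants are usable (no nonconstant radial harmonic is smooth at the origin), giving the single equation \eqref{same_mass_ball} for $\tilde r$. For $U = A_\rho$ one additionally uses the Newtonian capacity kernel ($\log r$ if $d = 2$, $r^{2-d}$ if $d \geq 3$), which is harmonic and smooth on $\overline{A_\rho}$, yielding the two equations \eqref{eq:annuli_constraints} for $(r_1,r_2)$. The hypothesis $\mu \in \mathcal{S}_0(U)$ guarantees solvability in the admissible parameter range.

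The crux is a bookend dominance lemma: for every feasible $\nu \leq \chi_U$ with $\mu \leq_{\operatorname{SH}} \nu$ in $U$, one has $\nu \leq_{\operatorname{SH}} \nu^*$ in $U$ as well. By Proposition~\ref{prop:subharmonic equivalent}, this is equivalent to $v := \Delta^{-1}(\nu^* - \nu) \geq 0$ together with $v \equiv 0$ outside $U$. The outside vanishing is automatic from the harmonic moment matching above, since $\nu$ and $\nu^*$ share mass (and the capacity moment in the annular cases). For the interior sign, in the pure 1D case one uses that the Newtonian potential is $|x|/2$ and that $\nu^* - \nu \geq 0$ on the outer intervals and $\leq 0$ in the middle (because $\nu^*$ saturates the density bound $1$ precisely on the bookends), giving the classical $+,-,+$ sign pattern; combined with vanishing boundary data and the zero mass and mean identities, this forces $v \geq 0$ by the standard second-order comparison. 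The radial cases reduce to the analogous Bessel-type ODE on the radial interval $[0,1]$ or $[\rho,1]$, and the boundary conditions provided by the harmonic moment matching close the argument.

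Combining these ingredients, every feasible $\nu$ satisfies $\mu \leq_{\operatorname{SH}} \nu \leq_{\operatorname{SH}} \nu^*$ in $U$; applied to $\nu^*$ itself, this shows $\nu^*$ is feasible. Testing the Green identity of Lemma~\ref{lem:integration by parts} with the strictly subharmonic function $-u$ then yields $\int u\,d\nu \geq \int u\,d\nu^*$, and the strict sign $\Delta u < 0$ promotes equality to $\Delta^{-1}(\nu^* - \nu) \equiv 0$ in $U$, hence $\nu = \nu^*$. This gives uniqueness and optimality of $\nu^*$ for every admissible weight $u$, proving universality. The main obstacle is the radial bookend lemma in the annulus, where the weighted radial potential solves a Bessel-type ODE with a source of mixed sign: positivity of $v$ depends delicately on both the matching of the capacity moment (to close both outer boundary conditions) and on the density bound $\nu \leq \chi_U$ (to ensure the $+,-,+$ sign structure in the radial variable).
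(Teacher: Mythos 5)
Your proposal follows essentially the same route as the paper: spherical symmetrization of competitors, pinning the endpoints by testing with harmonic functions ($1$, $x$, and the Newtonian kernel on the annulus), a bookend dominance lemma proved via the radial ODE and the $+,-,+$ sign structure coming from $\nu \leq \chi_U$ (this is exactly the paper's Appendix A), and then optimality and uniqueness via the Green identity and strict superharmonicity of $u$. The only gloss is at the very end: for non-radial competitors the dominance $\nu \leq_{\operatorname{SH}} \nu^*$ holds only after symmetrization, so the strict-sign argument first yields $T(\nu)=\nu^*$, and one must add the one-line observation that $T(\nu)\in\{0,1\}$ together with $0\leq\nu\leq 1$ forces $\nu=T(\nu)$.
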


\begin{proof} We prove the case $U = A_{\rho}$; the remaining two cases follow from a similar argument. 

Since $\mu \in \mathcal{S}_0(A_{\rho})$, the admissible set for $\mathcal{P}(\mu,u)$ is nonempty. We first show that there exists a radial competitor. Given any competitor $\nu$, we consider its symmetrization
\begin{equation} \label{eq:T(nu) defi}
    T(\nu)(x) := \int_{O(d)} \nu(Ax)\, dA,
\end{equation}
where $O(d)$ is the orthogonal group equipped with the Haar probability measure. Then $T(\nu)$ is radial, and since $\nu \le \chi_{A_{\rho}},$ we immediately obtain $T(\nu)(x) \le \chi_{A_{\rho}}(x).$

Next, we verify that $\mu \le_{\mathrm{SH}} T(\nu)$ on $A_{\rho}$.  
Let $\varphi \in L^{1}(A_{\rho})$ be subharmonic on $A_{\rho}$, and similarly  consider the symmetrization $T(\varphi)$. Then, by Fubini's theorem and the radiality of $\mu$, $T(\varphi)  \in L^1(A_{\rho})$ is subharmonic on $A_{\rho}$, and 
\begin{equation}
   \int_{\mathbb{R}^d} T(\varphi)(x)\, d\mu(x)
= \int_{\mathbb{R}^d} \varphi(x)\, d\mu(x) \quad \text{ and } \quad  \int_{\R^d} T(\varphi)(x) d\nu(x) = \int_{\R^d} \varphi(x) T(\nu)(x) dx.
\end{equation}
Hence, we obtain from $\mu \leq_{\textnormal{SH}} \nu$ over $A_{\rho}$ and Proposition \ref{prop:subharmonic equivalent} that
\[
\int_{\mathbb{R}^d} \varphi(x)\, d\mu(x)
= \int_{\mathbb{R}^d} T(\varphi)(x) \, d\mu(x)
\le \int_{\mathbb{R}^d} T(\varphi)(x) \, d\nu(x)
= \int_{\mathbb{R}^d} \varphi(x) T(\nu)(x) dx,
\]
and thus $\mu \le_{\mathrm{SH}} T(\nu)$ on $A_{\rho}$.  That is, $T(\nu)$ is a radial competitor for $\mathcal{P}(\mu,u)$. \smallskip

 As equality in \eqref{eq:subharmonic defi equiv L1} holds for harmonic functions over $A_{\rho}$, we may apply it to $\varphi(x)=1$ and $\varphi(x)=\mathcal{N}(x)$ to obtain that
\[  \int_{\rho}^1 r^{d-1} \mu(r) dr = \int_{\rho}^1 r^{d-1} T(\nu)(r) dr \quad \text{and} \quad\int_{\rho}^1 \phi(r)\mu(r) dr = \int_{\rho}^1 \phi(r) T(\nu)(r)dr,  \] where $\phi$ is as in Proposition \ref{prop:annuli_target}. 
 Thus, Proposition \ref{prop:annuli_target} implies 
 \begin{equation} T(\nu) \leq_{\textnormal{SH}} \nu^* \text{ on } A_{\rho},  \label{raidal_target_ordering} \end{equation} and hence $\mu \leq_{\textnormal{SH}}  \nu^* \text{ on } A_{\rho},$ which means $\nu^*$ is admissible for $\mathcal{P}(\mu,u)$.
 Now if $\nu_r$ is an admissible radial target measure, then by \eqref{raidal_target_ordering}, $T(\nu_r)=\nu_r \leq_{\textnormal{SH}} \nu^*$ on $A_{\rho}$, so that
\begin{equation} \int_{\R^d} u(x) d\nu^*(x) \leq \int_{\R^d} u(x) d\nu_r(x), \label{best_radial} \end{equation} because $u$ is superharmonic on $A_{\rho}$. To upgrade this to any admissible target measure, we observe from Fubini's theorem and the radiality of $u$ that
\[ \int_{\R^d} u(x) d\nu(x)  = \int_{\R^d} u(x) T(\nu)(x) dx  \geq \int_{\R^d} u(x) d\nu^*(x), \] where we used \eqref{best_radial}. Hence, $\nu^*$ is an optimizer of $\mathcal{P}(\mu,u)$. \smallskip

To obtain uniqueness, let $\tilde{\nu}$ be another optimizer of $\mathcal{P}(\mu,u)$. Then the above arguments imply that $T(\tilde{\nu}) \leq_{\textnormal{SH}} \nu^*$, so that $v(x) := \Delta^{-1}(\nu^*-T(\tilde{\nu})) \geq 0$. Optimality and Lemma \ref{lem:integration by parts} then imply
\[ 0 = \int_{\R^d} u(x) \Delta v(x) dx = \int_{\R^d} \Delta u(x) \cdot v(x) dx = 0. \]  
Thus, since $\Delta u < 0$ and $v(x) \geq 0$, recalling that $v$ is continuous, we obtain $v \equiv 0$. This implies that $\nu^* = T(\tilde{\nu})$.  Hence, $T(\tilde{\nu})\in \{0,1\}$ and, since $\tilde{\nu}\leq1$, \eqref{eq:T(nu) defi} forces $\tilde{\nu}=T(\tilde{\nu})$, and thus $\tilde{\nu}=\nu^*$.\end{proof}

When $\delta>0$, the above result is enough to uniquely characterize the maximal solution, and to identify the transition zone $\Sigma$ a.e. We show next that, in fact, $\Sigma$ can be identified exactly, and the set of initial freezing can be shown to be empty.

We first prove that, \emph{conditional} on $\Sigma$ being a.e. equal to an outer shell, the set of initial freezing is empty. For a later application, we do not assume below that $u$ is radial.
\begin{lemma}[No initial freezing in a.e. annular transition zones]\label{lem:a.e. annular implies no freezing}  Let $\mu\in \cS_{\delta}(U)$, with $\delta>0$, $|\{\mu=1\}|=0$, and with $\mu$ radial if $d\geq 2$. Let $\Sigma$ be the transition zone for the maximal solution associated to some weight $u$, and assume that $\nu^*=\chi_{\Sigma}$ is a.e. given by one of the three cases in \eqref{eq:nu* radial}. Then the freezing time satisfies 
\begin{equation*}
   U \cap \{s=0\}=\emptyset.
\end{equation*}
\end{lemma}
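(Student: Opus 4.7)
The strategy is to reduce the statement to proving $v > 0$ on $U$ for $v := w(0,\cdot) = \Delta^{-1}(\nu^*-\mu)$. Since $w_t = -\tfrac{1}{2}\eta \leq 0$ and $w \geq 0$, the condition $w(t,x)=0$ for all $t>0$ is equivalent to $v(x) = 0$, hence $\{s=0\}\cap U = \{v=0\}\cap U$. The function $v$ is continuous, non-negative, vanishes outside $U$, radial when $d\geq 2$, and lies in $C^{1,\alpha}(\mathbb{R}^d)$ by Calder\'on--Zygmund (since $\Delta v = \nu^*-\mu \in L^{\infty}$).

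Each of the three forms of $\nu^*$ in \eqref{eq:nu* radial} splits $U$ into open \emph{water regions} (where $\nu^*=0$: $B_{\tilde r}$, $\{r_1<|x|<r_2\}$, or $(a,b)$) and \emph{ice shells} (where $\nu^*=1$). On each water region $W$ one has $|W\cap\Sigma|=0$, so Lemma~\ref{lem:sigma saturation}(ii) gives $W\subset\{s=\infty\}$ and hence $v>0$ on $W$. The remaining points are handled by contradiction: suppose $v(r_0)=0$ for some $r_0\in U$ lying in the closure of an ice shell. Let $V\subset U$ be the open ice annulus (or interval in 1D) one of whose radial boundaries is $r_0$ and the other is a component of $\partial U$, i.e., $V\in\{\{r_0<|x|<1\},\{\rho<|x|<r_0\},(0,r_0),(r_0,1)\}$ depending on the case. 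The crucial point is that $v = \partial_\nu v = 0$ on the whole of $\partial V$: at $r_0$, by $v\geq 0$ attaining its minimum combined with $C^1$ regularity, and at $\partial V\cap\partial U$, because $v\equiv 0$ outside $U$ forces both $v$ and $\nabla v$ to vanish there by $C^1$ regularity.

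Choose a non-negative harmonic function on $\overline V$ by
\begin{equation*}
h(x) := \mathcal{N}(|x|) - \mathcal{N}(r_0) \quad \text{(outer shells)},\qquad h(x) := \mathcal{N}(r_0) - \mathcal{N}(|x|) \quad \text{(inner shell of }A_\rho\text{)},
\end{equation*}
with $h(x) := \pm(x-r_0)$ in $1$D. Since $0\notin\overline V$, $h$ is smooth and harmonic on $V$, and $A_0 := \int_V h\,dx > 0$. Using $\Delta v = 1-\mu$ on $V$ together with the vanishing of $v$ and $\partial_\nu v$ on $\partial V$, Green's identity gives
\begin{equation*}
\int_V h(1-\mu)\,dx \;=\; \int_V h\,\Delta v\,dx \;=\; \int_V v\,\Delta h\,dx \;=\; 0,
\end{equation*}
so $\int_V h\,\mu = A_0$. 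Extending $h$ by $0$ on $U\setminus V$ yields $\tilde h \in L^1(U)$; the jump of $\partial_\nu \tilde h$ across $\partial V\cap U$ equals $+\mathcal{N}'(r_0)>0$ outward from $V$, producing a positive surface measure in $\Delta\tilde h$, so $\tilde h$ is subharmonic on $U$. Picking $\tilde\nu\leq(1-\delta)\chi_U$ with $\mu\leq_{\operatorname{SH}}\tilde\nu$ from $\mu\in\mathcal{S}_\delta(U)$, the $L^1$ subharmonic formulation of Proposition~\ref{prop:subharmonic equivalent} (applicable since $U=\overline U^{\circ}$) gives
\begin{equation*}
A_0 \;=\; \int_U \tilde h\,d\mu \;\leq\; \int_U \tilde h\,d\tilde\nu \;\leq\; (1-\delta)\int_U \tilde h \;=\; (1-\delta)\,A_0,
\end{equation*}
which forces $\delta A_0 \leq 0$, contradicting $\delta>0$.

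The main obstacle is verifying that $v$ and $\partial_\nu v$ simultaneously vanish on every component of $\partial V$, since this is what kills the boundary terms in Green's identity; the remaining estimates are uniform across the three cases of \eqref{eq:nu* radial}.
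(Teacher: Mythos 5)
Your proof is correct and follows essentially the same route as the paper's: the contradiction at a zero $r_0$ of $v$ inside an ice shell, run through Green's identity with $h=\pm(\mathcal{N}-\mathcal{N}(r_0))$ and then through Proposition \ref{prop:subharmonic equivalent} with the extended test function $\tilde h=h^+$, is exactly the paper's direct computation $v(r)=\int\phi_r(\chi_\Sigma-\mu)\geq\delta\int\phi_r>0$ with $\phi_r=(\mathcal{N}-N(r))_+$ (resp. $(N(r)-\mathcal{N})_+$ for the inner shell), both hinging on the same truncated Newtonian potentials and the slack $\delta$. The only differences are cosmetic: you treat the interior water regions via Lemma \ref{lem:sigma saturation}(ii) where the paper uses the minimum principle, and you handle $d=1$ directly where the paper reflects to the annulus case.
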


\begin{proof} The problem is equivalent to showing that $v:=\Delta^{-1}({\chi_{\Sigma}-\mu})$ satisfies
\begin{equation*}
    v>0 \quad \text{in}\quad  U.
\end{equation*}    
Assume first that the second case in \eqref{eq:nu* radial} holds. Note that since $\chi_{\Sigma}-\mu$ is a.e. equal to a radial function, $v$ must be radial, so we may write $v=v(r)$, and we then have
\begin{equation*}
    (r^{d-1}v'(r))'=r^{d-1}(\chi_{\Sigma}-\mu).
\end{equation*}
Integrating twice from $r$ to $1$, we obtain
\begin{equation*}
    v(r)=\int_{r}^1 \frac{1}{s^{d-1}}\int_{s}^1t^{d-1}(\chi_{\Sigma}-\mu)(t)dt ds.
\end{equation*}
Letting $\sigma_{d-1}$ be the surface area of $\mathbb{S}^{d-1}$,  $N(|x|):=\mathcal{N}(x)$ (with  $\Delta \mathcal{N}=\delta_{0}$), and
$\phi_r(x):=(\cN(x)-N(r))_+$ in  $U$, integration by parts yields
\begin{multline} \label{eq:v rep formula radial}
    v(r)=\sigma_{d-1}\int_r^1 (N(s)-N(r))(\chi_{\Sigma}-\mu)s^{d-1}ds=\int_{B_1\setminus B_r}(N(|x|)-N(r))(\chi_{\Sigma}-\mu)dx\\=\int_{B_1}\phi_r(x)(\chi_{\Sigma}-\mu)dx.
\end{multline} 
Since $\mathcal{N}$ is subharmonic in $U=B_1$, so is $\phi_r$  for every $r\in(0,1)$, with $\phi_r \in L^1(U)$. Since $\mu \in \cS_{\delta}(U)$, letting $\nu_0$ be as in Definition \ref{def:Sdelta}, we have, by  Proposition \ref{prop:subharmonic equivalent},
\begin{equation*}
    \int_{U}\phi_r \mu \leq \int_U\phi_r \nu_0 \leq (1-\delta)\int_{U}\phi_r. 
\end{equation*}
Hence, if $r\in [\tilde{r},1)$, $\chi_{\Sigma} = 1$ a.e. in the support of $\phi_r$, and \eqref{eq:v rep formula radial} yields
\begin{equation*}
    v(r)=\int_{B_1}\phi_r(x)(1-\mu)dx\geq \delta \int_{B_1}\phi_r(x)dx>0.
\end{equation*}
On the other hand, we have $\Delta v=-\mu\leq0$ in $B_{\tilde{r}}$ with $v>0$ on $\partial B_{\tilde{r}}$, so the minimum principle yields $v>0$ in $B_{\tilde{r}}$, completing the proof.

The third case of \eqref{eq:nu* radial} can be argued similarly. The fact that $v>0$ in the outer shell $B_1 \setminus \overline{B}_{r_2}$ can be shown with identical arguments. As for the inner shell $B_{r_1}\setminus \overline{B_{\rho}}$, one obtains again through integration by parts that
\begin{equation*}
    v(r)=\int_{B_r \setminus B_{\rho}}(N(r)-N(|x|))(\chi_{\Sigma}-\mu)dx=\int_{U}\varphi_r(x)(\chi_{\Sigma}-\mu)dx,
\end{equation*}
where  $\varphi_r(x):=(N(r)-\cN(x))_+$ in $U$. The key observation here is  that, since in this case $0\notin U$, $\mathcal{N}$ is harmonic (instead of merely subharmonic) in $U$, which makes $-\mathcal{N}$ subharmonic. Therefore, $\varphi_r$ is subharmonic in $U$. The proof that $v>0$ in $\{\rho<|x|\leq r_1\}$ concludes as in the first case by testing $\varphi_r$ in \eqref{eq:integration by parts}. On  $\{r_1 < |x|< r_2\}$ we have $\Delta v = -\mu \le 0$, and $v>0$ on  $\partial B_{r_1}\cup\partial B_{r_2}$. By the minimum principle, it follows that $v>0$ in $\{r_1\leq |x|\leq r_2\}$, which concludes the proof.

Finally, for the first case in \eqref{eq:nu* radial}, after translation and rescaling we may assume that $U=(1/2,1)$, and after reflection, we may assume that $\mu$ is even and $U=(-1,-1/2)\cup (1/2,1)$. But this is just a particular instance of the third case in \eqref{eq:nu* radial}.
\end{proof}

We now establish the key structural result about radial and one-dimensional maximal solutions, proving that their transition zone is a pure outer shell of the domain (see Figures \ref{fig:non-universality} and \ref{fig:sigma} for examples of non-radial transition zones), and no freezing occurs at the initial time.

\begin{proposition}[Transition zone and no-initial-freezing of radial solutions]\label{prop:radial transition zone} Let $\mu \in \cS_{\delta}(U)$ be radial with $|\{\mu=1\}|=0$ and $\delta>0$. Let $\Sigma$ be the transition zone of the unique maximal solution to \eqref{eq:stefan intro}  associated with a weight $u$. Then, under the notation and assumptions of Proposition \ref{target_measure_universial},
\begin{equation}
 \Sigma =\begin{cases}{\{0<x<a}\}\cup \{b<x<1\} & \text{if} \quad U=(0,1),\\
 \{\tilde{r}<|x|<1\} &\text{if} \quad U=B_1,\\
 \{\rho<|x|<r_1\} \cup \{r_2<|x|<1\} & \text{if} \quad U=\{\rho<|x|<1\}.
 \end{cases}
\end{equation}
In all cases, the set of initial freezing $U\cap \{s=0\}$ is empty.
\end{proposition}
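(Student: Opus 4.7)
The plan is to combine Proposition \ref{target_measure_universial} with the measure-saturation property of Lemma \ref{lem:sigma saturation} and the exterior-ball remark at the end of Proposition \ref{prop:sigma structure}, in order to upgrade the a.e.\ identification of $\Sigma$ to a pointwise one, and to dispose of the initial freezing set via Lemma \ref{lem:a.e. annular implies no freezing}. Throughout, let $V$ denote the explicit open outer-shell set appearing on the right-hand side of the claimed identity in each of the three cases.

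First I would note that since $\eta$ is the maximal solution associated with the weight $u$, one has $\nu^{*}=\chi_{\Sigma}$ a.e.\ by \eqref{eq:sigma a.e.}, while Proposition \ref{target_measure_universial} identifies $\nu^{*}=\chi_{V}$ a.e.\ Hence $|\Sigma\,\Delta\,V|=0$. The hypothesis of Lemma \ref{lem:a.e. annular implies no freezing} (the a.e.\ shape of $\nu^{*}$) is now satisfied, and its conclusion yields $U\cap\{s=0\}=\emptyset$, which is the second assertion of the proposition; in particular, $V\subset\{s>0\}$.

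Next, since $V$ is open, $|V\setminus\Sigma|=0$, and $V\subset\{s>0\}$, the measure-saturation statement Lemma \ref{lem:sigma saturation} (i) gives the inclusion $V\subset\Sigma$. For the reverse inclusion, I would let $W$ denote the open component of $U\setminus\overline{V}$, namely $W=(a,b)$ if $U=(0,1)$, $W=B_{\tilde r}$ if $U=B_{1}$, and $W=\{r_{1}<|x|<r_{2}\}$ if $U=A_{\rho}$. Because $|W\cap\Sigma|=0$, Lemma \ref{lem:sigma saturation} (ii) forces $W\cap\Sigma=\emptyset$. The only points of $U\setminus V$ not yet accounted for are those in $\partial W\cap U$, namely $\{a,b\}$, $\partial B_{\tilde r}$, or $\partial B_{r_{1}}\cup\partial B_{r_{2}}$, respectively; at each such point the set $W$ provides an exterior ball (or interval) to $\Sigma$, and the concluding remark of Proposition \ref{prop:sigma structure} then excludes these points from $\Sigma$. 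Combining with $V\subset\Sigma$ gives $\Sigma=V$ in all three cases.

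The only step that is not essentially a lookup is this final boundary analysis, since the measure-theoretic identity $\Sigma=V$ a.e.\ does not by itself control the inner boundary of $V$. This is precisely the place where the Hopf-type exterior-ball argument of Section \ref{sec:regularity} is used in an essential way. Once this is performed in each of the three geometries, both claims of the proposition follow at once.
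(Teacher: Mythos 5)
Your proposal is correct and follows essentially the same route as the paper: identify $\Sigma$ a.e.\ with the explicit shell via Proposition \ref{target_measure_universial}, invoke Lemma \ref{lem:a.e. annular implies no freezing} for the absence of initial freezing, use the two parts of Lemma \ref{lem:sigma saturation} to get $V\subset\Sigma\subset\overline{V}$, and remove the inner boundary points with the exterior-ball (Hopf) remark of Proposition \ref{prop:sigma structure}. The only difference is presentational: you spell out the boundary analysis in each geometry, whereas the paper states it in one line.
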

\begin{proof}By Proposition \ref{target_measure_universial}, $\Sigma$ is a.e. equal to the desired, uniquely determined open annular region, which we denote by $\Sigma'$. Thus, by Lemma \ref{lem:a.e. annular implies no freezing}, the set $U\cap\{s=0\}$ is empty. Finally, we may upgrade the a.e. equality to exact equality as follows: by Lemma \ref{lem:sigma saturation}, we have $\Sigma '\subset \Sigma \subset \overline{\Sigma'}$. Moreover, by Proposition \ref{prop:sigma structure}, no point of $\Sigma\cap \partial\Sigma$ may satisfy an exterior ball condition: thus $\Sigma \subset \Sigma'$.
\end{proof}
\begin{remark}\label{rem:counterexample subharmonic L1} One may now see that the three-way equivalence concluded in Proposition \ref{prop:subharmonic equivalent} is false without the assumption that $U=\overline{U}^{\circ}$. Indeed, let $d\geq 2$, let $(\mu,u)$ be any radial pair of initial data with $\{\mu>0\}=B_1$, and let $v=\Delta^{-1}(\chi_{\Sigma}-\mu)$, where $\Sigma$ is the unique annular transition zone given by Proposition \ref{prop:radial transition zone}. Since $B_1$ and $U:=B_1\setminus \{0\}$ have the same exit time $\tau^U$ a.s., we have $\mu\chi_{U} \leq_{\operatorname{SH}} \chi_{\Sigma}$ with respect to $U$. On the other hand, $v(0)\neq0$ (since the set of initial freezing $\{x\in B_1:v(x)=0\}$ is empty), so $v=\Delta^{-1}(\chi_{\Sigma}-\mu)=\Delta^{-1}(\chi_{\Sigma}-\mu\chi_{U})$ does not vanish outside of $U$. Thus \eqref{eq:pot vanish outside U equiv} and \eqref{eq:rel subharmonic equiv} are not equivalent for the pair $(\mu\chi_{U}, \chi_{\Sigma})$.
\end{remark}

\subsection{Non-universality of maximal solutions for dimensions $d \geq 2$} 
Having shown that maximal solutions are universal when the data is radial or when $d=1$, we prove below that universality is false without one of these assumptions.
\begin{figure}[h!]
    \centering
    \begin{minipage}{0.40\textwidth}
        \centering
        \includegraphics[width=\textwidth]{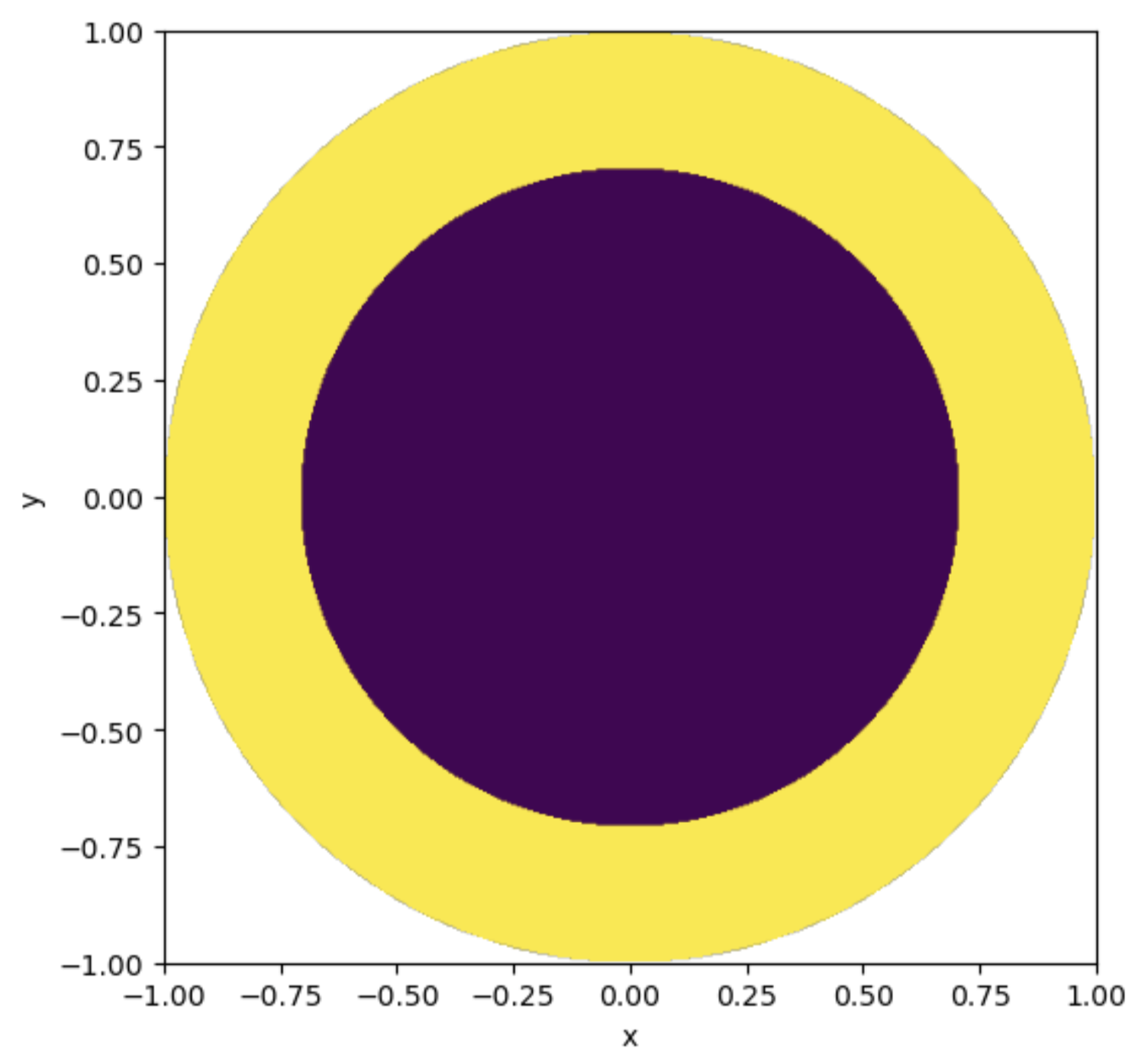}
    \end{minipage}\hfill
    \begin{minipage}{0.42\textwidth}
        \centering
        \includegraphics[width=\textwidth]{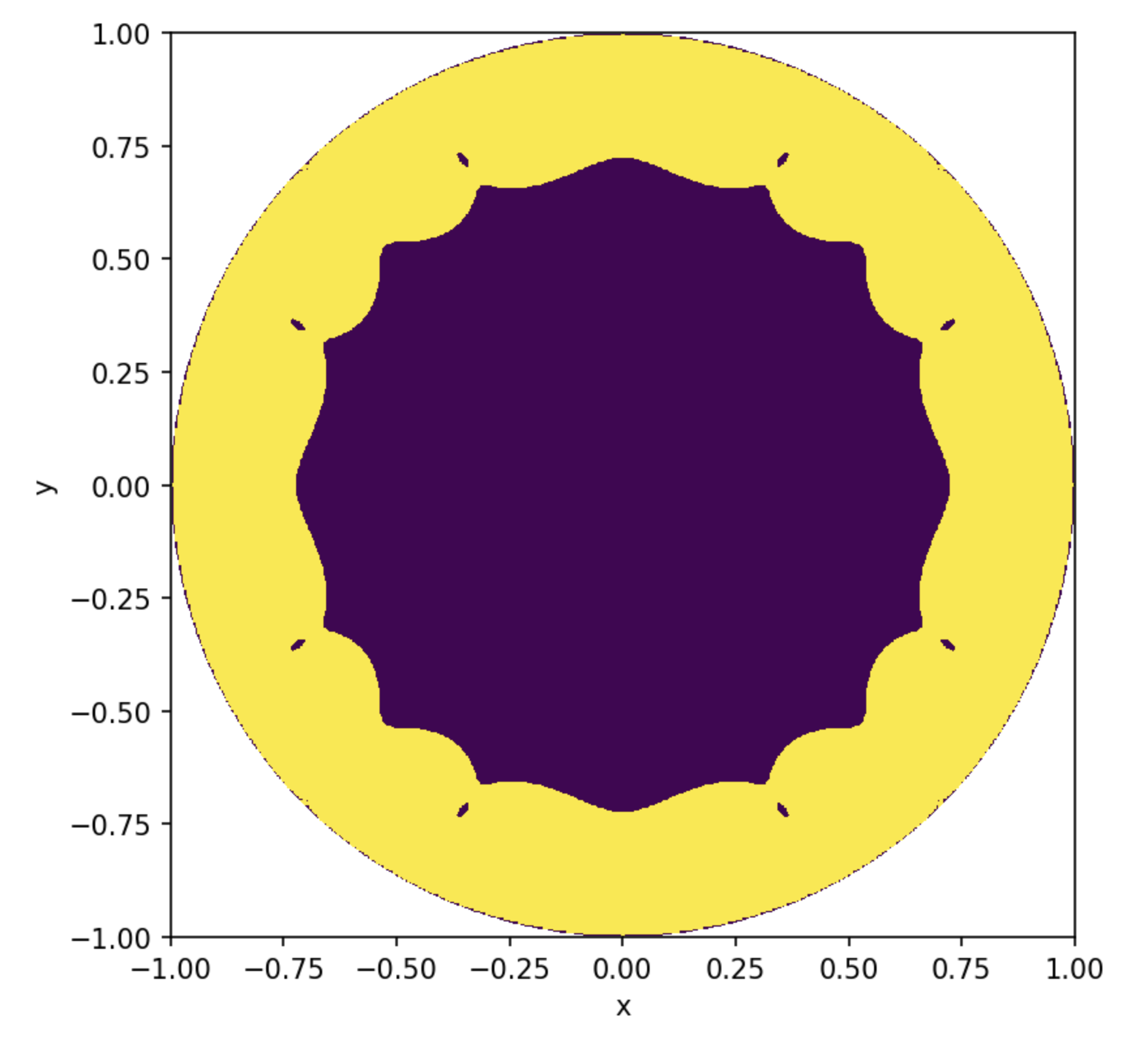} 
    \end{minipage}
    \caption{Transition zone (yellow) of maximal solutions illustrated for $\mu=\frac12 \chi_{B_1}$, and weight functions $u(x,y)=2-(x^2+y^2)$ and $u(x,y)=2-300x^{12}y^{12}-10^{-4}(x^2+y^2)$, respectively.}
    \label{fig:non-universality}
\end{figure}

\begin{proposition}[Non-universality of maximal solutions]\label{no_universality_prop}
    Let $d \ge 2$ and $U=B_1$. Consider a radial $\mu\in \cS_{\delta}(U)$, with $\delta>0$ and $|\{\mu=1\}|=0$. Let $\eta$ be the maximal solution to \eqref{eq:stefan intro} with weight $u$, and let $\Sigma$ be its transition zone. Then the following holds:
\begin{itemize}
\item[(i)] If $u$ is radial, then $\Sigma=B_1 \backslash \overline{B_{\tilde{r}}}$ for some $\tilde{r}\in (0,1).$
\item[(ii)] \emph{(Robust non-analytic counterexample)} If the restriction $u|_{\partial B_{\tilde{r}}}$ is not real analytic for some $0<\tilde{r}<1$, then $\Sigma \not = B_1 \backslash \overline{B_{\tilde{r}}}$. 
\item[(iii)] \emph{(Explicit analytic counterexample in $d=2$)} Let $d=2$, $\mu \equiv \tfrac12 \chi_U$, and define
\begin{equation} \label{eq:analytic u counter}
    u(x,y):=1-15x^4y^4-\varepsilon (x^2+y^2), \quad (x,y)\in U \quad\hbox{ for a sufficiently small }\varepsilon >0.
\end{equation}
Then $u$ is an admissible weight, and $\Sigma \not = B_1 \backslash \overline{B_{\tilde{r}}}$ for any $\tilde{r}\in (0,1)$.
\end{itemize}
In particular, in any dimension $d\geq 2$, maximal solutions depend on the choice of $u$.
\end{proposition}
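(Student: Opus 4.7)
The plan is to prove parts (i)--(iii) sequentially and then deduce the final conclusion. Part (i) is immediate from Proposition \ref{prop:radial transition zone} applied to the radial data $(\mu, u)$ on $U = B_1$: the transition zone of the maximal solution is forced to be $\Sigma = B_1 \setminus \overline{B_{\tilde r}}$ for the unique $\tilde r \in (0, 1)$ pinned down by mass balance.

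For part (ii) I would argue by contradiction: suppose $\Sigma = B_1 \setminus \overline{B_{\tilde r}}$ exactly. Since $\mu$ is radial and $\nu^{*} = \chi_{\Sigma}$ matches the annular form in \eqref{eq:nu* radial}, Lemma \ref{lem:a.e. annular implies no freezing} gives $U \cap \{s = 0\} = \emptyset$, whence $\{s > 0\} = B_1$. Proposition \ref{prop:sigma structure} then makes the dual optimizer $\psi$ harmonic (hence real analytic) on the whole of $B_1$. Using $|\Sigma \Delta \{\psi > u\}| = 0$ from Lemma \ref{lem:dual_simple} together with the continuity of $f := \psi - u$ on $B_1$, one has $f \geq 0$ on $\overline{\Sigma} \cap B_1$ and $f \leq 0$ on $\overline{B_{\tilde r}}$, so $\psi = u$ on $\partial B_{\tilde r}$. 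Since $\partial B_{\tilde r}$ is a real analytic submanifold of $B_1$, the restriction $\psi|_{\partial B_{\tilde r}}$, and hence $u|_{\partial B_{\tilde r}}$, must be real analytic, contradicting the hypothesis.

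For part (iii) I would first verify admissibility of $u$: $\Delta u = -180(x^2 y^4 + x^4 y^2) - 4\varepsilon < 0$ on $B_1$, and since $\max_{\overline{B_1}} x^4 y^4 = 1/16$ (attained at $|x| = |y| = 1/\sqrt{2}$ by AM--GM), $u \geq 1/16 - \varepsilon > 0$ on $\overline{B_1}$ for any $\varepsilon \in (0, 1/16)$. I then argue by contradiction: suppose $\Sigma = B_1 \setminus \overline{B_{\tilde r}}$ for some $\tilde r$. Mass balance $\pi(1 - \tilde r^2) = \pi/2$ forces $\tilde r = 1/\sqrt{2}$, and the argument of (ii) yields $\psi$ harmonic on $B_1$ with $\psi = u$ on $\partial B_{\tilde r}$. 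A direct computation using $\sin^4(2\theta) = 3/8 - \cos(4\theta)/2 + \cos(8\theta)/8$ and $\tilde r^8 = 1/16$ gives
\begin{equation*}
u|_{\partial B_{\tilde r}}(\theta) = C_0 + C_1 \cos(4\theta) + C_2 \cos(8\theta), \quad C_0 = 1 - \tfrac{\varepsilon}{2} - \tfrac{45}{2048},\ \ C_1 = \tfrac{15}{512},\ \ C_2 = -\tfrac{15}{2048},
\end{equation*}
whose harmonic extension to $\R^2$ is the polynomial $P(r, \theta) = C_0 + C_1 (r/\tilde r)^4 \cos(4\theta) + C_2 (r/\tilde r)^8 \cos(8\theta)$. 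The function $\psi - P$ is harmonic on $B_1$, vanishes on $\partial B_{\tilde r}$, hence (by the maximum principle on $B_{\tilde r}$ followed by unique continuation on the connected open set $B_1$) vanishes on $B_1$. Thus $\psi$ extends continuously to $\overline{B_1}$ as $P$. Evaluating at $(1, 0)$, one finds $C_1 (1/\tilde r)^4 = 15/128$ and $C_2 (1/\tilde r)^8 = -15/128$, so the degree-four and degree-eight contributions cancel and $P(1,0) = C_0 = 1 - \varepsilon/2 - 45/2048$, while $u(1, 0) = 1 - \varepsilon$. Hence $\psi(1, 0) - u(1, 0) = \varepsilon/2 - 45/2048 < 0$ for $\varepsilon < 45/1024$. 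But $\psi \geq u$ a.e.\ on $\Sigma$ together with continuity up to the boundary forces $\psi(1, 0) \geq u(1, 0)$, a contradiction.

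Finally, the non-universality statement follows by combining (i) with (iii) in $d = 2$; for $d \geq 3$, combine (i) with (ii) by taking $\mu = \tfrac12 \chi_{B_1}$ (so mass balance forces $\tilde r_{*} = 2^{-1/d}$) and building $u$ by adding to a radial strictly superharmonic base a small, compactly supported $C^2$ bump, centered at a point of $\partial B_{\tilde r_{*}}$ and chosen smooth but non-analytic, so that $u|_{\partial B_{\tilde r_{*}}}$ is non-analytic while $\Delta u < 0$ and $u > 0$ are preserved. The main technical obstacle is the arithmetic in (iii): one must simultaneously secure the admissibility of $u$ and the sign of $P(1,0) - u(1,0)$, which hinges on the precise interplay between the coefficient $15$ and the geometry $\tilde r = 1/\sqrt{2}$.
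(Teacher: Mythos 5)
Your proposal is correct and follows essentially the same route as the paper: part (i) via Proposition \ref{prop:radial transition zone}, part (ii) by showing the dual optimizer $\psi$ is harmonic on $B_1$ (using Lemma \ref{lem:a.e. annular implies no freezing} and Proposition \ref{prop:sigma structure}) and must equal $u$ on $\partial B_{\tilde r}$, and part (iii) by determining $\psi$ from the Dirichlet data on $\partial B_{1/\sqrt2}$ plus unique continuation and evaluating $(\psi-u)(1,0)=\varepsilon/2-45/2048<0$, exactly as in the paper. Your closing construction of a non-analytic weight for $d\ge 3$ makes explicit a step the paper leaves implicit, but it is not a different argument.
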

\begin{proof}Part (i) follows from  Proposition \ref{prop:radial transition zone}.
Assume now that $\Sigma=B_1 \setminus \overline{B_{\tilde{r}}}$ for some $\tilde{r} \in (0,1)$. By Lemma \ref{lem:a.e. annular implies no freezing}, $U\cap \{s=0\}=\emptyset$. Thus, by Proposition \ref{prop:sigma structure}, any optimizer $\psi$ to the dual problem \eqref{eq:dualp simple} is harmonic in $B_1$, and 
\begin{equation*}
    \overline{\Sigma}=\overline{B_1 \setminus B_{\tilde{r}}}=\{\psi \geq u\}.
\end{equation*}
In particular, $\psi=u$ on $\partial B_{\tilde{r}}$, implying that $u$ is analytic on $\partial B_{\tilde{r}}$, which shows (ii) by contra-positive.

For case (iii), we note first that $u$ is an admissible weight: indeed the strict superharmonicity of $u$ is immediate for every $\varepsilon>0$, and the positivity for small $\varepsilon$ follows because $\sup_{(x,y)\in B_1}x^4y^4=\frac{1}{16}.$ Finally, assume by contradiction that $d=2$, $\mu\equiv \frac12\chi_{U}$, and $u$ is given by \eqref{eq:analytic u counter}. Note that, since $\mu$ and $\chi_{\Sigma}$ must have the same mass, we have $\tilde{r}=1/\sqrt{2}$.  Since $\psi$ is harmonic, by uniqueness of the Dirichlet problem on the ball, this determines $\psi$ uniquely on $B_{1/\sqrt{2}}$. Furthermore, by unique continuation of harmonic functions, this actually determines $\psi$ uniquely on all of $B_1$. A posteriori, since $u$ is a polynomial, $\psi$ is a harmonic polynomial that can be computed exactly in polar coordinates by solving the Dirichlet problem on $B_{1/\sqrt{2}}$. One then obtains
\begin{equation*}
(\psi-u)(1,0)=-(45/2048) +\varepsilon/2<0  
\end{equation*}
for $\varepsilon<\frac{90}{2048}$. However, this contradicts the fact that $(1,0)\in \overline{\Sigma}=\{\psi \geq u\}$. 

\end{proof}
 See Figure \ref{fig:non-universality} for a different polynomial example. This proposition illustrates a sort of converse phenomenon to what occurs in Theorem \ref{thm: regularity intro}: while analyticity of the weight improves the regularity of $\Sigma$, part (ii) shows that non-analyticity of $u$ already forces the transition zone to deviate from the simple annular geometry. In this sense, non-analyticity of $u$ \emph{by itself} can generate geometric irregularities and ill-posedness in the class of maximal solutions. The explicit example of part (iii) shows, however, that analyticity of $u$ is not enough to guarantee universality (this polynomial example is adapted from a similar construction in harmonic $L^1$ approximation theory \cite[Sec. 4]{GoHaRo}).

We may now show the main result of this section.
\begin{proof}[Proof of Theorem \ref{thm:universality}] By Proposition \ref{no_universality_prop}, universality fails in dimension $d\geq 2$.

In dimension $1$, any open bounded set $U\subset \mathbb{R}$ is a disjoint union of countably many bounded intervals. Hence, it suffices to establish universality on a single interval, and by translation and scaling we may reduce to the unit interval $(0,1)$. Similarly, in the radial case, it suffices to consider the case where $U$ is an annulus or a ball. 

By Proposition~\ref{target_measure_universial}, for any strictly superharmonic weight $u$, the optimal target measure $\nu^*$ of $\mathcal{P}(\mu,u)$ is independent of the choice of $u$ (radial $u$ if $d\geq 2$). Hence, all such weights induce the same active particle distribution, and therefore the corresponding maximal solutions coincide.  \end{proof}

\subsection{Waiting time and initial nucleation}\label{subsec:waiting}
Before discussing waiting times for maximal solutions, we first prove Proposition \ref{prop:t=0 nucleation}, which earlier provided an example of a non-maximal solution with waiting time and initial-time nucleation.

\begin{proof}[Proof of Proposition \ref{prop:t=0 nucleation}] For each $k\geq1$, let $A_k:={B_{1/k}} \backslash \overline B_{1/(k+1)}$. Let $\eta_k$ be the maximal solution to \eqref{eq:stefan intro} with initial data $\mu|_{A_k}$ associated to any radial weight (e.g. let $u(x):= 2-|x|^2$). By Proposition \ref{prop:radial transition zone} (applied to a rescaled solution), $\eta_k$  is a radial function, the transition zone $\Sigma_k$ of this solution is the union of two outer annuli in $A_k$, and the set of initial freezing for $\eta_k$ is empty.  By the arguments of Proposition \ref{prop: existence}, it follows that there exists a solution to \eqref{eq:stefan intro} with initial data $\mu$, given by the glued function
\begin{equation*}
    \eta(t,x)=\eta_k(t,x), \quad x\in A_k.
\end{equation*}
 By construction, using the fact that the set $\{s=0\}$ is closed, the solution $\eta$ satisfies the required properties.
\end{proof}

We now show by example that maximal solutions, too, can exhibit waiting times. In fact, our example will show that this pathology can occur for smooth measures $\mu$ that arise as arbitrarily small perturbations of the well-behaved radial solutions of Section \ref{subsec:radial}, displaying the inherent instability of the problem.

\begin{proposition}\label{prop:fourier example} Let $d=2$, $\delta>0$, and $U:=B_1 \backslash \overline{B_{\frac12}}$. Consider a radial $\mu_0\in \cS_{\delta}(U)\cap C^{\infty}(\overline{U})$, with $\mu_0\leq 1-2\delta$. For $\delta_0>0$ sufficiently small, we define in polar coordinates
\begin{equation} \label{eq:mu fourier defi}  \mu(r,\theta):=\mu_0(r)+\delta_0\sum_{k=1}^{\infty}e^{-\sqrt{k}}\cos(k\theta )\in (0,1-\delta), \quad (r,\theta)\in (1/2,1)\times  [0,2\pi).
\end{equation}
Then $\mu \in \cS_{\delta}(U)\cap C^{\infty}(\overline{U})$, and, for any weight $u$, the transition zone $\Sigma$ for the maximal solution associated to $u$ satisfies
\begin{equation}\label{eq:no outer shell in sigma}|U_{\varepsilon} \backslash \Sigma|>0 \;\;\;\text{ for every } \;\varepsilon>0.\end{equation}     
\end{proposition}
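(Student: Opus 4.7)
The plan is to argue by contradiction via Fourier analysis on the annulus, using that $\mu \leq_{\operatorname{SH}} \chi_\Sigma$ with respect to $U$ forces \emph{equality} on harmonic test functions, and exploiting the fact that the angular Fourier coefficients $e^{-\sqrt k}$ of $\mu$ decay slower than any geometric rate. First I would dispose of the preliminary claim that $\mu \in \cS_\delta(U) \cap C^\infty(\overline U)$: since $\sum_{k\geq 1} k^n e^{-\sqrt k}<\infty$ for every $n$ (substitute $j=\sqrt k$), the series $g(\theta):=\sum_{k\geq 1} e^{-\sqrt k}\cos(k\theta)$ converges in $C^\infty$, so for $\delta_0$ small enough $0<\mu<1-\delta$ on $\overline U$; the bound $\mu\leq 1-\delta$ then yields $\mu\in\cS_\delta(U)$ by taking $\nu=\mu$ in Definition \ref{def:Sdelta}.

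Now assume for contradiction that $|U_{\varepsilon_0}\setminus \Sigma|=0$ for some $\varepsilon_0\in(0,1/4)$. Since $U=\overline{U}^\circ$ is Lipschitz, Proposition \ref{prop:eulerian}(d) combined with Proposition \ref{prop:subharmonic equivalent} gives $\int_U \varphi\,d\mu\leq \int_\Sigma \varphi\,dx$ for every subharmonic $\varphi\in L^1(U)$; applying this to both $\pm\phi$ yields
\begin{equation*}
\int_U \phi\,d\mu\;=\;\int_\Sigma \phi\,dx\qquad\text{for every harmonic }\phi\in L^1(U).
\end{equation*}
I would test this identity against the harmonic polynomials $\phi_k(r,\theta):=r^k\cos(k\theta)$, $k\geq 1$. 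Using the radiality of $\mu_0$ and the orthogonality $\int_0^{2\pi}\cos(k\theta)\cos(j\theta)\,d\theta=\pi\delta_{jk}$, the left-hand side reduces to
\begin{equation*}
\int_U \phi_k\,d\mu \;=\; \delta_0\pi e^{-\sqrt k}\int_{1/2}^{1} r^{k+1}\,dr \;\geq\; \frac{c\,\delta_0\,e^{-\sqrt k}}{k}
\end{equation*}
for some absolute $c>0$ and all large $k$. On the right-hand side, the rotational symmetry of $U_{\varepsilon_0}$ forces $\int_{U_{\varepsilon_0}}\phi_k\,dx=0$; the contradiction hypothesis then confines $\Sigma\setminus U_{\varepsilon_0}$, up to a null set, to the central shell $\{1/2+\varepsilon_0\leq |x|\leq 1-\varepsilon_0\}$, whence
\begin{equation*}
\Big|\int_\Sigma \phi_k\,dx\Big| \;=\; \Big|\int_{\Sigma\setminus U_{\varepsilon_0}}\phi_k\,dx\Big| \;\leq\; 2\pi(1-\varepsilon_0)^{k+1}.
\end{equation*}

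Since $e^{-\sqrt k}/k$ decays subexponentially while $(1-\varepsilon_0)^k$ decays exponentially, for $k$ sufficiently large the left-hand side strictly exceeds the modulus of the right-hand side, contradicting the equality above. The only subtle ingredient is the choice of the Fourier tail $e^{-\sqrt k}$: it decays slower than every geometric rate $(1-\varepsilon)^k$, so the contradiction triggers for every $\varepsilon_0>0$. Conceptually, the $C^\infty$ but non-analytic nature of $g(\theta)$, mirrored in its sub-exponential Fourier decay, is precisely what prevents the transition zone from absorbing a full annular neighborhood of $\partial U$, in contrast with the clean annular outcome for radial data (Proposition \ref{prop:radial transition zone}).
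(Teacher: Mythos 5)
Your proposal is correct and follows essentially the same route as the paper's proof: the same harmonic test functions $r^k\cos(k\theta)$, the same angular orthogonality computation giving the lower bound $\sim \delta_0 e^{-\sqrt{k}}/k$ on $\int_U \phi_k\,d\mu$, the same exponential upper bound on $\int_\Sigma \phi_k$ under the contradiction hypothesis, and the same sub-exponential versus exponential decay clash. The only cosmetic difference is that you remove the full two-shell neighborhood $U_{\varepsilon_0}$ whereas the paper works only with the outer shell $B_1\setminus B_{1-\varepsilon}$; both yield the identical estimate.
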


\begin{proof}
 Note first that $\mu$ is smooth by the Weierstrass M-test. Since $\mu \leq_{\operatorname{SH}} \chi_{\Sigma}$, Proposition \ref{prop:subharmonic equivalent} yields, for any harmonic function $h\in C(\overline{U})$,
\begin{equation*}
    \int_{U} h d\mu= \int_{\Sigma}h.
\end{equation*}
For $k \in \mathbb{N}$, let $h_k$ be the harmonic function defined in polar coordinates by
\begin{equation*}
    h_k(r,\theta):=\operatorname{Re}((re^{i\theta })^k)=r^k\cos(k\theta), \quad (r,\theta)\in (1/2,1)\times  [0,2\pi).
\end{equation*}
Then, by the $L^2$ orthogonality of the sequence $\{\cos(k\theta)\}_{k=1}^{\infty}$,
\begin{equation} \label{2einthkmu}
\int_{U}h_kd\mu = \delta_0 e^{-\sqrt{k}} \int_{\frac12}^1\int_{0}^{2\pi}r^{k+1} \cos^2(k\theta)d\theta dr=\frac{\delta_0 \pi}{k+2}e^{-\sqrt{k}}(1-2^{-(k+2)}).    
\end{equation}
Assume that, for some $\varepsilon\in (0,\frac14)$, $|U_{\varepsilon} \backslash \Sigma|=0$. Then $|(B_1\backslash B_{1-\varepsilon}) \backslash \Sigma|=0$, and
\begin{equation*}
\int_{\Sigma \cap (B_{1}\backslash B_{1-\varepsilon})}h_k=\int_{1-\varepsilon}^{1}\int_{0}^{2\pi}r^{k+1}\cos(k\theta)d\theta dr =0, 
\end{equation*}
which implies that
\begin{equation} \label{2einthksig}
\left|\int_{\Sigma} h_k\right| =    \left|\int_{\Sigma \cap B_{1-\varepsilon}} h_k\right|\leq \int_{0}^{1-\varepsilon}\int_{0}^{2\pi}r^{k+1}d\theta dr \leq \frac{2\pi}{k+2}(1-\varepsilon)^{k+2}. 
\end{equation}
We then conclude from \eqref{2einthkmu} and \eqref{2einthksig} that
\begin{equation*}
\delta_0 (1-2^{-(k+2)})e^{-\sqrt{k}}\leq 2(1-\varepsilon)^{k+2}=2e^{\ln(1-\varepsilon)(k+2)}.   
\end{equation*}
Since the right hand side decays exponentially, and the left hand side decays sub-exponentially, this yields a contradiction for sufficiently large $k$.
\end{proof}
Proposition \ref{prop:fourier example}, specialized to the case where the weight $u$ is radial, yields, in particular, the waiting time example stated in Proposition \ref{prop:waiting time}. 
\begin{remark}\label{rem:fourier} The proof of Proposition \ref{prop:fourier example} displays clearly how waiting time irregularity arises as a direct consequence of quantitative non-analyticity of $\mu$ in the tangential direction, that is, the insufficiently fast decay of the Fourier coefficients.

A simple variant of the same argument also shows that, for any fixed $\varepsilon>0$, even finite modes of the form $\mu(r,\theta)=\mu_0(r)+\delta_0 r^k\cos(k\theta)$ will satisfy $|U_{\varepsilon}\setminus \Sigma|>0$ if $k$ is sufficiently large (see Figure \ref{fig:sigma}).
\end{remark}
\begin{figure}[h!]
    \centering
    \includegraphics[width=0.40\textwidth]{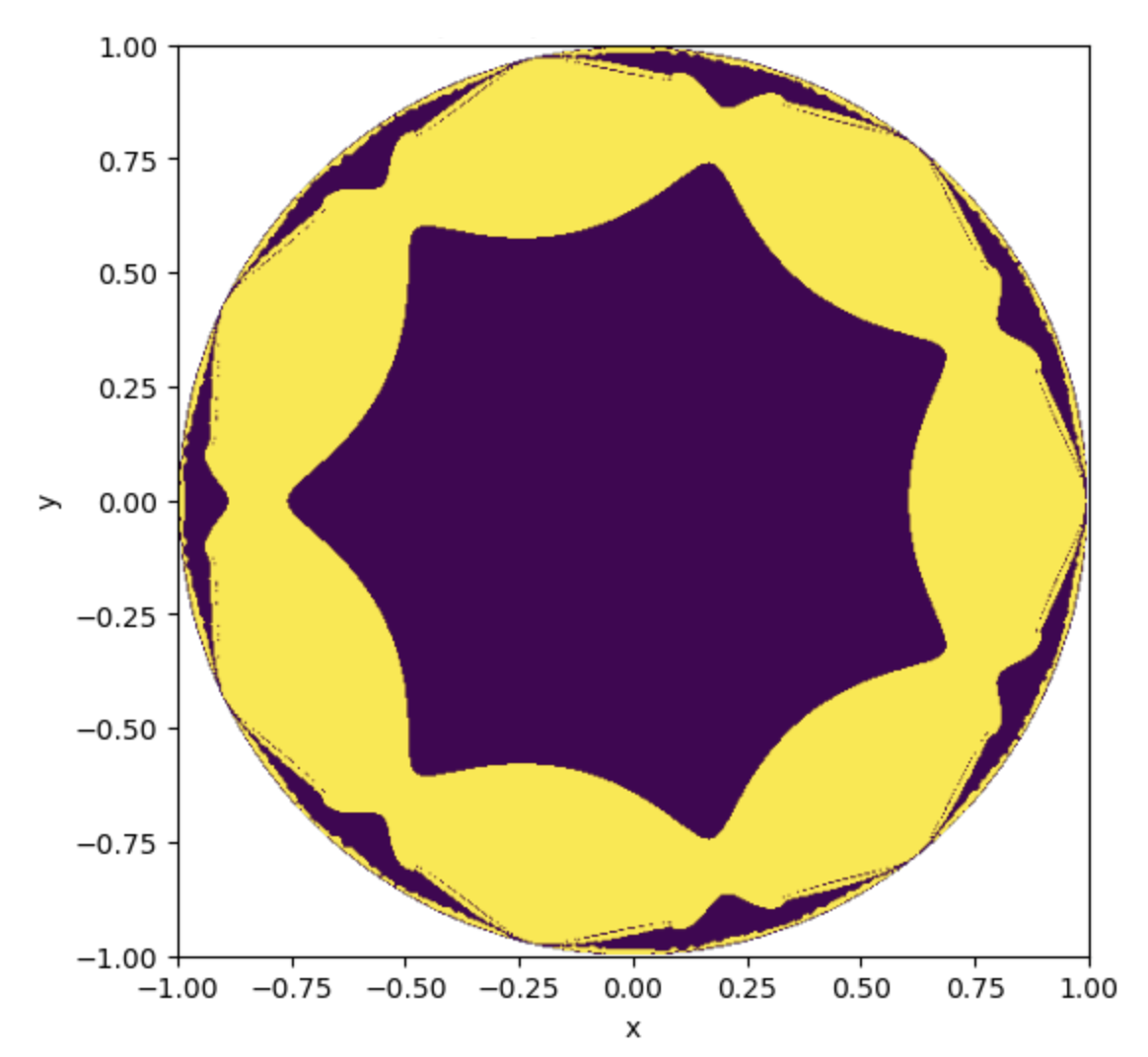}
    \caption{Illustration of the transition zone $\Sigma$ (in yellow) when $d=2$, $U=B_1$, $u(x,y)=2-x^2-y^2$, and $\mu(r,\theta)=\frac12+\frac{9}{20}r^7\cos (7\theta)$. The thin yellow layer at the boundary is not an artifact; see Lemma \ref{lem:sigma saturation}.}
    \label{fig:sigma}
\end{figure}

\section{Stability of maximal solutions}

\label{sec:stability}

In this section we prove Theorem~\ref{thm:stability intro}. 
The main difficulty is that the primal problem $\mathcal{P}(\mu, u)$  depends not only on $\mu$ but also on its positivity set $U$. As we will see, the set-convergence condition imposed in Definition \ref{def:set convergence} is sharp for stability (see Remark~\ref{rem:stability counterexample}). We first prove an upper semicontinuity result for $\mathcal{P}(\mu, u)$. Combined with the regularity of the dual optimizers from Section \ref{sec:regularity}, this yields stability of the transition zone. We then obtain strong $L^p$ stability of the solution $\eta$ as a consequence of the transition zone stability and the free boundary regularity of Corollary \ref{cor:regu}. Throughout this section, we assume that $\delta\in(0,1)$, and that $U$ is an open and bounded Lipschitz set.

\subsection{Stability of the transition zone}

We begin by stating an a priori $L^1(U)$ estimate for the dual optimizers, which follows directly from the proof of \cite[Theorem 5.3]{choi2024existence}:

\begin{lemma}\label{L1_control_optimizer} Let $\mu \in \mathcal{S}_{\delta}(U)$ and $u \in L^1(U)$. Then any optimizer $\psi^*$ of $\mathcal{D}(\mu,u)$ satisfies
\[  \int_{U} |\psi^*(x)| dx \leq  \delta^{-1}\|u\|_{L^1(U)}. \] 
\end{lemma}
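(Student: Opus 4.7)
The plan is to reduce to the simplified form of the dual problem, test the optimizer against the admissible competitor $\psi \equiv 0$, and then absorb the resulting mass of $\psi^*$ by invoking the subharmonic ordering characterization in Proposition~\ref{prop:subharmonic equivalent} to extract the factor $\delta$ from the slack condition built into $\mathcal{S}_\delta(U)$.

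First, by Lemma~\ref{lem:dual_simple} I may take $\psi^* \ge 0$ in $L^1(U)$ with $\Delta\psi^* \ge 0$ distributionally, so that $\int_U |\psi^*|\,dx = \int_U \psi^*\,dx$. Since the constant function $\psi \equiv 0$ is admissible in the simplified dual, optimality of $\psi^*$ yields
\[
\int_U (\psi^* - u)^+\,dx \;-\; \int_U \psi^*\,\mu\,dx \;\le\; \int_U u^-\,dx.
\]
Using the elementary lower bound $(\psi^* - u)^+ \ge \psi^* - u$ in the first term and the identity $\int u + \int u^- = \int u^+$, rearrangement gives
\[
\int_U \psi^*(1-\mu)\,dx \;\le\; \int_U u^+\,dx.
\]

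Next I would exploit $\mu \in \mathcal{S}_\delta(U)$. By Definition~\ref{def:Sdelta} there exists $\nu$ with $\nu \le (1-\delta)\chi_U$ and $\mu \leq_{\operatorname{SH}} \nu$ with respect to $U$. Since $U$ is Lipschitz (hence $U=(\overline{U})^\circ$), Proposition~\ref{prop:subharmonic equivalent} permits the nonnegative subharmonic test function $\psi^* \in L^1(U)$, yielding
\[
\int_U \psi^*\,\mu\,dx \;\le\; \int_U \psi^*\,d\nu \;\le\; (1-\delta)\int_U \psi^*\,dx.
\]
Substituting this back produces
\[
\delta \int_U \psi^*\,dx \;\le\; \int_U \psi^*(1-\mu)\,dx \;\le\; \int_U u^+\,dx \;\le\; \|u\|_{L^1(U)},
\]
and dividing by $\delta$ gives the stated bound.

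The argument is essentially a short duality computation, with the only delicate point being the use of Proposition~\ref{prop:subharmonic equivalent} with an $L^1$ (rather than continuous) test function; this requires the mild geometric condition $U=(\overline{U})^\circ$, which is automatic under the standing Lipschitz assumption on $U$ throughout this section.
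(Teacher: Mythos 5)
Your proof is correct. The paper itself does not prove this lemma: it is stated as following ``directly from the proof of [choi2024existence, Thm.\ 5.3]'', so there is nothing internal to compare against line by line. Your self-contained argument is the natural one and almost certainly mirrors the cited proof: reduce to the form \eqref{eq:dualp simple}, compare the optimizer against the admissible competitor $\psi\equiv 0$ to get $\int_U\psi^*(1-\mu)\le\int_U u^+$, and then use the slack measure $\nu\le(1-\delta)\chi_U$ from Definition~\ref{def:Sdelta} together with Proposition~\ref{prop:subharmonic equivalent} (valid since $U$ is Lipschitz, hence $U=(\overline{U})^{\circ}$, and since $\psi^*\in L^1(U)$ is an admissible subharmonic test function) to absorb $\int_U\psi^*\mu\le(1-\delta)\int_U\psi^*$. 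All inequalities check out, including $(\psi^*-u)^+\ge\psi^*-u$ and $\int_U u+\int_U u^-=\int_U u^+$. The only caveat, which you flag implicitly, is that the bound on $\int_U|\psi^*|$ uses the normalization $\psi^*\ge0$ from the second formulation in Lemma~\ref{lem:dual_simple}; this is consistent with how the paper uses the lemma (e.g.\ in Proposition~\ref{general_USC}, where the dual optimizers are taken nonnegative and subharmonic), so no gap results.
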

Next, we show a boundary estimate for continuous $\mu$ that follows from the subharmonic order,
\begin{lemma} \label{lem:mu boundary estimate} Suppose that $\mu \in \cS_{\delta}(U)\cap C(\overline{U})$ for some $\delta>0$. Then 
\begin{equation}
    \sup_{x\in \partial U}\mu\leq 1-\delta.
\end{equation}    
\end{lemma}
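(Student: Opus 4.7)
The plan is to argue by contradiction via a boundary minimum analysis of the Newtonian potential $v := \Delta^{-1}(\nu-\mu)$, where $\nu$ is the witness of $\mu \in \cS_{\delta}(U)$ supplied by Definition \ref{def:Sdelta}.

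First, I would suppose that $\mu(x_0) > 1-\delta$ for some $x_0 \in \partial U$. By Definition \ref{def:Sdelta}, I may fix $\nu \in L^\infty(\R^d)$ with $\nu \leq (1-\delta)\chi_U$ and $\mu \leq_{\sh} \nu$ with respect to $U$. Since $U$ is Lipschitz, $U = \overline{U}^\circ$, and Proposition \ref{prop:subharmonic equivalent} then yields $v \geq 0$ on $\R^d$ together with $v \equiv 0$ on $\R^d \setminus U$. Because $\nu-\mu \in L^\infty$ has compact support, $v$ is continuous (indeed $C^{1,\alpha}_{\mathrm{loc}}$), and the existence of exterior points arbitrarily close to the Lipschitz boundary point $x_0$ forces $v(x_0)=0$.

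The contradiction will arise from a local Green representation in a small ball $B_\rho(x_0)$. Using continuity of $\mu$ up to $\overline{U}$ together with $\nu \leq (1-\delta)\chi_U$, I would pick $r,\varepsilon>0$ such that $\mu - \nu \geq \varepsilon$ a.e. on $B_r(x_0) \cap U$. Fixing $\rho \in (0,r)$ and letting $G$ denote the Dirichlet Green function of $B_\rho(x_0)$ (so $G>0$ in the interior), the Green--Poisson representation for $-\Delta v = \mu-\nu \in L^\infty$ gives
\[
v(x_0) \;=\; \frac{1}{|\partial B_\rho(x_0)|}\int_{\partial B_\rho(x_0)} v\, d\sigma \;+\; \int_{B_\rho(x_0)} G(x_0,y)\,(\mu-\nu)(y)\, dy.
\]
The boundary mean is nonnegative since $v\geq 0$. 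Since $\mu-\nu$ vanishes on $\R^d \setminus U$ and is at least $\varepsilon$ on $B_\rho(x_0)\cap U$, the bulk integral is bounded below by $\varepsilon \int_{B_\rho(x_0)\cap U} G(x_0,y)\, dy$, which is strictly positive because $G>0$ on the interior of $B_\rho(x_0)$ and $|B_\rho(x_0)\cap U|>0$ by the Lipschitz property at $x_0$. This forces $v(x_0)>0$, contradicting $v(x_0)=0$, and completes the argument.

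The main subtlety is ensuring the stronger vanishing $v\equiv 0$ outside $U$ (rather than merely outside $\overline U$), for which the Lipschitz hypothesis is essential; the remainder is a clean mean-value contradiction placing $x_0$ as an interior boundary minimum of $v$ while, locally, $-\Delta v$ is strictly positive on a set of positive measure. An alternative route more in the spirit of Section \ref{sec:subharmonic rough} would be to test the subharmonic order directly against a subharmonic bump (e.g., the harmonic extension in $B_\rho(x_0)$ of an indicator concentrated near $x_0$ on $\partial B_\rho(x_0) \cap \overline{U}$), but the Green representation above seems the most direct path.
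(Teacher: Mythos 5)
Your argument is correct and is essentially the paper's proof: both set $v=\Delta^{-1}(\nu-\mu)$, use Proposition \ref{prop:subharmonic equivalent} to get $v\ge 0$ with $v\equiv 0$ outside $U$, and derive a contradiction from the fact that $x_0\in\partial U$ is an interior minimum of $v$ in a ball where $-\Delta v=\mu-\nu$ is nonnegative and strictly positive on a set of positive measure. The only difference is cosmetic: the paper invokes the strong minimum principle directly, while you unpack it via the Green--Poisson representation at the center of the ball (which also makes the Lipschitz hypothesis do no work beyond guaranteeing $U=\overline{U}^{\circ}$, exactly as in the paper).
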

\begin{proof}
Let $\nu$ be as in Definition \ref{def:Sdelta}, and let $v:=\Delta^{-1}(\nu-\mu)$. Since $\mu \leq_{\operatorname{SH}}\nu$ with respect to $U$, Proposition \ref{prop:subharmonic equivalent} implies that $v\geq 0$ and $v \equiv 0$ outside of $U$. Assume, for contradiction, that $\mu(x_0)>1-\delta$ for some $x_0 \in \partial U$. By continuity of $\mu$, there exists $r>0$ such that  $\Delta v=\nu-\mu\leq 1-\delta -\mu <0$ a.e. on $B_r(x_0) \cap U$. Since $v$ vanishes outside $U$, this means that $\Delta v\leq 0$ a.e. in $B_r(x_0)$. But $v(x_0)=0=\min_{\R^d} v$, which contradicts the strong minimum principle.
\end{proof} 

Next, we prove that the primal problem is upper semicontinuous with respect to $(\mu,u)$.
\begin{proposition}[Upper semicontinuity] \label{general_USC}    
For $\mu, \mu_n$, $u_n$, and $u$ as given in Theorem \ref{thm:stability intro},
\begin{equation}\label{usc}
\mathcal{P}(\mu,u) \geq \limsup _{n\to \infty}\mathcal{P}(\mu_n,u_n). 
\end{equation}
\end{proposition}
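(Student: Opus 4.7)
The plan is to pass to the limit on the dual side. By the duality identity $\mathcal{P}=\mathcal{D}$ (as used in the proof of Proposition~\ref{prop:sigma structure}) and Lemma~\ref{lem:dual_simple},
\begin{equation*}
\mathcal{P}(\mu_n,u_n)\;=\;-\inf_{\psi\in \mathcal{A}(U_n)} F(\psi;\mu_n,u_n),\qquad F(\psi;\mu,u):=\int (\psi-u)^+\,dx-\int \psi\,\mu\,dx,
\end{equation*}
where $\mathcal{A}(U):=\{\psi\in L^1(U):\psi\ge 0,\ \Delta\psi\ge 0\}$, and the same identity holds with $(\mu,u,U)$ in place of $(\mu_n,u_n,U_n)$. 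Consequently, \eqref{usc} is equivalent to the lower semicontinuity of $(\mu,u)\mapsto \inf F(\cdot;\mu,u)$ under \eqref{conv:mode}--\eqref{conv:mode2}, and my strategy is to pick minimizers $\psi_n$ of the perturbed dual problems, extract a subsequential limit $\psi$ admissible for the limit problem, and prove $F(\psi;\mu,u)\le \liminf_n F(\psi_n;\mu_n,u_n)$.

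For compactness, by Lemma~\ref{L1_control_optimizer} and \eqref{conv:mode2}, the minimizers satisfy $\|\psi_n\|_{L^1(U_n)}\le \delta^{-1}\|u_n\|_{L^1(U_n)}\le C$ uniformly in $n$. By the outer-convergence half of \eqref{conv:mode}, all $\overline{U_n}$ sit inside a fixed bounded open $D\supset\overline U$. Extending $\psi_n$ by zero, I obtain a family of functions that are subharmonic on $U_n$ and uniformly bounded in $L^1(D)$; classical $L^1_{loc}$ compactness of subharmonic functions yields, along a subsequence, $\psi_n\to \psi$ in $L^1_{loc}(U)$, with $\psi\ge 0$ subharmonic on $U$ and $\psi\in L^1(U)$ by Fatou. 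Hence $\psi\in\mathcal{A}(U)$ is admissible for the limit dual problem.

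For the passage to the limit in $F$, I split each integral as $\int_{U_n\cap U}+\int_{U_n\triangle U}$. On $U_n\cap U$, the $L^1_{loc}$ convergence $\psi_n\to\psi$, the $C^2$ convergence $u_n\to u$, the $L^\infty$ convergence of $\mu_n$, and the Lipschitz continuity of $(\cdot)^+$ together give convergence to the corresponding integrals on $U$. The residual piece on $U_n\triangle U$ has vanishing Lebesgue measure, but the $\psi_n$ carry no a priori $L^\infty$ bound, so the contribution is not automatically small; here I would obtain a uniform boundary control by combining Lemma~\ref{lem:mu boundary estimate} (applied to each $\mu_n$, together with the $L^\infty$ convergence $\mu_n\to\mu$) to get $\mu_n\le 1-\delta/2$ on a uniform boundary strip $\{d(x,\partial U)<\varepsilon_0\}$ for $n$ large, and then invoking the uniform $L^1$ bound on $\psi_n$ and a sub-mean-value / barrier argument to control $\int_{U_n\triangle U}(\psi_n+(\psi_n-u_n)^+)$. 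Assembling these, $F(\psi;\mu,u)\le \liminf_n F(\psi_n;\mu_n,u_n)=-\limsup_n\mathcal{P}(\mu_n,u_n)$, which rearranges to \eqref{usc}.

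The main obstacle is precisely this boundary control. Without structural information, a subharmonic function bounded only in $L^1$ may concentrate near the boundary, so the residual integral on $U_n\triangle U$ need not vanish from $|U_n\triangle U|\to 0$ alone. Both the outer-stability part of Definition~\ref{def:set convergence} -- confining $\overline{U_n}$ inside any fixed neighborhood of $\overline U$, which prevents thin exterior spikes that could host large values of $\psi_n$ -- and the boundary bound on $\mu_n$ furnished by Lemma~\ref{lem:mu boundary estimate} are essential here. Indeed, as noted in Remark~\ref{rem:stability counterexample}, dropping the outer stability produces a genuine counterexample to \eqref{usc}.
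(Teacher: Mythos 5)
Your overall route -- pass to the dual via Lemma~\ref{lem:dual_simple}, take dual optimizers $\psi_n$, get a uniform $L^1$ bound from Lemma~\ref{L1_control_optimizer}, extract a subharmonic limit $\psi\in L^1(U)$, and pass to the limit in the dual functional -- is exactly the paper's strategy, and you correctly identify that the whole difficulty is the boundary contribution. But your proposed resolution of that difficulty has a genuine gap. You aim to show that $\int_{U_n\triangle U}\bigl(\psi_n+(\psi_n-u_n)^+\bigr)$ is \emph{small in absolute value} by a ``sub-mean-value / barrier argument'' from the uniform $L^1$ bound. No such bound is available: a non-negative subharmonic (even harmonic) function with $\|\psi_n\|_{L^1}\le C$ can concentrate a fixed positive fraction of its mass in an arbitrarily thin boundary strip (e.g.\ a Poisson-kernel profile $P(\cdot,y)$ with $y\in\partial U$ has $\int_{U_\varepsilon}P(\cdot,y)\ge c>0$ for all $\varepsilon$), so $|U_n\triangle U|\to 0$ plus the $L^1$ bound does not force this integral to vanish. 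A related slip: $L^1_{\mathrm{loc}}(U)$ convergence of $\psi_n$ does not give convergence of $\int_{U_n\cap U}$ either; the strip $(U_n\cap U)\setminus K$ for compact $K\subset U$ suffers from the same concentration problem and must be folded into the boundary term.

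The paper's proof sidesteps absolute control of $\psi_n$ near $\partial U$ entirely, using a one-sided sign argument -- which is all that upper semicontinuity requires. From Lemma~\ref{lem:mu boundary estimate} and continuity of $\mu$ one fixes a compact $K\subset U$ with $\mu<1-\delta/2$ on $U\setminus K$, hence $\mu_n<1$ on $U_n\setminus K$ for large $n$. On that set, the pointwise inequality $-(\psi_n-u_n)^+\le u_n-\psi_n$ gives
\begin{equation*}
-(\psi_n-u_n)^+ + \psi_n\mu_n \;\le\; u_n+(\mu_n-1)\psi_n \;\le\; u_n ,
\end{equation*}
since $\psi_n\ge 0$ and $\mu_n-1<0$: the possibly large boundary mass of $\psi_n$ enters with a favorable sign and is simply discarded, leaving only $\int_{U_n\setminus K}u_n\le |U\setminus K|\sup_n\|u_n\|_\infty$, which is made small by exhausting $U$ with compacts. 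You have both ingredients in hand (the strict subcriticality of $\mu_n$ near $\partial U$ and the outer stability confining $\overline{U_n}$), but you need to use them through this sign cancellation in the dual objective rather than through an absolute estimate on $\psi_n$, which is false in general.
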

\begin{proof} We will show that $\mathcal{D}(\mu,u) \geq \limsup_{n \rightarrow \infty} \mathcal{D}(\mu_n,u_n)$, which, by duality, is equivalent to \eqref{usc}. Let $\psi_n$ be a dual optimizer for $\mathcal{D}(\mu_n,u_n)$. By Lemma \ref{L1_control_optimizer}, \eqref{conv:mode}, and \eqref{conv:mode2} we have
\begin{equation} \|\psi_n\|_{L^1(U_n)} \leq 2 \delta^{-1} |U| \cdot \sup_{n}\|u_n\|_{L^{\infty}}\label{L1_bound_uniform_psi} \end{equation}for $n$ sufficiently large. Because $\psi_n \geq 0$ and is subharmonic, we have the Caccioppoli estimate
\[ \|\nabla \psi_n\|_{L^2(V)} \leq C(\operatorname{dist}(V,\p U_n))  \|\psi_n\|_{L^1(U_n)} \hbox{ for any } V\subset \joinrel \subset U_n. \] On the other hand, due to \eqref{conv:mode}, for any open set $V \subset \joinrel \subset U$, we have $V \subset \joinrel \subset U_n$ for sufficiently large $n$.  The Caccioppoli estimate thus yields, up to a subsequence, $\psi_n \rightarrow \psi $ in $L^2_{\textnormal{loc}}(U)$, where $\psi \geq 0$ and is subharmonic. Note also that $\psi \in L^1(U)$ due to \eqref{L1_bound_uniform_psi}.

By Lemma \ref{lem:mu boundary estimate} and the continuity of $\mu$, there exists a compact set $K\subset U$ such that $\mu<1-\delta/2$ on $U \backslash K$. Due to the mode of convergence \eqref{conv:mode}, we have $K \subset U_n $ and $\mu_n<1$ on $U_n \backslash K$ for large $n$. Hence
\begin{multline}
\mathcal{D}(\mu_n,u_n) = \int_{U_n} -(\psi_n-u_n)^+ + \psi_n\mu_n \leq -\int_{K}(\psi_n-u_n)^+ + \psi_n\mu_n \\
+ \int_{U_n\backslash K}u_n+(\mu_n-1)\psi_n 
\leq \int_{K} -(\psi_n-u_n)^+ + \psi_n\mu_n + \int_{U_n \setminus K} u_n.
\end{multline}
Letting $n \to \infty$ and using \eqref{conv:mode}--\eqref{conv:mode2} we obtain
\begin{equation}
 \limsup_{n\to \infty} \mathcal{D}(\mu_n,u_n) \leq
 \int_{K} -(\psi-u)^+ + \psi \mu + |U \setminus K| \cdot \sup_{n}\|u_n\|_{L^{\infty}}.
\end{equation}
By inner regularity of measurable sets and dominated convergence we conclude
\begin{equation}
 \limsup_{n\to \infty} \mathcal{D}(\mu_n,u_n) \leq \int_{U}  -(\psi-u)^+ + \psi\mu  \leq \mathcal{D}(\mu,u) .
\end{equation}
\end{proof}
\begin{remark} \label{rem: optimizer lim}
    The proof of Proposition \ref{general_USC} shows that any subsequential limit $\psi$ of the optimizers $\psi_n$ for $\mathcal{D}(\mu_n,u_n)$ is an optimizer for $\mathcal{D}(\mu,u)$.
\end{remark}

Next we obtain the main stability result for the transition zone.
\begin{proposition}[Stability of transition zones] \label{general_stability} 
Under the notation and assumptions of Theorem \ref{thm:stability intro}, we have $\Sigma_n \to \Sigma$ in the sense of Definition \ref{def:set convergence}.
\end{proposition}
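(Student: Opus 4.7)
The plan is to combine the dual-optimizer framework from Section~\ref{sec:regularity} with the one-sided upper semicontinuity (Hartogs' lemma) for subharmonic functions, establishing the two requirements of Definition~\ref{def:set convergence} in turn: first the $L^1$-measure convergence $|\Sigma_n\Delta\Sigma|\to0$, then the outer-stability condition.

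For the measure convergence, I would start from a nonnegative dual optimizer $\psi_n$ of $\mathcal{D}(\mu_n,u_n)$ supplied by Lemma~\ref{lem:dual_simple}. The uniform bound on $\|u_n\|_{L^\infty}$ (from $u_n\to u$ in $C^2(D)$) combined with Lemma~\ref{L1_control_optimizer} gives $\|\psi_n\|_{L^1(U_n)}\leq C$; Caccioppoli as in the proof of Proposition~\ref{general_USC} yields local gradient bounds, and a diagonal extraction produces $\psi_{n_k}\to\psi$ in $L^1_{\textnormal{loc}}(U)$ and a.e., with $\psi$ itself optimal for $\mathcal{D}(\mu,u)$ by Remark~\ref{rem: optimizer lim}. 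Lemma~\ref{lem:dual_simple} gives $\chi_{\Sigma_n}=\chi_{\{\psi_n>u_n\}}$ and $\chi_\Sigma=\chi_{\{\psi>u\}}$ a.e. The crucial intermediate step is $|\{\psi=u\}\cap U|=0$: on $E:=\{s>0\}$, $\psi$ is harmonic by Proposition~\ref{prop:sigma structure}, so $f:=\psi-u$ is $C^2$ with $\Delta f=-\Delta u>0$ strictly; if $|\{f=0\}\cap E|>0$, the standard Lebesgue-density lemma iterated twice ($C^1$ gives $\nabla f=0$ a.e.\ on $\{f=0\}$, and a second application to each $\partial_i f$ gives $D^2 f=0$ a.e.\ there) would force $\Delta f=0$ at a density point, a contradiction. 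Since $|U\setminus E|=0$, this yields $|\{\psi=u\}\cap U|=0$, after which dominated convergence gives $|\Sigma_{n_k}\Delta\Sigma|\to0$. Uniqueness of $\Sigma$ from Proposition~\ref{prop:eulerian}(b) and a standard subsequence argument upgrade the convergence to the full sequence.

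For outer stability I would prove the following local statement: for each $x_0\in U\setminus\overline{\Sigma}$ there exist $r,N$ with $\overline{\Sigma_n}\cap B_r(x_0)=\emptyset$ for all $n\geq N$. Since $\partial U\subset\overline{\Sigma}$ by Lemma~\ref{lem:sigma saturation}, $x_0$ is interior, and I can choose $B_{2r}(x_0)\subset U\setminus\overline{\Sigma}$. Lemma~\ref{lem:sigma saturation}(ii) then forces $B_{2r}(x_0)\subset E$, and by continuity of the USC harmonic representative of $\psi$ on $E$ together with $\{\psi>u\}\subset\Sigma$, we obtain $\psi\leq u$ pointwise on $B_{2r}(x_0)$. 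Then $u-\psi\geq 0$ is continuous and strictly superharmonic there, so the strong minimum principle yields $u-\psi\geq c_0>0$ on $\overline{B}_{3r/2}(x_0)$ for some $c_0>0$. Hartogs' lemma for USC subharmonic functions, applied to $\psi_n\to\psi$ in $L^1_{\textnormal{loc}}(U)$, gives $\psi_n\leq\psi+c_0/4$ on $\overline{B}_{3r/2}(x_0)$ for large $n$, and combining with $u_n\to u$ uniformly yields $\psi_n\leq u_n-c_0/4$ pointwise. Proposition~\ref{prop:sigma structure} then forces $\Sigma_n\cap\overline{B}_{3r/2}(x_0)=\emptyset$, whence $\overline{\Sigma_n}\cap B_r(x_0)=\emptyset$. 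For given open $W\supset\overline{\Sigma}$, the compact set $\overline{U}\setminus W$ is covered by finitely many such balls, handling the bulk of the complement. Closure points of $\overline{\Sigma_n}$ lying outside $\overline{U}$ (permitted because the hypothesis $(\R^d\setminus U_n)\to(\R^d\setminus U)$ only gives inner convergence $K\subset U_n$ for compact $K\subset U$) are ruled out by combining measure saturation of $\Sigma_n$ with $|U_n\setminus U|\to0$, which prevents any $\Sigma_n$-bump from persisting in the Hausdorff sense outside $\overline{U}$.

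The main obstacle is the quantitative pointwise comparison $\psi_n<u_n$ needed to exclude $\Sigma_n$ near $x_0$, which demands an upgrade from $L^1_{\textnormal{loc}}$ convergence to one-sided uniform convergence; Hartogs' lemma for USC subharmonic functions is the precisely calibrated tool. A secondary technical issue is the vanishing of the level set $|\{\psi=u\}|=0$, which is trivial if $u$ is analytic (as $\psi$ is then analytic on $E$) but in the $C^2$ regime requires the density argument sketched above based on $\Delta f>0$. Finally, the mismatch between the one-sided complement convergence assumed on $U_n$ and the two-sided Hausdorff conclusion desired for $\Sigma_n$ must be bridged using the saturation and mass-balance properties of the transition zone.
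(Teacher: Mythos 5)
Your proof is correct in substance but takes a genuinely different route from the paper's on both halves. For the measure convergence, the paper works on the primal side: it extracts a weak $L^2$ limit $\tilde\nu$ of $\nu_n=\chi_{\Sigma_n}$, identifies $\tilde\nu$ with the optimizer $\nu$ by combining the upper semicontinuity of $\mathcal{P}$ with an admissibility check (uniform convergence of the potentials $\Delta^{-1}(\nu_n-\mu_n)$ plus Lemma~\ref{lem:integration by parts}), and then upgrades weak to strong $L^2$ convergence via the norm identity $\|\nu_n\|_{L^2}^2=\|\nu_n\|_{L^1}=\|\mu_n\|_{L^1}$. You instead work entirely on the dual side, passing to a.e.\ convergence of the optimizers $\psi_n$ and using $|\{\psi=u\}\cap U|=0$; this is a valid alternative (your density-point argument is fine, and the stronger bound $\dim_{\mathcal{H}}(\{f=0\})\le d-1$ is already in the proof of Theorem~\ref{thm: regularity intro}), though you should add a word on the boundary layer $U\setminus K$, where $L^1_{\mathrm{loc}}$ convergence gives nothing and one must instead use that $|U\setminus K|$ and $|U_n\Delta U|$ are small. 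For the outer stability, the paper shows $K:=\overline U\setminus W\subset\{s=\infty\}^{\circ}$, deduces that the $\psi_n$ are \emph{harmonic} in a fixed neighborhood of $K$ for large $n$ (using the already-proved convergence of the potentials), and invokes compactness of harmonic functions; your Hartogs-type one-sided uniform bound for subharmonic functions reaches the same pointwise comparison $\psi_n<u_n$ on $K$ without needing harmonicity of $\psi_n$ there, which is a legitimate and arguably more robust alternative. In both arguments one should note that the subsequence trick survives possible non-uniqueness of dual optimizers, because $K\subset\{\tilde\psi<u\}$ holds for \emph{every} optimizer $\tilde\psi$.

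The one step that does not hold up is your final claim that points of $\overline{\Sigma_n}$ outside $\overline U$ are ruled out by measure saturation together with $|U_n\setminus U|\to0$. Saturation only gives $|B_r(x)\cap\Sigma_n|>0$ for $x\in\overline{\Sigma_n}$, with no uniform lower bound, so a component of $\Sigma_n$ of measure $o(1)$ sitting at a fixed location outside $\overline U$ is consistent with both facts. Indeed, the hypotheses \eqref{conv:mode} constrain $U_n$ only from inside and in measure: $U_n=U\cup B_{1/n}(x_0)$ with $x_0$ far from $U$ and $\mu_n=\tfrac12\chi_{U_n}$ satisfies every stated assumption, yet mass conservation forces $|\Sigma_n\cap B_{1/n}(x_0)|>0$, so $\overline{\Sigma_n}$ is not absorbed into a neighborhood of $\overline\Sigma$. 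This step therefore cannot be completed from the stated ingredients. You have, however, put your finger on a point that the paper's own proof passes over silently: it concludes $\overline{\Sigma_n}\subset W$ from $\overline{\Sigma_n}\cap(\overline U\setminus W)=\emptyset$, which tacitly assumes $\overline{\Sigma_n}\subset\overline U\cup W$. Modulo this shared issue, your argument is complete.
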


\begin{proof}  First observe that, letting $\nu_n=\chi_{\Sigma_n}$,
\[ \|\nu_n\|^{2}_{L^2(\R^d)} =\|\nu_n\|_{L^1(\R^d)} = \|\mu_n\|_{L^1(\R^d)} \leq (1-\delta) |U|. \] In the equalities above, we used the assumption that $\mu_n\in \cS_{\delta}(U_n)$ and the fact that $\nu_n \in \{0,1\}$ from Proposition \ref{prop:eulerian} (c). Hence, up to extracting a subsequence,  $\nu_n \rightharpoonup \tilde{\nu}$ in $L^2(U)$ for some $\tilde{\nu}$. From Proposition \ref{general_USC}, 
\begin{equation} \int_U u d\nu \geq \limsup_{n \rightarrow \infty} \int_{U_n} u_nd \nu_n = \int_U u d\tilde{\nu} \label{USC_implies123}  \end{equation} By Proposition \ref{prop:eulerian} (b), there exists a unique optimizer of $\mathcal{P}(\mu,u)$, so if $\tilde{\nu}$ were admissible, we would obtain $\tilde{\nu}=\nu$. To show admissibility, first observe that from the weak convergence we have $0 \leq \tilde{\nu} \leq \chi_{U}$, so it remains to check that $\mu \leq_{\textnormal{SH}} \tilde{\nu}$ over $U$. By the uniform $L^\infty$ bounds and compact supports of $\mu_n,\nu_n$, the weak convergence and the Calder\'on-Zygmund estimate imply that $\Delta^{-1}(\nu_n-\mu_n)$ converges uniformly to a limit $g(x)$, which solves
 \[ \Delta g = \tilde\nu - \mu \text{ on } \R^d, \quad \quad \lim_{|x| \rightarrow \infty} g(x) = 0.  \] Hence, by Liouville's theorem, this limit is $\Delta^{-1}(\tilde{\nu}-\mu)$. Therefore, we have the uniform limit
\[ \Delta^{-1} (\tilde{\nu} -  \mu)(x) = \lim_{n \rightarrow \infty} \Delta^{-1} (\nu_n -\mu_n)(x).  \] 
By Proposition \ref{prop:eulerian} (a) and the definition of weak solution,
$\Delta^{-1} (\nu_n - \mu_n) \geq 0$ and $\Delta^{-1} (\nu_n -  \mu_n) \equiv 0$ in $U_n^C$. Thus we have $ \Delta^{-1} (\tilde{\nu}-\mu) \geq 0$.
Moreover,  for any $x_0\in \R^d \backslash \overline{U}$ and any $r>0$ such that $B_r(x_0) \subset \R^d \backslash \overline{U}$,
\[ |\{x\in B_{r}(x_0):\Delta^{-1} \nu_n(x) - \Delta^{-1} \mu_n(x) \neq 0\}| \xrightarrow{n \to \infty} 0,  \] since $|U_n \Delta U|\xrightarrow{n\to \infty} 0$. By uniform convergence and continuity, it follows that 
\[ \Delta^{-1} (\tilde{\nu}- \mu) \equiv 0 \text{ in } U^C. \]
Since $v := \Delta^{-1}(\tilde{\nu}-\mu)$ is nonnegative, and vanishes outside $U$, we conclude from Lemma \ref{lem:integration by parts} (i) and Proposition \ref{prop:subharmonic equivalent} that $\mu \leq_{\textnormal{SH}} \tilde{\nu}$ with respect to $U$, which shows that $\tilde{\nu}$ is admissible for $\mathcal{P}(\mu,u)$. Hence, $\tilde{\nu} = \nu$. 

Now we show the symmetric difference convergence of $\Sigma_n$ to $\Sigma$, or equivalently the strong convergence of $\nu_n$ in $L^2(\R^d)$. To upgrade from weak to strong convergence, by the Hilbert space structure of $L^2$, it is enough to show that the $L^2$ norms converge. The latter holds because, from the subharmonic ordering and the fact that $\nu(x),\nu_n(x) \in \{0,1\}$, we have 
\[ \|\nu\|_{L^2(\R^d)} = \|\nu\|_{L^1(\R^d)}^{1/2} = \|\mu\|_{L^1(\R^d)}^{1/2} = \lim_{n \rightarrow \infty} \|\mu_n\|_{L^1(\R^d)}^{1/2} = \lim_{n \rightarrow \infty} \|\nu_n\|_{L^1(\R^d)}^{1/2} =  \lim_{n \rightarrow \infty} \|\nu_n\|_{L^2(\R^d)}. \] Hence, $\nu_n \rightarrow \nu$ in $L^2(\R^d)$.

Lastly, we show that if $\overline{\Sigma} \subset W$ for some open set $W$, then $\overline{\Sigma_n}\subset W$ for sufficiently large $n$. Let $K := \overline{U} \backslash W$, and let $\psi_n,\psi\in L^1(U)$ be, respectively, optimizers for \eqref{eq:dualp simple}. By the last statement in Lemma \ref{lem:sigma saturation}, we have $\partial U\subset \overline{\Sigma}$, and thus $K\subset U$. Observe also that $K \cap \overline{\Sigma} =\emptyset$ and therefore, by part (ii) of Lemma \ref{lem:sigma saturation},  $K \subset \{s=\infty\}^{\circ}$. In particular, noting that $K$ is compact, $\psi$ is harmonic in a neighborhood of $K$. 
By Proposition \ref{prop:sigma structure}, since $\psi-u$ is smooth and strictly subharmonic in $\{s>0\}$, we must have $\{s=\infty\}^{\circ} \cap \{\psi \geq u\}=\emptyset$, and thus
\begin{equation}
 K \subset \{\psi< u\}.
\end{equation}
Since $K \subset \{s>0\}=\{\Delta^{-1}(\nu-\mu)>0\}\subset U$,
 we have $K\subset \{\Delta^{-1}(\nu_n-\mu_n)>0\}\subset U_n$, and $\psi_n$ is harmonic in a fixed neighborhood of $K$ for sufficiently large $n$. Recalling Remark \ref{rem: optimizer lim}, and using the compactness of harmonic functions, up to a subsequence  we may assume that $\psi_n \to \psi$ uniformly in $K$  and thus, for sufficiently large $n$, 
\begin{equation}
    K \subset \{\psi_n <u_n\}. 
\end{equation}
 A second application of Proposition \ref{prop:sigma structure} then yields $K \subset \{s_n=\infty\}^{\circ}$, and thus $\overline{\Sigma_n} \subset W$, which concludes the proof.
\end{proof}

 \begin{remark} \label{rem:stability counterexample} 
By the notion of convergence in Definition \ref{def:set convergence}, we assumed in Proposition \ref{general_stability}  that for every compact set $K \subset U$, one has $K \subset U_n$ for sufficiently large $n$. The following counterexample states that this assumption is necessary for stability. Let $U:=B_1$, $u(x):=2-|x|^2$, and $\mu_n \equiv \frac12 \chi_{U_n}$ where $U_n := B_1 \backslash \overline{B_{2^{-n}}}$. Then all the other conditions of Proposition \ref{general_stability} are satisfied. One has $\Sigma= B_1\backslash \overline{B_{2^{-1/d}}}$ (see Proposition \ref{prop:radial transition zone}). Yet, from the results of Section \ref{subsec:radial},  $\Sigma_n \xrightarrow{n\to \infty} (B_1 \backslash \overline{B_{b}}) \cup B_a $, where $0<a<b<1$. 
 \end{remark}

\subsection{Stability of the temperature}
To complete the proof of Theorem~\ref{thm:stability intro}, it remains to show stability of the $\eta$ variable. The weak convergence of the $\eta$ variable is a direct consequence of the transition zone's stability. To upgrade the weak convergence to strong convergence, we use the results of Section \ref{sec:regularity}, namely the fact that the transition zone has measure zero boundary. It then follows that the free boundary $\partial\{\eta>0\}$ is a set of space-time measure zero, which enables upgrading the weak convergence of $\eta$.
 We begin by noting some basic stability properties of the potential variable $w$, which follow from energy estimates and the Calder\'on-Zygmund estimate.

\begin{lemma}[$L^2$ stability of potentials]\label{L2_stable_potential}
Let $(\mu_n,\nu_n) \subset (L^{\infty}(\R^d))^2$ be a sequence of uniformly compactly supported and bounded densities, with $\mu_n \leq_{\operatorname{SH}}\nu_n$.
Let $w_n$ solve \eqref{obstacle_problem_123} with $\mu=\mu_n$ and $\nu=\nu_n.$ Then 
\begin{equation}
\|w_n(t)-w(t)\|_{L^2(\R^d)}^2 
\lesssim 
\Big( 
\|\mu_n-\mu\|^2_{L^2(\R^d)} 
 + \|\nu_n-\nu\|^2_{L^2(\R^d)} 
\Big), \label{parabolic_stability_pot} \end{equation}
where $w$ is the unique solution of \eqref{obstacle_problem_123}. In addition, if $\mu_n \rightarrow \mu$ and $\nu_n \rightarrow \nu$ in $L^2(\R^d)$, then $w_n\to w$ in $C^{1,\alpha}_x C_t^{\beta}([0,\infty)\times\R^d)$  for any $0 < \alpha,\beta < 1$.
\end{lemma}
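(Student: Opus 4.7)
The plan is to derive the $L^2$ estimate via an energy method on $W := w_n - w$, and then upgrade to the uniform convergence statement using parabolic regularity and interpolation.

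First, subtracting the two obstacle problems gives
\begin{equation*}
\p_t W - \tfrac{1}{2}\Delta W = -\tfrac{1}{2}\bigl(\nu_n \chi_{\{w_n>0\}} - \nu \chi_{\{w>0\}}\bigr),
\qquad W(0,\cdot) = \Delta^{-1}\bigl((\nu_n - \nu) - (\mu_n - \mu)\bigr).
\end{equation*}
I would test this equation against $W$ and integrate by parts in space, producing a favorable dissipation term $\tfrac{1}{2}\|\nabla W\|_{L^2}^2$. The obstacle right-hand side is split as
\begin{equation*}
\nu_n \chi_{\{w_n>0\}} - \nu \chi_{\{w>0\}} = (\nu_n - \nu)\chi_{\{w_n>0\}} + \nu\bigl(\chi_{\{w_n>0\}} - \chi_{\{w>0\}}\bigr).
\end{equation*}
The key algebraic observation is a $T$-monotonicity property: because $w,w_n \geq 0$ and $\nu \geq 0$, the quantity $\nu(\chi_{\{w_n>0\}} - \chi_{\{w>0\}})W$ is pointwise nonnegative, as is seen in the two nontrivial cases $\{w_n>0,\,w=0\}$ (where $W=w_n>0$ and the indicator difference equals $+1$) and $\{w_n=0,\,w>0\}$ (where $W=-w<0$ and the indicator difference equals $-1$). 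Discarding this favorable contribution, using Cauchy--Schwarz and Young on the remaining $(\nu_n-\nu)$ term, and applying Gronwall yields $\|W(t)\|_{L^2}^2 \lesssim \|W(0)\|_{L^2}^2 + \|\nu_n-\nu\|_{L^2}^2$.

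To absorb $\|W(0)\|_{L^2}$ into \eqref{parabolic_stability_pot}, I would use the representation $w(t,\cdot) = \tfrac{1}{2}\int_t^\infty \eta(s,\cdot)\,ds$ from Proposition \ref{prop:eulerian}(a) together with Lemma \ref{LSC_eta} and the uniform compact support of $\mu_n,\nu_n$, to conclude that $w_n(0,\cdot)$ and $w(0,\cdot)$ vanish outside a common bounded open set $\Omega$. Thus $W(0)$ is supported in $\overline\Omega$ and satisfies $\Delta W(0) = (\nu_n-\nu)-(\mu_n-\mu)$ on $\R^d$. Pairing this equation with $W(0)$ itself, integrating by parts without boundary contribution, and invoking Poincar\'e's inequality on $\Omega$ gives $\|W(0)\|_{L^2}^2 \lesssim \|(\nu_n-\nu)-(\mu_n-\mu)\|_{L^2}^2$, which completes the proof of \eqref{parabolic_stability_pot}.

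For the final convergence statement, the source term $-\tfrac{1}{2}\nu\chi_{\{w>0\}}$ in the obstacle equation is uniformly bounded in $L^\infty$ by the uniform $L^\infty$ bound on $\nu_n$, so standard parabolic Calder\'on--Zygmund and Schauder estimates (as in \cite{lieberman96,ladyzhenskaya}) give uniform bounds on $\|w_n\|_{C^{1,\alpha'}_x C^{\beta'}_t}$ for any $\alpha',\beta' \in (0,1)$. Combining these uniform bounds with the $L^2$ convergence just established, an Arzel\`a--Ascoli extraction yields subsequential limits in $C^{1,\alpha}_x C^{\beta}_t$ for any $\alpha<\alpha'$, $\beta<\beta'$, and uniqueness of the $L^2$ limit upgrades this to convergence of the full sequence. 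The main subtlety I anticipate is the verification of the favorable sign of the obstacle term, which is elementary once the case analysis is written out; the compactness argument for $W(0)$ is standard but relies crucially on the particle-picture representation of $w$, since in low dimensions the Newtonian potential of a compactly supported mean-zero $L^2$ function need not lie globally in $L^2(\R^d)$.
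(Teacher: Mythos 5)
The paper gives no written proof of this lemma (it is asserted to ``follow from energy estimates and the Calder\'on--Zygmund estimate''), and your strategy -- energy identity for $W=w_n-w$, the $T$-monotonicity sign of the obstacle term, control of $W(0)$ via compact support and Poincar\'e, then parabolic regularity plus compactness -- is exactly the intended one. The sign verification of $\nu(\chi_{\{w_n>0\}}-\chi_{\{w>0\}})W\ge 0$ is correct, and your handling of $\|W(0)\|_{L^2}$ is the right fix for the low-dimensional failure of $\Delta^{-1}$ to map into $L^2(\R^d)$; note that the compact support of $w_n(0,\cdot)$ can be seen directly from $v_n:=\Delta^{-1}(\nu_n-\mu_n)\ge 0$ being harmonic with zero flux outside a fixed ball containing all supports, so its spherical averages vanish there.

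The one step that falls short of the stated conclusion is the Gronwall argument. If you discard the dissipation and absorb $\tfrac14\|W\|_{L^2}^2$ via Gronwall, the implied constant in \eqref{parabolic_stability_pot} grows like $e^{ct}$, so you only prove the estimate locally uniformly in time; but the lemma's bound has no $t$ on the right-hand side, and your final Arzel\`a--Ascoli step on the \emph{unbounded} domain $[0,\infty)\times\R^d$ genuinely needs uniform-in-$t$ control (H\"older compactness fails on non-compact sets, and a diagonal extraction only yields locally uniform convergence, not convergence in the global norm $C^{1,\alpha}_xC^{\beta}_t([0,\infty)\times\R^d)$). The repair is cheap and uses structure you have already set up: since $0\le w_n(t,\cdot)\le w_n(0,\cdot)=v_n$, all of $W(t,\cdot)$ lives in the same fixed ball for every $t$, so the Poincar\'e inequality converts the retained dissipation into $-c\|W\|_{L^2}^2$, and the differential inequality $\tfrac{d}{dt}\|W\|^2\le -c\|W\|^2+C\|\nu_n-\nu\|^2$ integrates to a bound uniform in $t\in[0,\infty)$. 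With that uniform $L^2$ smallness in hand, interpolating against the uniform $C^{1,\alpha'}_xC^{\beta'}_t$ bounds (using again the fixed compact spatial support) gives convergence in $C^{1,\alpha}_xC^{\beta}_t$ globally in time, without needing a separate decay argument at $t=\infty$.
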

This strong convergence of $w_n$ implies weak convergence of $\eta_n:$

\begin{lemma} \label{thm:stable_eta} For $\mu_n, \nu_n, \mu, \nu$ given as in Lemma \ref{L2_stable_potential}, let $(\eta_n,\rho_n)$ and $(\eta,\rho)$ be respectively the Eulerian variables associated to $(\mu_n,\nu_n)$ and $(\mu, \nu)$.  Then
\[ \eta_n \rightharpoonup \eta \;\;\hbox{ weakly in} \;L^2( (0,\infty) \times \R^d).  \] 
\end{lemma}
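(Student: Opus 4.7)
The plan is to pass to the limit in the distributional identity $\eta_n = -2\,\partial_t w_n$, which is built into the definition of the potential variable, using the strong convergence $w_n \to w$ provided by Lemma \ref{L2_stable_potential}. I will first secure a uniform $L^2((0,\infty)\times\R^d)$ bound for $\eta_n$, then extract a weakly convergent subsequence, identify its limit through the potential, and conclude by uniqueness.

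For the uniform $L^2$ bound, I will combine an $L^\infty$ bound with an $L^1$ bound. The first follows from Definition \ref{def:weak solution}: the monotonicity of $\{\eta_n(t,\cdot)>0\}$ gives $\partial_t \eta_n - \tfrac{1}{2}\Delta \eta_n \le 0$ in the weak sense, so by comparison with the heat semigroup applied to $\mu_n$, one gets $\|\eta_n(t,\cdot)\|_{L^\infty} \le \|\mu_n\|_{L^\infty} \le C$, uniformly in $n$ and $t$. For the $L^1$ bound, the definition of the potential and Proposition \ref{prop:eulerian}(a) yield
\[
\int_0^\infty \eta_n(t,x)\,dt \;=\; 2\,w_n(0,x) \;=\; 2\,\Delta^{-1}(\nu_n-\mu_n)(x),
\]
and since the $\nu_n-\mu_n$ are uniformly bounded with uniformly compact support, standard Newtonian potential estimates give a uniform $L^\infty$ bound for the right-hand side. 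As $\eta_n$ is supported in a fixed bounded spatial set, integrating produces a uniform $L^1$ bound. Interpolating,
\[
\|\eta_n\|_{L^2((0,\infty)\times\R^d)}^2 \;\le\; \|\eta_n\|_{L^\infty}\,\|\eta_n\|_{L^1((0,\infty)\times\R^d)} \;\le\; C,
\]
and Banach-Alaoglu produces a subsequence with a weak $L^2$-limit $\tilde\eta$.

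To identify $\tilde\eta$, I will fix $\varphi \in C_c^\infty((0,\infty)\times\R^d)$ and apply Fubini to the definition of $w_n$, which gives
\[
\int_0^\infty\!\!\int_{\R^d} w_n(t,x)\,\varphi_t(t,x)\,dx\,dt \;=\; \tfrac{1}{2}\int_0^\infty\!\!\int_{\R^d} \eta_n(s,x)\,\varphi(s,x)\,dx\,ds,
\]
with the analogous identity for $(w,\eta)$. By Lemma \ref{L2_stable_potential}, $w_n\to w$ in $C^{1,\alpha}_x C_t^\beta$; since the $w_n$ are uniformly compactly supported in space, the left-hand side passes to the limit directly. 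This forces $\iint \tilde\eta\,\varphi = \iint \eta\,\varphi$ for all such $\varphi$, and hence $\tilde\eta = \eta$ a.e. Because the weak limit is unique, the full sequence $\eta_n$ converges weakly in $L^2$ to $\eta$.

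The argument is not genuinely difficult once the preceding potential-stability is at hand: the only real technical point is the uniform $L^2$ bound, which itself rests on routine subcalorocity and potential estimates. The deeper work, namely the strong convergence of $w_n$ and the absence of Laplace-shifted mass at infinity, is fully packaged inside Lemma \ref{L2_stable_potential}.
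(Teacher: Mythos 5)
Your proof is correct and follows essentially the same route as the paper: both arguments pass the relation $w_n(t,x)=\tfrac12\int_t^\infty \eta_n(s,x)\,ds$ through the strong convergence of $w_n$ from Lemma \ref{L2_stable_potential} to identify the weak limit of $\eta_n$, the only cosmetic difference being that the paper tests against $\psi(x)\chi_{[t,\infty)}(s)$ and invokes density while you test against $\varphi_t$ and extract subsequences. Your explicit derivation of the uniform $L^2((0,\infty)\times\R^d)$ bound (via subcalorocity of $\eta_n$ and $\int_0^\infty\eta_n\,dt=2w_n(0,\cdot)$) is a welcome addition, since the paper's density argument needs it implicitly.
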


\begin{proof}
    By Lemma \ref{L2_stable_potential},  for any $\psi \in L^2(\R^d)$ and any fixed time $t \geq 0$, 
    $$ \int_{\R^d} \psi(x) w_n(t,x) \,dx \rightarrow \int_{\R^d} \psi(x) w(t,x) \,dx. $$
    Recalling that $w_n(t,x) = \frac12 \int_t^{\infty} \eta_n(s,x) \,ds$ (Proposition \ref{prop:eulerian}), this implies  that
    $$
        \int_{\R^d} \int_0^{\infty} \psi(x) \chi_{[t,\infty)}(s) \eta_n(s,x) \,ds\,dx \rightarrow \int_{\R^d} \int_0^{\infty} \psi(x) \chi_{[t,\infty)}(s) \eta(s,x) \,ds\,dx. 
    $$
    This convergence holds for all test functions of the form $g(s,x) = \psi(x) \chi_{[t,\infty)}(s)$. By a density argument, it follows  that $\eta_n \rightharpoonup \eta$ in $L^2((0,\infty) \times \R^d)$.
\end{proof}

Now we upgrade the above to strong convergence:

\begin{proposition}[Stability of the temperature] \label{stable_Stefan_strong}
 For $\mu\in \mathcal{S}_{\delta}(U)\cap C(\bar{U})$ and $\mu_n\in \mathcal{S}_{\delta}(U_n)\cap C(\bar{U_n})$ satisfying \eqref{conv:mode}, and $u_n,u$ satisfying \eqref{conv:mode2}, let $\eta_n$ and $\eta$ be respectively the maximal solutions of \eqref{eq:stefan intro} with initial data $\mu_n$ and $\mu$, and with weight $u$. Then one has, for any $p\in [1,\infty)$,
\[
\eta_n \to \eta \quad \text{in } L^p((0,\infty)\times \mathbb{R}^d).
\]
\end{proposition}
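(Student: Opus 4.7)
The plan is to combine the weak $L^2$ convergence $\eta_n \rightharpoonup \eta$ from Lemma~\ref{thm:stable_eta} with uniform bounds and the regularity theory of Section~\ref{sec:regularity} to obtain strong $L^p_{\mathrm{loc}}$ convergence, and then conclude global $L^p$ convergence via dominated convergence. The maximum principle gives $\|\eta_n\|_{L^\infty((0,\infty)\times\R^d)} \leq \|\mu_n\|_\infty \leq C$, while a standard energy estimate (as in the proof of Lemma~\ref{lem:sigma saturation}) together with uniformly compact spatial support (furnished by Definition~\ref{def:set convergence}) produces an integrable envelope with uniform exponential time decay. By Proposition~\ref{general_stability}, $\chi_{\Sigma_n} \to \chi_\Sigma$ in $L^2$, which combined with \eqref{conv:mode} and Lemma~\ref{L2_stable_potential} gives $w_n \to w$ locally uniformly in $C^{1,\alpha}_x C^\beta_t$.

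Next, I would decompose $(0,\infty)\times \R^d$ into the open set $\{w>0\}$, its exterior $\{w=0\}^\circ$, and the free boundary $\partial\{w>0\}$. The latter has zero Lebesgue measure in $\R^{d+1}$: by Theorem~\ref{thm: regularity intro} and Corollary~\ref{cor:regu}, it consists of a smooth hypersurface together with a singular set controlled by the low-dimensional sets $\partial\Sigma \cap \{s>0\}$ and $S$, all negligible in $\R^{d+1}$. On $\{w>0\}$, for any $(t_0,x_0)$ the uniform convergence of $w_n$ yields $w_n \geq w(t_0,x_0)/2 > 0$ in a space-time neighborhood $V$ for $n$ large. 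Since $\eta_n$ is caloric in $\{w_n > 0\}$ by Lemma~\ref{LSC_eta} and uniformly bounded, interior parabolic regularity together with Arzel\`a--Ascoli yields, along a subsequence, $\eta_n \to \tilde\eta$ in $C^\infty_{\mathrm{loc}}(V)$; the weak $L^2$ limit forces $\tilde\eta=\eta$, and uniqueness of the limit extends this to full-sequence local uniform convergence, hence $L^p_{\mathrm{loc}}(\{w>0\})$.

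On $\{w=0\}^\circ$, $w$ vanishes identically in a neighborhood of each point, so $w_n \to 0$ uniformly on compact subsets. Using the identity $\eta_n = -2\partial_t w_n$ globally (from Proposition~\ref{prop:eulerian}(a)) and integrating by parts in time against any $\phi \in C_c^\infty(\{w=0\}^\circ)$, one obtains
\begin{equation*}
\int\!\!\int \eta_n\,\phi \;=\; 2\int\!\!\int w_n\,\partial_t\phi \;\longrightarrow\; 0.
\end{equation*}
Since $\eta_n \geq 0$, approximating $\chi_K$ (for $K \Subset \{w=0\}^\circ$) by smooth test functions yields $\eta_n \to 0$ in $L^1(K)$, and the uniform $L^\infty$ bound upgrades this to $L^p(K)$. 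Combining the two regionwise $L^p$ convergences with negligibility of the free boundary, and invoking dominated convergence through the uniform integrable envelope, produces the strong $L^p$ convergence on $(0,\infty)\times\R^d$. The main obstacle, I anticipate, is precisely the region $\{w=0\}^\circ$, where $\eta_n$ may remain positive even though $\eta\equiv 0$; the sign condition $\eta_n \geq 0$, combined with the weak vanishing of $\partial_t w_n$ inherited from the uniform smallness of $w_n$, is essential to upgrade weak to strong convergence.
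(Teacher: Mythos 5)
Your proof is correct and follows essentially the same strategy as the paper's: reduce to a finite time window via the uniform decay of the potentials, use caloricity of $\eta_n$ on the positivity set together with the weak convergence of Lemma \ref{thm:stable_eta} and parabolic compactness to get locally uniform convergence there, and invoke the negligibility of $\partial\{\eta>0\}$ from Corollary \ref{cor:regu}. The only (harmless) variation is on the region where $\eta$ vanishes: you integrate $\eta_n=-2\partial_t w_n$ by parts against test functions supported in $\{w=0\}^\circ$ and use $\eta_n\geq 0$, whereas the paper bounds $\int \eta_n\,dt$ directly by the uniformly small values of $w_n$ off a compact subset of the positivity set — both hinge on the same identity and the uniform convergence $w_n\to w$.
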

\begin{proof}Since the $\eta_n$ are bounded, by interpolation we may take $p=1$. We recall from \eqref{eq:w limit t infty} that
\begin{equation}\int_{T}^{\infty} \int_{\R^d}|\eta-\eta_n| \leq \int_{U} w(T,\cdot)+\int_{U_n}w_n(T,\cdot) \xrightarrow[]{T\to \infty}0 \;\;\text{ uniformly in $n$}.\end{equation}
Note that $\eta$ and $\eta_n$ are uniformly bounded, and their supports are uniformly bounded in measure. Thus, to conclude it suffices to show $L^1$ convergence in any fixed finite time interval $(t_0,T)$, $0<t_0<T$. By Corollary \ref{cor:regu} (recalling that $|\{x \in U:s(x)=0\}|=0$ by Definition \ref{def:weak solution}) we have $|\partial\{(t,x):\eta(t,x)>0\}|=0$. Therefore,  by inner regularity of measurable sets, for any $\e>0$ there exists a compact set $K \subset \{(t,x): t\in[t_0,T],\; \eta(t,x)>0\}$ such that $|\{(t,x): t\in [t_0,T],\; \eta(t,x)>0\} \backslash K|<\e$. By regularity of $w$, we may also assume that $w\leq \e$ outside $K$. By Proposition \ref{general_stability} and Lemma \ref{L2_stable_potential}, we have $K \subset \{\eta_n>0\}$ and $w_n\leq 2\varepsilon$ outside $K$ for $n$ sufficiently large.  Since $\eta_n$ are bounded and caloric in $K$, and since $\eta_n \rightharpoonup \eta$ weakly (Lemma \ref{thm:stable_eta}), parabolic regularity implies that
 $\lim_{n\to \infty}\|\eta_n-\eta\|_{L^{\infty}(K)}=0$. Since
\begin{multline}
 \|\eta-\eta_n\|_{L^1((t_0,T)\times \mathbb{R}^d)} \leq \int_{K}|\eta-\eta_n| + \int_{((t_0,T)\times \R^d)\setminus K}|\eta-\eta_n|\\
 \leq \int_{K}|\eta-\eta_n|+ \|w+w_n\|_{L^{\infty}(((t_0,T)\times \R^d)\setminus K)}(|U|+|U_n|),
\end{multline}
letting $n\to \infty$ yields $\limsup_{n\to \infty}\|\eta-\eta_n\|_{L^1((t_0,T)\times \mathbb{R}^d)}  \leq 6 \e|U|$. To conclude, let $\e \to 0$.
\end{proof}

\section{Regularity and nucleation of radial and one-dimensional maximal solutions}
\label{maximal_solution_d_1_section}
In this section, we analyze the topology of the positivity set of maximal solutions in the radial and  one dimensional settings. Although one-dimensional solutions have been extensively investigated, the existing 
theory primarily concerns unbounded intervals, where the free boundary is constrained, by definition, to consist of a single point, avoiding any topological changes or nucleation phenomena. This theory does not apply to our setting of compactly supported initial data. In particular, 
previously proposed selection principles, such as the notion of physical solutions 
\cite{DNS22}, do not cover this regime.

Building on the universality result established in Proposition~\ref{target_measure_universial} for radial and one-dimensional solutions, 
we show that maximal solutions serve as a natural selection principle for compactly supported data. Indeed, the universal maximal solution minimizes nucleation: its support may only exhibit topological changes if every other weak solution does as well (Proposition~\ref{nucleation_most_unlikely}). We recall that 
nucleation has been interpreted as a continuation mechanism for the evolution of solutions 
\cite{gurtin1994thermodynamics}. 

Our analysis further shows that (1) nucleation does not occur when $\mu$ is below the critical supercooling threshold ($\sup \mu \leq 1$),
and (2) nucleation may occur in general otherwise, as demonstrated by an 
instructive example. 

Because of the universality established in Proposition~\ref{target_measure_universial}, given a radial initial data $\mu$, we simply refer to the maximal solution associated with any (radial if $d\geq 2$) weight $u$ as the maximal solution. 
 
\subsection{ The subcritical case: $0\leq \mu\leq 1$}
We show next that if $\mu$ does not surpass the critical threshold $1$, the evolution of the free boundary is smooth over time.  A crucial ingredient is the following monotonicity property, which will ensure that the free boundary does not nucleate or jump over time. Under the slack condition $\delta>0$ (which we drop in this section), we showed already in Corollary \ref{cor:regu} that even non-radial maximal solutions do not jump below the critical supercooling threshold, but it remains open whether such non-radial solutions may nucleate in dimensions $d\geq 2$.

\begin{proposition}[No nucleation or jumps]\label{prop: no_nuc_jumps}

Assume that 
\begin{equation*}U=B_1 \quad \text{or} \quad U=A_{\rho}:=\{\rho<|x|<1\}, \; \;\;\rho\in (0,1) \quad \text{or} \quad U=(0,1) \text{ with } d=1,    
\end{equation*} Let $0 \leq \mu \leq 1$ be supported on $U$, and assume $\mu$ is radial if $d\geq 2$. Assume further that $\mu$ is not a.e. equal to one in any neighborhood of $\p U$. Then the (radial if $d\geq 2$) maximal solution $\eta$ with initial data $\mu$ exists and the freezing time $s(x)$ satisfies
\begin{equation}
\begin{cases}
\textnormal{$s(x)$ is strictly increasing on $(0,a)$ and strictly decreasing on $(b,1)$} 
& \textnormal{if } U = (0,1), \\[6pt] 

\textnormal{$s(|x|)$ is strictly decreasing on $(\tilde r, 1)$} 
& \textnormal{if } U = B_1, \\[6pt]

\textnormal{$s(|x|)$ is strictly increasing on $(\rho, r_1)$ and strictly decreasing on $(r_2,1)$} & \textnormal{if } U = A_\rho.
\end{cases} \label{s_strictly_monotone}
\end{equation}
 where $(a,b)$ are from \eqref{one_dimension_mass_moments}, 
$\tilde r$ is from \eqref{same_mass_ball}, 
and $(r_1,r_2)$ are from \eqref{eq:annuli_constraints}.
In particular, there is no initial freezing, nucleation, or jump of the free boundary, and the transition zone $\Sigma$ is given by $\{\nu^*>0\}$ with $\nu^*$ from Proposition \ref{target_measure_universial}.
\end{proposition}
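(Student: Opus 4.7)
The plan is to (a) establish existence of the maximal solution and identify the transition zone exactly as $\{\nu^{*}>0\}$, (b) rule out initial freezing, and (c) propagate a sign condition on the spatial (radial) derivative $v$ of the potential variable $w=\tfrac{1}{2}\int_{t}^{\infty}\eta(s,\cdot)\,ds$ via a parabolic maximum principle, from which strict monotonicity of $s$ will follow.

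I would first verify $\mu\in\cS_{0}(U)$ (using $\mu\le 1$ and the moment conditions that define $\nu^{*}$) and invoke Proposition \ref{target_measure_universial}, producing the maximal solution with target measure $\nu^{*}$. For concreteness take $U=(0,1)$ with $\nu^{*}=\chi_{(0,a)}+\chi_{(b,1)}$; the radial cases are analogous. Then $w$ solves the obstacle problem \eqref{obstacle_problem_123}. Since $\Delta w(0,\cdot)=\chi_{\Sigma}-\mu\ge 0$ on $(0,a)\cup(b,1)$ (as $\mu\le 1$) and $\le 0$ on $(a,b)$, the initial potential is convex on each shell of $\{\nu^{*}>0\}$ and concave on $(a,b)$, with $w(0,0)=w(0,1)=0$. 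The hypothesis that $\mu$ is not a.e.\ equal to $1$ in any neighborhood of $\partial U$ rules out the degenerate convex-with-zero-endpoints scenario, and a short convexity argument forces $w(0,\cdot)>0$ strictly on $(0,a]$ and $[b,1)$; concavity plus positive endpoint values gives $w(0,\cdot)>0$ on $[a,b]$. Hence $U\cap\{s=0\}=\emptyset$. The exact identification $\Sigma=(0,a)\cup(b,1)$ then follows from Lemma \ref{lem:sigma saturation} (saturation on each open component) together with Proposition \ref{prop:sigma structure} (exterior ball condition at $a$ and $b$).

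Next I would analyze $v:=w_{x}$. The same convexity forces $v(0,\cdot)<0$ strictly on $[b,1)$ and $>0$ strictly on $(0,a]$. On the smooth set $\{w>0\}$, differentiating the obstacle equation yields $v_{t}=\tfrac{1}{2}v_{xx}$; on the parabolic boundary of $\{w>0\}$, $v$ vanishes on the free boundary (by $C^{1,\alpha}_{x}$ regularity of $w\ge 0$ at its minimum) and on $\R\setminus U$ (since $w\equiv 0$ there). An Angenent--Sturm nodal-set argument applied to $v$ on the shrinking positive set of $w$ shows that the number of sign changes of $v(t,\cdot)$ is non-increasing in time. Since $v(0,\cdot)$ has exactly one sign change, located in the middle interval $(a,b)\subset\{s=\infty\}$, this unique sign change persists and remains trapped in the never-freezing region; hence $v(t,\cdot)<0$ strictly on $\{w(t,\cdot)>0\}\cap[b,1)$ and $>0$ strictly on $\{w(t,\cdot)>0\}\cap(0,a]$. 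Strict monotonicity of $s$ on each shell follows: for $b<x_{1}<x_{2}<1$ with $T=s(x_{2})$, the sign of $v$ gives $w(t,x_{1})>w(t,x_{2})$ for $t<T$, and a Hopf-type argument at the free boundary upgrades the limit to $w(T,x_{1})>0$, yielding $s(x_{1})>T$.

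The radial cases follow the same scheme applied to $v=w_{r}$, which satisfies $v_{t}=\tfrac{1}{2}\bigl(v_{rr}+\tfrac{d-1}{r}v_{r}-\tfrac{d-1}{r^{2}}v\bigr)$; the zero-order term has the favorable sign for a weighted maximum principle preserving $v\le 0$ (with the single sign change trapped in $\{s=\infty\}$ in the annular case, and no sign change at all in the ball case). The main obstacle is Step (c): ensuring that the single sign change of $v$ remains trapped in the interior region $\{s=\infty\}$ throughout the evolution, which requires combining the Sturmian zero-crossing principle with the structural information $a,b\notin\Sigma$ to prevent sign changes from colliding with the free boundary. This monotonicity is what ultimately prevents nucleation and jumps of the free boundary. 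Once strict monotonicity of $s$ is established, the absence of initial freezing, nucleation, and jumps is immediate, and smoothness of the free boundary (claimed elsewhere in Theorem \ref{thm:1-d}(a)) follows from Corollary \ref{cor:regu} combined with $\Sigma=\Sigma^{\circ}$.
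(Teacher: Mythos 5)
Your overall architecture (existence and identification of $\Sigma$ via Proposition \ref{target_measure_universial}, convexity of $w(0,\cdot)$ on the shells, then sign control of the spatial derivative, then strictness) starts along reasonable lines, but the central step --- propagating the sign of $v=w_x$ forward in time via an Angenent--Sturm nodal-set argument --- contains a genuine gap, in fact two. First, the equation you assert for $v$ is wrong on $\{w>0\}$: differentiating $w_t-\tfrac12 w_{xx}=-\tfrac12\nu^*\chi_{\{w>0\}}$ in $x$ gives $v_t-\tfrac12 v_{xx}=-\tfrac12\,\partial_x\nu^*$, and since $\nu^*=\chi_{(0,a)}+\chi_{(b,1)}$ jumps at $a$ and $b$ (both interior to $\{w(t,\cdot)>0\}$ for all finite $t$), $v$ satisfies the heat equation only up to delta-function sources at $x=a$ and $x=b$. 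The classical Sturmian zero-counting lemma does not apply to such an equation. Second, even granting a zero-counting principle, you explicitly leave unresolved ``the main obstacle'' of showing that the unique sign change of $v(t,\cdot)$ stays trapped in $(a,b)$ and never migrates into a shell; but that trapping \emph{is} the proposition, so the proof is circular at its key point. The strictness upgrade via an unspecified ``Hopf-type argument'' is also incomplete: weak sign control only yields $w(t,x_1)\ge w(t,x_2)$, hence $s$ nonincreasing, and the passage to strict monotonicity is exactly where the hypothesis that $\mu$ is not a.e.\ equal to $1$ near $\partial U$ must enter, which your sketch does not use at that stage.

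The paper avoids time propagation entirely by a fixed-time elliptic argument you are missing: $\Delta w(t,\cdot)=\chi_\Sigma-\mu_t$ where $\mu_t=\eta(t,\cdot)+\chi_{\Sigma\cap\{s\le t\}}\le 1$, because $\eta\le 1$ (comparison principle, using $\mu\le1$) and $\eta(t,x)=0$ wherever $s(x)\le t$. Hence $w(t,\cdot)$ is subharmonic on each shell of $\Sigma$ for \emph{every} $t$, and since $\partial_r w(t,\cdot)=0$ on $\partial U$, integrating $\tfrac{d}{dr}\bigl(r^{d-1}\partial_r w\bigr)\ge 0$ on each shell gives the sign of $\partial_r w$ at every time slice directly, with no nodal-set machinery. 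Strictness then follows by contradiction: if $s$ were constant on a subinterval, the strong minimum principle applied to $p=\partial_r w\ge 0$ forces $p\equiv0$ on a space-time rectangle, spatial analyticity of caloric functions extends this to a whole shell, and evaluating $\Delta w(0,\cdot)=1-\mu$ there forces $\mu\equiv1$ a.e.\ near $\partial U$, contradicting the hypothesis. I recommend replacing your Step (c) with this argument.
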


\begin{proof} For concreteness, we assume $U=A_{\rho}$, with the other two cases being similar. Consider the radial function $w(t,x) =\frac12 \int_t^{\infty} \eta(s,x) ds$, where $\eta$ is the active particle distribution associated to $(\mu,\nu^*)$. Then from Lemma \ref{LSC_eta} and \eqref{eq:s defi intro}, we have \[s(x) = \inf\{t>0:w(t,x)=0\}.\] 
Thus, as $\p_t w \leq 0$, it suffices to show that $\p_r w(t,r) >0$ on $(\rho,r_1)$ and $\p_r w(t,r) < 0$ on $(r_2,1)$ to obtain \eqref{s_strictly_monotone}.

We first show that $w(t,\cdot)$ is subharmonic on $\Sigma$ for every $t\geq 0$. By Proposition \ref{prop:eulerian}, we have $\Delta w(t,x) = \chi_{\Sigma}-\mu_t(x)$, where $\mu_t(x)$ is the law of $W_{t \wedge \tau^*}$ so it suffices to show $\mu_t \leq 1$. For this purpose, note that, since $\mu\leq 1$ and $\eta$ is subcaloric, $\eta\leq 1$ by the comparison principle. Thus, using \eqref{def_integral_rho} and \eqref{eq:intrho0t1sec42pf} we get from \eqref{mu_t_formula} that $$\mu_t(x)=\eta(t,x)+\chi_{\Sigma\cap\{s(x)\leq t\}} \leq\eta(t,x)+\chi_{\{\eta(t,x)=0\}}\leq1.$$ 
Hence, $w(t,\cdot)$ is subharmonic on $\Sigma$ for any $t \geq 0$. 

Since $w\geq 0$ and $w(t,\cdot)=0$ on $\p B_{\rho} \cup \p B_1$ due to Proposition \ref{prop:subharmonic equivalent}, we have $0=\partial_r w(t,\rho)=\partial_r w(t,1)$, and, $w(t,\cdot)$ being subharmonic then implies
\[ \frac{d}{dr} (r^{d-1} \p_r w(t,r)) \geq 0 \;\;\text{ on }\;\; (\rho,r_1) \cup (r_2,1). \] 
 This then implies that $\p_rw(t,r) \geq 0$ on $(\rho,r_1)$ and $\p_r w(t,r) \leq 0$ on $(r_2,1)$.  Since $\p_t w \leq 0$, we infer that $s(|x|)$ is nondecreasing on $(\rho,r_1)$ and nonincreasing on $(r_2,1)$.

To upgrade this to strict monotonicity, we argue by contradiction. If $s(|x|)$ was not strictly increasing on $(\rho,r_1)$, then there would be an interval $I = (c,d) \subset (\rho,r_1)$ such that $s(|x|)\equiv s(|c|)$ in $I$, which implies $w(s(c),r)\equiv 0$ for $r \in I$, and thus  $\p_r w(s(c),r)\equiv 0$ for $r \in I$. Differentiating the equation $w_t-\frac12 \Delta w=-\frac{1}{2}$, we see that
$p := \p_r w$ solves
\begin{equation} \p_t p = \p^2_{rr} p  + \frac{d-1}{r} \p_r p - \frac{d-1}{r^2} p \text{ on } (0,s(c)) \times I \label{radial_PDE} \end{equation}

Since $p \geq 0$ and attains its minimum value $0$ outside of the parabolic boundary, \eqref{radial_PDE} implies that $p \equiv 0$ on $[0,s(c)] \times I$. By spatial analyticity of caloric functions, $p \equiv 0$ on $[0,s(c)]\times [\rho,d]$.  
We infer from \eqref{obstacle_problem_123} that
$0=\Delta w(0,x) = 1-\mu(x)$ a.e. on $ \{ \rho < |x| < r_1\}$, contradicting our assumption on $\mu$. A similar argument shows that $s(|x|)$ is strictly decreasing on $
(r_2,1)$.
\end{proof}

We now show that the free boundary is smooth. 
\begin{proposition}[Smooth free boundary] \label{smooth_fb} Under the notation and assumptions of Proposition \ref{prop: no_nuc_jumps}, every free boundary point is a regular point. In particular,
\begin{equation}
  s\in C^{\infty}(\Sigma), \quad \nabla s \neq 0.  
\end{equation} 
\end{proposition}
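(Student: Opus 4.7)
The plan is to combine the explicit radial structure of $\Sigma$ from Proposition~\ref{prop:radial transition zone} and the monotonicity of Proposition~\ref{prop: no_nuc_jumps} with the regularity dichotomy of Corollary~\ref{cor:regu}. Since $\Sigma$ is an explicit disjoint union of open annular shells or intervals, one has $\Sigma=\Sigma^{\circ}$; and by the strict radial monotonicity of $s$ together with its lower semicontinuity, $s$ is continuous on $\Sigma$. Corollary~\ref{cor:regu} then already gives $s\in C^1(\Sigma)$ and reduces the problem to verifying that every free boundary point is regular. Because $s$ has no time discontinuities, this amounts to showing that the infinite-speed locus $S_1\cap\Sigma=\{x\in\Sigma:\nabla s(x)=0\}$ is empty; the second part of Corollary~\ref{cor:regu} will then upgrade the regularity to $s\in C^\infty(\Sigma)$.

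Assume for contradiction that $x_0\in \Sigma$ satisfies $\nabla s(x_0)=0$, and set $t_0:=s(x_0)$ and $r_0:=|x_0|$. By the strict radial monotonicity of $s$ and the continuity of both $s$ and $w$, there is a neighborhood of $x_0$ in which the positive set of $w(t_0,\cdot)$ lies on a single side of the sphere $\partial B_{r_0}$. For instance, when $x_0$ lies on an outer shell,
\begin{equation*}
w(t_0,x)>0 \text{ for } |x|<r_0 \text{ near } x_0, \qquad w(t_0,x)=0 \text{ for } |x|\geq r_0 \text{ near } x_0,
\end{equation*}
with the symmetric statement if $x_0$ lies on an inner shell. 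Consider the parabolic blow-up
\begin{equation*}
w_\lambda(t,x):=\lambda^{-2}\, w\bigl(t_0+\lambda^2 t,\, x_0+\lambda x\bigr).
\end{equation*}
By the $C^{1,\alpha}_x\cap C^{0,1}_t$ regularity of $w$, standard nondegeneracy for the parabolic obstacle problem \cite{caffarelli2004regularity}, and compactness, one extracts a subsequential limit $w_0$ that is a nontrivial global solution of the parabolic obstacle problem on $\R^{d+1}$ with $\nabla w_0(0,0)=0$. Passing the one-sided vanishing above through the scaling yields
\begin{equation*}
w_0(0,x)=0 \quad \text{on the closed half-space } \{x\cdot\widehat{x}_0\geq 0\}.
\end{equation*}

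By the classification of parabolic obstacle blow-ups at free boundary points with vanishing gradient (see e.g.\ \cite{caffarelli2004regularity}), $w_0$ is either a half-space profile of the form $\tfrac12 \bigl((x\cdot\nu-\beta t)_+\bigr)^2$ with $\beta>0$ (a regular point, advancing with finite speed), or a stationary quadratic profile $w_0(t,x)=\tfrac12\langle Qx,x\rangle$ with $Q\ge 0$ and $\operatorname{tr} Q=1$ (a singular point). The latter vanishes only on the linear subspace $\ker Q\subsetneq\R^d$ and thus cannot vanish on an entire half-space; hence $w_0$ must be a half-space profile. This means that $(t_0,x_0)$ is a regular free boundary point, and in particular $\nabla s(x_0)\neq 0$, contradicting our assumption. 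Therefore $\nabla s\neq 0$ on $\Sigma$, and Corollary~\ref{cor:regu} then yields both $s\in C^\infty(\Sigma)$ and the fact that every free boundary point in $\partial\{\eta>0\}\cap\{s>0\}$ is regular.

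The main technical ingredient is the blow-up classification for the parabolic obstacle problem; once it is in hand, the radial one-sidedness of the positive set makes the exclusion of the singular profile essentially immediate, since a nontrivial quadratic form on $\R^d$ cannot vanish on a half-space.
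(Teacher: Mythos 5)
Your overall strategy coincides with the paper's: reduce to excluding free boundary points with $\nabla s=0$, blow up the potential $w$ parabolically at such a point, and rule out the singular blow-up profile using the fact that the positivity set of $w(t_0,\cdot)$ lies on one side of a sphere through $x_0$. However, there is a genuine gap in the classification step. The blow-up limits of the parabolic obstacle problem at points with vanishing gradient are not only the stationary quadratic forms $\tfrac12\langle Qx,x\rangle$ with $\operatorname{tr}Q=1$; the full family (as in the paper's citation of Blanchet, \eqref{eq:alternative blow-up sing}) is
\begin{equation}
  p_2(t,x)=\tfrac12\left(-mt+Ax\cdot x\right),\qquad A\ge 0,\ \operatorname{tr}A=1-m,\ m\in[0,1],
\end{equation}
on $(-\infty,0]\times\R^d$. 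Your half-space-vanishing argument only excludes the cases $m<1$, where $A\neq 0$ and the zero set of $p_2(0,\cdot)$ is the proper subspace $\ker A$. For $m=1$ the profile is $p_2=-t/2$, which vanishes identically at $t=0$ and hence on every half-space; it is a genuine blow-up (it arises from $w=(t_0-t)^+$, cf.\ Remark \ref{rem:dim par sharp}), so your argument does not eliminate it. The paper handles this case separately: $m=1$ forces $\limsup_{(t,x)\to(t_0,x_0),\,t\le t_0}\eta(t,x)\ge 1$, which is ruled out by the strong maximum principle since $\mu\le 1$. This is precisely where the subcritical hypothesis $\mu\le 1$ enters the proof beyond the geometry of $\Sigma$, and without it the statement is false (nucleation/jumps can occur when $\sup\mu>1$), so the omission is not cosmetic.

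Two smaller points. First, the regular blow-up profile is the stationary half-space parabola $\tfrac12(x\cdot e)_+^2$; the travelling profile $\tfrac12((x\cdot\nu-\beta t)_+)^2$ with $\beta>0$ is neither parabolically $2$-homogeneous nor a solution of $w_t-\tfrac12\Delta w=-\tfrac12\chi_{\{w>0\}}$. Second, Corollary \ref{cor:regu} is stated under the hypotheses of Theorem \ref{thm: regularity intro} (in particular $\mu\in\cS_\delta(U)$ with $\delta>0$ and $U$ Lipschitz), which are dropped in the radial/one-dimensional section; what you actually need is the local obstacle-problem regularity on the open set $\Sigma$ (or, as the paper does, the classical regular-point theory of Caffarelli--Petrosyan--Shahgholian once the blow-up is identified as $\tfrac12(x\cdot e)_+^2$), so you should invoke that directly rather than the corollary.
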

\begin{proof}  By Proposition \ref{prop: no_nuc_jumps}, the transition zone $\Sigma$ is open, and thus, for any $x_0\in \Sigma$, $w$ satisfies, in a neighborhood of $(t_0,x_0):=(s(x_0),x_0)$, \begin{equation} w_t- \Delta w/2=-\chi_{\{w>0\}}/2.
\end{equation}
Letting
\begin{equation}
    w_r(t,x)=r^{-2}w(t_0+r^2t,x_0+rx), 
\end{equation}
by the classical theory for the parabolic obstacle problem \cite[Props. 1.1, 1.2]{Bla06}, there exists $p_2$ such that
\begin{equation} \label{eq:d=1 blow up convergence}
    \lim_{r\to 0^+}w_r(t,x)=p_2(t,x), \quad (t,x)\in (-\infty,0]\times \R^d
\end{equation}
locally uniformly, where
\begin{equation} \label{eq:alternative blow-up reg}
    p_2=(x\cdot e)_+^2/2, \quad e\in \mathbb{S}^{d-1}
\end{equation}
or \begin{equation} \label{eq:alternative blow-up sing}
    p_2=(-mt+Ax \cdot x)/2, \quad A\geq 0, \quad \operatorname{tr}(A)=1-m, \quad m\in [0,1].
\end{equation}
By Proposition \ref{prop: no_nuc_jumps}, $\{w(t_0,\cdot)>0\}$ is an annulus or a ball, so there exists $e_0\in \mathbb{S}^{d-1}$ such that \[\{w(t_0,\cdot)>0\}\subset \{e_0\cdot (x-x_0)>0\}.\] But this makes \eqref{eq:alternative blow-up sing} impossible unless $m=1$, since \eqref{eq:d=1 blow up convergence} at $t=0$ would then force \[\lim_{r\to 0}|B_r(x_0)|^{-1}|B_r(x_0) \setminus \{w(t_0,\cdot)>0\}| =0.\]
On the other hand, $m=1$ can only occur if
\begin{equation*}\limsup_{(t,x)\to (t_0,x_0),\; t\leq t_0} \eta(t,x)\geq 1,\end{equation*} 
which, since $\mu \leq1$, is ruled out by the strong maximum principle. Therefore, one must have \eqref{eq:alternative blow-up reg}. In the language of the obstacle problem, this means that $(t_0,x_0)$ is a regular free boundary point, and it is a classical fact that, in a neighborhood of $(t_0,x_0)$, the free boundary is a smooth hypersurface moving with finite positive speed (see \cite[Thm. II]{caffarelli2004regularity}), which means $s\in C^{\infty}$ near $x_0$, and $\nabla s(x_0) \neq 0$.\end{proof}

\subsection{Example of nucleation when $|\{\mu>1\}|>0$}  \label{nuc_heuristics} We now focus on nucleation in the case $d=1$, while noting that the heuristics below are also valid for the radial case. Proposition \ref{prop: no_nuc_jumps} implies that when $0 \leq \mu \leq 1$  there is no nucleation of the free boundary, namely the set $\{w(t,\cdot)>0\}$ is a single bounded open interval for all $t>0$. We will show that it is possible for solutions to nucleate at positive times when $|\{\mu>1\}|>0$. 
The key point is that if $|\{\mu>1\}|>0$, then the potential $w$ may no longer be monotone over the transition zone.

We first illustrate the heuristics behind our example. Fixing $k\in \mathbb{N}$, we consider a density $\mu$ whose maximum is slightly larger than $1$ and which is formally expected to produce $k$ nucleation. Let $ \mu = \chi_{ (0,1/2) } + \chi_{ (3/4,1) } - \varepsilon  \psi''$, where $\psi: \R\to [0,\infty)$ is smooth with $\{\psi>0\}=(0,1)$, is concave on $[1/2,3/4]$, and $\varepsilon>0$ is chosen small enough so that $\mu \geq 0$. 

 Then $\mu \leq_{\textnormal{SH}} \nu:=\chi_{(0,1/2)} + \chi_{(3/4,1)}$  by an integration by parts argument. In particular, by Proposition \ref{target_measure_universial}, $\nu$ is the optimal target measure of $\mathcal{P}(\mu,u)$. Hence $w(0,x) = \Delta^{-1}(\nu-\mu)(x) = \varepsilon \psi(x)$ and $\lim_{t \to 0} \partial_t w(t,x) = (1/2)(w_{xx}-\nu\chi_{\{w>0\}})(0,x)= -\mu(x)/2$. For small times $t>0$, using that $w \geq 0$ and $\p_t w \leq 0$, a first-order in time approximation for $w(t,x)$ suggests
$$w(t,x) \approx \max\{\varepsilon \psi(x) - (t/2) \mu(x),0\}.$$ Since $\e \psi(x) \ll 1$ and $\mu \approx 1$ on $(0,1/2)$, the linear profile collapses to zero so rapidly that we expect this first-order approximation to be accurate. 

Now if $\psi$ has $k$ intervals on which it decreases in $(0,1/2)$, then we expect that there should at least be $k$ times  where $w$ hits zero in interior points of its positivity set from the past times. Since $\{w>0\} \cap \{t>0\} = \{\eta>0\} \cap \{t>0\}$, we therefore expect at least $k$ times at which $\eta$ nucleates.

Based on the above heuristics, we rigorously show that there exists a polynomial $\mu$ for which the corresponding maximal solution with initial data $\mu \chi_{(0,1)}$ nucleates:
\begin{proposition}[Nucleation can occur] \label{nuc_example} There exists a polynomial initial density $\mu$ on $(0,1)$ such that $|\{\mu>1\} \cap (0,1)|>0$, and for the maximal solution with initial data $\mu \chi_{(0,1)}$ one has that
\[
s(x) \textnormal{ is not monotone on } (b,1),
\]
where $b$ is defined in \eqref{one_dimension_mass_moments}.
\end{proposition}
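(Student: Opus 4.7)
The plan is to rigorously realize the first-order heuristic of Section~\ref{nuc_heuristics} via a perturbative construction. Fix a small parameter $\eta>0$ and consider the polynomial density
\begin{equation*}
\mu_\eta(x):=1-\eta+\eta\varphi(x)\quad\text{on }(0,1),
\end{equation*}
where $\varphi$ is a fixed polynomial to be chosen with (i) $\varphi>1$ on a set of positive measure, (ii) $\int_0^1\varphi\,dx=0$, and (iii) $\int_0^1 x\varphi\,dx=0$. Condition (i) ensures $|\{\mu_\eta>1\}|>0$ for small $\eta>0$, while (ii) and (iii) make the two moment equations in \eqref{one_dimension_mass_moments} solvable with $a_\eta=(1-\eta)/2$ and $b_\eta=(1+\eta)/2$ exactly. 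On $(b_\eta,1)$ the initial potential $w_0=\Delta^{-1}(\nu^*-\mu_\eta)$ satisfies $w_0''=1-\mu_\eta=\eta(1-\varphi)$ with Dirichlet data $w_0(1)=0$ and $C^1$-matching at $x=b_\eta$ with the middle region $(a_\eta,b_\eta)$ where $w_0''=-\mu_\eta$. Solving this piecewise ODE yields $w_0=\eta\, G(x)+O(\eta^2)$ for an explicit polynomial $G$, and one selects $\varphi$ so that $G$ has at least one strict interior local minimum in $(1/2,1)$. This is a finite-dimensional condition on the coefficients of $\varphi$, which is readily satisfied because the map $\varphi\mapsto G$ is linear and conditions (i)--(iii) leave substantial freedom in the shape of $\varphi$.

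The second step is to transfer non-monotonicity of $w_0$ into non-monotonicity of the freezing time $s$. On $(b_\eta,1)$, wherever $w>0$ the potential satisfies $w_t-\tfrac12 w_{xx}=-\tfrac12$, since $\nu^*=1$ there. For $\delta>0$ small, the explicit trial functions
\begin{equation*}
\bar w(t,x):=w_0(x)-\tfrac{t}{2}(\mu_\eta(x)-\delta),\qquad \underline w(t,x):=w_0(x)-\tfrac{t}{2}(\mu_\eta(x)+\delta)
\end{equation*}
satisfy, using $w_0''=1-\mu_\eta$,
\begin{equation*}
\bar w_t-\tfrac12\bar w_{xx}=-\tfrac12+\tfrac{\delta}{2}+\tfrac{t}{4}\mu_\eta''(x),\qquad \underline w_t-\tfrac12\underline w_{xx}=-\tfrac12-\tfrac{\delta}{2}+\tfrac{t}{4}\mu_\eta''(x),
\end{equation*}
so $\bar w$ (resp.\ $\underline w$) is a supersolution (resp.\ subsolution) of \eqref{obstacle_problem_123} on $[0,T_\delta]$ with $T_\delta:=2\delta/\|\mu_\eta''\|_\infty$. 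Parabolic comparison on the connected component of $\{w>0\}$ containing $(b_\eta,1)\times\{0\}$ yields the sandwich
\begin{equation*}
\frac{2w_0(x)}{\mu_\eta(x)+\delta}\le s(x)\le\frac{2w_0(x)}{\mu_\eta(x)-\delta},
\end{equation*}
valid on $(b_\eta,1)$ as long as the right-hand side is $\le T_\delta$, which holds since $s(x)=O(\eta)\ll T_\delta$. Letting $\delta\to 0$ yields the leading-order formula $s(x)=2w_0(x)/\mu_\eta(x)+o(\eta)=2\eta G(x)+O(\eta^2)$, which inherits the strict interior local minimum of $G$ for all sufficiently small $\eta>0$.

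The principal technical obstacle will be selecting $\varphi$ explicitly so that $G$ has the required interior local minimum, while simultaneously verifying (i)--(iii) above; this amounts to constructing an explicit polynomial satisfying a small number of linear moment constraints plus a qualitative shape condition on the twice-integrated profile, which I expect to be routine but requires bookkeeping across the matching at $x=b_\eta$ with the middle-region expansion. A secondary point is the justification of the parabolic comparison for \eqref{obstacle_problem_123} with the sub/supersolutions $\underline w,\bar w$ at the moving free boundary of $w$; this follows from the characterization of $w$ as the smallest nonnegative supersolution (equivalently, from the maximum principle on the open set $\{w>0\}$ together with the boundary condition $w=0$ on $\partial\{w>0\}$).
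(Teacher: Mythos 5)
Your overall strategy (a polynomial perturbation of the constant density plus a first-order-in-time linearization of the potential) is the same one the paper uses, but there is a genuine logical gap in the second step: the upper bound $w\le \bar w$, hence $s(x)\le 2w_0(x)/(\mu_\eta(x)-\delta)$, does not follow from any of the comparison arguments you invoke. The function $w$ solves the obstacle problem $w_t-\tfrac12 w_{xx}=-\tfrac12\nu^*\chi_{\{w>0\}}$, so it is a subsolution of the linear equation $w_t-\tfrac12w_{xx}=-\tfrac12$ only on the set $\{w>0\}$; comparison with $\bar w$ on that set requires $\bar w\ge w=0$ on the free boundary $\partial\{w>0\}$, which is exactly the unknown. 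The two repairs you suggest do not close this: $\bar w^+$ is \emph{not} a supersolution of the obstacle problem, because $\partial_{xx}\bar w^+$ acquires a \emph{positive} singular measure on $\partial\{\bar w>0\}$, so $-\tfrac12\partial_{xx}\bar w^+$ has a negative singular part and the supersolution inequality fails there; and the ``smallest nonnegative supersolution'' characterization only compares $w$ against admissible competitors, which $\bar w^+$ therefore is not. The bound is in fact false unconditionally: the waiting-time construction (Proposition \ref{prop:fourier example}) produces points $x$ with $w_0(x)$ arbitrarily small (quadratically small in $\operatorname{dist}(x,\partial U)$, by Lemma \ref{lem: w quadratic growth}) but $s(x)=\infty$, so no estimate of the form $s(x)\le C\,w_0(x)$ can hold without structural information on $\{w>0\}$. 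The paper resolves precisely this point by making the upper bound \emph{conditional}: Lemma \ref{no_nuc_bounds} assumes $s$ is nonincreasing on an interval $I$, which guarantees the cylinder $(0,T)\times I$ lies inside $\{w>0\}$ and supplies controllable lateral boundary data via a cutoff $\zeta$ vanishing at $\partial I$ (together with $\partial_t w\le0$); the proof is then a contradiction between this conditional upper bound and the unconditional lower bound. Your direct sandwich silently assumes the no-nucleation structure of $\{w>0\}$ that the proposition is trying to refute.

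A secondary gap is admissibility. You need $\mu_\eta\in\mathcal S_0((0,1))$ and that the two-interval measure is the optimal target, but Proposition \ref{prop:one_d_target} only establishes $\mu\le_{\operatorname{SH}}\nu$ under $0\le\mu\le1$, which your $\mu_\eta$ violates by design. You must additionally verify $w_0=\Delta^{-1}(\nu^*-\mu_\eta)\ge0$ on all of $(0,1)$ (equivalently $G\ge0$ on the outer regions, with positivity on the short middle interval then following from concavity). This is an extra constraint on $\varphi$, compatible with but not implied by your conditions (i)--(iii) and the interior-local-minimum requirement; it is exactly what the paper's explicit quartic and the computations \eqref{double_int_mu}--\eqref{double_int_nu} are verifying. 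Your lower bound via the subsolution $\underline w$ is fine (it matches Lemma \ref{lower_w}), though it is cleaner to run that comparison on all of $(0,1)$, where $\underline w\le0\le w$ on the lateral boundary, rather than on $(b_\eta,1)$, where the data at $x=b_\eta$ is not controlled.
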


The proof of Proposition \ref{nuc_example} can be found in Appendix \ref{nuc_example_appendix}. 

Next we show that  maximal solutions nucleate only in cases where nucleation is unavoidable. Namely, if the maximal solution nucleates, then so does every other weak solution. 

\begin{proposition} \label{nucleation_most_unlikely}
Let $\eta$ be a weak solution of \eqref{eq:stefan intro} with initial data $\mu$, where $\mu$ satisfies the assumptions of Proposition \ref{target_measure_universial}. Assume that $\eta$ is \emph{not} the maximal solution associated with $\mu$. Let $s(x)$ denote the freezing time associated to $\eta$, and consider the set where the solution never freezes,
\[
    \Gamma := \{x : s(x) = \infty\}.
\]
If $U = (0,1)$, then $\Gamma$ is not an interval; if $U = B_1$, then $\Gamma$ is not a ball; and if $U = A_{\rho}$, then $\Gamma$ is not an annulus.
\end{proposition}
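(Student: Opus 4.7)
The plan is to proceed by contradiction: assume $\Gamma$ has the expected shape (an interval, ball, or annulus in the respective cases) and deduce that $\eta$ must coincide with the unique maximal solution $\eta^*$ associated to $\mu$ given by Proposition \ref{target_measure_universial}, contradicting the hypothesis. Since any weak solution satisfies $|\{s=0\}|=0$ by Definition \ref{def:weak solution}, the transition zone $\Sigma := \{0<s<\infty\}$ of $\eta$ satisfies $\chi_\Sigma = \chi_{U\setminus\Gamma}$ a.e. The assumed shape of $\Gamma$ then reduces $\chi_\Sigma$ to an indicator function depending on only a small number of free parameters: two endpoints in the 1D case; the center and radius of a ball for $U=B_1$; and the center together with two radii for $U=A_\rho$.

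Next, I would invoke Proposition \ref{prop:eulerian}(d) to obtain $\mu \leq_{\mathrm{SH}} \chi_\Sigma$ with respect to $U$. Combined with Proposition \ref{prop:subharmonic equivalent}, this upgrades to the equality $\int \phi\, d\mu = \int \phi\, \chi_\Sigma\,dx$ for every harmonic $\phi \in L^1(U)$, since both $\pm \phi$ are then subharmonic and the reverse inequality also holds. Testing with well-chosen harmonic functions should pin the free parameters down uniquely. Concretely: constants (mass) and linear functions (first moment) yield \eqref{one_dimension_mass_moments} in 1D and force the candidate ball to be centered at the origin via the radial symmetry of $\mu$, then identify its radius through \eqref{same_mass_ball}. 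In the annular case, radial harmonic functions on $A_\rho$—such as $\mathcal{N}$, or $\log|\cdot|$ when $d=2$, which are harmonic there precisely because $0 \notin A_\rho$—furnish the extra equation needed to determine both radii as in \eqref{eq:annuli_constraints}. Altogether this yields $\chi_\Sigma = \nu^*$ a.e.

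Once $\Sigma$ and the maximal transition zone $\Sigma^*$ agree up to a null set, I would conclude $\eta = \eta^*$ by the uniqueness of weak Stefan solutions with prescribed initial data and transition zone (\cite[Cor. 9.10]{kim2024stefan}), used analogously in the proof of Proposition \ref{fractal_freezing_maximal}. The main technical obstacle is the annular case $U = A_\rho$, where two free radii must be pinned down simultaneously; the cleanest way to handle this is via Proposition \ref{prop:annuli_target}, which provides the relevant uniqueness of radial admissible targets of annular shape under the two moment conditions, ensuring injectivity of the map $(r_1',r_2') \mapsto (\text{mass}, \text{Newtonian moment})$ on the admissible parameter region.
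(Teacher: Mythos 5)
Your proposal is correct and follows essentially the same route as the paper: both reduce the question to the optimal target measure, use harmonic test functions (mass, first moment, and the Newtonian potential in the annular case) together with the uniqueness of the shell parameters to force $\chi_\Sigma=\nu^*$ a.e., and then invoke the bijection between admissible target measures and weak solutions from \cite{kim2024stefan} to contradict non-maximality. The only difference is organizational — the paper first deduces $\nu\neq\nu^*$ from non-maximality and then shows the assumed shape would force $\nu=\nu^*$, while you run the implication in the reverse order — and the paper additionally dispatches the degenerate single-shell case via Lemma \ref{lem:sigma saturation}, which in your setup is absorbed by allowing the shell parameters to take boundary values.
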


\begin{proof} We show the claim for the case when $U = (0,1)$ as the other two cases follow from a parallel argument. Since there exists a weak solution of \eqref{eq:stefan intro}, we obtain from Proposition \ref{prop:eulerian} (d) that $\mu \in \mathcal{S}_0((0,1))$, and 
\[ \mu \leq_{\textnormal{SH}} \nu := \chi_{ \{0<s(x)<\infty\} }. \]
By \cite[Thm. 5.6]{kim2024stefan}, $\eta$ is the active particle distribution associated to $(\mu,\nu)$. In particular, by \cite[Cor. 9.9]{kim2024stefan}, the fact that $\eta$ is not the maximal solution implies that $\nu \neq \nu^*$, where $\nu^*$ is given by Proposition \ref{target_measure_universial}.

As $\eta$ is supported on $(0,1)$, Proposition \ref{prop:eulerian} (a) implies that if $w(t,x)$ is the associated potential variable, then $\{x:w(0,x)>0\} \subset (0,1)$ and $\{x:0<s(x)<\infty\} \subset (0,1)$. Hence, from \eqref{optimal_stopping_time}
we obtain $\tau \leq \tau^{(0,1)}$, where $\tau$ is the optimal stopping time for $\mathcal{Q}(\mu,\nu).$ 

 Since $\mu \leq_{\operatorname{SH}}\nu$ and $\mu \leq_{\operatorname{SH}}\nu^*$, we have $\int_{0}^{1}\mu=\int_{0}^{1}\nu=\int_{0}^{1}\nu^*\leq 1$. If we had $\int\mu =1$, then $\nu=\nu^*=\chi_{(0,1)}$, a contradiction. Hence, we may assume that $\int_0^1 \mu(x)dx < 1$, and therefore $a<b$. Since $\nu^*$ is the unique target measure that is the indicator of two intervals containing $0$ and $1$,  $\nu$ cannot be the indicator of two intervals. By Lemma \ref{lem:sigma saturation}, $\nu$ also cannot be the indicator of a single interval. Thus, the set $\{s(x)=\infty\}$ cannot be a single interval. 
\end{proof}

In particular, Proposition~\ref{nucleation_most_unlikely} implies that $\Gamma$ is disconnected for any non-maximal solution, which forces these solutions to nucleate.

\subsection*{Acknowledgments}
The authors thank Young-Heon Kim for his helpful comment on lower semicontinuity of the optimal Skorokhod problem. The authors also thank Sergey Nadtochiy for insightful discussions that inspired Proposition \ref{fractal_freezing_maximal}.
 R.C. is partially supported by NSF grant DMS 2342349. I.K. is partially supported by NSF DMS 2452649. Part of this work is completed during I.K’s visit at KIAS, and she thanks KIAS’s hospitality.

\begin{myappendices} 
  
\section{Admissible target measures in radial and one-dimensional cases}
\label{app:opt-target}
In this appendix, we show that the target measure $\nu^*$ from
Proposition~\ref{target_measure_universial} is admissible for
$\mathcal{P}(\mu,u)$ under the assumption $\mu \le 1$. 
As shown in the proof of Proposition~\ref{target_measure_universial}, 
this admissibility already implies that $\nu^*$ is the optimal target measure  
for arbitrary radial $\mu \in \mathcal{S}_0(U)$, even without requiring $0\leq\mu \leq1$.

\begin{proposition}[Target measure over annuli]\label{prop:annuli_target}
Let $0 \le \mu(x) \le 1$ be radial and supported on the annulus $A_\rho := \{\rho < |x| < 1\}$ with $1 > \rho >0$.
Then choose radii $\rho \le r_1 \le r_2 \le 1$ such that for $\nu(x) := \chi_{ \{\rho <|x| < r_1\} } + \chi_{ \{r_2 <|x| <1\} }$, one has
\begin{equation}\label{eq:annuli_constraints}
    \int_{\rho}^1 r^{d-1} \mu(r)\,dr
    = \int_{\rho}^1 r^{d-1} \nu(r) dr, \quad \quad
   \int_{\rho}^1 \phi(r) \mu(r) dr = \int_{\rho}^1 \phi(r) \nu(r)dr,\end{equation} 
   where $\phi(r) = r$ if $d \neq 2$ and $\phi(r) = r\log(r)$ if $d=2$. Then $\mu \le_{\mathrm{SH}} \nu$ on $A_\rho$. In addition, $v(|x|) := \Delta^{-1}(\nu-\mu)(|x|)$ is nondecreasing on $(\rho,r_1)$ and nonincreasing on $(r_2,1)$.
\end{proposition}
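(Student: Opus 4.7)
The plan is to invoke Proposition \ref{prop:subharmonic equivalent}, which reduces $\mu \leq_{\mathrm{SH}} \nu$ on $A_\rho$ to showing that the radial potential $v := \Delta^{-1}(\nu-\mu)$ is nonnegative on $\R^d$ and vanishes outside $A_\rho$. Three tasks remain: (a) produce radii $\rho \leq r_1 \leq r_2 \leq 1$ satisfying \eqref{eq:annuli_constraints}; (b) verify $v \equiv 0$ outside $A_\rho$; (c) verify $v \geq 0$ on $A_\rho$, with the monotonicity claim for $v$ appearing as a byproduct of (c).

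For (a), set $M_0 := \int_\rho^1 (1-\mu) r^{d-1}\, dr$ and $N_0 := \int_\rho^1 (1-\mu) \phi\, dr$. I would parameterize the family of mass-$M_0$ subintervals of $[\rho, 1]$ by the left endpoint $r_1 \in [\rho, r_1^{\max}]$, with $r_2 = r_2(r_1)$ determined implicitly by the mass constraint (this family is nonempty since $M_0 \leq \int_\rho^1 r^{d-1}\, dr$). Setting $F(r_1) := \int_{r_1}^{r_2(r_1)} \phi\, dr$, a short computation using $r_2'(r_1) = (r_1/r_2)^{d-1}$ and the strict monotonicity of the radially harmonic function $\phi(r)/r^{d-1}$ shows that $F$ is strictly monotone on $[\rho, r_1^{\max}]$. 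The constraint $0 \leq 1-\mu \leq 1$ then allows the bathtub principle to place $N_0$ between the extreme values $F(\rho)$ and $F(r_1^{\max})$, and the IVT yields the required pair. For (b), spherical averaging of the Newtonian kernel over $\partial B_s$ gives
\begin{equation*}
v(|x|) = c_d \int_\rho^1 (\nu-\mu)(s)\, s^{d-1}\, \kappa_d(|x|, s)\, ds,
\end{equation*}
where $\kappa_d(r,s) = \max(r,s)^{2-d}$ for $d \neq 2$ and $\kappa_2(r,s) = \log\max(r,s)$. Evaluating at $|x| \leq \rho$ and $|x| \geq 1$ reduces the vanishing of $v$ there to exactly the two identities in \eqref{eq:annuli_constraints}, so (b) holds by construction.

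For (c), part (b) yields $v(\rho) = v(1) = 0$ and, by the $C^{1,\alpha}$ regularity of $v$, also $v'(\rho) = v'(1) = 0$. Integrating the radial ODE $(r^{d-1} v')' = r^{d-1}(\nu-\mu)$ from $\rho$ on $(\rho, r_1)$ gives $r^{d-1} v'(r) = \int_\rho^r s^{d-1}(1-\mu)\, ds \geq 0$, so $v$ is nondecreasing from $0$, hence nonnegative; symmetric integration from $1$ on $(r_2, 1)$ shows $v$ is nonincreasing down to $0$, hence nonnegative. This simultaneously establishes the monotonicity claim of the last sentence. On the middle shell $\{r_1 < |x| < r_2\}$, $\Delta v = -\mu \leq 0$ makes $v$ superharmonic, and the minimum principle with the now-known nonnegative boundary values on $\partial B_{r_1} \cup \partial B_{r_2}$ forces $v \geq 0$ throughout. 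The main difficulty is step (a): rigorously identifying the range of the moment functional $F$ as an interval containing $N_0$ crucially uses $\mu \leq 1$, both to make the parameter family nonempty and to justify the bathtub argument via the monotonicity of $\phi/r^{d-1}$. The remaining steps reduce to standard ODE analysis and radial Newtonian potential computations.
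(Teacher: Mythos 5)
Your proof is correct and follows essentially the same route as the paper's: reduce $\leq_{\mathrm{SH}}$ to the nonnegativity of $v$ and its vanishing outside $A_\rho$ via Proposition \ref{prop:subharmonic equivalent}, derive the vanishing from the two moment identities in \eqref{eq:annuli_constraints}, and integrate the radial ODE $(r^{d-1}v')'=r^{d-1}(\nu-\mu)$ from the endpoints to obtain the sign of $v'$ on the two outer shells. The only divergences are cosmetic: you spell out the intermediate value argument for the radii (which the paper asserts in one line), and on the middle shell you invoke superharmonicity and the minimum principle where the paper instead tracks the quantile $r^{*}$ to show $v$ is unimodal --- both close the argument.
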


\begin{proof} Using that $0 \leq \mu \leq 1$, we obtain the existence of such $0 \leq r_1 \leq r_2 \leq 1$ via an intermediate value theorem argument. 

Let us first show that \eqref{eq:annuli_constraints} implies $v(r)=0$ for $r \leq \rho$ and $r \geq 1$. Observe that the second equality in \eqref{eq:annuli_constraints} implies $v(0)=0$. Since $v$ is radial and harmonic on $B_{\rho}(0)$, it must vanish identically for $r \leq \rho$. 

Conversely, for $|x| \geq 1$, the map $y \mapsto \mathcal{N}(x-y)$ is harmonic on $B_1$. Using polar coordinates and the mean value property, we observe
\begin{align*}
    v(x) &= \int_{B_1} \mathcal{N}(x-y) \, d(\nu-\mu)(y) = C_d \int_0^1 r^{d-1} (\nu(r)-\mu(r)) \left( \int_{\partial B_1} \mathcal{N}(x-r \omega) \, d\omega \right) dr \\
    &= \tilde{C}_d \mathcal{N}(x) \int_0^1 r^{d-1}(\nu(r)-\mu(r)) \, dr = 0,
\end{align*}
where the last equality follows from the first equality in \eqref{eq:annuli_constraints}. Thus, $v(x)=0$ for $|x| \geq 1$.

Now because $v$ is radial and $\Delta v = \nu-\mu$, we have that
\begin{equation}\label{eq:radial_ODE}
\frac{d}{dr}\!\left(r^{d-1}v'(r)\right)=r^{d-1}(\nu(r)-\mu(r)).
\end{equation}

\medskip\noindent\textbf{Step 1: nondecreasing on $(\rho,r_1]$.}
Because $0\le \mu \le 1$ and $\nu=1$ on $(\rho,r_1]$, the right-hand side of \eqref{eq:radial_ODE} is non-negative there. As $v'(\rho)=0$ because $v(r)=0$ for $r \leq \rho$, we obtain
\[
v'(r)\ge 0 \qquad \text{on } (\rho,r_1].
\]

\medskip\noindent\textbf{Step 2: nonincreasing on $[r_2,1)$.}
Similarly, since $\nu=1$ on $[r_2,1)$, $v'(1)=0$, and \eqref{eq:radial_ODE} implies
\[
v'(r)\le 0 \qquad \text{on } [r_2,1).
\]

\medskip\noindent\textbf{Step 3: behavior on $(r_1,r_2)$.}
Define the radial quantile
\[
r^{*}:=\inf\left\{r>\rho:\int_{\rho}^{r}s^{d-1}\!\left(\nu(s)-\mu(s)\right)\,ds \le 0\right\}.
\]
The previous monotonicity conclusions now imply
\[
r_1 \le r^{*} \le r_2.
\]
Now for $ r \in (\rho,r^{*})$, $\int_{\rho}^r s^{d-1}(\nu(s)-\mu(s)) ds \geq 0$, so integrating \eqref{eq:radial_ODE} implies from $v'(\rho)=0$ that $v'(r)\ge 0$. Now if $r \in [r^*,r_2)$, $\int_{\rho}^r s^{d-1}(\nu(s)-\mu(s)) ds \leq 0$, so integrating \eqref{eq:radial_ODE} implies $v'(r)\le 0$. Thus $v$ attains its minimum at $r=\rho$ or $r=1$, where it is zero. Therefore $v\geq 0$ and vanishes outside $A_{\rho}$, which proves $\mu \le_{\mathrm{SH}} \nu$ on $A_{\rho}$ by Proposition \ref{prop:subharmonic equivalent}.
\end{proof}

Next we handle the case where $\mu$ is supported on the unit ball.

\begin{proposition}[Target measure over unit ball]\label{prop:radial_target}
Let $0 \le \mu(x) \le 1$ be radial and supported on $B_1$. Then choose $0 \leq r_1 \leq 1$ such that for $\nu(x) := \chi_{\{ r_1 \leq |x| \leq 1 \} }(x)$ we have
\begin{equation}
    \int_0^{\infty} r^{d-1} \mu(r) dr = \int_0^{\infty} r^{d-1} \nu(r) dr. \label{same_mass_ball}
\end{equation}
Then $\mu \le_{\mathrm{SH}} \nu$ on $B_1$. In addition, $v(|x|) := \Delta^{-1}(\nu-\mu)(|x|)$ is nonincreasing on $(0,1)$.
\end{proposition}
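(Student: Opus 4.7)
The plan is to mirror the radial-ODE approach of Proposition~\ref{prop:annuli_target}, now in the simpler setup of a single unknown $r_1$ constrained by \eqref{same_mass_ball}. The existence of such an $r_1 \in [0,1]$ follows from the continuity of $r_1 \mapsto \int_{r_1}^1 s^{d-1}\,ds$ together with the bound $0 \le \int_0^1 s^{d-1}\mu(s)\,ds \le 1/d$ implied by $0 \le \mu \le 1$. The goal will be to establish three properties of $v(x) := \Delta^{-1}(\nu - \mu)(x)$: (a) $v \equiv 0$ outside $B_1$, (b) $v \ge 0$, and (c) $v$ is nonincreasing in $|x|$ on $(0,1)$. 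Together with Proposition~\ref{prop:subharmonic equivalent}, properties (a) and (b) yield $\mu \le_{\mathrm{SH}} \nu$ on $B_1$, while (c) is the last statement of the proposition.

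First I would write down the radial Poisson equation
\[
\frac{d}{dr}\bigl(r^{d-1} v'(r)\bigr) = r^{d-1}\bigl(\nu(r) - \mu(r)\bigr),
\]
with $r^{d-1} v'(r) \to 0$ as $r \to 0^+$ from radial smoothness of $v$ at the origin. To obtain (a), I would reuse the mean-value computation from Proposition~\ref{prop:annuli_target}: since $y \mapsto \mathcal{N}(x - y)$ is harmonic in $B_{|x|} \supset \overline{B_1}$ whenever $|x| \ge 1$, the spherical average $\int_{\partial B_1} \mathcal{N}(x - r\omega)\,d\omega$ equals $\omega_{d-1}\mathcal{N}(x)$ for every $r \in [0,1]$, and hence
\[
v(x) = \tilde C_d\,\mathcal{N}(x) \int_0^1 r^{d-1}(\nu - \mu)\,dr = 0
\]
by \eqref{same_mass_ball} (in dimension $d=2$ with $|x|=1$ the identity $0=0$ is trivial, and the conclusion at general $|x|>1$ follows from the vanishing of the integral).

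Integrating the ODE from $0$ to $r \in (0,1)$ then gives $r^{d-1} v'(r) = \int_0^r s^{d-1}(\nu - \mu)(s)\,ds$. On $(0, r_1)$ the right-hand side equals $-\int_0^r s^{d-1}\mu(s)\,ds \le 0$, while on $(r_1, 1)$, after rewriting via \eqref{same_mass_ball} as $\int_0^{r_1} s^{d-1}\mu(s)\,ds = \int_{r_1}^1 s^{d-1}(1-\mu(s))\,ds$, one obtains $r^{d-1}v'(r) = -\int_r^1 s^{d-1}(1-\mu(s))\,ds \le 0$. This proves (c), and combined with $v(1) = 0$ (which follows from (a) and continuity of $v$), it also yields (b). An invocation of Proposition~\ref{prop:subharmonic equivalent} then closes the argument. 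No substantive obstacle is expected, since the argument is strictly simpler than the annular case: the single constraint \eqref{same_mass_ball} already forces $v$ to vanish outside $B_1$, and no auxiliary radial quantile is needed to split the sign of $v'$.
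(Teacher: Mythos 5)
Your proof is correct and follows essentially the same route as the paper's: establish $v\equiv 0$ outside $B_1$ via the mean-value computation from the annular case, then use the radial ODE to show $v'\le 0$ on $(0,1)$, and conclude via Proposition~\ref{prop:subharmonic equivalent}. The only (cosmetic) difference is that on $(0,r_1)$ you integrate the ODE from the origin directly, whereas the paper invokes the minimum principle for the superharmonic $v$ there — both amount to the same observation that $r^{d-1}v'(r)=-\int_0^r s^{d-1}\mu\,ds\le 0$.
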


\begin{proof}
    By a similar argument as in Proposition \ref{prop:annuli_target}, we have $v(r)=0$ for $r \geq 1$. Similarly, $v$ satisfies \eqref{eq:radial_ODE}, which implies (using $v'(1)=0$) that
    \[ v'(r) = -\frac{1}{r^{d-1}} \int_r^1 s^{d-1}(\nu(s)-\mu(s)) \, ds. \]
    For $r \in (r_1, 1)$, we have $\nu(s) = 1$. Since $\mu \leq 1$, this implies $v'(r) \leq 0$ on $(r_1, 1)$. 
    For $r \in (0, r_1)$, we observe that $\nu(x) = 0$, so $\Delta v(x) = -\mu(x) \leq 0$. Since $v$ is radial and smooth at the origin ($v'(0)=0$), the minimum principle implies $v'(r) \leq 0$ on $(0, r_1)$. Combining these, we obtain $v'(r) \leq 0$ on $(0,1)$. Since $v(r)=0$ for $r \geq 1$, we see that $v(r) \geq 0$ on $B_1$ and vanishes outside. Thus Proposition \ref{prop:subharmonic equivalent} implies $\mu \leq_{\textnormal{SH}} \nu$ over $B_1$.
\end{proof}

Finally, a similar argument to Proposition \ref{prop:annuli_target} implies the following:

\begin{proposition}[Target measure on $(0,1)$] \label{prop:one_d_target} Consider $d = 1$ and a function $\mu$ with $0 \le \mu \le 1$ supported in $(0,1)$. Define $\nu(x) = \chi_{(0,a)}(x) + \chi_{(b,1)}(x)$ where $0 \leq a \leq b \leq 1$ are chosen such that
\begin{equation}
        \int_0^1 \mu(x) dx = \int_0^1 \nu(x) dx, \quad  \quad\int_0^1 x\mu(x) dx = \int_0^1 x \nu(x) dx,
 \label{one_dimension_mass_moments}
\end{equation} then $\mu \leq_{\textnormal{SH}} \nu$ with respect to $(0,1)$. In addition, $v(x) := \Delta^{-1}(\nu-\mu)(x)$ is nondecreasing on $(0,a)$ and nonincreasing on $(b,1)$.
\end{proposition}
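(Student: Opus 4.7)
The plan is to adapt the argument of Proposition \ref{prop:annuli_target} to the one-dimensional setting, which is in fact cleaner, since the 1D Newtonian potential $\mathcal{N}(x) = |x|/2$ is elementary and the two moment conditions in \eqref{one_dimension_mass_moments} give very direct control over $v$ at the endpoints. Existence of $0 \leq a \leq b \leq 1$ satisfying \eqref{one_dimension_mass_moments} follows from an intermediate value argument combined with $0 \leq \mu \leq 1$, exactly as in Proposition \ref{prop:annuli_target}.

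First, I would verify that $v \equiv 0$ outside $(0,1)$, using the explicit representation $v(x) = \int_0^1 \tfrac{1}{2}|x-y|(\nu-\mu)(y)\,dy$. For $x \leq 0$ this reduces to $v(x) = \tfrac{1}{2}\int_0^1 y(\nu-\mu)(y)\,dy - \tfrac{x}{2}\int_0^1 (\nu-\mu)(y)\,dy$, which vanishes by \eqref{one_dimension_mass_moments}; the case $x \geq 1$ is analogous. In particular $v(0) = v(1) = 0$, and since $v'' = \nu - \mu \in L^\infty$, the function $v$ is $C^1$, so $v'(0) = v'(1) = 0$.

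Next, on the intervals $(0,a)$ and $(b,1)$ where $\nu \equiv 1 \geq \mu$, the ODE $v''(x) = \nu(x) - \mu(x) \geq 0$ combined with $v'(0) = 0$ gives $v'(x) \geq 0$ on $(0,a)$ by integration, which is the asserted monotonicity. Symmetrically, $v'(1) = 0$ together with $v'' \geq 0$ on $(b,1)$ yields $v'(x) \leq 0$ there. It then remains to establish nonnegativity of $v$ on the middle interval $(a,b)$, where $\nu \equiv 0$ and hence $v'' = -\mu \leq 0$, i.e., $v$ is concave on $[a,b]$. Since $v(a) \geq v(0) = 0$ and $v(b) \geq v(1) = 0$ from the monotonicity just established, the concavity of $v$ on $[a,b]$ forces $v(x) \geq \tfrac{b-x}{b-a}v(a) + \tfrac{x-a}{b-a}v(b) \geq 0$ on $(a,b)$, since a concave function lies above the chord joining its endpoint values.

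Finally, having shown $v \geq 0$ on $\mathbb{R}$ and $v \equiv 0$ outside $(0,1)$, Proposition \ref{prop:subharmonic equivalent} immediately yields $\mu \leq_{\operatorname{SH}} \nu$ with respect to $(0,1)$. I do not expect any real obstacle here: the argument avoids the delicate radial split of Proposition \ref{prop:annuli_target}, and the only mildly subtle point is the clean identification $v'(0) = v'(1) = 0$ from the two moment conditions together with the $C^1$ regularity of $v$, after which monotonicity on the outer intervals and concavity on the middle interval assemble the result.
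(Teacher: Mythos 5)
Your proof is correct and follows essentially the same route the paper intends: the paper proves this proposition by declaring it "a similar argument to Proposition \ref{prop:annuli_target}", and your write-up is exactly that adaptation (vanishing of $v$ outside $(0,1)$ from the two moment conditions, $v'(0)=v'(1)=0$, convexity of $v$ on the outer intervals giving the stated monotonicity, and nonnegativity on $(a,b)$). The only cosmetic difference is that on the middle interval you invoke concavity and the chord, whereas the annulus proof tracks the sign of the running integral $\int(\nu-\mu)$; in one dimension these are the same observation, since concavity of $v$ on $(a,b)$ is precisely the monotone decrease of $v'$ used there.
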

\section{Example of nucleation in dimension $1$} \label{nuc_example_appendix}
In this appendix, we justify Proposition \ref{nuc_example}. We first establish some bounds on $w$.
\begin{lemma}[Lower bound on $w$]\label{lower_w}
Let $w(t,x)$ solve
\begin{equation}
\begin{cases}
\partial_t w - \Delta w/2 = -\nu \chi_{\{w>0\}}/2, & (t,x)\in (0,\infty)\times U, \\[2mm]
w(0,x) = \Delta^{-1}(\nu-\mu)(x), & x\in U, \\[2mm]
w(t,x) = 0, & (t,x)\in (0,\infty)\times \partial U,
\end{cases}
\end{equation}
where $\nu \ge 0$ and $U\subset \R^d$ is bounded and open.  
If $\mu \in C^2(\R^d)$ and $\|\Delta \mu\|_{L^{\infty}(U)} < \infty$, then
\[
w(t,x) \ge v(t,x)
:= w(0,x) - (t/2)\mu(x) - (t^2/8)\|\Delta \mu\|_{L^{\infty}(U)}.
\]
\end{lemma}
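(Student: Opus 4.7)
The plan is to show that $v$ is a classical subsolution of the same equation that $w$ satisfies, then conclude $w \geq v$ by a parabolic comparison argument.

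First, I would compute $\partial_t v - \tfrac12 \Delta v$ directly from the definition of $v$. Using $\Delta w(0,\cdot) = \nu-\mu$, a direct calculation gives
\begin{equation*}
\partial_t v - \tfrac12 \Delta v \;=\; -\tfrac12 \nu + \tfrac{t}{4}\bigl(\Delta \mu - \|\Delta \mu\|_{L^\infty(U)}\bigr) \;\leq\; -\tfrac12 \nu \;\leq\; -\tfrac12 \nu \chi_{\{w>0\}}.
\end{equation*}
Thus $v$ is a classical subsolution of the PDE satisfied by $w$. In particular, the distributional inequality $\partial_t(w-v) - \tfrac12\Delta(w-v) \geq 0$ holds on $(0,\infty)\times U$.

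Next, I would verify the ordering $w \geq v$ on the parabolic boundary. At $t=0$, $v(0,\cdot) = w(0,\cdot)$ by construction. On $(0,\infty) \times \partial U$, $w \equiv 0$ by the boundary condition, and $w(0,\cdot)|_{\partial U}=0$ in the setting of the application (where $\Delta^{-1}(\nu-\mu)$ vanishes on $\partial U$), so $v(t,x) = -(t/2)\mu(x) - (t^2/8)\|\Delta\mu\|_{L^\infty(U)} \leq 0 = w(t,x)$, using $\mu \geq 0$ on $\partial U$ as standing throughout the paper.

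Finally, I would upgrade these two ingredients to $w \geq v$ on the whole cylinder by a standard energy-based comparison argument. Since $w$ is only $C^{1,\alpha}_x \cap C^{0,1}_t$ across the free boundary (cf.~the discussion after Proposition~\ref{prop:eulerian}), I prefer not to invoke a pointwise maximum principle. Instead, I would test the inequality $\partial_t(w-v) - \tfrac12\Delta(w-v)\geq 0$ against the nonnegative function $(v-w)^+$, which has zero trace on $\partial U$ by the previous step. Integration by parts in space yields
\begin{equation*}
\frac{d}{dt}\int_U \tfrac12 [(v-w)^+]^2\, dx \;\leq\; -\int_U \tfrac12 |\nabla (v-w)^+|^2\, dx \;\leq\; 0,
\end{equation*}
and since $(v-w)^+(0,\cdot) \equiv 0$, this forces $(v-w)^+ \equiv 0$, i.e.\ $w \geq v$ on $(0,\infty) \times U$.

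The only technical obstacle is the justification of the integration by parts and of the chain rule $\tfrac{d}{dt}\int_U \tfrac12[(v-w)^+]^2 = \int_U (v-w)^+ \partial_t(v-w)\chi_{\{v>w\}}$, given the limited regularity of $w$. Both are routine given that $w$ belongs to $L^2_t H^1_x$ with $\partial_t w \in L^2$, and that $(v-w)^+$ has vanishing trace on $\partial U$, so I do not expect any serious difficulty there.
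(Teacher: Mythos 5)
Your proof is correct and follows essentially the same route as the paper: you verify that $v$ is a subsolution of the equation for $w$ via the identity $\Delta w(0,\cdot)=\nu-\mu$, check the ordering on the parabolic boundary (using $w(0,\cdot)=0$ and $\mu\ge 0$ on $\partial U$), and conclude by comparison. The only difference is that the paper invokes the parabolic minimum principle directly for the last step, whereas you implement the comparison via an energy estimate on $(v-w)^+$; this is a minor (and, given the limited regularity of $w$ across the free boundary, arguably more careful) variant of the same argument.
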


\begin{proof}
Observe that
\[
\partial_t v - \Delta v/2
= -\nu/2 + (t/4)\big(\Delta \mu(x) - \|\Delta \mu\|_{L^{\infty}(U)}\big)
\le -\nu \chi_{\{w>0\}}/2.
\]
Hence
\[
\partial_t(w-v) -\Delta(w-v)/2 \ge 0
\quad \text{on } (0,\infty)\times U,
\]
with
\[
(w-v)(0,x)=0, \qquad 
(w-v)(t,x) = (t/2)\mu(x) + (t^2/8)\|\Delta \mu\|_{L^{\infty}(U)} \ge 0
\quad \text{on } (0,\infty)\times\partial U.
\]
By the parabolic minimum principle, it follows that $w(t,x)\ge v(t,x)$.
\end{proof}

Next, we show that when there is no nucleation, we can obtain an upper bound on $w(t,x)$ on $\{w>0\}$ in $d=1$.

\begin{lemma}[Linearization upper bounds under no nucleation] \label{no_nuc_bounds} Let $\mu \in \mathcal{S}_0((0,1)) \cap C^2([0,1])$ and let $\nu^*$ be from Proposition \ref{target_measure_universial}. Let $w$ be the unique solution of
\[ 
\p_t w -  \p^2_{x} w/2 = -\frac{\nu^*}{2} \chi_{ \{w>0\} }  \quad \text{ in } \;(0,\infty) \times \R,  \quad\quad
w(0,x) = \Delta^{-1}(\nu^*-\mu)(x) \;\;\;\;  x\in\R.  
\]
Assume that the freezing time $s(x)$ is nonincreasing on some interval $I := (b,c) \subset (b,1)$, where $b$ is given by \eqref{one_dimension_mass_moments}, and let $T := s(c)$. Then, for any smooth function $\zeta$ satisfying $0 \leq \zeta \leq 1$ with $\zeta(b)=\zeta(c)=0$,
\[ 
w(t,x) \leq w(0,x) - (t/2) (\zeta(x)\mu(x)) + (t^2/8) \| (\zeta \mu)''\|_{L^{\infty}(I)} \quad (t,x)\in (0,T)\times I.
\]
\end{lemma}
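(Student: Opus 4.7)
The plan is to prove the bound by a parabolic maximum-principle comparison on the rectangle $Q := (0,T)\times I$. The crucial preliminary is that, since $s$ is nonincreasing on $I=(b,c)$ with $s(c)=T$, one has $s(x)\ge T$ for all $x\in I$, so $w>0$ throughout $Q$. Because $I\subset (b,1)$ and $\nu^{*}=\chi_{(0,a)}+\chi_{(b,1)}$, one has $\nu^{*}\equiv 1$ on $I$, so on $Q$ the equation for $w$ collapses to the linear one $\p_t w-\tfrac12 \p^2_x w=-\tfrac12$.

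Set the candidate upper barrier $v(t,x):=w(0,x)-\tfrac{t}{2}(\zeta\mu)(x)+\tfrac{t^{2}}{8}\|(\zeta\mu)''\|_{L^\infty(I)}$ and consider $u:=w-v$. Using $\p^2_x w(0,x)=\nu^{*}(x)-\mu(x)=1-\mu(x)$ on $I$, a direct computation yields
\begin{equation*}
\p_t u-\tfrac12 \p^2_x u=-\tfrac12 \mu(x)\bigl(1-\zeta(x)\bigr)-\tfrac{t}{4}\Bigl(\|(\zeta\mu)''\|_{L^\infty(I)}+(\zeta\mu)''(x)\Bigr)\le 0 \quad \text{on } Q,
\end{equation*}
since $0\le\zeta\le 1$, $\mu\ge 0$, and $\|(\zeta\mu)''\|_{L^\infty(I)}\ge -(\zeta\mu)''(x)$. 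Hence $u$ is a classical subsolution of the heat equation on $Q$.

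It then remains to check $u\le 0$ on the parabolic boundary of $Q$. Trivially $u(0,\cdot)\equiv 0$ on $I$. On the lateral boundary, the vanishing of $\zeta$ at the endpoints reduces $u$ to $u(t,b)=w(t,b)-w(0,b)-\tfrac{t^{2}}{8}\|(\zeta\mu)''\|_{L^\infty(I)}$ and analogously at $x=c$; since $w$ is nonincreasing in $t$ (as $w(t,x)=\tfrac12\int_t^\infty\eta(s,x)\,ds$ by Proposition~\ref{prop:eulerian}(a)) and the last term is non-negative, both boundary expressions are $\le 0$. The parabolic maximum principle applied to $u$ on $Q$ then yields $u\le 0$ on $Q$, which is precisely the claimed bound. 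No step here looks like a serious obstacle; the only care needed is to correctly identify the linear PDE satisfied by $w$ on $Q$ from the nonincreasing hypothesis on $s$, and to exploit $\zeta(b)=\zeta(c)=0$ together with the time-monotonicity of $w$ on the lateral boundary.
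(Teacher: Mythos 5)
Your proof is correct and follows essentially the same route as the paper's: you build the same barrier $v$, verify the differential inequality using $\partial_x^2 w(0,\cdot)=\nu^*-\mu$ and the inclusion $(0,T)\times I\subset\{w>0\}$, handle the lateral boundary via $\zeta(b)=\zeta(c)=0$ together with $\partial_t w\le 0$, and conclude by the parabolic maximum principle. The only (harmless) cosmetic difference is that you substitute $\nu^*\equiv 1$ on $I$ and work with $u=w-v$, whereas the paper keeps $\nu^*$ symbolic and compares $v$ with $w$ directly.
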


\begin{proof}
Define $v(t,x) := w(0,x) - (t/2) (\zeta(x)\mu(x)) + (t^2/8) \| (\zeta \mu)''\|_{L^{\infty}(I)}$. Then one has
\[ 
\p_t v -\p^2_{x} v/2 = -\nu^*/2 + ((1-\zeta)/2)\mu + (t/4) \left( \|(\zeta \mu)''\|_{L^{\infty}(I)} + (\zeta \mu)'' \right) 
\]
\[ 
\geq -\nu^*/2 = \p_t w -\p^2_{x}w/2 \text{ on } (0,T) \times I. 
\]
Here we used the fact that $(0,T) \times I \subset \{w>0\}$, which follows from our assumptions. 
Also, we have that $v(0,x)=w(0,x)$ and $v(t,c)=w(0,c) + (t^2/8) \|(\zeta \mu)''\|_{L^{\infty}(I)} \geq w(0,c) \geq w(t,c)$ (recall that $\p_tw \leq 0$) and similarly $v(t,b) \geq w(t,b)$. 
Hence, we conclude by the maximum principle.
\end{proof}

We now use these two inequalities to prove Proposition \ref{nuc_example}. 
\begin{proof}[Proof of Proposition \ref{nuc_example}]

Consider the polynomial
\[ \mu(x) := 154( -x^4/4+ 7x^3/15 -  7x^2/25 + 8x/125)\; \text{ for } x \in (0,1). \] We first show that $\mu \chi_{(0,1)} \in \mathcal{S}_0((0,1))$. Motivated by Proposition \ref{target_measure_universial} we define the constants
\[ a = (12397/100500) \approx 0.12335 \;\;\;\text{ and } \;\;\;b = (30353/100500) \approx 0.30202. \] Then we claim that $\mu \chi_{(0,1)} \leq_{\text{SH}} \nu: = \chi_{(0,a)} + \chi_{(b,1)}$ over $(0,1)$, which implies $\mu \in \mathcal{S}_0((0,1))$. Note that $a$ and $b$ are chosen so that $\mu \chi_{(0,1)} $ and $\nu$ have the same mass and first moment.

 First, by direct computation, one has
\begin{equation} \int_0^x \int_0^y \mu(s) \chi_{(0,1)} (s) dsdy = (77/1500) x^3\left( 32 -70x +70x^2 - 25x^3 \right). \label{double_int_mu}   \end{equation} and
\begin{equation} \int_0^x \int_0^y \nu(s) ds = \begin{cases} x^2/2 &  x \in (0,a) \\ 
ax-a^2/2 & x \in (a,b) \\ x^2/2+ (a-b)x + (b^2-a^2)/2 &x\in (b,1)\end{cases}. \label{double_int_nu} \end{equation} Hence, we have from \eqref{double_int_mu} and \eqref{double_int_nu} that 
\[ v(x) := \int_0^x\int_0^y (\nu(s)-\mu(s) \chi_{(0,1)}(s) ) ds \geq 0  \text{ for } x \in (0,1). \] In particular, one has $v(0)=v(1)=0$ because $\mu \chi_{(0,1)} $ and $\nu$ have the same mass and first moment. In addition, we have that $\lim_{x \rightarrow 0^+}v'(x)=\lim_{x \rightarrow 1^-}v'(x) = 0$ because $\nu$ and $\mu$ have the same mass. Thus, we can continuously extend both $v$ and $v'$ to be zero outside of $(0,1)$. In particular, since $\nu = \mu + v''$, the conditions $v(0)=v(1)=v'(0)=v'(1)$ and $v \geq 0$ on $(0,1)$ imply, by integration by parts, that $\mu \leq_{\textnormal{SH}} \nu$ over $(0,1)$ (see Lemma \ref{lem:integration by parts}). In particular, by Proposition \ref{target_measure_universial}, $\nu$ is the optimal target measure for $\mathcal{P}(\mu,u)$. 

We now claim that, for this choice of $\mu$, that $s(x)$ fails to be nonincreasing over $(b,0.8)$. To see this, we argue by contradiction by using Lemmas \ref{no_nuc_bounds} and \ref{lower_w}. First, we compute
\begin{equation} \|\mu''\|_{L^{\infty}((0,1))} = 2926/25, \label{mu_double_bound} \end{equation} and we choose the cutoff function
\[ \zeta(x) := C(x-b)(0.8-x), \] where $1/C \approx 0.062$ is chosen so that $\|\zeta\|_{L^{\infty}( (b,.8))} = 1$. Then this implies that
\begin{equation} \| (\zeta\mu)''\|_{L^{\infty} ( (b,0.8))} \leq 71. \label{cut_off_2nd} \end{equation} We now show that  Lemmas \ref{no_nuc_bounds} and \ref{lower_w}, together with the assumption that $s(x)$ is nonincreasing over $(b,0.8)$ imply  $s(0.5) < s(0.8)$, a contradiction. 

If $s(x)$ is nonincreasing on $(b,.8)$, then we obtain by Lemma \ref{no_nuc_bounds} and Lemma \ref{lower_w} combined with the bounds \eqref{mu_double_bound} and \eqref{cut_off_2nd}, recalling $v(x)=w(0,x)$, that for $t \leq s(0.8)$ and $x \in (b,0.8)$,
\begin{equation} v(x) - (t/2)(\zeta(x)\mu(x)) +  (71/8)t^2  \geq w(t,x) \geq v(x) - (t/2) \mu(x) - (1463/100)t^2. \label{linearization_bounds_w123} \end{equation}  Here the lower bound is always valid, while the upper bound is only valid until $t=s(0.8)$.

From the explicit formulae for $v$ and $\mu$, the lower bound in \eqref{linearization_bounds_w123} at $x=0.8$ implies
\[ s(0.8) \geq \inf_{t \geq 0} \left\{ t:v(0.8) - (t/2) \mu(0.8) - (1463/100)t^2 = 0\right\} \geq 0.006.  \] Consequently, the upper bound in \eqref{linearization_bounds_w123} is valid until at least $T=0.006$. But observe that the upper bound in \eqref{linearization_bounds_w123} at $x=0.5$ implies from the explicit formula of $v$ and $\mu$
\[ \min\{0.006,s(0.5)\} \leq \inf_{t \geq 0} \{ t:v(0.5) - (t/2)\zeta(0.5)\mu(0.5) + (71/8)t^2 =0  \} \leq0.0025. \] This contradicts the assumption that $s(x)$ is nonincreasing on $(b,0.8)$. 
\end{proof}

\end{myappendices}

\bibliographystyle{alpha}
\bibliography{bib}
\end{document}